\numberwithin{equation}{section}
\let\cal\mathcal
\def\Ascr{{\cal A}}
\def\Bscr{{\cal B}}
\def\Cscr{{\cal C}}
\def\Escr{{\cal E}}
\def\Fscr{{\cal F}}
\def\Kscr{{\cal K}}
\def\Lscr{{\cal L}}
\def\Mscr{{\cal M}}
\def\Nscr{{\cal N}}
\def\Oscr{{\cal O}}
\def\Qscr{{\cal Q}}
\def\Rscr{{\cal R}}
\def\Sscr{{\cal S}}
\let\blb\mathbb
\def\QQ{{\blb Q}}
\def\GG{{\blb G}}
\def \PP{{\blb P}}
\def \ZZ{{\blb Z}}
\def \NN{{\blb N}}
\def\id{\text{id}}
\def\Mod{\operatorname{Mod}}
\def\Gr{\operatorname{Gr}}
\def\gr{\operatorname{gr}}
\def\rad{\operatorname {rad}}
\def\gr{\operatorname {gr}}
\def\Spec{\operatorname {Spec}}
\def\Rep{\operatorname {Rep}}
\def\GL{\operatorname {GL}}
\def\Ext{\operatorname {Ext}}
\def\End{\operatorname {End}}
\def\RHom{\operatorname {RHom}}
\def\ulRHom{\underline{\RHom}}
\def\Sl{\operatorname {SL}}
\def\Gl{\operatorname {GL}}
\def\im{\operatorname {im}}
\def\ker{\operatorname {ker}}
\def\Tor{\operatorname {Tor}}
\def\End{\operatorname {End}}
\def\id{{\operatorname {id}}}
\def\Tot{\operatorname {Tot}}
\def\r{\rightarrow}
\def\u{\uparrow}
\def\GL{\operatorname {GL}}
\DeclareMathOperator{\Proj}{Proj}
\DeclareMathOperator{\Ind}{Ind}
\DeclareMathOperator{\coh}{coh}
\DeclareMathOperator{\Ob}{Ob}
\DeclareMathOperator{\Sym}{Sym}
\DeclareMathOperator{\Fr}{Fr}
\DeclareMathOperator{\codim}{codim}
\DeclareMathOperator{\SL}{SL}
\DeclareMathOperator{\tH}{\widehat{H}}
\DeclareMathOperator{\HHH}{H}
\theoremstyle{definition}
\newtheorem{lemma}{Lemma}[section]
\newtheorem{proposition}[lemma]{Proposition}
\newtheorem{theorem}[lemma]{Theorem}
\newtheorem{corollary}[lemma]{Corollary}
\newtheorem{definition}[lemma]{Definition}
\newtheorem{conjecture}[lemma]{Conjecture}
\newtheorem{remark}[lemma]{Remark}
\DeclareMathOperator\Hom{Hom}
\DeclareMathOperator{\Dist}{Dist}
\def\Cjk{{C^\bullet_{jk}}}
\def\gCjk{{\tilde{C}^\bullet_{jk}}}
\def\Djk{{D^\bullet_{jk}}}
\def\oCjk{{\bar{C}}^\bullet_{jk}}
\def\oDjk{{\bar{D}}^\bullet_{jk}}
\def\tCj{{{C^\bullet_{j}}}}
\def\gCj{{{\tilde{C}^\bullet_{j}}}}
\def\Xs{X^{\mathbf{s}}}
\def\tH{\hat{H}}
\def\refl{\operatorname{ref}}
\def\dur{*}
\def\Ljkt{C_{jk}^{(t)}}
\def\Ljt{C_{j}^{(t)}}
\def\sLjkt{L_{jk}^{(t)}}
\def\Djkt{D_{jk}^{(t)}}
\def\genC{\tilde{C}}
\def\bC{C}
\def\gC{\tilde{C}}
\def\un{{}}
\def\assume{\max\{n-2,3\}}
\def\W{E}
\newcommand*\bigcdot{\mathpalette\bigcdot@{.5}}
\newcommand*\bigcdot@[2]{\mathbin{\vcenter{\hbox{\scalebox{#2}{$\m@th#1\bullet$}}}}}
\newcommand{\pushright}[1]{\ifmeasuring@#1\else\omit\hfill$\displaystyle#1$\fi\ignorespaces}
\newcommand{\pushleft}[1]{\ifmeasuring@#1\else\omit$\displaystyle#1$\hfill\fi\ignorespaces}
\definecolor{ruta2}{rgb}{0.409, 0.459, 0.208}
\mathchardef\mhyphen="2D
\newcounter{todocounter}
\DeclareDocumentCommand\addreference{g}{\stepcounter{todocounter}\todo[color = blue!30, fancyline]{\thetodocounter. Add reference\IfNoValueF{#1}{: #1}}\xspace}
\DeclareDocumentCommand\checkthis{g}{\stepcounter{todocounter}\todo[color = red!50, fancyline]{\thetodocounter. Check this\IfNoValueF{#1}{: #1}}\xspace}
\DeclareDocumentCommand\fixthis{g}{\stepcounter{todocounter}\todo[color = orange!50, fancyline]{\thetodocounter. Fix this\IfNoValueF{#1}{: #1}}\xspace}
\DeclareDocumentCommand\expand{g}{\stepcounter{todocounter}\todo[color = green!50, fancyline]{\thetodocounter. Expand\IfNoValueF{#1}{: #1}}\xspace}
\newcommand{\mylabel}[2]{#2\def\@currentlabel{#2}\label{#1}}
\title[]{The Frobenius morphism in invariant theory II}
\author{Theo Raedschelders}
\address{(Theo Raedschelders)\newline School of Mathematics and Statistics, University of Glasgow, Glasgow G12 8QQ, 
United Kingdom\newline {\tt Theo.Raedschelders@glasgow.ac.uk}}
\thanks{The first author is supported by an EPSRC postdoctoral fellowship EP/R005214/1.}
\author{\v{S}pela \v{S}penko}
\thanks{The second author is a FWO $[$PEGASUS$]^2$ Marie Sk\l odowska-Curie fellow at the Free University of Brussels
(funded by the European Union Horizon 2020 research and innovation
programme under the Marie Sk\l odowska-Curie grant agreement
No 665501 with the Research Foundation Flanders (FWO)). During part of this work she was also a postdoc with Sue Sierra at the University of Edinburgh}
\address{(\v{S}pela \v{S}penko)\newline Departement Wiskunde, Vrije Universiteit Brussel, 
Pleinlaan $2$, B-1050 Elsene, Belgium\newline {\tt spela.spenko@vub.be}}
\author{Michel Van den Bergh}
\address{(Michel Van den Bergh)\newline Departement WNI, Universiteit Hasselt, Universitaire Campus \\
B-3590 Diepenbeek, Belgium \newline {\tt michel.vandenbergh@uhasselt.be}}
\thanks{The third author is a senior researcher at the Research Foundation Flanders (FWO).  While working on this project he was supported by
the FWO grant G0D8616N: ``Hochschild cohomology and deformation theory of triangulated categories''.}
\keywords{Invariant theory, Frobenius kernel, Frobenius summand, FFRT, Grassmannian, tilting bundle, noncommutative resolution}
\subjclass{13A50, 14M15, 32S45}
\let\oldmarginpar\marginpar
\def\marginpar#1{\oldmarginpar{\raggedright \tiny #1}}
\def\cone{\operatorname{cone}}
\def\pdim{\operatorname{pdim}}
\def\RInd{\operatorname{RInd}}
\begin{document}

\begin{abstract}
Let $R$ be the homogeneous coordinate ring of the Grassmannian $\GG=\Gr(2,n)$  defined over an algebraically closed field $k$ of characteristic $p \geq \assume$. In this paper we give a description of the decomposition of $R$, considered as graded $R^{p^r}$-module, for $r \geq 2$. This is a companion paper to \cite{FFRT1}, where the case $r=1$ was treated, and taken together, our results imply that $R$ has finite F-representation type (FFRT). Though it is expected that all rings of invariants for reductive groups have FFRT, ours is the first non-trivial example of such a ring for a group which is not linearly reductive. As a corollary, we show that the ring of differential operators $D_k(R)$ is simple, that $\GG$ has global finite F-representation type (GFFRT) and that $R$ provides a noncommutative resolution for $R^{p^r}$.
\end{abstract}

\maketitle
\tableofcontents

\section{Introduction}
\subsection{Setting the scene}
It is hard to exaggerate the importance of the Frobenius morphism in algebraic geometry and commutative algebra. Indeed, even the most hard-line characteristic zero aficionado can only 
stand and marvel at the Deligne-Illusie proof of the Hodge-to-de Rham degeneration theorem \cite{MR894379}, the plethora of powerful cohomological vanishing results obtained using Frobenius splitting \cite{MR2107324}, or the profound simplifications the theory of tight closure has brought to commutative algebra \cite{MR1017784}. Nevertheless, many basic questions concerning the Frobenius morphism for specific rings and varieties remain unanswered. 

In this paper, we focus on Grassmannians of two-dimensional quotients, and in particular the corresponding Pl\"ucker coordinate rings, which are well known to admit a description as invariant rings for $\SL_2$. The question we are concerned with was first raised in \cite{MR1444312}, and concerns a representation theoretic property of commutative rings and varieties, dubbed finite F-representation type (FFRT), which we now discuss. 

Assume $k$ is an algebraically closed field of positive characteristic $p>0$. If a commutative $k$-algebra $R$ is reduced, then the Frobenius morphism $\Fr:R \to R$ is an isomorphism onto its image, so we can identify the inclusion $R^{p^r} \hookrightarrow R$ with the extension $R \hookrightarrow R^{1/p^r}$, where $R^{1/p^r}$ denotes the overring of $p^r$-th roots of elements of $R$. If $R=k + R_1 + \cdots$ is moreover finitely generated and $\NN$-graded, then we can view and decompose $R^{1/p^r}$ in the category of $\QQ$-graded $R$-modules, which is Krull-Schmidt. The following definition was introduced in \cite{MR1444312}. 

\begin{definition}
\label{intro:def-1}
Let $R=k + R_1 + \cdots$ be a reduced finitely generated $\NN$-graded $k$-algebra. A \textit{higher Frobenius summand} is an indecomposable summand of $R^{1/p^r}$ for some $r \geq 1$. We say that R has \textit{finite F-representation type} (FFRT) if the number of isomorphism classes of higher Frobenius summands is finite (up to degree shifts).
\end{definition}

For an extended discussion of the role that this fundamental property, and the related notion of p-uniformity, plays in algebraic geometry and commutative algebra, we refer to the introduction of \cite{FFRT1}. Here we will content ourselves with reiterating that the FFRT property has been verified essentially in only the following two cases.
\begin{enumerate}
\item \label{fin-cm}If $R$ is Cohen-Macaulay and of finite representation type (e.g. if $R$ is a quadric hypersurface)
then it satisfies FFRT. 
\item \label{lin-red}One of the main results in \cite{MR1444312} states
that the FFRT property holds for invariant rings for \emph{linearly} reductive groups.
\end{enumerate}
Motivated by potential applications in the theory of rings of differential operators
(see \S\ref{intro:diff} below) one is led to define the FFRT property in characteristic zero.
\begin{definition}[FFRT in characteristic zero] 
\label{def:FFRT-char-zero}
Let $k$ denote an algebraically closed field of characteristic zero and $R=k+R_1+\cdots$ an integral finitely generated $\NN$-graded $k$-algebra. 
 We
say that $R$ satisfies FFRT if there exists a finitely generated $\ZZ$-algebra $A\subset k$, a finitely generated graded flat $A$-algebra $R_A$ such
that $k\otimes_A R_A=R$ and
such that  for every geometric point $x:\Spec l\r \Spec A$, $\operatorname{char} l>0$, we have
that $x^\ast(R_{A})$ satisfies FFRT.
\end{definition}
It is then natural to make the following conjecture:

\begin{conjecture}[FFRT for invariant rings]
\label{con:maincon}
Let $k$ be an algebraically closed field of characteristic zero and let $W$ be a finite dimensional $k$-representation for a reductive
group $G$. Then $k[W]^G$ satisfies FFRT.
\end{conjecture}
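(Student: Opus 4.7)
The plan is to pass to positive characteristic via Definition~\ref{def:FFRT-char-zero}: spread out the pair $(G,W)$ to $G_A \curvearrowright W_A$ over a finitely generated $\ZZ$-subalgebra $A \subset k$, with $G_A$ a Chevalley group scheme acting on a flat $A$-module $W_A$, and reduce to checking FFRT for the invariants $R_l = l[W_l]^{G_l}$ at every geometric fibre $x:\Spec l \to \Spec A$ of positive characteristic. The problem then becomes: for $G$ a reductive (though no longer linearly reductive) group in characteristic $p$, exhibit a finite list of indecomposable graded $R$-modules whose degree shifts exhaust the summands of $R^{1/p^r}$ for every $r \geq 1$.

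The natural structural input is the modular representation theory of reductive groups. By Mathieu's theorem, $k[W]$ admits a good filtration by dual Weyl modules $\nabla(\lambda) = \operatorname{ind}_B^G(\lambda)$, and Kempf vanishing keeps the cohomology of line bundles on $G/B$ under control; Steinberg's tensor product theorem further factors simple modules along the $p$-adic expansion of their highest weights. The strategy is to describe $R$ as an $R^{p^r}$-module via a filtration whose associated graded is a direct sum of $\nabla$-filtered summands governed by the representation theory of the $r$-th Frobenius kernel $G_r$. This should be combined with induction on $r$: first establish a decomposition $R = \bigoplus_i M_i^{\oplus n_i}$ as an $R^p$-module with only finitely many isomorphism classes of $M_i$, then iterate the same construction on each $M_i$ through the inclusion $R^{p^2} \hookrightarrow R^p$, and continue.

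The main obstacle is ensuring that the list of isomorphism classes of indecomposable summands does not grow with $r$. For linearly reductive $G$ this is automatic because the representation theory is semisimple and Frobenius acts trivially on isotypical components; for general reductive $G$ one needs a uniform bound on the indecomposables appearing on every Frobenius layer, and no general mechanism for producing such a bound is known. The arguments in \cite{FFRT1} and in the present paper for $\SL_2 \curvearrowright \Gr(2,n)$ depend on explicit tilting bundles and on the small-rank combinatorics of $\SL_2$-representations, neither of which extends mechanically to arbitrary reductive $G$. A general attack would plausibly require either a tilting resolution of $R$ with complexity bounded independently of $r$, or a $\mathcal{D}$-module argument exploiting the (expected) simplicity of the ring of differential operators $D_k(R)$; both routes appear substantially harder than the $\Gr(2,n)$ case handled here.
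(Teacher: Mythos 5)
This statement is a \emph{conjecture}, not a theorem, and the paper does not claim to prove it in general. What the paper actually establishes is Theorem~\ref{intro:main-app}: the conjecture holds in the single case $G=\SL_2(V)$, $W=F\otimes_k V$, i.e.\ for the Pl\"ucker coordinate ring of $\Gr(2,n)$, in characteristic $p\geq\max\{n-2,3\}$. You have correctly recognised that the general statement is open, and your sketch faithfully reproduces the paper's strategy in the special case (spread out over a finitely generated $\ZZ$-algebra, decompose $S^{G_1}$ via Theorem~\ref{thm:decFr1}, then iterate by decomposing the $G_1$-invariants of each summand and showing the list stabilises after one more step). You also put your finger on exactly the right obstruction: there is no general mechanism guaranteeing that the set of indecomposables stabilises under iteration, and the $\SL_2$ argument leans hard on tilt-free resolutions, the explicit Koszul complex \eqref{biresolution01}, the two-rowed combinatorics of $\SL_2$-tilting modules, and a delicate formality statement (Proposition~\ref{prop:outform}) that makes the spectral sequences collapse. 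None of these has an evident analogue for general $G$.

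So there is no ``gap'' in your proposal in the usual sense --- you are not offering a proof of an open conjecture, and neither does the paper. The only caveat worth flagging: you gesture at a $\Dscr$-module route ``exploiting the (expected) simplicity of $D_k(R)$,'' but the logical direction in the paper goes the other way --- FFRT plus strong F-regularity is used to \emph{deduce} simplicity of $D_k(R)$ (Theorem~\ref{thm:diff}, via Smith--Van den Bergh), not to prove FFRT. Using $D$-simplicity as an input to FFRT would require an independent proof of simplicity in positive characteristic, which is itself unknown in general. Apart from that, your assessment of what is and is not known matches the paper.
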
 

If $G$ is linearly reductive, i.e. it is the extension of a finite group by a torus, then this conjecture follows essentially from \eqref{lin-red} (see also the calculations in \cite[\S3.2]{MR1444312}). Moreover, in this case it is shown in \cite{MR1444312} that the Frobenius summands of $k[W]^G$ are given by so-called ``modules of covariants'': $k[W]^G$-modules of the form $M(U) := (U \otimes_k k[W])^G$ for a $G$-representation $U$ \cite{MR1209911,van1991cohen}. 

\subsection{The main result}

Though the results from \cite{MR1444312} provide some evidence, Conjecture \ref{con:maincon} seems out of reach at the moment. In particular, not a single example of an invariant ring satisfying FFRT was known, for a group which is only geometrically, but not linearly reductive (and not falling under \eqref{fin-cm}). In this paper we will provide a first class of such examples.

It is classical (see \cite[\S 3.1]{weyman2003cohomology}) that the homogeneous coordinate ring of the Grassmannian $\Gr(2,n)$ can be described using invariant theory. Let $V$ be a two dimensional vector space, and let $F$ denote a vector space of dimension $n$, for $n \geq 4$. We then continue to set 
\begin{alignat*}{2}
W&=F \otimes_k V\qquad & S&=\Sym(W)\\
G &=\Sl(V)\qquad & R &=S^G
\end{alignat*}
Then $\GG=\Proj R=\Gr(2,n)$. Note in particular that $\SL(V)=\SL_2(k)$ is not linearly reductive. The following theorem is the main application of the results in \cite{FFRT1} and this paper. 

For the remainder of the introduction, we will assume that the characteristic \[p\geq \assume.\]

\begin{theorem}(\S\ref{sec:proofsmain})
\label{intro:main-app}
The invariant ring $R$ has FFRT. 
In particular, for an algebraically closed field $k$ of characteristic zero, $k[W]^G$ satisfies Conjecture \ref{con:maincon}.
\end{theorem}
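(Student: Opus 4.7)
The plan is to assemble the decomposition of $R$ as an $R^p$-module established in the companion paper \cite{FFRT1} with the description of $R$ as a graded $R^{p^r}$-module for $r \geq 2$ proved in the main technical body of the present paper, and then deduce the characteristic zero statement via reduction mod $p$.

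In positive characteristic $p \geq \max\{n-2,3\}$, verifying FFRT amounts to showing that the indecomposable direct summands of $R^{1/p^r}$, as $r$ ranges over all positive integers, fall into finitely many isomorphism classes up to degree shift. For $r=1$ this is the main result of \cite{FFRT1}, which exhibits the indecomposable summands as an explicit finite family of modules of covariants $M(U) = (U \otimes_k S)^G$, indexed by a controlled collection of $\SL(V)$-representations. For $r \geq 2$ I would invoke the decomposition of $R$ as graded $R^{p^r}$-module proved in the main sections of this paper and argue by induction on $r$, exploiting the identification $R^{1/p^r} \cong (R^{1/p})^{1/p^{r-1}}$. Concretely, given the finite list of indecomposable summands at level $r-1$, I would re-expand each such summand using the level-one decomposition and verify that only the same finite family of indecomposable $R$-modules reappears, so that the collection of higher Frobenius summands stabilises after a bounded number of steps.

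Once the positive characteristic statement is in hand, the characteristic zero conclusion follows from Definition \ref{def:FFRT-char-zero}. The Pl\"ucker ring of $\Gr(2,n)$ admits a natural flat integral model $R_A$ over a base $A = \ZZ[1/N]$, with $N$ chosen so that every residue characteristic of $A$ exceeds $\max\{n-2,3\}$. For each geometric point $x : \Spec l \to \Spec A$ with $\operatorname{char} l > 0$, the fibre $x^\ast(R_A)$ is the Pl\"ucker ring of $\Gr(2,n)$ over $l$ and therefore satisfies FFRT by what has just been proved. Conjecture \ref{con:maincon} then follows for $W = F \otimes V$ and $G = \SL(V)$.

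The principal obstacle lies in the inductive step. Because $\SL(V)$ is not linearly reductive, neither the indecomposability of $M(U)$ nor a uniform bound on the $\SL_2$-representations $U$ that occur as $r$ grows comes for free, in contrast to the situation treated in \cite{MR1444312}. The combinatorial and representation-theoretic heart of the argument is therefore to pin down exactly which irreducibles contribute to $R^{1/p^r}$ at each level, using a detailed analysis of the action of the Frobenius kernels $\SL(V)^{(r)}$ on $\Sym(F \otimes V)$ and of how Steinberg-type weight decompositions propagate under iterated Frobenius, and to show that, up to degree shift, the resulting family of indecomposable summands collapses onto the finite list already identified at level one. This is where the bulk of the technical work of the paper is concentrated.
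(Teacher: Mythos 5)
Your overall plan is the same as the paper's: Theorem~\ref{intro:main-app} is deduced as an immediate corollary of the explicit decomposition theorems (Theorem~\ref{thm:old} from~\cite{FFRT1} for $r=1$, Theorem~\ref{thm:decR} for $r\ge 2$), the iterative strategy you sketch (re-expanding each level-$(r-1)$ summand via the level-one decomposition and checking stabilisation) is precisely how Theorem~\ref{thm:higherFr}/Theorem~\ref{thm:decR} is proved, and the characteristic-zero deduction via an integral model over $\ZZ[1/N]$ is exactly what Definition~\ref{def:FFRT-char-zero} requires.

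One factual error should be corrected, though it does not break the logic of the deduction: you describe the indecomposable summands in the $r=1$ case (and implicitly beyond) as \emph{modules of covariants} $M(U)=(U\otimes_k S)^G$. This is only partly true. Theorem~\ref{thm:old} and Theorem~\ref{thm:decR} show that the summands are the modules of covariants $S\{l\}=M(S^lV)$ \emph{together with} the modules $K\{j,k\}=(\Omega^{k+1}M_j)^G$, which are invariants of stable syzygies of the induced modules $M_j$ and are \emph{not} modules of covariants. The introduction emphasises this as one of the genuine new difficulties compared to the linearly reductive case treated in~\cite{MR1444312}: there, all higher Frobenius summands are modules of covariants, whereas here one needs the extra family $K\{j,k\}$, and much of the technical work in the paper (the formality computations of~\S\ref{sec:formality}, the weight bounds of~\S\ref{sec:weights}, the passage from $B$-level to $G$-level in~\S\ref{sec:ind-B-G}) is devoted precisely to controlling the $K_{jk}^{G_1}$ so as to show that after the second Frobenius power no further new families arise. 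Your remark that ``neither the indecomposability of $M(U)$ nor a uniform bound on the representations $U$ comes for free'' misses this point: the real issue is that the summands are not of the form $M(U)$ at all, so the question is which syzygy-type modules appear, and this is settled by Propositions~\ref{prop:mocdecomposition} and~\ref{prop:Kjkdecomposition}. With that correction, the deduction of FFRT from the finiteness of the list~\eqref{eq:Rsummands}, and of the characteristic-zero statement from the integral model, is exactly what the paper does.
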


What we actually achieve in this paper, is a complete description of the indecomposable summands of $R$ as graded $R^{p^r}$-module, for $r \geq 1$, up to (nonzero) 
multiplicity, for $p$ as above. The case $r=1$ was treated in \cite{FFRT1} (and is in fact characteristic free), and the results from that paper will be used here to obtain the corresponding decomposition for $r \geq 2$. Contrary to our early expectations, passing from $r=1$ to $r \geq 2$ seems to be highly non-trivial, and requires a significant amount of work.

To formulate a version of the result, we need to introduce the summands occurring. For $j\in \NN$, define the $R$-modules of covariants 
\begin{equation}
\label{intro:moc}
S\{j\}=M(S^jV)=(S^jV\otimes_k S)^G,
\end{equation}  
where $S^jV$ denotes the $j$-th symmetric power of $V$. Then $S\{j\}$
is a Cohen-Macaulay $R$-module if $0\le j\le n-3$ (see
\cite[Proposition \ref{prop:cm}]{FFRT1}). In contrast to the linearly
reductive case, it turns out we need more than just modules of
covariants to describe the higher Frobenius summands, which partly
explains the increased difficulty.

Denoting the fundamental weight of $G=\SL_2$ by $\omega$, we will
identify $S$ with the polynomial ring $k[x_1,y_1,\ldots,x_n,y_n]$,
with $x_i$, $y_i$ having weights $+\omega$ and $-\omega$
respectively. If we put $S_+=S/(y_1,\ldots,y_n)$, then the extra
summands we need can be constructed as syzygies of the modules
\begin{equation}
M_j=\Ind_{B}^G ((j\omega)\otimes_k S_+),
\end{equation} 
where $B$ denotes the Borel subgroup (with negative roots) of $G$. More precisely, the module $M_j$ has, for $1\leq j\leq n-3$, a $G\times \GL(F)$-equivariant graded resolution of the form: 
{\footnotesize
\begin{multline}
0\r S^{n-j-2}V\otimes_k\wedge^n F\otimes_k S(-n)\r \cdots \r 
V\otimes_k \wedge^{j+3}F\otimes_k S(-j-3)\r \wedge^{j+2}F \otimes_k S(-j-2)\r\\ \wedge^j F\otimes_k S(-j)
\r
 V\otimes_k \wedge^{j-1} F\otimes_k S(-j+1)\r \cdots \r 
 S^jV\otimes_k S\,[\r  M_j\r 0],
\end{multline}
}
see \S\ref{sec:standard1}. Using this resolution, we can then define the graded $R$-modules
\begin{equation}
\label{intro:kjk}
K\{j,k\}=(\Omega^{k+1}M_j)^G.
\end{equation}
Our main theorem now states that \eqref{intro:moc} and \eqref{intro:kjk} make up all the higher Frobenius summands, up to degree shift. If $M$ is a graded $R$-module then we write $M^{\Fr^r}$ for the corresponding graded $R^{p^r}$-module via the isomorphism $\Fr^r:R \to R^{p^r}$. We first recall a (degrees and multiplicities free) version of the main theorem of \cite{FFRT1}. 

\begin{theorem}\cite[Theorem \ref{thm:mainR}]{FFRT1}\label{thm:old}
The indecomposable summands of $R$ as an $R^p$-module are up to degree and nonzero multiplicity:
\[
\{K\{j,j\}^{\Fr}\mid 1\leq j\leq n-3\}\cup \{S\{l\}^{\Fr}\mid 0\leq l\leq n-3\}.
\]
\end{theorem}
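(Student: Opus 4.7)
The plan is to lift the decomposition problem from $R$ as an $R^p$-module to $S$ as a $G$-equivariant $S^p$-module, and then take $G$-invariants. The key enabling fact is that the Frobenius morphism $\Fr:S\r S^p$ is $G$-equivariant (since the $G$-action on $S$ is by algebra automorphisms), so that $R^p=(S^p)^G$ and taking $G$-invariants commutes with the $S^p$-module decomposition.

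First, as a $G$-equivariant $S^p$-module, $S$ is free of finite rank, and its structure is controlled by the finite-dimensional quotient $\bar S:=S/(x_1^p,y_1^p,\ldots,x_n^p,y_n^p)$; the $G$-isotypic content of $\bar S$ can be computed explicitly from the $\SL_2$-character of $S$. Each Weyl-type contribution $S^jV$ appearing in $\bar S$ produces, by $S^p$-freeness and invariants, a summand of $R$ isomorphic up to degree shift to the module of covariants $S\{j\}^{\Fr}=(S^jV\otimes_k S)^G$, which is indecomposable and Cohen--Macaulay for $0\le j\le n-3$ by \cite[Proposition~\ref{prop:cm}]{FFRT1}. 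This gives the ``naive'' family $\{S\{l\}^{\Fr}\mid 0\le l\le n-3\}$.

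To account for the remaining summands $K\{j,j\}^{\Fr}$, I would use the $G\times\GL(F)$-equivariant Borel--Weil--Bott resolution of the Borel-induced module $M_j$ recalled in \S\ref{sec:standard1}: truncating this resolution at position $j+1$ and applying $G$-invariants yields precisely $K\{j,j\}=(\Omega^{j+1}M_j)^G$. The Koszul structure of the standard resolution, together with the explicit decomposition of its terms as $G\times\GL(F)$-modules, should provide $R^p$-module splittings showing that these truncated syzygies split off $R$ literally rather than as mere subquotients. Completeness and pairwise non-isomorphism (up to degree shift) would then be verified by analyzing $\Hom_{R^p}(R,R)$ through $G$-invariants of $S$, or equivalently via $\Ext$-computations on $\Proj R=\Gr(2,n)$.

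The main obstacle is precisely the identification of the $K\{j,j\}^{\Fr}$ as direct summands. In $\bar S$, the contributions not accounted for by modules of covariants assemble via non-trivial extensions into the higher syzygies of $M_j$, and the key task is to show that these syzygies split off $R$ as graded $R^p$-modules. Unlike in the linearly reductive setting treated in \cite{MR1444312}, where only modules of covariants appear, the non-semisimplicity of $\SL_2$ in positive characteristic forces the use of Borel-induced rather than $G$-induced representations, and producing the splittings requires combining the Koszul structure of the standard resolution with indecomposability arguments and a careful understanding of $\SL_2$-cohomology of modules supported on the Borel subgroup.
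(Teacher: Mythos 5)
Your overall framing misses the crucial role of the Frobenius kernel $G_1$, and as a result the approach has no working mechanism to produce the summands $K\{j,j\}^{\Fr}$. The reduction used in the paper (Corollary~\ref{cor:decomp}) is \emph{not} that one decomposes $S$ as a $(G,S^p)$-module and takes $G$-invariants; rather, the Krull--Schmidt decomposition of $R$ over $R^p$ corresponds under the symmetric monoidal equivalence \eqref{eq:reflr} to the decomposition of $S^{G_1}$ as a $(G^{(1)},S^p)$-module. Decomposing the literal $(G,S^p)$-module $S$ is both the wrong problem and likely fruitless: $S$ is $S^p$-free, but the $G$-module structure on its $S^p$-basis is \emph{not} semisimple, so the ``Weyl-type contributions $S^jV$ in $\bar S$'' do not split off as $G$-subrepresentations. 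In particular your claim that ``each Weyl-type contribution $\ldots$ produces, by $S^p$-freeness and invariants, a summand $\ldots S\{j\}^{\Fr}$'' is false as stated; in positive characteristic the $G$-structure on $\bar S$ carries nontrivial extensions, and it is precisely these that give rise to the syzygy modules.

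The real content, which your proposal only gestures at with ``should provide $R^p$-module splittings,'' is the decomposition theorem recalled here as Theorem~\ref{prop:main}: one chooses a left projective $G_1$-resolution $C(k)$ of $k$ consisting of $G$-tilting modules, forms $M^\bullet=\Hom_{G_1}(C(k),S)$, and then, \emph{provided one can verify that $\tau_{\geq 1}M^\bullet$ is formal in $D(G,S)$} and the other hypotheses, one obtains $H^0(M^\bullet)=S^{G_1}\cong T\otimes_k S^p\oplus\bigoplus_{l\geq 1}\tilde\Omega^{l+1}_s H^l(G_1,S)$. The cohomology $H^l(G_1,S)$ is computed via $B_1$-Tate cohomology together with the Andersen--Jantzen spectral sequence and turns out to be a sum of shifts of $M_l^{\Fr}$; taking stable syzygies then produces exactly the $K_l^{\Fr}=K_{ll}^{\Fr}$, which descend to $K\{l,l\}^{\Fr}$ on $R$. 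The formality step is what replaces the ``Koszul structure $\ldots$ should provide splittings'' step in your outline, and it is the genuine technical heart of the argument; your proposal does not supply anything that would play this role, nor does it explain how to compute the higher Frobenius-kernel cohomology of $S$. Without these ingredients, the decomposition you describe remains at the level of a filtration (giving only that the listed modules appear as subquotients), not a direct sum decomposition.
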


For $r \geq 2$, we obtain the following theorem, which we consider to be the main result in this paper. Remember that our standing assumption in this introduction is that $p\geq \assume$.

\begin{theorem}(\S\ref{sec:proofsmain})
\label{thm:decR} 
The indecomposable summands of $R$ as an $R^{p^r}$-module for $r\geq 2$ are up to degree and nonzero multiplicity:
\begin{equation}\label{eq:Rsummands}
\{K\{j,k\}^{\Fr^r}\mid 1\leq j, k\leq n-3\}\cup \{S\{l\}^{\Fr^r}\mid 0\leq l\leq n-3\}.
\end{equation}
\end{theorem}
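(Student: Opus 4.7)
The plan is to prove Theorem \ref{thm:decR} by induction on $r$, with base case $r = 1$ supplied by Theorem \ref{thm:old}. Let $\mathcal{F} := \{S\{l\} \mid 0 \leq l \leq n-3\} \cup \{K\{j,k\} \mid 1 \leq j, k \leq n-3\}$ denote the list of $R$-modules appearing in the theorem. The compatibility $(N^{\Fr^{r-1}})|_{R^{p^r}} = (N|_{R^p})^{\Fr^{r-1}}$ for any $R$-module $N$---which is immediate from the definitions and the fact that $\Fr^{r-1}\colon R\to R^{p^{r-1}}$ restricts to an isomorphism $R^p\to R^{p^r}$---means that if $R\cong\bigoplus_i N_i^{\Fr^{r-1}}$ as an $R^{p^{r-1}}$-module with $N_i\in\mathcal{F}$ (as known by induction), then further restricting to $R^{p^r}$ amounts to first decomposing each $N_i$ as an $R^p$-module and then applying $\Fr^{r-1}$ to each summand. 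The inductive step thus reduces to the following.

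\textbf{Key Decomposition Lemma.} \emph{For every $N\in\mathcal{F}$, the restriction $N|_{R^p}$ decomposes, up to degree and nonzero multiplicity, into indecomposable summands of the form $N'^{\Fr}$ with $N'\in\mathcal{F}$; moreover every such $N'$ arises in this way from some $N$ in the $r=1$ list $\{S\{l\}\}\cup\{K\{j,j\}\}$.}

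To prove the Key Decomposition Lemma, I would exploit the $G\times\GL(F)$-equivariant graded free resolution of $M_j$ displayed in the introduction, together with the definition $K\{j,k\}=(\Omega^{k+1}M_j)^G$, in order to reduce the statement to understanding the $R^p$-module structure of the explicit free $S$-modules occurring in that resolution. The key representation-theoretic ingredient is that under the running assumption $p\geq n-2$, every symmetric power $S^jV$ with $0\leq j\leq n-3$ is a restricted simple $\SL_2$-representation, so the Steinberg tensor product theorem controls the decomposition of $S$ as a $G$-equivariant $S^p$-module. Combining this with the fact that taking syzygies and $G$-invariants commutes suitably with Frobenius restriction (using good-characteristic vanishing for the relevant higher $G$-cohomology), the plan is to match the syzygies of $M_j|_{R^p}$ with the resolutions defining the $K\{j',k'\}$, thereby reading off the claimed combinatorial decomposition; which $N'$ actually occur with positive multiplicity is then extracted from the combinatorics of the resolution.

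The main obstacle is the Key Decomposition Lemma itself, and specifically the interplay between the syzygy operator $\Omega$---which defines $K\{j,k\}$ only implicitly via a resolution of $M_j$---and the operation of restricting scalars from $R$ to $R^p$. Unlike for the modules of covariants $S\{l\}$, whose $R^p$-module structure can in principle be read off directly from the $G$-equivariant decomposition of $S^lV\otimes S$ as an $S^p$-module, the modules $K\{j,k\}$ are defined only implicitly, so controlling their Frobenius restriction requires managing the entire $M_j$-resolution simultaneously. This is precisely why, as remarked in the introduction, passing from $r=1$ to $r\geq 2$ is substantially more involved than one might initially expect.
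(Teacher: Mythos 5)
Your high-level strategy coincides with the paper's: both iterate from the $r=1$ case (Theorem~\ref{thm:old}), using the compatibility of restriction and Frobenius twist to reduce to decomposing, over $R^p$, each module in the candidate list $\mathcal{F}$. Your ``Key Decomposition Lemma'' is thus exactly the content of the paper's Propositions~\ref{prop:mocdecomposition} and~\ref{prop:Kjkdecomposition} (translated via Corollary~\ref{cor:decomp} into decompositions of $(T(j)\otimes_k S)^{G_1}$ and $K_{jk}^{G_1}$ as $(G^{(1)},S^p)$-modules, together with Theorem~\ref{thm:higherFr}). The observation that the base-case list $\{S\{l\}\}\cup\{K\{j,j\}\}$ must produce all of $\mathcal{F}$, but then stabilizes, is also correct and is what the paper calls the ``iteration closing after the second step.''

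The gap is that the sketch for the Key Decomposition Lemma does not survive contact with the actual difficulty. You propose to ``match the syzygies of $M_j|_{R^p}$ with the resolutions defining $K\{j',k'\}$'' by decomposing the free terms $T(l)\otimes_k S$ of the resolution over $S^p$ and tracking syzygies. But the syzygy operator $\Omega^{k+1}$, the invariants functor $(-)^{G_1}$, and restriction to $S^p$ do not commute in any direct way: a $(G,S)$-free resolution of $M_j$ becomes, after applying $\RHom_{G_1}(k,-)$, a \emph{complex} of $(G^{(1)},S^p)$-modules whose degree-zero cohomology is $K_{jk}^{G_1}$, but whose other cohomology groups are nonzero, and there is no reason a priori that $H^0$ decomposes as a direct sum indexed by the higher cohomologies. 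The paper's resolution of this is Theorem~\ref{prop:main}: under a list of hypotheses (including, crucially, that $\tau_{\geq 1}M^\bullet$ be \emph{formal} in $D(G^{(1)},S^p)$), one has $H^0(M^\bullet) \cong T\otimes_k S^p \oplus \bigoplus_{j\geq 1}\tilde\Omega_s^{j+1}H^j(M^\bullet)$, and it is from the stable syzygies of the higher cohomology that the $K_{rs}^{\Fr}$ summands emerge. Verifying the formality hypothesis for the relevant complexes (the truncations of \eqref{biresolution01}, passed through $\RHom_{G_1}(k,-)$) is the content of all of \S\ref{sec:formality} and \S\ref{sec:ind-B-G}, and is, as the introduction says, the lion's share of the paper. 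Your proposal neither invokes a decomposition theorem of this type nor accounts for why the cohomology in higher degrees can be disentangled from $H^0$; the phrase ``good-characteristic vanishing for the relevant higher $G$-cohomology'' suggests you expect these obstructions to vanish, but they do not --- they contribute nonzero summands, and the formality argument is what makes them split off. Without supplying this, the Key Decomposition Lemma --- and hence the inductive step --- is unproved.
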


From these results, it is then clear that one obtains Theorem \ref{intro:main-app} as an immediate corollary.

\subsubsection{Proof sketch}

Our approach to Theorem \ref{thm:decR} is similar to the one used in \cite{FFRT1}.  We first note that decomposing $R$ as $R^{p^r}$-module
 is equivalent to decomposing $S^{G_r}$ as
$(G^{(r)}, S^{p^r})$-module, where $G_r$ denotes the $r$-th Frobenius
kernel of $G$ (see \cite[Part I, \S 9]{jantzen2007representations}), and $G^{(r)}=G/G_r$, and the same holds for $R,S^{G_r}$ replaced by $M^G,M^{G_r}$ for a reflexive $(G,S)$-module $M$. Our proof of Theorem \ref{thm:decR} then starts with Theorem
\ref{thm:old}, which (equivalently) gives the decomposition of $S^{G_1}$ and iterates the construction (by decomposing the $G_1$-invariants of the summands of $(S^{G_1})^{\Fr^{-1}}$). In a way we are lucky, since it turns out that no new summands appear  after the second iteration.
In order to perform this iteration we use a general decomposition result \cite[Theorem \ref{prop:mainffrt1}]{FFRT1} (see Theorem \ref{prop:main}) from our previous paper. The lion's share of this paper is devoted to checking that a certain formality result, which is necessary to apply this decomposition result, holds true. A more detailed outline of the proof will be discussed later on in \S\ref{sec:proofoutline}, when we have recalled all the necessary 
concepts and notation.

\subsection{Differential operators, Grassmannians and noncommutative resolutions}

We now briefly discuss several results of a seemingly different nature, which in fact follow from our main results Theorem \ref{thm:old} and Theorem \ref{thm:decR}.

\subsubsection{Differential operators}\label{intro:diff}
The authors of \cite{MR1444312} originally introduced the FFRT
property as a possible mode of attack to a question posed by Levasseur
and Stafford \cite{LSt} regarding simplicity of rings of differential
operators for invariant rings in characteristic zero. We refer to
\cite[\S\ref{sec:diff}]{FFRT1} for more references and context. One of
the main results of \cite{MR1444312} is that if $R$ is as in
Definition \ref{intro:def-1} and satisfies FFRT as well as another
natural condition (strong F-regularity) then its ring of differential
operators is simple. Hence Theorem \ref{intro:main-app} can be used to
show the
following.

\begin{theorem}\label{thm:diff}(\S\ref{sec:proofsmain})
The ring of differential operators $D_k(R)$ is a simple ring.
\end{theorem}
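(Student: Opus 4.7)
The plan is to combine Theorem \ref{intro:main-app} with the simplicity criterion of Smith and Van den Bergh, which is the main motivating result of \cite{MR1444312}: if $R$ is a reduced, finitely generated, $\NN$-graded $k$-algebra in positive characteristic that satisfies both strong F-regularity and FFRT, then its ring of differential operators $D_k(R)$ is simple. Since Theorem \ref{intro:main-app} (obtained as an immediate consequence of Theorem \ref{thm:old} and Theorem \ref{thm:decR}) already supplies FFRT, the only remaining task is to verify strong F-regularity of $R$.

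For strong F-regularity, I would invoke the classical identification of $R$ with the affine cone over the Plücker embedding of $\Gr(2,n)$: concretely, $R$ is the determinantal ring generated by the $2\times 2$ minors $p_{ij} = x_i y_j - x_j y_i$ of the $2 \times n$ generic matrix with rows $(x_1,\ldots,x_n)$ and $(y_1,\ldots,y_n)$. Strong F-regularity of such coordinate rings of Grassmannians, and more generally of Schubert varieties and generic determinantal rings, is well documented in the literature — for instance through the Frobenius splitting techniques of Mehta--Ramanathan and Ramanathan, or the direct analysis of determinantal rings by Hochster--Huneke, Conca, and Hashimoto. Any one of these references provides strong F-regularity of $R$ in arbitrary positive characteristic, without additional hypothesis on $p$.

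Once both hypotheses are in place, the theorem follows by directly quoting \cite{MR1444312}. I expect no serious obstacle: the argument is essentially a citation, and the main point to verify is that the graded conventions and the definition of FFRT used here match those of \cite{MR1444312}, which is automatic since Definition \ref{intro:def-1} is taken from that paper. A brief remark on the characteristic hypothesis $p \geq \assume$ is warranted — it is inherited only from Theorem \ref{intro:main-app}, whereas strong F-regularity holds in all positive characteristics.
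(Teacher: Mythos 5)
Your proposal is correct and follows the same route as the paper: both invoke the Smith--Van den Bergh criterion (\cite[Theorem 4.2.1]{MR1444312}) that FFRT plus strong F-regularity forces simplicity of $D_k(R)$, supply FFRT via Theorem \ref{intro:main-app}, and import strong F-regularity from the literature. The paper's specific citation is \cite[Theorem 6]{Hashimoto}, which treats strong F-regularity of invariant subrings via good filtrations and is among the references you list, so there is no substantive difference.
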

Using the isomorphism $\SL(2)\cong \operatorname{Sp}(2)$ one finds
that the corresponding result in characteristic zero is true by
\cite[Corollary IV.1.4]{LSt} (see also \cite[Theorem C]{VdB13}).  As
mentioned above it would be interesting to use Theorem \ref{thm:diff}
to obtain an alternative proof of the
characteristic zero result.
This would
entail establishing the relevant
base change properties (see \cite[Question
5.1.2]{MR1444312}).
\subsubsection{Grassmannians}

For a variety $X$ defined over $k$, the pushforwards $\Fr^r_*\Oscr_X$ of the structure sheaf (and other line bundles $\Lscr$) seem to carry subtle information about $X$. Let us give some examples:
\begin{enumerate}
\item In \cite{MR3428948}, Achinger characterises smooth toric varieties in terms of splitting properties of $\Fr_*\Lscr$. A similar result is proved for ordinary abelian varieties in \cite{MR3563232}.
\item If $X$ is a projective curve, then $\Fr^r_*\Oscr_X$ carries information about the genus of the curve, see \cite[Theorem 1.3]{MR3563232}.
\item For many interesting varieties and bundles, $\Fr_*$ of a semistable bundle is again semistable \cite{MR2415312}.
\item For homogeneous varieties, $\Fr^r_*\Oscr_X$ often contains summands which are tilting bundles, see \cite[\S\ref{sec:pushforwards}]{FFRT1} for more precise statements.
\end{enumerate} 

The definition of FFRT admits an obvious global version.

\begin{definition}\cite{MR3459631}
Let $X$ denote a variety over $k$. A \textit{higher Frobenius summand} is an indecomposable summand of $\Fr^r_*\Oscr_X$ for some $r \geq 1$. We say that X has \textit{global finite F-representation type} (GFFRT) if the number of isomorphism classes of higher Frobenius summands is finite. 
\end{definition} 

There are several examples of GFFRT varieties: projective spaces and smooth quadric hypersurfaces \cite{achinger2012frobenius}, smooth toric varieties \cite{MR1752764}, $\PP^2$ blown up at four points in general position \cite{MR3459631}. It is moreover clear that for a homogeneous coordinate ring $R$ of $X$, $R$ having FFRT almost ensures that $X=\Proj(R)$ has GFFRT (one only needs to take care of the twists). In fact, if $R$ is Gorenstein, FFRT implies GFFRT (see Lemma \ref{lem:(G)FFRT}).  
As a corollary to Theorem \ref{intro:main-app}, we thus obtain the following theorem.

\begin{theorem}\label{intro:FrOscr}(\S\ref{subsec:GFFRT})
The Grassmannian $\GG=\Gr(2,n)$ has GFFRT.
\end{theorem}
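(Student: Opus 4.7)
The plan is to derive Theorem~\ref{intro:FrOscr} as an immediate corollary of the FFRT result for the homogeneous coordinate ring $R$. Two ingredients are needed: that $R$ has FFRT, and that $R$ is Gorenstein; Lemma~\ref{lem:(G)FFRT} then provides the bridge from FFRT on the affine side to GFFRT on the projective side.

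The first ingredient is exactly Theorem~\ref{intro:main-app}, itself a consequence of Theorem~\ref{thm:old} and Theorem~\ref{thm:decR}. For the second, I would recall that $R$ is the Pl\"ucker coordinate ring of the smooth Fano variety $\Gr(2,n)$, whose canonical bundle is $\omega_\GG=\Oscr_\GG(-n)$; it follows that $R$ is Gorenstein with canonical module a twist of $R$. Alternatively, this can be read off directly from the presentation $R=S^{\SL(V)}$, since $\SL(V)$ is semisimple and so carries no non-trivial characters, whence the top determinantal invariants contribute nothing obstructing the Gorenstein property.

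With both ingredients in hand, Lemma~\ref{lem:(G)FFRT} applies directly and yields GFFRT for $\GG=\Proj R$: sheafifying the finite list of indecomposable summands of $R$ as an $R^{p^r}$-module furnished by Theorem~\ref{thm:decR} produces a finite list of indecomposable summands of $\Fr^r_\ast\Oscr_\GG$.

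The only mildly subtle point, which is already packaged inside Lemma~\ref{lem:(G)FFRT}, is the bookkeeping of grading shifts: the decomposition in Theorem~\ref{thm:decR} a priori involves infinitely many graded twists of each indecomposable, and one needs the Gorenstein hypothesis (so that tensoring with $\Oscr_\GG(1)$ permutes the sheaf-theoretic summands in a controlled way) to ensure that only finitely many isomorphism classes of indecomposable sheaves arise after sheafification. Since that lemma is already established, no genuine obstacle remains, and the proof of Theorem~\ref{intro:FrOscr} reduces to a one-line deduction from our main results.
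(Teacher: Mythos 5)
Your proposal is correct and follows essentially the same route as the paper: invoke Theorem~\ref{intro:main-app} for FFRT, note that $R$ is Gorenstein, and apply Lemma~\ref{lem:(G)FFRT}. The only cosmetic difference is the justification for the Gorenstein property, where the paper simply cites Donkin (whereas your geometric argument via $\omega_\GG=\Oscr_\GG(-n)$ is also valid and arguably more transparent; your second ``no nontrivial characters of $\SL(V)$'' heuristic is directionally right but would need the good-filtration machinery to be made rigorous in characteristic $p$, so the first argument is the one to keep).
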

In fact, also in this case we obtain a precise description of the actual vector bundles occurring as summands of $\Fr^r_*\Oscr_{\GG}$, for which we refer to \S\ref{sec:grass}.

\subsubsection{Noncommutative resolutions}

As discussed in \cite[\S\ref{sec:grass}]{FFRT1}, for many singular varieties $Y$, the Frobenius pushforward $\Fr_*\Oscr_Y$ provides a canonical \textit{noncommutative resolution} of $Y$. This means that $\Escr nd_Y(\Fr_*\Oscr_Y)$ is a sheaf of algebras on $Y$ which is locally of finite global dimension. This is true for $Y=\Spec(R)$, the affine cone over $\GG$ by \cite[Theorem \ref{thm:ncr}]{FFRT1}. In \S\ref{sec:ncrproperty}, we show that this result extends to $r >1$.

\begin{theorem}\label{thm:ncr2}(\S\ref{sec:ncrproperty}) 
Assume 
$r\geq 1$. 
Then $\Fr^{r}_*\Oscr_Y$ provides a noncommutative resolution for $Y$.
\end{theorem}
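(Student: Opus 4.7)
The plan is to reduce, via Morita equivalence, to showing that an explicit $r$-independent endomorphism algebra has finite global dimension, and then to extract this from the $r=1$ case \cite[Theorem \ref{thm:ncr}]{FFRT1}. Under the identification of $\Fr^{r}_*\Oscr_Y$ with $R$ as an $R^{p^r}$-module on $Y=\Spec R$, the task is to show that $\End_{R^{p^r}}(R)$ has locally finite global dimension. The case $r=1$ being already in \cite[Theorem \ref{thm:ncr}]{FFRT1}, I assume $r\geq 2$ from now on. Setting
\[
\tilde T \;:=\; \bigoplus_{0\leq l\leq n-3} S\{l\} \;\oplus\; \bigoplus_{1\leq j,k\leq n-3} K\{j,k\},
\]
Theorem \ref{thm:decR} identifies the indecomposable summands of $R$ as $R^{p^r}$-module with those of $\tilde T^{\Fr^r}$; since $R=S\{0\}$ is one of them, $\add_{R^{p^r}}(R) = \add_{R^{p^r}}(\tilde T^{\Fr^r})$. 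Therefore $\End_{R^{p^r}}(R)$ is Morita equivalent to $\End_{R^{p^r}}(\tilde T^{\Fr^r}) \cong \End_R(\tilde T)$, and the problem reduces to showing that $\End_R(\tilde T)$ has locally finite global dimension, a statement that no longer depends on $r$.

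Next, I would compare $\tilde T$ with the smaller module $\tilde T_1 := \bigoplus_l S\{l\} \oplus \bigoplus_j K\{j,j\}$, whose endomorphism algebra has locally finite global dimension by the same Morita reduction applied to $r=1$. The extra summands $K\{j,k\}$ with $j\neq k$ are, by \eqref{intro:kjk}, $G$-invariants of successive syzygies $\Omega^{k+1}M_j$ in the explicit $G\times\GL(F)$-equivariant resolution of $M_j$ recalled in the introduction. The key step is to show that the natural syzygy short exact sequences
\[
0\to \Omega^{k+1}M_j\to F_k\to \Omega^k M_j\to 0
\]
(with $F_k$ a term of that resolution) remain exact after applying $(-)^G$. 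Granting this, each new summand $K\{j,k\}$ is related to $K\{j,k\mp 1\}$ through a short exact sequence whose middle term lies in $\add(\tilde T_1)$ (a module of covariants), with base case $K\{j,j\}\in\tilde T_1$. A standard induction on $|k-j|$, adjoining one summand at a time and using these short exact sequences to bound projective dimensions of the simples of the enlarged endomorphism algebra, then yields finite global dimension of $\End_R(\tilde T)$.

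The main obstacle I anticipate is the $(-)^G$-exactness step, equivalently the vanishing of $H^1(G, \Omega^{k+1}M_j)$ in the relevant weight range. Since $G=\SL_2$ is not linearly reductive, this vanishing is far from automatic and must be established by hand; I would approach it via a weight analysis of the $G\times\GL(F)$-equivariant pieces of the resolution under the standing bound $p\geq \assume$, in the spirit of the cohomological vanishing estimates already developed in \cite{FFRT1}.
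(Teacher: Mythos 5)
Your overall blueprint — use Morita equivalence to reduce to $\End_R(\tilde T)$ with $\tilde T$ independent of $r$, then adjoin the summands $K\{j,k\}$ one at a time via the explicit tilt-free resolution — is essentially the strategy the paper pursues in \S\ref{sec:ncrproperty} (with a slightly different induction order: the paper starts from $\bigoplus_i T\{i\}$ and adjoins \emph{all} $K_{jk}$ in reverse lexicographic order, rather than starting from the $r=1$ module $\tilde T_1$). However, your proposal misidentifies the hard step, and the step you do identify as hard is essentially free.

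You flag the exactness of $(-)^G$ on the syzygy sequences, i.e.\ vanishing of $H^1(G,\Omega^{k+1}M_j)$, as the main obstacle. But this is automatic: $K_{jk}=\Omega^{k+1}M_j$ has a good filtration (Proposition \ref{prop:propKjk}\eqref{lem:Kjkgoofy}), all terms of the resolution \eqref{biresolution01} are tilt-free, and $(-)^G$ is exact on short exact sequences of modules with good filtration (Kempf vanishing). No weight analysis is needed. The genuine obstacle is elsewhere: to run the ``adjoin one summand'' induction you need the sequences not merely to stay exact under $(-)^G$, but to stay exact after applying $\Hom_{G,S}(L,-)$ for every summand $L$ already adjoined. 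That is, you must show that the complex $\Hom_{G,S}(L,\tilde D^\bullet_{jk})\to\Hom_{G,S}(L,K_{jk})$ is exact (and compute the failure exactly when $L=K_{jk}$), which is the content of Lemma \ref{lem:fundex2}. This is a nontrivial $\Ext$-vanishing statement between the various $K_{lm}$ and the terms of the resolution; the paper proves it with a double-complex spectral sequence for $\Hom_{G,S}(\tilde D^\bullet_{lm},\tilde C^\bullet_j)$ together with the duality $(\tilde D^\bullet_{jk})^\vee\cong \tilde C^\bullet_{\bar j\bar k}(n)\otimes_k\wedge^nF^\ast$. Your proposal glosses over this with ``a standard induction \dots\ then yields finite global dimension,'' which hides exactly the part of the argument that requires work. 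Without that $\Hom$-exactness the bound on projective dimensions of the simples of the enlarged endomorphism algebra does not follow.
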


\section{Notation and conventions}\label{sec:not2}
We use mostly the same notation as in the companion paper \cite{FFRT1}. Throughout $k$ is an algebraically closed field of characteristic $p>0$. 
The assumption on $p$ will be made after introducing the necessary notation. 
\medskip

Below a $G$-module for an algebraic group $G$ is a comodule for the coordinate ring $\Oscr(G)$. A $G$-representation
is a finite dimensional $G$-module. Some ``modules'' like tilting modules are finite dimensional
by definition and so they are in fact representations. We write $G_r=\ker \Fr^{r}$ for the $r$'th Frobenius kernel \cite[Part I, \S 9]{jantzen2007representations}. We have $G/G_r\cong G^{(r)}$
where $G^{(r)}$ is the $r$-Frobenius twist of $G$. 

\medskip

If $S$ is a reduced commutative $k$-algebra and  $M$ is an $S$-module then we
  write $M^{\Fr}$ for the pullback
of $M$ under the ring isomorphism $S^p\r S:f\mapsto f^{1/p}$, so
$S^{\Fr}=S^p$. If $S=\Sym(W)$ for a $G$-representation $W$ then 
$S^{\Fr}=\Sym(W^{\Fr})$ where $W^{\Fr}=W^{(1)}$ is the usual Frobenius twist
of $W$. 

\medskip

If $U=\bigoplus_n U_n(-n)$ is a graded representation for $G$ then $U^{\Fr}=\bigoplus_n U^{\Fr}_{n}(-np)$.
Occasionally the blown up grading of $U^{\Fr}$ is not what we want and in that
case we write $U^{\overline{\Fr}}=\bigoplus_n U^{\Fr}_{n}(-n)$.

\medskip

For a reductive group $G$ we denote by $B$ its Borel subgroup, and by $H$ its 
maximal torus.\footnote{In order to avoid notation collisions a torus is denoted by the letter $H$ instead of the more standard $T$, which is reserved  for tilting modules.}   By convention, the roots of $B$ are the negative roots.
We write $X(H)$ for the weights of $H$ and $X(H)^+$ for the set of
dominant weights.  The set $X(H)$ is partially ordered by putting $\mu<\lambda$
whenever $\lambda-\mu$ is a sum of positive roots. If $\chi$ is a character of a torus $H$ 
 and $n\in \ZZ$ then we write $(n\chi)$ for the 1-dimensional $H$-representation
with character $\chi(-)^n$. 

\medskip

In most of this paper we specialize to  the following situation:
$G=\SL_2=\Sl(V)$, $\dim_k V=2$ and $S=\Sym(W)$, $R=S^G$,
where $W=V^{\oplus n}=F \otimes_k V$, $\dim_k F=n$ with $n \geq 4$. 
As in \cite{FFRT1} this is sometimes referred to as the ``standard $\Sl_2$-setting''. 

\medskip

Contrary to \cite{FFRT1}, which made no assumptions on the characteristic, we will assume  $p\geq \assume$ here, unless otherwise specified. The significance of this hypothesis is explained in \S\ref{sec:assonp}. 

\medskip

The fundamental weight of
$\SL_2$ is denoted by $\omega$. 
We denote by $S^jV$ (resp. $D^jV$) the $j$-th symmetric
(resp. divided) power representation, by $L(j)$ the simple
$\SL_2$-representation with the highest weight $j\omega$, and by $T(j)$ the
tilting module with the highest weight $j\omega$.    
We also use $(-)$ for the grading
shift but as explained in \cite[\S\ref{sec:notation}]{FFRT1} this
never leads to confusion. For preliminaries on modular representation
theory, in particular for the notion of good filtration and related
results (which we will sometimes use without further
reference), 
we refer to \cite[\S\ref{sec:prelim}]{FFRT1}.

\medskip

We use $(-)^\vee=\Hom_R(-,R)$ for the dual of a module, and $(-)^\dur=\Hom_k(-,k)$ for the dual of a  vector space.

\medskip

For a logical condition $P$ we put
\begin{equation}
\label{eq:logical}
[P]=
\begin{cases}
1&\text{if $P$ is true,}\\
0&\text{if $P$ is false.}
\end{cases}
\end{equation}
\section{Preliminaries}
\subsection{$G_r$-invariants versus higher Frobenius summands}\label{subsec:equivalence}
Recall the following definition from \cite{vspenko2015non}.
\begin{definition}
\label{def:generic1}
Let $G$  be a reductive group and let $W$ be a $G$-representation. 
Denote $X=W^\vee$. 
We say that $W$ is \emph{generic} if  
\begin{enumerate}
\item  $X$ contains a point with closed orbit and trivial stabilizer.
\item If $X^{\mathbf{s}}\subset X$ is the locus of points that satisfy (1) then $\codim
  (X-\Xs,X) \ge 2$.
\end{enumerate}
\end{definition}
Assume that $W$ is generic and put $S=\Sym(W)$, $R=S^G$.
Generalizing slightly \cite[\eqref{eq:refl}]{FFRT1}
we have inverse
symmetric monoidal equivalences 
\begin{equation}
\label{eq:reflr}
\xymatrix{
\refl(R^{p^r})\ar@/^1em/[rr]^{(S^{p^r}\otimes_{R^{p^r}}-)^{\vee\vee}}&&\refl(G^{(r)},S^{p^r})\ar@/^1em/[ll]^{(-)^{G^{(r)}}}
}
\end{equation}
between the category of reflexive $R^{p^r}$-modules
and the category of $G^{(r)}$-equivariant reflexive $S^{p^r}$-modules.

\begin{proposition}\cite[Proposition \ref{SG1}]{FFRT1} If $M$ is a reflexive $(G,S)$-module then
 $M^{G_r}$ is a reflexive $S^{p^r}$-module and moreover
$M^G$ and $M^{G_r}$ correspond to each other under the equivalences \eqref{eq:reflr}. 
\end{proposition}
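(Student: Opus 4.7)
The plan is to follow the proof of the $r=1$ case, \cite[Proposition \ref{SG1}]{FFRT1}, replacing $G_1$ by $G_r$ throughout. First I would observe that $M^{G_r}$ carries a natural $(G^{(r)}, S^{p^r})$-structure: since $G_r=\ker\Fr^r$ is normal in $G$, the quotient $G/G_r\cong G^{(r)}$ acts on $M^{G_r}$; and since $S^{p^r}$ is identified with $\Sym(W^{(r)})=S^{\Fr^r}$, whose $G$-action factors through $G^{(r)}$, one has $S^{p^r}\subseteq S^{G_r}$, making $M^{G_r}$ into an $S^{p^r}$-module in a $G^{(r)}$-equivariant way.

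Granting this, the correspondence part of the statement is immediate from the equivalence \eqref{eq:reflr}: the right-to-left functor sends $M^{G_r}$ to $(M^{G_r})^{G^{(r)}}=M^G$ (invariants compose along the short exact sequence $1\to G_r\to G\to G^{(r)}\to 1$), so by invertibility the left-to-right functor sends $M^G$ to $M^{G_r}$. It thus remains to verify that $M^{G_r}\in\refl(G^{(r)}, S^{p^r})$.

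For the reflexivity I would use the same strategy as in the $r=1$ case. Writing $j:\Xs\hookrightarrow X$ for the inclusion of the locus with trivial stabilizer, genericity of $W$ gives $\codim(X\setminus \Xs, X)\geq 2$, and reflexivity of $M$ yields $M=j_\ast(M|_{\Xs})$. Since $G_r$-invariants commute with the pushforward $j_\ast$, this reduces matters to showing that $(M|_{\Xs})^{G_r}$ is the pushforward (along the open immersion into $\Spec S^{G_r}$) of a reflexive sheaf on the quotient $\Xs/G_r$; this quotient exists as a smooth scheme because the $G$-action is schematically free on $\Xs$, and hence so is the $G_r$-action. The transition from reflexivity over $S^{G_r}$ to reflexivity over $S^{p^r}$ is then effected by the \'etale-local identification $S^{G_r}\cong R\otimes_{R^{p^r}} S^{p^r}$, which presents $S^{G_r}$ as a finite (and reflexive) $S^{p^r}$-module. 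The main technical obstacle is the careful execution of this last identification, together with the verification of schematic freeness of the $G_r$-action on $\Xs$; both should follow routinely from the generic action hypothesis but are the only genuinely new ingredients compared with the $r=1$ setting.
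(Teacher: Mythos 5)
Your overall strategy is sound and does follow the shape of the $r=1$ argument: restrict to the free locus $\Xs$ where the quotient by $G_r$ is a torsor, use that $\codim(X\setminus\Xs)\ge 2$ to recover $M^{G_r}$ as a pushforward, and then pass from reflexivity over the intermediate ring $S^{G_r}$ to reflexivity over $S^{p^r}$. Two remarks, one on a weak link, one on a cleaner route.

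The step you flag as ``the main technical obstacle'' --- the identification $S^{G_r}\cong R\otimes_{R^{p^r}}S^{p^r}$ --- is actually a red herring and is not quite correct as stated: that isomorphism holds only over the free locus, where it amounts to identifying $\Xs/G_r$ with the fibre product $(\Xs/G)\times_{(\Xs/G)^{(r)}}\Xs^{(r)}$, a torsor-theoretic argument. But nothing of this is needed for the transfer you want. You already know $S^{p^r}\subseteq S^{G_r}\subseteq S$, and $S$ is finite over $S^{p^r}$, so $S^{G_r}$ is automatically a finite $S^{p^r}$-module; and $S^{G_r}$ is normal because it is the ring of invariants in a normal ring. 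Then one invokes the standard fact that for a finite extension of Noetherian normal domains, restriction of scalars preserves reflexivity (reflexivity is $S_2$ plus torsion-freeness, and both descend along finite maps). So once $M^{G_r}$ is reflexive over $S^{G_r}$ --- which follows exactly from your torsor-plus-codimension argument once one is careful that the pushforward is along $\Xs/G_r\hookrightarrow X/G_r$ rather than $\Xs\hookrightarrow X$ --- reflexivity over $S^{p^r}$ is automatic, with no \'etale-local claims required.

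A cleaner route entirely, and one that makes the citation to FFRT1 do all the work, is to induct on $r$. Using the exact sequence $1\to G_1\to G_r\to (G^{(1)})_{r-1}\to 1$ one has $M^{G_r}=\bigl(M^{G_1}\bigr)^{(G^{(1)})_{r-1}}$. By the $r=1$ case, $M^{G_1}$ is a reflexive $(G^{(1)},S^p)$-module; since $W^{(1)}$ is again a generic representation of the reductive group $G^{(1)}$ and $S^p=\Sym(W^{(1)})$, the inductive hypothesis applied to $(G^{(1)},W^{(1)},M^{G_1})$ gives reflexivity of $M^{G_r}$ over $(S^p)^{p^{r-1}}=S^{p^r}$, and the correspondence then follows from $\bigl(M^{G_r}\bigr)^{G^{(r)}}=\bigl(\bigl(M^{G_1}\bigr)^{(G^{(1)})_{r-1}}\bigr)^{(G^{(1)})^{(r-1)}}=\bigl(M^{G_1}\bigr)^{G^{(1)}}=M^G$. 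This avoids re-proving anything about the free locus and sidesteps the questions you identify at the end of your sketch.

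Finally, a small presentational point: as you note yourself, the ``correspondence'' paragraph uses that $M^{G_r}$ lies in $\refl(G^{(r)},S^{p^r})$ before that has been established; the logical order should be reversed, or the dependence made explicit.
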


\begin{corollary}\cite[Corollary \ref{cor:frob}]{FFRT1}
\label{cor:decomp}
For a reflexive $(G,S)$-module $M$ there is a 1-1 correspondence, under the equivalences \eqref{eq:reflr}, between the indecomposable summands of $M^G$ as $R^{p^r}$-module and the indecomposable summands of $M^{G_r}$ as $(G^{(r)},S^{p^r})$-module.
\end{corollary}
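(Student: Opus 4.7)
The plan is to reduce everything to the preceding proposition together with general nonsense about equivalences of additive categories. By the preceding proposition, $M^G$ and $M^{G_r}$ correspond under the equivalences \eqref{eq:reflr}. Since any equivalence of additive categories transports direct sum decompositions to direct sum decompositions and induces a bijection on isomorphism classes of indecomposable objects, a decomposition of $M^G$ into indecomposables in $\refl(R^{p^r})$ is sent to a decomposition of $M^{G_r}$ into indecomposables in $\refl(G^{(r)},S^{p^r})$, and vice versa, with isomorphism classes respected.

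The only thing left to check is that decomposing $M^G$ in $\refl(R^{p^r})$ gives the same indecomposable summands as decomposing it in $\Mod(R^{p^r})$, and analogously on the equivariant side. This follows from the stability of reflexivity under direct summands: if $N \in \refl(R^{p^r})$ admits an $R^{p^r}$-module splitting $N = N_1 \oplus N_2$, then each $N_i$ is the image of an idempotent on a reflexive module, and the canonical map $N_i \to N_i^{\vee\vee}$ is a direct summand of the isomorphism $N \to N^{\vee\vee}$, hence itself an isomorphism. The analogous statement for $(G^{(r)}, S^{p^r})$-equivariant summands of a reflexive equivariant module is equally direct.

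I do not anticipate any substantive obstacle: the content is carried entirely by the preceding proposition, and the corollary is essentially its categorical restatement. The only minor care required is to distinguish between $S^{p^r}$-modules and $(G^{(r)}, S^{p^r})$-equivariant modules when invoking Krull--Schmidt type closure, but this distinction is already built into the formulation of \eqref{eq:reflr}.
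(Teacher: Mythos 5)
Your argument is correct and is the natural one: the corollary is an immediate consequence of the preceding proposition plus the fact that an equivalence of additive categories preserves direct sum decompositions and indecomposability, together with the (worth stating) observation that direct summands of reflexive modules are reflexive, so the decomposition in $\refl$ agrees with the decomposition in $\Mod$. Since the paper cites this result from the companion paper \cite{FFRT1} rather than reproducing a proof, there is no in-text argument to compare against, but this is essentially the only reasonable route and matches what one would expect \cite{FFRT1} to say.
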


\subsection{Tilt-free, $\nabla$-free modules, stable syzygies}
Let $G$ be a reductive group acting on a commutative connected graded algebra $S=k+S_1+\cdots$ with good filtration.
In \cite[\S\ref{sec:projectives}]{FFRT1} we introduced certain well-behaved $(G,S)$-modules. 
A graded $(G,S)$-module $F$ is 
$\nabla$-\emph{free} if $F$ has a good filtration and $F$ is projective (hence free) as graded $S$-module, and is \emph{tilt-free} if it is of the form $T\otimes_k S$ with $T$ a graded tilting module. For equivalent definitions see loc. cit.

A tilt-free resolution (i.e. a resolution with tilt-free
modules as terms) gives rise to \emph{stable syzygies} which are well-suited for an equivariant setting.  More precisely, if $M$ is a finitely
generated graded $(G,S)$-module with good filtration and
$T\otimes_k S\to M$ is a surjection whose kernel has good filtration
(such a surjection exists in our standard $\SL_2$-setting by
\cite[Proposition \ref{prop:stable_syzygy}]{FFRT1}), then we call its
kernel a stable syzygy and write $\Omega_s M$. It is uniquely
determined up to adding $T'\otimes_k S$ with $T'$ tilting
(c.f. \S\ref{sec:stables} in loc. cit.). By $\tilde\Omega_s M$ we
denote the $(G,S)$-module obtained from $\Omega_s M$ by deleting all
summands which are $\nabla$-free.

\subsection{Some standard modules and their properties}
\label{sec:standard1}
We assume that we are in the standard $\Sl_2$-setting (\S\ref{sec:not2}). As in \cite[\S\ref{calculations}]{FFRT1} we will identify $S$ with the polynomial ring $k[x_1,y_1,\ldots,x_n,y_n]$ with $x_i$, $y_i$ having weights $+\omega$ and $-\omega$ respectively (see \S\ref{sec:not2}). Put $S_+=S/(y_1,\ldots,y_n)$. 
Here we state some facts about the modules $M_j=\Ind_{B}^G ((j\omega)\otimes_k S_+)$ that we shall use  in the sequel. We refer to \cite[\S\ref{sec:resol}]{FFRT1} for the proofs. 
We will also discuss some additional useful properties of the syzygies of $M_j$.

The module $M_j$ has for $1\leq j\leq n-3$ a $G\times \GL(F)$-equivariant graded resolution of the form: 
{\footnotesize
\begin{multline}
\label{biresolution01}
0\r D^{n-j-2}V\otimes_k\wedge^n F\otimes_k S(-n)\r \cdots \r 
D^1 V\otimes_k \wedge^{j+3}F\otimes_k S(-j-3)\r \wedge^{j+2}F \otimes_k S(-j-2)\r\\ \wedge^j F\otimes_k S(-j)
\r
 V\otimes_k \wedge^{j-1} F\otimes_k S(-j+1)\r \cdots \r 
 S^jV\otimes_k S\,[\r  M_j\r 0].
\end{multline}
}

\begin{remark}\label{rem:DtoSinresM}
Under our standing assumption $p\geq \assume$ (see \S\ref{sec:not2}), we have $T(l)=D^lV= S^lV$ for $0\leq l\leq n-3$ (see \cite[Proposition \ref{dotyhenke}]{FFRT1}). Hence the $D^lV$, $S^lV$ that appear in \eqref{biresolution01}
may be replaced by $T(l)$.
\end{remark}

The resolution \eqref{biresolution01} is induced from the $B$-equivariant Koszul resolution of $(j\omega)\otimes_k S_+$:
{\footnotesize
\begin{multline}
\label{biresolution11}
0\r ((j-n)\omega) \otimes_k \wedge^n F\otimes_k S(-n)\r \cdots\r
(-3\omega)\otimes_k \wedge^{j+3}F\otimes_k S(-j-3)\r\\
(-2\omega)\otimes_k \wedge^{j+2}F\otimes_k S(-j-2)\r 
(- \omega)\otimes_k \wedge^{j+1}F\otimes_k S(-j-1)\r
\wedge^{j}F \otimes_k S(-j)\r\\
(\omega)\otimes_k\wedge^{j-1} F\otimes_k S(-j+1)\r \cdots \r 
(j\omega)\otimes_k S\,[\r  (j\omega)\otimes_k S_+\r 0].
\end{multline}
}

We set 
\begin{align}
\label{def:kjk}
K_{jk}^\un&=\Omega^{k+1}M_j,\\
L_{jk}&=\Omega^{k+1}((j\omega)\otimes_k S_+),
\end{align}
where the syzygies are computed using \eqref{biresolution01}\eqref{biresolution11}.
In the proof of \cite[Proposition \ref{prop:resMj}]{FFRT1}, \eqref{biresolution01} was derived from \eqref{biresolution11} using the relations (without labeling $K_{jk}$)
\begin{equation}\label{eq:LK}
R\Ind_{B}^G L_{jk}=
\begin{cases}
\Ind_{B}^G L_{jk}=K_{jk}^\un&\text{if $k\leq j$,}\\
R^1\Ind_{B}^G L_{jk}[-1]=K_{j,k-1}^\un[-1]&\text{if $k\geq j+1$.}
\end{cases}
\end{equation}
Following \cite{FFRT1} we also put
\begin{equation}\label{Kj}
K_j=\Omega^{j+1}M_j(j+2)=K_{jj}(j+2)
\end{equation}
where the shift by $(j+2)$ serves as an appropriate normalization.
The properties of the modules $K_{jk}$ resemble those of  $K_j$ which were discussed in \cite[Proposition \ref{prop:Kjproperties}]{FFRT1}.
\begin{proposition}\label{prop:propKjk}
Let $1\leq j,k\leq n-3$.
\begin{enumerate}
\item\label{lem:Kjkdual}
As $(G,S)$-modules:  
$K_{jk}^\vee\cong K_{n-j-2,n-k-2}(n)\otimes_k \wedge^nF^*$. 
\item\label{lem:Kjkgoofy}$K_{jk}$ has a good filtration.
\item\label{item:Kjkinde} $K_{jk}$ is indecomposable.
\item\label{item:pdim}$\pdim K_{jk}=n-k-2$.
\item\label{item:0}
Let $K_{jk}^n$ be a shift of $K_{jk}$, such that $(K_{jk}^n)_0\neq 0$ and $(K_{jk}^n)_{<0}=0$. Then 
\[
K_{jk}^n=\begin{cases}
K_{jk}(k)&\text{if $k<j$,}\\
K_{jk}(k+2)&\text{otherwise.}
\end{cases}
\]
and
\[(K_{jk}^n)_0=\begin{cases}
\wedge^{k+1}F\otimes_k S^{j-k-1}V&\text{if $k<j$,}\\
\wedge^{k+2}F\otimes_k S^{k-j}V&\text{otherwise,}
\end{cases}
\]
\item\label{item:stabletilde}$\tilde{\Omega}^{k+1}_sM_j=\Omega^{k+1}_sM_j\cong K_{jk}$. Furthermore $\tilde{\Omega}^{l+1}_sM_l=0$ for $l\ge n-2$.
\end{enumerate}
\end{proposition}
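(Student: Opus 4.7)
All six items can be read off the tilt-free resolution \eqref{biresolution01}, generalizing the proof of \cite[Proposition \ref{prop:Kjproperties}]{FFRT1} which handles the case $k=j$. I would first check minimality: each differential $P_{k+1}\to P_k$ is obtained by applying $\Ind^G_B$ to the Koszul differential in \eqref{biresolution11}, which visibly lands in $\mathfrak{m}P_k$, so no tilting summands cancel. Minimality gives (4) directly, since the truncated minimal resolution $0\to P_{n-1}\to\cdots\to P_{k+1}\to K_{jk}\to 0$ has length $n-k-2$, and it gives (5) after identifying the lowest-degree piece of $K_{jk}$ with the generator space of $P_{k+1}$; the case split $k<j$ versus $k\ge j$ reflects whether $P_{k+1}$ lies in the lower or the upper half of \eqref{biresolution01}, i.e.\ comes from $\Ind_B^G$ or $R^1\Ind_B^G$ applied to \eqref{biresolution11}. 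For (2), each $P_k$ is tilt-free by Remark \ref{rem:DtoSinresM} and thus has a good filtration by Mathieu's theorem, while $M_j$ has one via Kempf vanishing on \eqref{biresolution11}; dimension-shifting $\Ext^{\bullet}_G(V(\lambda),-)$ through the short exact sequences $0\to K_{jk}\to P_k\to K_{j,k-1}\to 0$ then propagates good filtration to every syzygy.

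\textbf{Duality (1).} Apply $\uHom_S(-,S)$ termwise to \eqref{biresolution01}. Using the $\SL_2$-self-duality $(S^l V)^\dur\cong S^l V$ available in our range, together with $(\wedge^p F)^\dur\cong \wedge^{n-p}F\otimes (\wedge^nF)^\dur$, the dualized complex---re-indexed $p\leftrightarrow n-1-p$, shifted by $(n)$, and twisted by $(\wedge^nF)^\dur$---coincides with \eqref{biresolution01} for $M_{n-j-2}$. Taking the $(k+1)$-th syzygy on one side then produces the $(n-k-2)$-th syzygy on the other, yielding $K_{jk}^\vee\cong K_{n-j-2,n-k-2}(n)\otimes \wedge^nF^\dur$.

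\textbf{Stable syzygy (6).} Since each $P_k$ is tilt-free, the resolution realizes $K_{jk}$ as $\Omega^{k+1}_sM_j$ on the nose. The promotion to $\tilde\Omega^{k+1}_sM_j$ amounts to showing $K_{jk}$ has no $\nabla$-free direct summand, which, given (3), follows because $K_{jk}$ is itself not $\nabla$-free in our range (it has strictly positive projective dimension by (4)). The separate vanishing $\tilde\Omega^{l+1}_sM_l=0$ for $l\ge n-2$ I would handle by direct inspection of $M_l$ outside the range $1\le l\le n-3$: there the analogue of \eqref{biresolution01} degenerates (the weight-$l\omega$ summand at the top need no longer be tilt-free), but one checks by hand that $M_l$ admits a shorter tilt-free presentation whose $(l+1)$-th stable syzygy is $\nabla$-free, and hence is killed by $\tilde\Omega_s$.

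\textbf{Indecomposability (3)---the main obstacle.} By (5) the bottom graded piece of $K_{jk}^n$ is simple as a $G\times\GL(F)$-representation and generates $K_{jk}$ over $S$, so any $(G\times\GL(F),S)$-equivariant idempotent is trivial by Schur; the crux is that a $(G,S)$-equivariant idempotent need not be $\GL(F)$-equivariant, so one cannot apply Schur directly. The cleanest route I see is to compute $\End_{(G,S)}(K_{jk})$ directly from the resolution: applying $\Hom_{(G,S)}(-,K_{jk})$ to a partial tilt-free resolution and using (2) to control the relevant Ext groups reduces everything to a finite representation-theoretic check which should yield $k$. A fallback is to propagate indecomposability from the diagonal case $K_{jj}=K_j$---established in \cite[Proposition \ref{prop:Kjproperties}]{FFRT1}---to the off-diagonal $K_{jk}$ by exhibiting $K_{jk}$ as an iterated stable syzygy of $K_{jj}$, using the short exact sequences from the resolution together with the duality from (1) to move between the two halves of \eqref{biresolution01}.
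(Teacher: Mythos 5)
Your plan for items (1), (4), (5), and the first half of (6) matches the paper exactly, and the paper's own proof there is essentially the same one-liner read off the resolution \eqref{biresolution01} (plus the duality from \cite[\S \ref{proof:Mjproperties4}]{FFRT1} for (1) and indecomposability for the first claim of (6)). Two points deserve scrutiny, however.

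For (2), be careful about the direction of the induction. Dimension-shifting through $0\to K_{jk}\to P_k\to K_{j,k-1}\to 0$ gives an isomorphism $\Ext^i_G(\Delta(\lambda),K_{j,k-1})\cong\Ext^{i+1}_G(\Delta(\lambda),K_{jk})$ only for $i\geq 1$; it does not control $\Ext^1_G(\Delta(\lambda),K_{jk})$ in terms of $K_{j,k-1}$. So starting from $M_j$ and ``propagating forward'' (which your phrasing, with the appeal to Kempf vanishing for $M_j$, suggests) does not close the loop: $\Ext^1_G(\Delta(\lambda),K_{j,0})$ is a cokernel of a $\Hom$-map, not something that vanishes for free. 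The induction has to run \emph{downward} in $k$: one starts from the free top term $K_{j,n-2}=P_{n-1}$ and deduces good filtration of $K_{j,k-1}$ from that of $K_{j,k}$ using that the cokernel of an injection of modules with good filtration again has a good filtration. This is exactly the paper's stated argument (decreasing induction), and it is the form of dimension-shifting that actually works; your writeup should be anchored at the free end, not at $M_j$.

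For (3), which you correctly call the main obstacle, the paper does not re-prove indecomposability but simply refers to the proof of \cite[Proposition 12.3(4)]{FFRT1}. You identify the genuine difficulty -- the bottom graded piece is simple only as a $G\times\GL(F)$-representation, so Schur's lemma does not apply to $(G,S)$-equivariant idempotents -- but your two proposed routes (computing $\End_{(G,S)}(K_{jk})$ ``which should yield $k$'', or transporting indecomposability from $K_{jj}$ via stable syzygies) are both left as sketches. In particular the second route is not obviously sound: indecomposability is not in general preserved by taking $(G,S)$-syzygies, stable or otherwise, so this would need a real argument. The same applies to your sketch of $\tilde\Omega^{l+1}_sM_l=0$ for $l\geq n-2$ in (6), which the paper also delegates to \cite[Proposition 12.3(5)]{FFRT1}; your ``check by hand'' plan is plausible but not a proof.
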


\begin{proof}
\begin{enumerate}
\item 
By the discussion in \cite[\S \ref{proof:Mjproperties4}]{FFRT1}, the dual of \eqref{biresolution01}   is the same as the corresponding resolution of $M_{n-j-2}$ with the functor $(-)(n)\otimes_k \wedge^n F^\dur$ applied to it. The claim thus easily follows.
\item This follows by decreasing induction on $k\leq n-3$, using that the  cokernel of an injective map between modules with good filtration has good filtration. 
\item Same as the proof of \cite[Proposition 12.3(4)]{FFRT1}.
\item Clear from \eqref{biresolution01}.
\item Clear from \eqref{biresolution01}.
\item Since \eqref{biresolution01} is tilt-free (see Remark \ref{rem:DtoSinresM}), the first claim follows by \eqref{item:Kjkinde}. That $\tilde{\Omega}^{l+1}_sM_l=0$ was already shown in \cite[Proposition 12.3(5)]{FFRT1}.
\qedhere\end{enumerate}
\end{proof}

The following result is an immediate corollary of Proposition \ref{prop:propKjk}\eqref{item:pdim}\eqref{item:0}. 
\begin{corollary}
All $(G,S)$-modules $K_{jk}$ for $1\leq j,k\leq n-3$ are non-isomorphic and non-free as $S$-modules.
\end{corollary}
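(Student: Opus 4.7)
The plan is to extract two invariants from Proposition \ref{prop:propKjk} that completely distinguish the graded $(G,S)$-modules $K_{jk}$: the projective dimension over $S$, and the $G \times \GL(F)$-structure of the lowest-degree piece of the normalized shift $K_{jk}^n$. Non-freeness will be a one-line consequence of the projective-dimension formula.

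First, suppose $K_{jk} \cong K_{j'k'}$ as $(G,S)$-modules up to a graded shift. Projective dimension over $S$ is invariant under degree shifts and under isomorphism, so part \eqref{item:pdim} of Proposition \ref{prop:propKjk} gives $n-k-2 = n-k'-2$, hence $k=k'$. Moreover, the normalized shift introduced in \eqref{item:0} is intrinsic to the underlying ungraded module (it is pinned down by the requirement that the lowest nonzero graded piece sit in degree $0$), so an isomorphism up to shift forces $K_{jk}^n \cong K_{j'k}^n$ as graded $(G,S)$-modules, and in particular an isomorphism of their degree-$0$ parts as $G \times \GL(F)$-representations.

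Now I would invoke \eqref{item:0} to compute these bottom pieces. If $k<j$ they equal $\wedge^{k+1}F \otimes_k S^{j-k-1}V$, otherwise $\wedge^{k+2}F\otimes_k S^{k-j}V$. Since $\wedge^{k+1}F$ and $\wedge^{k+2}F$ are $\GL(F)$-representations of different ranks $\binom{n}{k+1} \neq \binom{n}{k+2}$, the ``small $k$'' and ``large $k$'' cases cannot be confused. Within each case, the $\SL(V)$-isotypic part $S^{j-k-1}V$ (respectively $S^{k-j}V$) has dimension that pins down $j$ uniquely from $k$, so $j=j'$.

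Finally, for non-freeness, part \eqref{item:pdim} yields $\pdim_S K_{jk} = n-k-2$, and the standing range $k\leq n-3$ gives $\pdim_S K_{jk} \geq 1$, so $K_{jk}$ cannot be a free $S$-module. No step here presents a genuine obstacle: the argument is essentially a bookkeeping exercise once Proposition \ref{prop:propKjk} is available, with the only mildly delicate point being the cross-case comparison (small-$k$ versus large-$k$ regime), which is immediately handled by the different exterior powers of $F$ appearing in the lowest graded piece.
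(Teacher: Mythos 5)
Your argument mirrors the paper's, which declares this an immediate corollary of Proposition~\ref{prop:propKjk}\eqref{item:pdim}\eqref{item:0}; the use of projective dimension to recover $k$ and to conclude non-freeness is exactly right. However, the cross-case comparison you flag as mildly delicate has a concrete flaw and a subtler one. The concrete flaw: $\binom{n}{k+1}=\binom{n}{k+2}$ precisely when $n=2k+3$, which does occur in range (take $n=5$, $k=1$; both equal $10$), so the rank count alone does not separate $\wedge^{k+1}F$ from $\wedge^{k+2}F$. What does separate them is that they are distinct irreducible $\GL(F)$-representations (different highest weights), not that they have different dimensions.

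The subtler point is that a $(G,S)$-module isomorphism is not required to intertwine the extra $\GL(F)$-action, so your deduction of an isomorphism of the degree-zero pieces ``as $G\times\GL(F)$-representations'' is not licensed, and this matters: as $G=\SL_2$-representations the two candidate bottom pieces really can coincide. In the same example ($n=5$, $k=1$, $j=2$, $j'=1$) both $\wedge^2F\otimes_k S^0V$ and $\wedge^3F\otimes_k S^0V$ restrict to the trivial $\SL_2$-representation of dimension $10$. If you read the corollary without allowing shifts, item~\eqref{item:0} also pins down the degree in which the bottom piece sits ($k$ if $k<j$, $k+2$ if $k\ge j$), which already separates the two regimes using $(G,S)$-data alone, and the $\SL_2$-dimension of the bottom piece then recovers $j$; this is the clean route. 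If instead you want non-isomorphism up to shift, you should state explicitly that you are comparing the $K_{jk}$ as $G\times\GL(F)$-equivariant modules (a legitimate move, since the resolution \eqref{biresolution01} is $G\times\GL(F)$-equivariant), or supply a further purely graded invariant to break the cross-case degeneracy.
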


\subsection{Preliminaries on $B_1$-cohomology}\label{subsec:prel}
We assume $G=\SL_2$ with the Borel subgroup $B$ (with negative roots) and maximal torus $H$. Also, set $U:=\rad(B)$. 
Here we give a slight generalization of \cite[\S10.2]{FFRT1}. 
 We will use the stable category $\underline{\Mod}(B_1)$ of $B_1$-modules with $\Hom$-spaces denoted by $\underline{\Hom}_{B_1}$, and Tate cohomology for $B_1$ defined by $\widehat{H}^\bullet(B_1,M):=\underline{\Ext}^\bullet_{B_1}(k,M)$ for which we refer the reader to \cite[Appendix B]{FFRT1}. 

For a bounded $B$-equivariant complex $M$,  
\begin{equation}
  \underline{\RHom}_{B_1}(k,M)=\allowbreak\underline{\RHom}_{U_1}(k,M)^{H_1}
  \end{equation} 
  (recall $H_1$ is linearly reductive) in $D(B^{(1)})$,
   and $\underline{\RHom}_{U_1}(k,M)$ is computed by 
the total  complex of the double complex
\begin{equation}
\label{complexa}
\cdots \xrightarrow{F}M\otimes_k (-2(p-1)\omega)\xrightarrow{F^{p-1}} M\xrightarrow{F} M\otimes_k (2\omega)\xrightarrow{F^{p-1}} M\otimes_k (2p\omega)\xrightarrow{F}\cdots 
\end{equation}
with the second $M$ occurring in horizontal degree zero. Here we used a complete resolution of $k$ as a $\Dist(U_1)$-module, where $\Dist(U_1)=k[F]/(F^p)$ is the distribution algebra of $U_1$, and $F$ has weight $-2\omega$ (see \cite[\S II.12]{jantzen2007representations}).  We  obtain the following periodicity result. 
\begin{lemma} \label{lem:periodicity} There exists a natural isomorphism between 
$\underline{\RHom}_{B_1}(k,-)[2]$ and $\underline{\RHom}_{B_1}(k,-) \otimes_k (2p\omega)$ viewed as functors $D(B)\r D(B^{(1)})$.
\end{lemma}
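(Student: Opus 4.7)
The plan is to extract the periodicity directly from the explicit double complex \eqref{complexa} computing $\underline{\RHom}_{U_1}(k,M)$, and then descend to $B_1$-invariants using the fact that $H_1$ is linearly reductive.

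First I would recall that $B_1=U_1\rtimes H_1$ and that, because $H_1$ is linearly reductive, we have
\[
\underline{\RHom}_{B_1}(k,M)=\underline{\RHom}_{U_1}(k,M)^{H_1}
\]
naturally in $M\in D(B)$, with target in $D(B^{(1)})$. It therefore suffices to construct a natural isomorphism
\[
\underline{\RHom}_{U_1}(k,M)[2]\;\cong\;\underline{\RHom}_{U_1}(k,M)\otimes_k(2p\omega)
\]
in $D(B)$ and then take $H_1$-invariants: the twist by $(2p\omega)$ is trivial on $H_1$ (it factors through $H^{(1)}$), so it commutes with $(-)^{H_1}$ and lands in $D(B^{(1)})$ as required.

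For $U_1$, I would use the explicit complete resolution of $k$ as a $\Dist(U_1)=k[F]/(F^p)$-module alternating multiplication by $F$ and $F^{p-1}$, with the $B$-equivariant structure making $F$ have weight $-2\omega$. Applying $\Hom_{\Dist(U_1)}(-,M)$ gives the complex \eqref{complexa}, call it $C^\bullet(M)$, with $C^0(M)=M$, $C^1(M)=M\otimes_k(2\omega)$, and $C^2(M)=M\otimes_k(2p\omega)$. The key observation is that two consecutive differentials have total weight $-2\omega-2(p-1)\omega=-2p\omega$, so the differential pattern literally repeats with a weight shift of $2p\omega$; in particular one has a natural equality of complexes
\[
C^{n+2}(M)=C^n(M)\otimes_k(2p\omega),
\]
which in turn gives an isomorphism of complexes $C^\bullet(M)[2]\cong C^\bullet(M)\otimes_k(2p\omega)$ that is manifestly natural in $M$.

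Combining the two steps yields the desired natural isomorphism. Naturality is automatic from the construction, and functoriality on $D(B)$ follows because the double complex \eqref{complexa} computes the derived functor and its formation is compatible with quasi-isomorphisms of bounded $B$-equivariant complexes $M$. The main (very mild) subtlety to check is the compatibility of the weight twist with the passage from $U_1$ to $B_1$, which is exactly why one needs $(2p\omega)$ to be $H_1$-trivial; no serious obstacle arises.
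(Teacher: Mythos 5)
Your proof is correct and takes essentially the approach the paper implicitly relies on: the paper presents the complex \eqref{complexa} and asserts the periodicity result without further comment, leaving the reader to observe (as you do) that the complex is literally $2$-periodic up to tensoring with $(2p\omega)$, and that passing through $H_1$-invariants is harmless since $(2p\omega)$ is $H_1$-trivial. You have merely made this explicit.
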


We will often use the cohomology and formality in $D(B^{(1)})$ of the complexes $\underline{\RHom}_{B_1}(k,L(i)\otimes_k (a\omega))$ for $0\leq i\leq p-2$, which can be easily checked as in the proof of  \cite[Lemma 10.2]{FFRT1}, using the complex \eqref{complexa}.

\begin{lemma}\cite[Lemma \ref{B1cohomology0}]{FFRT1}\label{B1cohomology02}
If $i\in \{0,\ldots, p-2\}$ then $\underline{\RHom}_{B_1}(k,L(i)\otimes_k (a\omega))$ is formal as object in $D(B^{(1)})$. Moreover we have as $B^{(1)}$-representations:
\begin{small}
\begin{equation}
\label{TateB1formulaA}
\tH^l(B_1,L(i)\otimes_k (a\omega))=
\begin{cases}
((lp-(i-a))\omega)&\text{if $i\equiv a\,(p)$ and $l \equiv 0\,(2)$,}\\
((lp+(i-(p-2-a)))\omega)&\text{if $i\equiv p-2-a\,(p)$ and $l \equiv 1\,(2)$,}\\
0&\text{otherwise.}
\end{cases}
\end{equation}
\end{small}
\end{lemma}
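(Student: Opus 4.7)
The plan is to exploit the explicit complex \eqref{complexa}, which computes $\underline{\RHom}_{U_1}(k,-)$, together with the identification $\underline{\RHom}_{B_1}(k,-)=\underline{\RHom}_{U_1}(k,-)^{H_1}$. Since $H_1$ is linearly reductive, taking $H_1$-invariants is exact, so it suffices to apply $(-)^{H_1}$ term by term to \eqref{complexa} with $M=L(i)\otimes_k(a\omega)$ and then compute the cohomology and establish the formality of the resulting complex of $B^{(1)}$-modules.

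Writing $c(l)\in\ZZ$ for the coefficient of $\omega$ in the twist at position $l$ of \eqref{complexa}, the term in position $l$ is $L(i)\otimes_k(a\omega)\otimes_k(c(l)\omega)$, whose $H$-weights are $(a+i-2j+c(l))\omega$ for $j\in\{0,\ldots,i\}$, each of multiplicity one. Passing to $H_1$-invariants selects those $j$ with $p\mid a+i-2j+c(l)$. Because the weights of $L(i)\otimes_k(a\omega)$ form an arithmetic progression of $i+1\le p-1$ terms with common difference $2$ and $p$ is odd, at most one such $j$ exists, so each invariant term $T_l$ is either zero or one-dimensional.

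The crucial observation is that $F^{p-1}$ annihilates $L(i)$ for $i\le p-2$: identifying $L(i)$ with $k[v,w]_i$ and $F$ with the derivation $v\mapsto w$, $w\mapsto 0$, one has $F^{i+1}=0$ already. Hence every $F^{p-1}$-differential in the $H_1$-invariant complex vanishes, and only the $F$-differentials $T_{2m}\to T_{2m+1}$ remain. A direct computation shows that if $T_{2m}$ is supported at weight index $j^*_e$ and $T_{2m+1}$ at $j^*_o\equiv j^*_e+1\pmod p$, then $F$ acts as multiplication by the scalar $i-j^*_e$; when both $j^*_e$ and $j^*_o$ lie in $\{0,\ldots,i\}$ this scalar is invertible modulo $p$, so the differential is an isomorphism.

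A short case analysis, parametrised by the unique residues $j^*_e\equiv(a+i)\cdot 2^{-1}\pmod p$ and $j^*_o\equiv j^*_e+1\pmod p$ in $\{0,\ldots,p-1\}$, then completes the proof. When both lie in $\{0,\ldots,i\}$ the pair $(T_{2m},T_{2m+1})$ is a contractible two-term summand. The only configuration producing a nonzero class at even degree $l=2m$ is $j^*_e=i$, equivalent to $a\equiv i\pmod p$, yielding a one-dimensional class of weight $(a-i+2mp)\omega=(lp-(i-a))\omega$. The only configuration at odd degree $l=2m+1$ is the wrap-around $j^*_e=p-1$, $j^*_o=0$, equivalent to $a\equiv p-2-i\pmod p$, yielding weight $(a+i+2+2mp)\omega=(lp+i-(p-2-a))\omega$. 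These two conditions are mutually exclusive for $i\in\{0,\ldots,p-2\}$, matching the stated formula, and the described decomposition into concentrated subcomplexes and contractible two-term pieces provides an explicit quasi-isomorphism to $\bigoplus_l\tH^l(B_1,M)[-l]$, hence formality in $D(B^{(1)})$. The only real subtlety is the careful bookkeeping of the shifts $c(l)$ needed to recover the weights exactly as stated, together with checking that the two cohomological conditions cannot occur simultaneously.
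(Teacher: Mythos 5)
Your argument is correct and follows the same route the paper indicates: apply $(-)^{H_1}$ to the explicit two-periodic complex \eqref{complexa}, note that $F^{p-1}$ already vanishes on $L(i)$ for $i\le p-2$ so the remaining differentials are the $F$-maps from even to odd degree, and then check by weight considerations that each such map is either an isomorphism or zero, which gives the claimed direct sum decomposition and hence both the cohomology formula and formality.
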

In particular, Lemma \ref{B1cohomology02} also  yields the formality of  $\underline{\RHom}_{B_1}(k,(j\omega))$
(which is particularly easy to see since all maps in \eqref{complexa} are zero) and \eqref{TateB1formulaA} computes the Tate cohomology of $(j\omega)$.

If $M$ is a $B$-module, then the ordinary $B_1$-cohomology (i.e.\ non-Tate) of $M$ 
can be computed by replacing \eqref{complexa} by its truncation in degree zero. For use below we record the following consequence
\begin{equation}
\label{eq:B1char}
H^i(B_1,(j\omega))
=
\begin{cases}
\hat{H}^i(B_1,(j\omega))&\text{if $i\ge 0$},\\
0&\text{if $i< 0$}.
\end{cases}
\end{equation}

\section{Higher Frobenius kernel invariants}\label{sec:introHFr}
First recall the decomposition  of $S^{G_1}$  which we obtained in~\cite{FFRT1} and which is valid in arbitrary characteristic. 
\begin{theorem}\label{thm:decFr1}
The indecomposable summands of  the graded $(G^{(1)},S^p)$-module $S^{G_1}$ are up to (nonzero if $p\geq n-2$) multiplicity  and grading shift: 
\[
\{T(0)^{\Fr}\otimes_k S^p,T(1)^{\Fr}\otimes_k S^p,\ldots,T(n-3)^{\Fr}\otimes_k S^p,K_1^{\Fr},K_2^{\Fr}\ldots,K_{n-3}^{\Fr}\}.
\]
\end{theorem}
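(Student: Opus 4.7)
The plan is to deduce Theorem \ref{thm:decFr1} from Theorem \ref{thm:old} by translating along the category equivalence of Corollary \ref{cor:decomp}. Applied to the reflexive $(G,S)$-module $M = S$ with $r = 1$, that corollary establishes a one-to-one correspondence between indecomposable summands of $R = S^G$ as a graded $R^p$-module and indecomposable summands of $S^{G_1}$ as a graded $(G^{(1)}, S^p)$-module. Theorem \ref{thm:old} enumerates the summands on the $R^p$-side as $K\{j,j\}^{\Fr}$ for $1 \leq j \leq n-3$ and $S\{l\}^{\Fr}$ for $0 \leq l \leq n-3$, so it remains to identify the $(G^{(1)}, S^p)$-module corresponding to each of these.

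The key observation is that taking invariants commutes with Frobenius pullback: for a reflexive $(G,S)$-module $N$, one has
\[
(N^G)^{\Fr} \;=\; (N^{\Fr})^{G} \;=\; (N^{\Fr})^{G^{(1)}},
\]
where $N^{\Fr}$ is the natural $(G^{(1)}, S^p)$-module obtained by Frobenius twisting both the $G$- and $S$-actions (and the middle equality holds because $G_1$ acts trivially on $N^{\Fr}$). Since the equivalence \eqref{eq:reflr} is implemented by $(-)^{G^{(1)}}$, this shows that $(N^G)^{\Fr}$ as $R^p$-module corresponds to $N^{\Fr}$ as $(G^{(1)}, S^p)$-module. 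Applying this to $N = T(l) \otimes_k S$ (using Remark \ref{rem:DtoSinresM} to identify $T(l) = S^l V$ under our standing hypothesis $p \geq \assume$) shows that $S\{l\}^{\Fr}$ corresponds to $T(l)^{\Fr} \otimes_k S^p$. Applying it to $N = K_{jj}$ shows that $K\{j,j\}^{\Fr}$ corresponds to $K_{jj}^{\Fr}$; by the normalisation $K_j = K_{jj}(j+2)$ from \eqref{Kj}, this agrees with $K_j^{\Fr}$ up to a grading shift.

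Together these identifications reproduce exactly the list of summands in the theorem, and the nonzero-multiplicity statement (for $p \geq n-2$) transfers directly from Theorem \ref{thm:old} via the equivalence, which respects direct-sum decompositions. There is no substantial obstacle to overcome here: the theorem is essentially a formal restatement of Theorem \ref{thm:old} on the $(G^{(1)}, S^p)$-side. In fact, in \cite{FFRT1} the logical order is naturally reversed — the $S^{G_1}$-decomposition is established first, using the representation theory of Frobenius kernels (as recalled in \S\ref{subsec:prel}), and then transported to Theorem \ref{thm:old} via the same equivalence.
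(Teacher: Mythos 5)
Your proposal is correct, and you are right that it is essentially a formal restatement. Two comments on how it relates to the paper's own treatment.

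The paper does not give a proof of Theorem \ref{thm:decFr1} at all: it is recalled verbatim from the companion paper \cite{FFRT1}, where (as you observe at the end) it is established directly via the Andersen--Jantzen spectral sequence and the structure of $B_1$-cohomology, and Theorem \ref{thm:old} is then \emph{deduced} from it using Corollary \ref{cor:decomp} --- the reverse of your argument. Since Corollary \ref{cor:decomp} is an honest equivalence, your direction is just as valid given Theorem \ref{thm:old} as cited input, and the mechanism you use (the identity $(N^{\Fr})^{G^{(1)}} = (N^{G})^{\Fr}$ for a reflexive $(G,S)$-module $N$, applied to $N = T(l)\otimes_k S$ and $N = K_{jj}$, with the normalisation $K_j = K_{jj}(j+2)$ absorbed into the grading shift) is exactly the bookkeeping needed; it is the same translation the paper itself performs, in the opposite direction, in the one-line proof of Theorem \ref{thm:decR} from Theorem \ref{thm:higherFr}.

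One minor caveat worth flagging: your identification $S\{l\}^{\Fr}\leftrightarrow T(l)^{\Fr}\otimes_k S^p$ invokes Remark \ref{rem:DtoSinresM} (that $T(l)=S^lV$ for $0\le l\le n-3$), which holds under the paper's standing hypothesis $p\ge\assume$, whereas Theorem \ref{thm:decFr1} is asserted to hold in arbitrary characteristic. Under the standing hypothesis this is immaterial, and that is the regime in which the rest of the paper operates, so this does not affect the correctness of your derivation in context; but it is the reason the two formulations (Theorem \ref{thm:old} in terms of $S\{l\}$, Theorem \ref{thm:decFr1} in terms of $T(l)$) are not literally interchangeable for small $p$.
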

Specializing to $p\ge \assume$ (our standing assumption) we will obtain in this paper a corresponding decomposition of $S^{G_r}$ 
 for $r\ge 2$.
\begin{theorem}\label{thm:higherFr} 
The indecomposable summands of the graded $(G^{(r)},S^{p^r})$-module $S^{G_r}$ for $r\geq 2$ are up to (nonzero) multiplicity and grading shift:
\begin{equation}\label{eq:summandsSr}
\{K_{jk}^{\Fr^r}\mid 1\leq j,k\leq n-3\}\cup \{(S^lV)^{\Fr^r}\otimes_k S^{p^r}\mid 0\leq l\leq n-3\}.
\end{equation}
\end{theorem}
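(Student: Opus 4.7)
The plan is to proceed by induction on $r$, taking Theorem \ref{thm:decFr1} as the base case $r=1$. For the inductive step, I would use the identification $S^{G_r} = (S^{G_1})^{G_{r-1}^{(1)}}$ coming from the normal inclusion $G_1 \subset G_r$ with quotient $G_{r-1}^{(1)}$. Since the summands of $S^{G_1}$ from Theorem \ref{thm:decFr1} are all of the form $N^{\Fr}$ for a reflexive $(G,S)$-module $N$ --- specifically $N = T(l) \otimes_k S$ or $N = K_j$ --- the problem reduces to computing $N^{G_{r-1}}$ as a reflexive $(G^{(r-1)}, S^{p^{r-1}})$-module for each such $N$ and then reapplying the Frobenius twist. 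Thus Theorem \ref{thm:higherFr} follows once one knows how to decompose $(T(l) \otimes_k S)^{G_{r-1}}$ and $K_j^{G_{r-1}}$, and checks that all resulting summands still belong to the list \eqref{eq:summandsSr}.

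The tool that drives the decomposition is the general result Theorem \ref{prop:main} (the counterpart in this paper of \cite[Theorem \ref{prop:mainffrt1}]{FFRT1}), which produces the indecomposable summands of $M^{G_1}$ from the cohomology of $\underline{\RHom}_{B_1}(k, M)$, provided this complex is formal in $D(B^{(1)})$. For the tilt-free modules $T(l) \otimes_k S$ with $0 \leq l \leq n-3 \leq p-2$, the required formality follows directly from Lemma \ref{B1cohomology02}, and one reads off summands of the predicted form using the periodicity in Lemma \ref{lem:periodicity} together with the Tate cohomology formula \eqref{TateB1formulaA}. To handle $K_j$, I would exploit the tilt-free resolution \eqref{biresolution01} of $M_j$ together with the identification $K_j = \Omega_s^{j+1} M_j(j+2)$ from Proposition \ref{prop:propKjk}\eqref{item:stabletilde}; applying $\underline{\RHom}_{B_1}(k, -)$ term-by-term to the Koszul-type resolution \eqref{biresolution11} and inducing to $G$ produces a spectral sequence whose $E_1$-page is computed by Lemma \ref{B1cohomology02}. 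The second index $k$ in the $K_{jk}$'s then arises naturally from the cohomological grading on $\underline{\RHom}_{B_1}(k, K_j)$, producing precisely the range $1 \leq k \leq n-3$.

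The central obstacle, and where most of the technical work lies, is establishing that $\underline{\RHom}_{B_1}(k, K_j)$ is genuinely formal --- not merely term-by-term on the resolution. This is where the hypothesis $p \geq \max\{n-2, 3\}$ is essential: it guarantees that every weight appearing in \eqref{biresolution11} lies in the good range $\{0, \ldots, p-2\}$ where Lemma \ref{B1cohomology02} applies, that the divided powers $D^l V$ coincide with the tilting modules and with the simples $S^l V = L(l)$ (cf.\ Remark \ref{rem:DtoSinresM}), and that higher Massey products potentially obstructing formality can be shown to vanish. Once this formality is in hand, a direct inspection shows that the iteration sends the list \eqref{eq:summandsSr} into itself: taking $G_1$-invariants of the Frobenius untwist of any $K_{jk}$ or $S^l V \otimes_k S$ produces only further summands on the same list. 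This self-similarity explains the stabilization at $r = 2$ alluded to in the proof sketch and closes the induction.
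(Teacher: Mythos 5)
Your proposal follows the paper's strategy: iterate by taking $G_1$-invariants starting from the $r=1$ decomposition, decompose each summand via Theorem \ref{prop:main} applied to a truncated tilt-free complex (since $K_{jk}$ is not $\nabla$-free you cannot feed $K_{jk}$ directly into the theorem), and then verify the formality hypothesis. The one point to sharpen is what must be shown formal: Theorem \ref{prop:main} requires formality of $\tau_{\ge 1}\RHom_{G_1}(k,\tilde{C}^\bullet_{jk})$, where $\tilde{C}^\bullet_{jk}=(\sigma_{\ge -k}\tilde{C}^\bullet_j)[-k]$ has cohomology in \emph{two} degrees ($K_{jk}$ in degree $0$ and $M_j$ in degree $k$), so this is not equivalent to formality of $\underline{\RHom}_{B_1}(k,K_j)$ --- the paper establishes the needed statement at the $B$-level in Proposition \ref{prop:outform} (with the Tate formality of Proposition \ref{prop:Tformal} serving as an intermediate step much along the lines you sketch) and then induces to $G$ via the Andersen--Jantzen spectral sequence in \S\ref{sec:ind-B-G}.
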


\subsection{Proof outline}
\label{sec:proofoutline}
Our proof of Theorem \ref{thm:higherFr} starts with Theorem
\ref{thm:decFr1} which gives the decomposition of $S^{G_1}$ and then
iterates the construction. More precisely, instead of directly decomposing
$S^{G_r}$ we decompose the $G_1$-invariants of the summands occurring in
the decomposition of $(S^{G_{r-1}})^{\Fr^{-(r-1)}}$. Starting with
$r=2$ we need to first decompose $K_i^{G_1}$ and
$(T(i)\otimes_k S)^{G_1}$. It turns out that their non-tilt-free summands are among the $K_{jk}^{\Fr}$ 
 so to continue we need to
decompose $K_{jk}^{G_1}$.  We will show that its summands are again
(up to tilt-free summands) among the $K_{lm}^{\Fr}$, which completes
the iteration after the second step.
Summarizing, in order to prove Theorem \ref{thm:higherFr} we need to 
find the indecomposable summands of $K_{jk}^{G_1}$ and $(T(j)\otimes_k S)^{G_1}$ as $(G^{(1)},S^p)$-modules.
To this end we will use the following general result.
\begin{theorem}\cite[Theorem \ref{prop:mainffrt1}]{FFRT1}
\label{prop:main} 
Assume that we are in the standard $\SL_2$-setting (\S\ref{sec:not2}) and assume in addition the following
\begin{enumerate}
\item \label{aa} $M^\bullet$ is a complex of finitely generated $\nabla$-free modules
  concentrated in degree $\ge 0$.
\item \label{bb} The $G$-representations $\HHH^i(M^\bullet)$ have good filtrations for all $i$.
\item \label{rr} The reflexive $(G,S)$-module $\HHH^0(M^\bullet)=Z^0(M^\bullet)$ has the property that $\Hom_S(\HHH^0(M^\bullet),S)$ has a good filtration.
\item \label{dd} $\tau_{\ge 1}M^\bullet$ is formal in the derived category of $(G,S)$-modules.
\end{enumerate}
Then we have as graded $(G,S)$-modules
\begin{equation}
\label{eq:maindecompositionformula}
\HHH^0(M^\bullet)\cong T\otimes_k S\oplus \bigoplus_{j\ge 1} \tilde{\Omega}^{j+1}_s \HHH^j(M^{\bullet}),
\end{equation}
where $T$ is a graded tilting module.
\end{theorem}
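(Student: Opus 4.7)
The plan is to use condition (d) to reduce $M^\bullet$ to a direct sum of elementary pieces, each supported on a single cohomology group $H^j(M^\bullet)$, and then to reassemble them using the Yoneda extensions that tie these pieces back to $H^0(M^\bullet)$. Starting from the formality isomorphism
\begin{equation*}
\tau_{\geq 1} M^\bullet \;\cong\; \bigoplus_{j\geq 1} H^j(M^\bullet)[-j]
\end{equation*}
in the derived category of $(G,S)$-modules, I would extract from the canonical truncation triangle
$H^0(M^\bullet)\to M^\bullet \to \tau_{\geq 1}M^\bullet \xrightarrow{\delta} H^0(M^\bullet)[1]$
a family of Yoneda classes $\delta_j \in \Ext^{j+1}_{(G,S)}(H^j(M^\bullet),H^0(M^\bullet))$ for $j \geq 1$. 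Together with formality, these classes determine $M^\bullet$ in the derived category.

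Next, for each $j\geq 1$, condition (b) together with \cite[Proposition \ref{prop:stable_syzygy}]{FFRT1} yields a tilt-free resolution of $H^j(M^\bullet)$ whose syzygies all inherit good filtrations; its $(j+1)$-st syzygy agrees with $\tilde{\Omega}^{j+1}_s H^j(M^\bullet)$ up to a $\nabla$-free summand. Using this resolution, $\delta_j$ can be realised concretely as a morphism $\tilde{\Omega}^{j+1}_s H^j(M^\bullet)\to H^0(M^\bullet)$, and assembling these over $j$ gives a candidate splitting map
\begin{equation*}
\psi\colon \bigoplus_{j\geq 1}\tilde{\Omega}^{j+1}_s H^j(M^\bullet) \longrightarrow H^0(M^\bullet).
\end{equation*}
I would then splice the truncated tilt-free resolutions into a single tilt-free complex $\tilde M^\bullet$ with $H^i(\tilde M^\bullet)=H^i(M^\bullet)$ for $i\geq 1$ and tautological $H^0(\tilde M^\bullet) = \bigoplus_j \tilde{\Omega}^{j+1}_s H^j(M^\bullet)$, carrying the same derived-category extension data as $M^\bullet$; by construction $\tilde M^\bullet$ is then quasi-isomorphic to $M^\bullet$.

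The main obstacle is upgrading this derived-category agreement to an honest module isomorphism on $H^0$ modulo tilt-free summands. Here condition (c) plays a crucial role: the good filtration on $\Hom_S(H^0(M^\bullet), S)$ rigidifies $H^0(M^\bullet)$ inside the category of reflexive $(G,S)$-modules, forcing the splittings coming from $\psi$ to lift from the derived category to genuine module splittings, with any obstructions absorbed into a $\nabla$-free (hence, after consolidation with condition (b), tilt-free) correction term $T\otimes_k S$. Combined with the $\nabla$-freeness of every term of $M^\bullet$ (condition (a)) and Krull--Schmidt for finitely generated graded $(G,S)$-modules, this yields \eqref{eq:maindecompositionformula}. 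The most delicate book-keeping throughout is reconciling the inherent indeterminacy of stable syzygies (defined only modulo $\nabla$-free summands) with the freedom in the representatives for the $\delta_j$ and the resulting ambiguity in the tilting module $T$.
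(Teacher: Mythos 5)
This theorem is quoted from the companion paper \cite{FFRT1}; the present paper only cites it and does not reproduce the proof, so there is no local proof to compare against. Your sketch nevertheless contains a clear gap. The step ``by construction $\tilde M^\bullet$ is then quasi-isomorphic to $M^\bullet$'' — where you have arranged $H^0(\tilde M^\bullet) = \bigoplus_{j\geq 1}\tilde{\Omega}^{j+1}_s H^j(M^\bullet)$ — cannot be correct in general, because it would force $H^0(M^\bullet)=\bigoplus_{j\geq 1}\tilde{\Omega}^{j+1}_s H^j(M^\bullet)$, i.e.\ $T=0$ in \eqref{eq:maindecompositionformula}. But $T$ is genuinely nonzero in trivial cases: for $M^\bullet=(T'\otimes_k S)[0]$ with $T'$ a nonzero tilting module, one has $H^{j}(M^\bullet)=0$ for $j\geq 1$, so your $\tilde M^\bullet$ would be the zero complex, whereas $H^0(M^\bullet)=T'\otimes_k S\neq 0$.

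You do sense the need for a ``correction term $T\otimes_k S$'' in your final paragraph, but that directly contradicts the quasi-isomorphism claim and is never reconciled with it; that the discrepancy between $H^0(M^\bullet)$ and $\bigoplus_j\tilde{\Omega}^{j+1}_s H^j(M^\bullet)$ is exactly a tilt-free module is the actual content of the theorem, not bookkeeping to be absorbed afterwards. Two further soft spots: the claim that $\tilde M^\bullet$ ``carries the same derived-category extension data as $M^\bullet$'' is circular, since the classes for $\tilde M^\bullet$ live in $\Ext^{j+1}(H^j,\bigoplus_i\tilde{\Omega}^{i+1}_sH^i)$ while the $\delta_j$ for $M^\bullet$ live in $\Ext^{j+1}(H^j,H^0)$, and these groups only agree once the theorem is already known; and the claim that $\psi$ is a split injection, together with the asserted role of hypothesis \eqref{rr} (``rigidifies $H^0$''), is a label rather than an argument. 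A complete proof has to tie hypothesis \eqref{rr} to a concrete Ext-vanishing or comparison-of-complexes step whose cone is a tilt-free module concentrated in degree $0$, which is what actually produces the complement $T\otimes_k S$.
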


  We will use Theorem \ref{prop:main} in order to
find the summands of $K_{jk}^{G_1}$ and $(T(j)\otimes_k S)^{G_1}$ as
$(G^{(1)},S^p)$-modules.  To do this we have to find complexes
$M^{\bullet}$, $N^{\bullet}$ satisfying \eqref{aa}-\eqref{dd} such
that $\HHH^0(M^\bullet)=K_{jk}^{G_1}$,
$\HHH^0(N^\bullet)=(T(j)\otimes_k S)^{G_1}$.  Let $C(k)$ be a left
projective $G_1$-resolution of~$k$ consisting of $G$-tilting modules
(see \cite[\S\ref{sec:proofoffinaldecompsub}]{FFRT1}).  In the case $j=0$ we used the complex
$N^{\bullet}=\Hom_{G_1}(C(k),S)$ in \cite[\S\ref{sec:proofoffinaldecompsub}]{FFRT1}, and by
slightly modifying the proof in loc.\ cit.\ it follows that in general
we can take $N^{\bullet}=\Hom_{G_1}(C(k),T(j)\otimes_k
S)$.
We can however not take $M^\bullet=\Hom_{G_1}(C(k),K_{jk})$
since then the freeness condition in \eqref{aa} would not be satisfied.  We will therefore look
for a complex $\genC^\bullet$ of free $S$-modules such that
$M^{\bullet}=\Hom_{G_1}(C(k),\genC^{\bullet})$ has the
desired properties. 

A natural choice for $\genC^\bullet$ is to choose a (shifted)
truncation of the complex \eqref{biresolution01}; i.e., in order to
obtain $\HHH^0(M^\bullet)=K_{jk}^{G_1}$ we should choose $\genC^\bullet$ to be the stupid truncation $\sigma_{\geq -k}$ of \eqref{biresolution01} shifted by $[-k]$. 
However, it seems more convenient to work first on the level of
$B$-modules and use a truncation of the complex \eqref{biresolution11}.
Then the truncations at the $G$-level are obtained by applying
$\RInd_{B^{(1)}}^{G^{(1)}}$ to the truncations at the $B$-level.

\subsection{Notes on the assumption {\boldmath $p\geq \assume$}}\label{sec:assonp}
While the decomposition of $S^{G_1}$ in~\cite{FFRT1} is characteristic-free, we were not able to establish the decomposition of $S^{G_r}$ for $r\geq 2$ in the same generality.

The advantages of the assumption $p\geq \assume$ are the following:
\begin{enumerate}
\item\label{tiltfreeC}
The resolution \eqref{biresolution01} of $M_j$ is tilt-free. 
Therefore, in the above construction the components of $\tilde{C}^\bullet$ and $M^\bullet$ have good filtrations, which is needed to verify Theorem \ref{prop:main}(\ref{aa},\ref{bb}). 

\item\label{item:stable}
Since \eqref{biresolution01} is a tilt-free resolution of $M_j$ 
 we can use it to compute stable syzygies 
 of $\tilde{\Omega}_s^*M_l$ (noting that $H^j(M^\bullet)$ for $j\geq 1$ turns out to be a sum of $M_l$ for $l$ in a suitable interval). 

 \item\label{quickdeg}
The assumption  
 makes the cohomology of our chosen $M^\bullet$ easy to estimate and moreover the corresponding double complex spectral sequence is sparse and quickly degenerates, which leads to the formality statement \eqref{dd} in Theorem \ref{prop:main}. 
 \end{enumerate}

 While \eqref{tiltfreeC} still applies for the decomposition of
 $K_{j}^{G_1}$ in small characteristic (as in this case the truncation
 of $\tilde{C}^\bullet$ is $\nabla$-free), the summands would be given
 by syzgyies of the tilt-free resolution of $M_l$ (under the
 assumption that the complex $\tau_{\ge 1}M^\bullet$ is formal and that the 
 cohomology is still given by suitable $M_l$), causing
 \eqref{item:stable} to fail and we would need to use (truncations of)
 tilt-free resolutions of $M_l$ in the next iteration step.  However,
 the current proof of even the formality of the complexes $\tau_{\ge 1}M^\bullet$ used for 
 decomposition of $K_j^{G_1}$ appears to heavily depend on
 \eqref{quickdeg}.

\medskip

Nevertheless, despite all these hurdles, preliminary computations for $p=2$, suggest that the FFRT property may hold for general $p$ in our
standard setting.

\section{Decomposition of Frobenius kernel invariants for tilt-free modules}
In contrast to our standing hypothesis on $p$ \emph{in this section the characteristic $p$ of $k$ is arbitrary}. We obtain the decomposition of the
graded $(G^{(1)},S^p)$-module $(T(j)\otimes_k S)^{G_1}$ for
$j\in \NN$. This result is interesting in its own right but as explained in \S\ref{sec:proofoutline} it is also an important step in the ``iteration part'' of the proof of Theorem \ref{thm:higherFr}. 

\begin{proposition}\label{prop:mocdecomposition}
The indecomposable summands of the graded $(G^{(1)},S^p)$-module $(T(j)\otimes_k S)^{G_1}$ are up to (possibly zero) 
 multiplicity and grading shift:
\begin{enumerate}
\item 
for $j<p-1$ 
\begin{equation}
\label{eq:nontiltfreecase}
\{T(0)^{\Fr}\otimes_k S^p,T(1)^{\Fr}\otimes_k S^p,\ldots,T(m)^{\Fr}\otimes_k S^p,K_1^{\Fr},K_2^{\Fr}\ldots,K_{n-3}^{\Fr}\}, 
\end{equation}  
\item 
for $j\geq p-1$
\begin{equation}
\label{eq:tiltfreecase}
\{T(0)^{\Fr}\otimes_k S^p,\ldots,T(m)^{\Fr}\otimes_k S^p\},
\end{equation}
\end{enumerate}
where $m=\lfloor((n-2)(p-1)+j)/p\rfloor$, 

Moreover, in (1) all summands appear with nonzero multiplicity while in (2) the summand $T(l)^{\Fr}\otimes_k S^p$ appears with nonzero multiplicity if $j_2\leq l\leq m$,  
where we write $j$ as $j_1+pj_2$ for ($0\leq j_1\leq p-2\wedge j_2=0)\vee (p-1\leq j_1\leq 2p-2)$. 
\end{proposition}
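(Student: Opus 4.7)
The plan is to apply Theorem \ref{prop:main} to a complex $N^\bullet$ whose $H^0$ recovers $(T(j)\otimes_k S)^{G_1}$. I would let $C(k)$ be a left $G_1$-projective resolution of $k$ by $G$-tilting modules (available by \cite[\S\ref{sec:proofoffinaldecompsub}]{FFRT1}) and set
\[
N^\bullet=\Hom_{G_1}(C(k),T(j)\otimes_k S),
\]
so that $H^i(N^\bullet)=H^i(G_1,T(j)\otimes_k S)$ as graded $(G^{(1)},S^p)$-modules, with $H^0(N^\bullet)=(T(j)\otimes_k S)^{G_1}$. Since $C(k)$ consists of $G$-tilting modules and $T(j)\otimes_k S$ has a good filtration, every term of $N^\bullet$ is $\nabla$-free, which verifies hypothesis \eqref{aa}; the reflexivity condition \eqref{rr} then follows from the monoidal equivalences \eqref{eq:reflr}.

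To shrink the range of $j$, I would invoke Donkin's tensor product theorem for $\SL_2$-tilting modules to write $T(j)\cong T(j_1)\otimes_k T(j_2)^{[1]}$ with $(j_1,j_2)$ as in the statement. Since the second factor is $G_1$-trivial, all cohomology factorises as
\[
H^i(G_1,T(j)\otimes_k S)\;\cong\; T(j_2)^{[1]}\otimes_k H^i(G_1,T(j_1)\otimes_k S),
\]
reducing the core calculation to $0\leq j_1\leq 2p-2$. I would then pass to $B_1$ via the spectral sequence $R\Ind_{B^{(1)}}^{G^{(1)}}H^\bullet(B_1,-)\Rightarrow H^\bullet(G_1,-)$ and apply Lemma \ref{B1cohomology02} to the line-bundle pieces $(a\omega)\otimes_k S$ arising from the Koszul complex \eqref{biresolution11} (suitably twisted by $T(j_1)$). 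The expected outcome is that the higher cohomology reassembles into a direct sum of modules $M_l$ for $l$ in an explicit range depending on $j_1$, so that hypothesis \eqref{bb} is immediate since each $M_l$ has a good filtration.

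Once the higher cohomology is identified in this form, Proposition \ref{prop:propKjk}\eqref{item:stabletilde} gives $\tilde\Omega^{k+1}_s M_l\cong K_{lk}$ for $1\leq l,k\leq n-3$ and $\tilde\Omega^{l+1}_s M_l=0$ for $l\geq n-2$. Substituting into \eqref{eq:maindecompositionformula} produces the $K$-summands in case~(1); in case~(2) ($j_1\geq p-1$) the Tate cohomology in the relevant weight range vanishes by Lemma \ref{B1cohomology02}, so no $K$-summands appear and only the tilt-free term $T\otimes_k S$ survives. Tensoring with $T(j_2)^{[1]}$ and decomposing the resulting $G^{(1)}$-tilting module into indecomposables yields the family $T(l)^{\Fr}\otimes_k S^p$; a direct weight count then produces the upper bound $m=\lfloor((n-2)(p-1)+j)/p\rfloor$ and, in case~(2), the lower bound $j_2$ on the nonzero-multiplicity range.

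The main obstacle is the formality condition \eqref{dd}, namely that $\tau_{\geq 1}N^\bullet$ be formal in the derived category of $(G,S)$-modules. My plan is to view $N^\bullet$ as the total complex of a bicomplex built from the Koszul differentials of \eqref{biresolution11} and the $B_1$-differentials of \eqref{complexa}, and to exploit the sparsity of Tate cohomology in Lemma \ref{B1cohomology02} (at most one nonzero weight per cohomological degree) to force the associated spectral sequence to degenerate at $E_2$ and to rule out Massey-product obstructions by a weight-separation argument. The remaining items---the precise cohomology identification, hypothesis \eqref{bb}, and the final tilting bookkeeping---should then be essentially routine.
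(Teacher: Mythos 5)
Your proposal follows the paper's overall strategy (apply Theorem~\ref{prop:main} to $N^\bullet=\Hom_{G_1}(C(k),T(j)\otimes_k S)$, identify the higher cohomology as a direct sum of $M_l^{\Fr}$'s and take stable syzygies), and the Donkin factorisation $T(j)\cong T(j_1)\otimes_k T(j_2)^{[1]}$ is also used in the paper (cf.~\eqref{eq:decomp}), so the skeleton is fine. However, two of the harder steps are where your plan is either off or underspecified.

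\textbf{Formality of $\tau_{\geq 1}N^\bullet$.} You propose to read $N^\bullet$ as a bicomplex built from ``the Koszul differentials of \eqref{biresolution11}'' together with the $B_1$-differentials. But \eqref{biresolution11} is a resolution of $(j\omega)\otimes_k S_+$, which plays no role here: $T(j)\otimes_k S$ is tilt-free, not a syzygy, and there is no Koszul factor in $N^\bullet$. (You may be importing the structure used in \S\ref{sec:formality} for the $K_{jk}$-case.) The paper's argument instead tensors the $B_1$-filtration of $S$ from the proof of \cite[Theorem~\ref{formalityy}]{FFRT1} with $T(j)$, yielding a filtration $F'_\bullet T_j$ of $T_j=T(j)\otimes_k S$ whose successive quotients are $B_1$-projective except for the top one $T(j)\otimes_k\widetilde{M}\otimes_k S^p_+$, and then runs the same formality argument as in loc.\ cit. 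Without this (or some substitute), hypothesis \eqref{dd} of Theorem~\ref{prop:main} is not established.

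\textbf{The nonzero-multiplicity claim.} This is the substantive ``Moreover'' part of the proposition and your proposal does not engage with it. A ``direct weight count'' indeed gives the upper bound $m$ (via \cite[Proposition~\ref{prop:G1inv}]{FFRT1} and the fact that $T(j)\otimes_k S/S^p_{>0}S$ has weights $\le n(p-1)+j$), but it does not show that every $T(l)^{\Fr}$ with $j_2\le l\le m$ actually appears. In the paper this requires: (i) matching $T^{\overline{\Fr}}$ with $N=(T(j)\otimes_k S/S^p_{>0}S)^{G_1}$ via Lemma~\ref{zadnja} and a degree argument; (ii) proving $T(q)$ appears in $S/S^p_{>0}S$ for every $0\le q\le n(p-1)$; and then (iii) making explicit choices of $q_1,q_2$ in the decomposition $T(j)\otimes_k T(q)=T(j_1)\otimes_k T(q_1)\otimes_k T(j_2)^{\Fr}\otimes_k T(q_2)^{\Fr}$ (e.g.\ $q_1=2p-2-j_1$ in case (1); $q_1\in\{p-1,2p-2-j_1,3p-2-j_1\}$ in case (2)) to exhibit each required $T(l)^{\Fr}$ as a summand of $N$. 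Your factorisation by $T(j_2)^{[1]}$ does reduce the problem, but you still need the base case $j_1\in[p-1,2p-2]$, $j_2=0$, where one must show each $T(l)^{\Fr}$ with $0\le l\le m(j_1)$ appears — and that is precisely the combinatorial step you have not supplied. Note also that $j_2$ is only a bound below which appearance is not \emph{guaranteed} (see \S\ref{ex:notinterval}); it is not a strict lower bound on which $T(l)^{\Fr}$ can occur, so the phrasing ``the lower bound $j_2$'' should be handled with care.
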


We will need several preparatory lemmas before embarking upon the proof of Proposition \ref{prop:mocdecomposition}, which are analogues of \cite[\eqref{eq1subsub},\eqref{eq:4seq}]{FFRT1}. 
To state the lemmas we first need to introduce some extra notation. Let $\Nscr^j$ be the set of $n+1$-tuples $t=(q_t,t_1,\ldots,t_n)\in \NN\times [p-2]^n$ for which $L(q_t)$ is a direct summand of the ``fusion'' tensor product (the non-$G_1$-projective part of the tensor product, see \cite[\S\ref{tensor}]{FFRT1}) $L(j)\underline{\otimes} L(t_1) \underline{\otimes} L(t_2)  
\otimes\allowbreak \cdots \underline{\otimes} L(t_n)$. 
Let $n_t$ be the multiplicity of the corresponding summand $L(q_t)$ and put $d_t=\sum t_i$. 

\begin{lemma}\label{predzadnja}
For $j<p-1$ and $l\geq 1$ there is an isomorphism of graded $(G^{(1)},S^p)$-modules 
\begin{equation}
\label{eq1subsubj}
H^l(G_1,T(j)\otimes_k S)\cong \bigoplus_{\substack{t\in \Nscr^j,q_t=0, l \equiv 0 (2) \text{ or}\\ q_t=p-2, l \equiv 1 (2)}}M_{l}^{\Fr}(-d_t)^{\oplus n_t}
\end{equation} 
Moreover, $H^l(G_1,T(j)\otimes_k S)$ is nonzero.
\end{lemma}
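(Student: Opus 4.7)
The statement is the direct analogue, for $0\le j<p-1$, of the case $j=0$ treated in \cite[\eqref{eq1subsub}]{FFRT1}. Since $T(j)=L(j)$, inserting the factor $T(j)$ into the $G_1$-cohomology computation amounts to inserting an extra simple $L(j)$ into the tensor product of simples, which is precisely what the indexing set $\Nscr^j$ encodes. I therefore plan to run the argument of loc.\ cit.\ essentially verbatim, tracking the extra $L(j)$ throughout.

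The starting point is a Steinberg-type decomposition of $S$ as a $G_1\times G^{(1)}$-module. Writing $S=\bigotimes_{i=1}^n k[x_i,y_i]$ with each factor $\cong\Sym V$, the standard decomposition expresses $\Sym V$ as a direct sum indexed by $t_i\in[p-2]$ of modules of the form $L(t_i)\otimes_k P_{t_i}$, where $P_{t_i}$ is a graded $G^{(1)}$-module coming from $k[x_i^p,y_i^p]$, with an internal degree shift by $t_i$. Tensoring the $n$ factors and multiplying by $T(j)=L(j)$, one obtains a decomposition of $T(j)\otimes_k S$ as a graded $G_1\times G^{(1)}$-module whose summands have the form
\[
\bigl(L(j)\otimes_k L(t_1)\otimes_k\cdots\otimes_k L(t_n)\bigr)\otimes_k P_t\,(-d_t),
\]
with $d_t=\sum_i t_i$ and $P_t=\bigotimes_i P_{t_i}$. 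Since $G_1$-cohomology kills the $G_1$-projective summands of the tensor product of simples, only the fusion summands of $L(j)\underline{\otimes}L(t_1)\underline{\otimes}\cdots\underline{\otimes}L(t_n)$ contribute; these are exactly the $L(q_t)^{\oplus n_t}$ indexed by $t\in\Nscr^j$.

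To identify $H^l(G_1,L(q_t))\otimes_k P_t$ with its $G^{(1)}$-structure, I apply Lemma \ref{B1cohomology02} to $L(q_t)$ together with \eqref{eq:B1char} (valid since $l\ge 1$): ordinary and Tate cohomology agree in this range, and the nonzero contributions arise precisely when $q_t=0$ with $l$ even, or $q_t=p-2$ with $l$ odd, producing the $B^{(1)}$-character $(lp\omega)$. Inducing from $B^{(1)}$ to $G^{(1)}$ (the relevant higher-$\Ind$ vanishing being part of the standard computation) and combining with the induced $P_t$-piece recovers $M_l^{\Fr}(-d_t)$, reproducing \eqref{eq1subsubj}. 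For nonvanishing, it suffices to exhibit one $t\in\Nscr^j$ of the correct parity: when $l$ is even, take $t_1=j$, $t_2=\cdots=t_n=0$, so that $L(j)\underline{\otimes}L(j)$ contains $L(0)$ as a fusion summand; when $l$ is odd, using $n\ge 4$ one can select the $t_i$ so that the fusion product contains $L(p-2)$. The principal obstacle is bookkeeping the grading shifts $d_t$ and the internal $G^{(1)}$-grading on each $P_t$ after the induction from $B_1$ to $G_1$; since $L(j)$ is $G^{(1)}$-trivial and carries no internal grading, the adaptation from the $j=0$ case is mechanical.
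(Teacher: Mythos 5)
Your plan follows the paper's route --- reduce to fusion tensor products, apply the $B_1$-cohomology formula of Lemma \ref{B1cohomology02}, and induce up to $G^{(1)}$ --- but the stated first step cannot be right as written. There is no direct sum decomposition of $\Sym V$, let alone of $S$, ``as a $G_1\times G^{(1)}$-module'' of the form $\bigoplus_{t_i} L(t_i)\otimes_k P_{t_i}$ with $P_{t_i}$ a $G^{(1)}$-module. What the paper (via \cite{FFRT1}) actually uses is a $B$-module short exact sequence (eq.\ (10.6) of loc.\ cit.)
\[
0\to Q\to P\to \bar{P}\to 0,\qquad
\bar{P}=\bigoplus_{i=0}^{p-2}L(i)\otimes_k P^p/y^pP^p(-i),
\]
in which $Q$ is $B_1$-projective. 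The pieces $P^p/y^p P^p$ (and hence $S^p_+=\bigotimes_i P_i^p/y_i^pP_i^p$) carry only a $B^{(1)}$-action --- the $B$-action does not extend to $G$ --- so they are not $G^{(1)}$-modules, and the decomposition lives only at the $B$-level, modulo $B_1$-projectives. Accordingly one must compute $\tH^l(B_1,T(j)\otimes_k S)$ first, identify it with a sum of $B^{(1)}$-characters $(lp\omega)$ tensored with $S^p_+$ coming from the fusion summands, and only then apply $\Ind^{G^{(1)}}_{B^{(1)}}$ \emph{to the whole object} $(lp\omega)\otimes_k S^p_+(-d_t)$, which via the Andersen--Jantzen spectral sequence gives $M_l^{\Fr}(-d_t)$. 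In your writeup you induce $L(q_t)$ alone and then ``combine with the induced $P_t$-piece''; but since $P_t$ has no $G^{(1)}$-structure there is no projection formula to invoke, and the tensor factor has to sit \emph{inside} the induction.

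Separately, the nonvanishing argument for $l$ odd is stated too vaguely, and the invocation of $n\geq 4$ is a red herring. The concrete choice $t=(p-2-j,0,\dots,0)$ (legitimate since $j<p-1$ gives $p-2-j\in[0,p-2]$) produces $L(p-2)$ as a fusion summand of $L(j)\underline{\otimes}L(p-2-j)\underline{\otimes}L(0)^{\underline{\otimes}(n-1)}$; this is exactly the paper's choice, and it works for every $n\ge 1$.
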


\begin{proof}
We proceed as in the proof of \cite[Propositions
\ref{mainprop0}, \ref{mainprop}]{FFRT1}. Recall that in loc.\ cit.\ we first use $B_1$-cohomology and then relate it to $G_1$-cohomology via
the Andersen-Jantzen spectral sequence (see \cite[\eqref{identity2}]{FFRT1}).
Now
\begin{equation}
\label{eq:tatefusion}
\tH^l(B_1,T(j)\otimes_k S)=\tH^l(B_1,T(j)\otimes_k\widetilde{M})\otimes_k S^p_+=\tH^l(B_1,L(j)\underline{\otimes} \widetilde{M})\otimes_k S^p_+,
\end{equation} 
where
\begin{equation}
\label{eq:mult}
\widetilde{M}=\bigoplus_{(i_1,\ldots,i_n)\in [p-2]^n} L(i_1)(-i_1) \underline{\otimes} L(i_2)(-i_2)\underline{\otimes} \cdots \underline{\otimes} L(i_n)(-i_n).
\end{equation}
Note that in the last equality of \eqref{eq:tatefusion} we also used that since $j<p-1$, we have $T(j)=L(j)$ (see \cite[Proposition \ref{dotyhenke}]{FFRT1}). The summands of $L(j)\underline{\otimes} \widetilde{M}$ are $G$-representations of the form $L(q_t)$ for $t\in \Nscr^j$ and by Lemma \ref{B1cohomology02} (applied with $a=0$) those summands will have nonzero $B_1$-cohomology if and only if either $l$ is even and $q=0$ or $l$ is odd and $q=p-2$.  In that case the $B_1$-cohomology is equal to $(lp\omega)$. 
Passing from $B_1$-cohomology to $G_1$-cohomology as in the proof of \cite[Proposition  \ref{mainprop}]{FFRT1} we then obtain \eqref{eq1subsubj}. 

It remains to argue that the modules are nonzero. This can be done on the level of $B_1$-cohomology. 
  From the explicit description of
$\underline{\otimes}$ (see \cite[\S\ref{tensor}]{FFRT1}) one obtains 
\begin{align}\label{eq:L(j)tnzL(j)}
L(j)\underline{\otimes}L(j)&=L(0)\oplus \cdots \\
L(j)\underline{\otimes} L(p-2-j)&=L(p-2)\oplus\cdots.
\end{align}
Fusion tensoring these identities with $L(0)^{\underline{\otimes} n-1}$ we find 
that $L(0)$ and $L(p-2)$ do indeed occur with nonzero multiplicity in $L(j)\underline{\otimes} \widetilde{M}$ and thus $\tH^l(B_1,T(j)\otimes_k S)$ equals $(lp\omega)\otimes_k S^p_+$ 
up to nonzero multiplicity and degrees. Thus, indeed at least one $n_t$ for $t$ satisfying the conditions in \eqref{eq1subsubj} is nonzero.
\end{proof}

\begin{lemma}\label{zadnja}
There is an exact sequence of graded $G^{(1)}$-modules
\begin{multline}\label{eq:tor}
0\r \oplus_{i=1}^\infty \Tor^{S^p}_{i+1}(H^i(G,T(j)\otimes_k S),k)\r 
(T(j)\otimes_k S)^{G_1}/(T(j)\otimes_k S)^{G_1}S^p_{>0}\\\r (T(j)\otimes_k S/S^p_{>0}S)^{G_1}\r \oplus_{i=1}^\infty \Tor^{S^p}_i(H^i(G,T(j)\otimes_k S),k)\r 0.
\end{multline}
Moreover, the left- and the right-most summands are direct sums of copies of the trivial $G^{(1)}$-representation living in degrees $\geq p$.
\end{lemma}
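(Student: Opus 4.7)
The plan is to follow the template of the analogous statement \cite[\eqref{eq:4seq}]{FFRT1} with $S$ replaced by $M := T(j)\otimes_k S$. Let $V\subset S^p$ be the $2n$-dimensional subspace spanned by $x_i^p, y_i^p$, so $S^p$ is a polynomial $k$-algebra on $V$ and the Koszul complex $K^\bullet = \Lambda^\bullet V\otimes_k S^p \to k$ is an $S^p$-free resolution of $k$ whose terms are $G_1$-trivial (while $V$ itself carries a nontrivial Frobenius-twisted $G$-action). Since $S$ is free over $S^p$, so is $M$, and hence $M\otimes_{S^p} K^\bullet \simeq M/S^p_{>0} M$ in cohomological degree zero. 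Termwise $G_1$-invariants commute with tensoring by $K^\bullet$ (as the latter is $G_1$-trivial), so we obtain a canonical map of complexes of graded $(G^{(1)}, S^p)$-modules
\[
M^{G_1}\otimes_{S^p} K^\bullet = (M\otimes_{S^p} K^\bullet)^{G_1} \to R\Gamma_{G_1}(M\otimes_{S^p} K^\bullet) \simeq R\Gamma_{G_1}(M/S^p_{>0}M).
\]
Let $C$ denote its cone. The source has cohomology $\Tor^{S^p}_\bullet(M^{G_1}, k)$ concentrated in non-positive cohomological degrees, while the target lies in non-negative degrees, so the long exact sequence in cohomology of the associated triangle telescopes directly to
\[
0 \to H^{-1}(C) \to M^{G_1}/S^p_{>0}M^{G_1} \to (M/S^p_{>0}M)^{G_1} \to H^0(C) \to 0.
\]

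Next I would use the identification $C \simeq (\tau^{\geq 1} R\Gamma_{G_1}(M))\otimes^L_{S^p} k$ (arising from applying $-\otimes^L_{S^p} k$ to the truncation triangle of $R\Gamma_{G_1}(M)$) together with the associated universal-coefficients spectral sequence $E_2^{p,q} = \Tor^{S^p}_{-p}(H^q(G_1, M), k)$ for $q\geq 1$, converging to $H^{p+q}(C)$. By Lemma \ref{predzadnja} each $H^q(G_1, M)$ is a direct sum of shifted copies of $M_q^{\Fr}$, and Frobenius-twisting \eqref{biresolution01} gives an explicit $S^p$-free resolution of $M_q^{\Fr}$ (supplemented by a tilt-free resolution for $q\geq n-2$ via Proposition \ref{prop:propKjk}\eqref{item:stabletilde}). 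Under the standing assumption $p\geq \assume$, each nonzero position-$i$ coefficient of \eqref{biresolution01} is $G$-isotypic of a single simple $L(q-i)$ (for $i\leq q$) or $L(i-q-2)$ (for $i\geq q+2$) tensored with the trivially acted-on factor $\wedge^i F$; a putative $d_r$-differential would relate bidegrees $(-i,q)\to(-i+r,q-r+1)$, whose $V$-highest weights differ by exactly one, so Schur's lemma in $G^{(1)}$-equivariant form forces $d_r = 0$. Thus $E_2 = E_\infty$ on the diagonals contributing to $H^{-1}(C)$ and $H^0(C)$, and the corresponding $G^{(1)}$-graded filtrations split canonically into the claimed direct-sum identifications.

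For the final assertion, the summands contributing to $\Tor^{S^p}_q(M_q^{\Fr}, k)$ and $\Tor^{S^p}_{q+1}(M_q^{\Fr}, k)$ come from positions $i=q, q+1$ where the $V$-factor is the trivial $L(0)=k$; only the $G^{(1)}$-trivial Frobenius twist of $\wedge^i F$ survives, and it sits in internal degree $ip\geq p$, with the non-negative shifts $-d_t$ only increasing the degree further. The main obstacle will be the vanishing of higher $d_r$-differentials: the Schur's-lemma argument handles $1\leq q\leq n-3$ cleanly using the single-simple-type structure of each term of \eqref{biresolution01} (and the skipped position $q+1$), but the range $q\geq n-2$ requires separate care, since the tilt-free resolution supplied by Proposition \ref{prop:propKjk}\eqref{item:stabletilde} may have mixed-isotypic top terms that in principle allow nonzero differentials; careful bookkeeping of internal degrees, combined with the parity constraints from \eqref{eq1subsubj}, should nevertheless force vanishing, but this is the delicate step.
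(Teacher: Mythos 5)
Your overall strategy matches the paper's: obtain the $4$-term sequence from the cone of $M^{G_1}\otimes_{S^p}K^\bullet\to R\Gamma_{G_1}(M\otimes_{S^p}K^\bullet)$, identify the cone with $(\tau^{\geq 1}R\Gamma_{G_1}M)\otimes^{L}_{S^p}k$, and compute the outer terms from Lemma \ref{predzadnja} together with the explicit $S^p$-free resolutions of $M_q^{\Fr}$ coming from \eqref{biresolution01} and its analogues for $q\geq n-2$; this is precisely what the paper's citation of \cite{FFRT1} plus Lemma \ref{predzadnja} amounts to. The ``moreover'' clause you get correctly.

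However there is a gap in the degeneration step, and it is not where you flagged it. You argue every $d_r$ connects $G^{(1)}$-isotypic pieces whose highest weights differ by one, hence vanishes by Schur. That fails exactly for the $d_r$ leaving the diagonal $s+t=-1$: there the source is the homological-degree-$(q+1)$ term of the resolution of $M_q$, which is the $D^0V\otimes_k\wedge^{q+2}F$ term, of $V$-highest weight $0$; and the target $\Tor^{S^p}_{q-r+1}(H^{q-r+1}(G_1,M),k)$ on the diagonal $s+t=0$ is $(\wedge^{q-r+1}F)^{\Fr}$-isotypic, also of $V$-highest weight $0$. With equal weights, $G^{(1)}$-Schur says nothing, and internal degree alone does not kill these either (the shifts $-d_t$ span a range wide enough for degrees to collide). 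What does kill them is the $\GL(F)$-equivariance that you mention (``the trivially acted-on factor $\wedge^i F$'') but do not use: $\wedge^{q+2}F$ and $\wedge^{q-r+1}F$ are distinct simple $\GL(F)$-modules, so any equivariant map between them vanishes. You should also say why the resulting filtrations on $H^{-1}(C)$, $H^0(C)$ actually \emph{split}, not just compute their associated graded; e.g. $\Ext^1_{G^{(1)}}(k,k)=0$ since $G^{(1)}\cong\SL_2$. Finally, the case $q\geq n-2$ you single out as delicate is not in fact problematic: the resolutions induced from \eqref{biresolution11} for such $q$ still have single-isotypic terms of the form $S^{q-m}V\otimes_k\wedge^m F$ or $D^{m-q-2}V\otimes_k\wedge^m F$, so no mixed isotypes appear and the same argument applies; Proposition \ref{prop:propKjk}\eqref{item:stabletilde} concerns vanishing of stable syzygies and is not the relevant input here.
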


\begin{proof}
The proof of the first claim is the same as in \cite[Lemma \ref{lem:4seq}]{FFRT1}. The second claim is proved by arguing as in the proof of \cite[Lemma \ref{lem:tor}]{FFRT1} using Lemma \ref{predzadnja} (i.e. by Lemma \ref{predzadnja} one needs to compute $\Tor^{S^p}_{l}(M_i^{\Fr},k)$, $l\in \{i,i+1\}$, which can be calculated  by employing the free $(G,S)$-resolutions  of $M_i$ (see \eqref{biresolution01} for $0\leq i\leq n-3$ and the analogous ones induced from \eqref{biresolution11} for $i\geq n-2$)). 
\end{proof}

\subsection{Proof of Proposition \ref{prop:mocdecomposition}}
As discussed in \S\ref{sec:proofoutline} we will apply Theorem \ref{prop:main} with  $M^\bullet=\Hom_{G_1}(C(k),\allowbreak T(j)\otimes_k S)$ and with $C(k)$ (as mentioned in \S\ref{sec:proofoutline}) a left
projective $G_1$-resolution of~$k$ consisting of $G$-tilting modules
(see \cite[\S\ref{sec:proofoffinaldecompsub}]{FFRT1}). 
To this end we need to verify the conditions (1)-(4). 
The proof goes along similar lines as the proof of \cite[Theorem \ref{mainth}]{FFRT1} in \cite[\S \ref{sec:mainproof}]{FFRT1} therefore we only indicate the necessary modifications. We use several times that the tensor product of modules with good filtration has a good filtration by \cite{MR1054234} (see \cite[Proposition \ref{tensorgoodfiltration}]{FFRT1}).

\begin{enumerate}
\item Similar to \cite[\S \ref{sec:proofoffinaldecompsub}\eqref{mainffrt1-3}]{FFRT1}. We can apply \cite[Lemma \ref{free}]{FFRT1} with $T(j)\otimes_k S$ instead of $S$ since $P\otimes_k T(j)$ is a projective $G_1$-representation with good filtration if the same holds for $P$. 
\item The same as \cite[\S \ref{sec:proofoffinaldecompsub}\eqref{mainffrt1-4}]{FFRT1}, using that $T(j)\otimes_k S$ has a good filtration.
\item  
Similar to \cite[\S \ref{sec:proofoffinaldecompsub}\eqref{mainffrt1-5}]{FFRT1}. 
By \eqref{Sdual} in loc. cit. we have an isomorphism $\Hom_{S^p}((T(j)\otimes_k S)^{G_1},S^p)
\cong \allowbreak (T(j)\otimes_k S)^{G_1}(2n(p-1))$.
Thus, the result follows by \cite[Corollary 2.2]{MR1202803} 
(see \cite[Theorem 4.13]{FFRT1}).
\item  Similar to \cite[\S \ref{sec:proofoffinaldecompsub}\eqref{mainffrt1-6}]{FFRT1}.
We follow the proof of \cite[Theorem \ref{formalityy}]{FFRT1}. 
 Tensoring the filtration \eqref{eq:filtffrt1} on $S$ in loc. cit. with $T(j)$, we obtain the filtration $(F'_iT_j)_{i=-1}^n$ of $T_j:=T(j)\otimes_k S$, 
such that $F'_iT_j/F'_{i-1}T_j$ is a projective $B_1$-representation for $i<n$ and $T_j/F'_{n-1}T_j=T(j)\otimes_k\widetilde{M}\otimes_k S^p_+$. We can then proceed as in loc. cit.
\end{enumerate}
As a result, we obtain the decomposition
\begin{equation}
\label{eq:result}
(T(j) \otimes S)^{G_1} \cong T^{\overline{\Fr}} \otimes_k S^p \oplus \bigoplus_{l\ge 1} \tilde{\Omega}^{l+1}_s \HHH^l(G_1,T(j)\otimes_k S)^{\Fr},
\end{equation}
where $T^{\overline{\Fr}} \otimes_k S^p$ denotes the tilt-free summand (see
\S\ref{sec:not2} for $\overline{\Fr}$).

We first discuss the non-tilt-free summands of $(T(j)\otimes_k S)^{G_1}$. To this end, we have to compute the $(G^{(1)},S^p)$-modules $H^l(G_1,T(j)\otimes_k S)=\tH^l(G_1,T(j)\otimes_k S)$ for $l\ge 1$ (see \cite[\eqref{eq:nontate}]{FFRT1}). We split this up into two cases, depending on the value of $j$.

For $j\ge p-1$, $T(j)$ is $G_1$-projective (see \cite[Proposition \ref{cor:tiltingkernel}]{FFRT1}) and hence
the same holds for $T(j)\otimes_k S$.  So 
\begin{equation}
\label{eq:coh-zero}
\tH^l(G_1,T(j)\otimes_k S)=0
\end{equation} 
for all $l$, and by \eqref{eq:result} there are no non-tilt-free summands in $(T(j) \otimes_k S)^{G_1}$, partially proving \eqref{eq:tiltfreecase}. 

For $j<p-1$ we use Lemma \ref{predzadnja} to obtain    
\begin{equation}
\bigoplus_{l\ge 1} \tilde{\Omega}^{l+1}_s \HHH^l(G_1,T(j)\otimes_k S)^{\Fr}=
\bigoplus_{l=1}^{n-3}\bigoplus_{\substack{t\in \Nscr^j,q_t=0, l \equiv 0 (2) \text{ or}\\ q_t=p-2, l \equiv 1 (2)}} K_l^{\Fr}(-p(l+2)-d_t)^{\oplus n_t},
\end{equation} 
using that $\tilde{\Omega}^{l+1}_{s} M_l=0$ for $l\geq n-2$. Moreover, up to (nonzero) multiplicities we obtain all $K_l^{\Fr}$ for $1\leq l\leq n-3$, which in particular live in degrees $\geq p$, as non-tilt-free summands in $(T(j) \otimes_k S)^{G_1}$.

\medskip

For the tilt-free summands of $(T(j)\otimes_k S)^{G_1}$ we can proceed
as in \cite[\S\ref{subsec:compT},\S\ref{sec:combdesc}]{FFRT1}.  Define
$N=(T(j)\otimes_k S/S^p_{>0}S)^{G_1}$. We will again consider two cases, according to the value of $j$.
For $j\ge p-1$, by combining Lemma \ref{zadnja} with \eqref{eq:coh-zero}, we find that
$T^{\overline{\Fr}}=N$. 
For $j<p-1$, we claim that $T^{\overline{\Fr}}$ and $N$ have the same indecomposable summands as $G^{(1)}$-representations. 
 To see this we proceed as in
\cite[\S\ref{subsec:compT}]{FFRT1}.  
We first obtain an isomorphism as $G^{(1)}$-modules (analogous to \cite[\eqref{finaldecomp2}]{FFRT1}):
\begin{multline}\label{eq:5.12}
(T(j) \otimes_k S)^{G_1}/S_{>0}^p(T(j) \otimes_k S)^{G_1} \\ \cong  \left(\bigoplus_{l=1}^{n-3} \bigoplus_{\begin{smallmatrix}t\in \Nscr^j,q_t=0,l\equiv 0\,(2)\text{ or }\\ q_t=p-2,l\equiv 1\,(2)\end{smallmatrix}} 
\wedge^{l+2}F(-p(l+2)-d_t)^{\oplus n_t}\right)\oplus T^{\overline{\Fr}},
\end{multline}
where we used Proposition \ref{prop:propKjk}\eqref{item:0}. In particular, the non-tilting part lives in degrees $\geq p$. 
We now need to compare 
$(T(j)\otimes_k S)^{G_1}/S^p_{>0}(T(j)\otimes_k S)^{G_1}$ to $N$. 
From Lemma \ref{zadnja} (and \eqref{eq:5.12}) it follows that 
$T^{\overline{\Fr}}$ can differ from $N$ only by copies of the trivial $G^{(1)}$-representation which
live in degrees $\geq p$. Since $(T(j)\otimes_k T(j)(-j))^{G_1}$
occurs in $N$ as $T(j)(-j)$ appears in $S/S^p_{>0}S$ by
\cite[\eqref{eq:roleofsigns}]{FFRT1}, $N$ contains a copy of trivial $G^{(1)}$-representation (see
\eqref{eq:L(j)tnzL(j)}) in degree $j\leq p-2<p$, which then also needs
to belong to $T^{\overline{\Fr}}$. Thus, $T^{\overline{\Fr}}$ and $N$
coincide up to nonzero multiplicities.

Moreover, $N$ is a tilting $G^{(1)}$-module (c.f. \S\ref{subsec:compT} in loc. cit.), which is a direct sum of  $T(i)^{\Fr}$ for $i\leq \lfloor(n(p-1)+j-2p+2)/p\rfloor=m$ by \cite[Proposition \ref{prop:G1inv}]{FFRT1} as the  weights  of $T(j)\otimes_k S/S^p_{>0}S$ are $\leq n(p-1)+j$. This establishes at least the upper bound on the tilt-free summands in \eqref{eq:nontiltfreecase}\eqref{eq:tiltfreecase}.

\medskip We need to see which $T(l)^{\Fr}$ for $0\leq l\leq m$
actually appear in $T^{\overline{\Fr}}$ and hence in 
\eqref{eq:nontiltfreecase}\eqref{eq:tiltfreecase}. 
By the previous paragraph we know that $N$ and $T^{\overline{\Fr}}$ contain the same indecomposable tilting representations. 
Therefore we now study the $T(l)^{\Fr}$ for $0\leq l\leq m$ that appear  
in\footnote{Strictly speaking it would not be necessary to include the case $q=0$ and $j\le p-2$ 
as it has already been dealt with in the previous paragraph.} $N$.

First we claim that
$T(q)$ appears in $S/S^p_{>0}S$ for all $0\leq q\leq n(p-1)$. From
\cite[\eqref{eq:roleofsigns}]{FFRT1}, one sees that it suffices to
check that every $0\leq q\leq n(p-1)$ occurs as some $q_t$ for
$t \in \Mscr$. This can be checked explicitly using \cite[Lemma
14.5]{FFRT1}.

Now let $q$ be as in the previous paragraph.  By \cite[Proposition \ref{cor:simple}]{FFRT1}, if we write $q$ in the form $q=q_1+pq_2$, where either $0 \leq q_1 \leq p-2$ and $q_2=0$ or $p-1 \leq q_1 \leq 2p-2$ and $q_2 \geq 0$, we can then decompose 
\begin{equation}
\label{eq:decomp}
T(j)\otimes_k T(q)=T(j_1)\otimes_k T(q_1)\otimes_k T(j_2)^{\Fr}\otimes_k T(q_2)^{\Fr}.
\end{equation} 
First assume $0\le j<  p-1$, in particular $j_2=0$. To establish \eqref{eq:nontiltfreecase} 
we need to explicitly produce the summands $T(l)^{\Fr}$, for all $0 \leq l \leq m$. We will do this by making specific choices for $q$. 
Set $q_1=2p-2-j_1$ ($> p-1$ since $j_1=j \in [0,p-2]$), so \eqref{eq:decomp} becomes 
\begin{equation}
\label{eq:tensor-j}
T(j) \otimes_k T(q)=T(j_1) \otimes_k T(2p-2-j_1) \otimes_k T(0)^{\Fr} \otimes_k T(q_2)^{\Fr}.
\end{equation}
As the highest weight of $T(j_1) \otimes_k T(2p-2-j_1)$ is $2p-2$, \eqref{eq:tensor-j} contains in particular $T(2p-2) \otimes_k T(q_2)^{\Fr}$ as a summand. We hence see that $N$ contains the summand
\begin{equation}\label{eq:59}
(T(2p-2) \otimes_k T(q_2)^{\Fr})^{G_1}=T(2p-2)^{G_1} \otimes_k T(q_2)^{\Fr}=T(q_2)^{\Fr},
\end{equation}
where we used \cite[Proposition \ref{prop:G1inv}]{FFRT1} for the last equality. Since $0\leq pq_2\leq \allowbreak n(p-1)-\allowbreak (2p-2-j)=(n-2)(p-1)+j$ we get all $T(l)^{\Fr}$ with $l\in [0,m]$ and hence \eqref{eq:nontiltfreecase}. 

Assume now
$j\geq p-1$. To establish \eqref{eq:tiltfreecase}, we need to explicitly produce the summands $T(l)^{\Fr}$, for all $j_2 \leq l \leq m$. We need to consider the two possible cases: $j_1=p-1$ and $j_1 >p-1$. If $j_1=p-1$, then set $q_1=p-1$. By a similar reasoning as above, we see that $T(j) \otimes_k T(q)$ contains $T(2p-2)\otimes_k T(q_2)^{\Fr} \otimes_k T(j_2)^{\Fr}$ as a summand, so $N$ contains $T(q_2)^{\Fr}\otimes_k T(j_2)^{\Fr}$ as a summand. Varying $q_2$, we therefore get all $T(l)^{\Fr}$ for $l\in [j_2,m]$. In the case $j_1>p-1$, by setting $q_1=2p-2-j_1$ and $q_2=0$ we similarly find $T(j_2)^{\Fr}$ as a summand. To obtain the other summands, setting $q_1=3p-2-j_1$, we see that $T(j) \otimes_k T(q)$ contains 
\begin{equation}
T(3p-2) \otimes_k T(j_2)^{\Fr} \otimes_k T(q_2)^{\Fr}=T(2p-2) \otimes_k T(1)^{\Fr} \otimes_k T(j_2)^{\Fr} \otimes_k T(q_2)^{\Fr},
\end{equation}
where we used \cite[Proposition \ref{cor:simple}]{FFRT1}. 
Therefore $N$ contains $T(1+j_2+q_2)^{\Fr}$ as a summand. Varying $q_2$, we also find all $T(l)^{\Fr}$ for $l\in [j_2+1,m]$.

\subsection{Examples and limiting behaviour}
\label{ex:notinterval} 
  Now we give examples of $j\ge p-1$ so that in Proposition
  \ref{prop:mocdecomposition}(2)  not all the potential summands
  $T(l)^{\Fr}\otimes_k S^p$ with $0\leq l<j_2$ appear with nonzero
  multiplicity in $(T(j)\otimes_k S)^{G_1}$.
We also give examples of $j\ge p-1$  where the set of summands is not even an interval. 

\medskip

To recapitulate: by the proof of Proposition \ref{prop:mocdecomposition}
the summands $T(l)^{\Fr}$ appearing in \eqref{eq:tiltfreecase} 
are those that appear in one of the representations
\[
(T(j)\otimes_k T(q))^{G_1}\allowbreak=\allowbreak (T(j_1)\otimes_k T(q_1))^{G_1}\otimes_k\allowbreak
(T(j_2)\otimes_k T(q_2))^{\Fr}
\]
for $0\le q\le n(p-1)$.
This is a purely combinatorial problem using \cite[Propositions \ref{prop:G1inv},
\ref{cor:simple}]{FFRT1}, although it seems not an easy one. Therefore we will only consider specific cases.

\medskip

A simple case where it is obvious that not all $0\leq l< j_2$ can appear with nonzero multiplicity  
is when $T(j_2)$ is $G_r$-projective (which is equivalent to $j_2\geq p^r-1$ by \cite[Lemma E.8]{jantzen2007representations}). Then every summand $T(l)$ of 
$((T(j_1)\otimes_k T(q_1))^{G_1})^{\Fr^{-1}}\otimes_k T(j_2)\otimes_k\allowbreak T(q_2)$ is $G_r$-projective as well and hence  $l\geq p^r-1$.

\medskip

We now give an example where the $l$'s that occur do not form an interval. Assume $p\geq n$ and 
let  $j=p-1+(3p-2)p$. 
In that case  $(T(j_1)\otimes_k T(q_1))^{G_1}=(T(p-1)\allowbreak\otimes_k T(q_1))^{G_1}$ is either $0$ or $k$ (up to multiplicities) (using \cite[Proposition \ref{prop:G1inv}]{FFRT1} together with $q_1\leq 2p-2$, and thus $p-1+q_1\leq 3p-3<p+(2p-2)$). 
If $q_1<p-1$ it is $0$ and if $q_1=p-1$ it is $k$. Since other $q_1$ for which we get $k$ would impose a more restrictive upper bound on the possible choice of $q_2$, we may assume $q_1:=p-1$.   
In that case the choices for $q_2$ are $0\leq q_2\leq c:=\lfloor ((n-1)(p-1))/p\rfloor\leq p-2$.  
We have $j_2=3p-2=2p-2+p$ and thus  $T(j_2)= T(2p-2)\otimes_k T(1)^{\Fr}$. 
It follows that 
\begin{equation}\label{eq:lde}
T(j_2)\otimes_k T(q_2)=T(2p-2)\otimes_k T(q_2)\otimes_k T(1)^{\Fr}.
\end{equation}
We claim that  $T(m)$ for $p\le 2p-2-c\leq m\leq 2p-2+c$ 
are all the 
 summands of $T(2p-2)\otimes_k T(q_2)$ for $0\leq q_2\leq c$. 
Assuming the claim, we split the interval of valid $m$'s as follows:
\[
[2p-2-c,2p-2+c]=[2p-2-c,2p-2]\cup [2p-1,2p-2+c].
\]
For $m\in [2p-2-c,2p-2]$ we use that $T(m)\otimes T(1)^{\Fr}=T(m+p)$. For $m\in  [2p-1,2p-2+c]$ we use
\begin{align*}
T(m)\otimes_k T(1)^{\Fr}&=T(m-p)\otimes_k T(1)^{\Fr}\otimes_k T(1)^{\Fr}\\
&=T(m-p)\otimes_k(T(0)^{\Fr}\oplus T(2)^{\Fr})\\
&=T(m-p)\oplus T(m+p).
\end{align*}
Combining the cases
we find that the summands of \eqref{eq:lde} 
correspond to the natural numbers in the  intervals  $[3p-2-c,3p-2]$, $[p-1,p-2+c]$, $[3p-1,3p-2+c]$, or equivalently in $[p-1,p-2+c]$ and $[3p-2-c,3p-2+c]$.  These intervals are neither consecutive nor overlapping
as (using that $c\le p-2$): $(p-2+c)+1\le 2p-3<2p-1\le \allowbreak(3p-2-c)-1$ (i.e.\ at least the weights $2p-3,2p-2,2p-1$ occur between the intervals). 

\medskip

It remains to prove the claim. Let us use $<_\oplus$ to denote the direct summand relation.  Note that as $T(1)^{\otimes q_2}$ is a direct sum of $T(q')$ for $q'\le q_2$ we have
\begin{multline}
\label{eq:reduction}
\{m\mid T(m)<_\oplus T(2p-2)\otimes_k T(q_2), 0\leq q_2\leq c\}=\\\{m\mid T(m)<_\oplus T(2p-2)\otimes_k T(1)^{\otimes q_2}, 0\leq q_2\leq c\}.
\end{multline}
The claim follows by induction on $c$ on the right hand side of \eqref{eq:reduction}
(as $c\leq p-2$) by the following identities which we verify below:
\begin{equation}
\label{eq:tiltingpieri}
T(a)\otimes_k T(1)=
\begin{cases}
T(a+1)\oplus T(a-1) & \text{if $p< a\leq 3p-3$, $a\neq 2p-1,2p$},\\
T(a+1)\oplus T(a-1)^{\oplus 2} & \text{if $a=p,2p$},\\
T(a+1) &\text{if $a=p-1,2p-1$}.
\end{cases}
\end{equation}
Note that these formulas are for $a\in [p-1,3p-3]=[p-1,2p-2]\cup [2p-1,2p-3]$. We claim it is sufficient to establish \eqref{eq:tiltingpieri} for $a\in [p-1,2p-2]$. Assume we have done
this and  $a\in [2p-1,3p-3]$. Then we use
$T(a)=T(a')\otimes_k T(1)^{\Fr}$ with $a'=a-p\in [p-1,2p-3]\bigl(\subset [p-1,2p-2]\bigr)$. Then using \eqref{eq:tiltingpieri} for $T(a')$ we find an expression $T(a)\otimes_k T(1)=\oplus T(a'\pm 1)\otimes_k T(1)^{\Fr}$, which establishes (with attention to the special case $a'=p-1$) that $a'\pm 1\in [p-1,2p-2]$. Then $T(a'\pm 1)\otimes T(1)^{\Fr}=T(a'\pm 1+p)=\oplus T(a\pm 1)$ where $a\pm 1$ is as given by the formulas so that we also
obtain the claimed result for $a\in [2p-1,3p-3]$.

So now we assume $a\in [p-1,2p-2]$. This case can be done  for example by 
combining \cite[Lemma 1.1]{MR2143497} (see \cite[Proposition \ref{dotyhenke}]{FFRT1})  which shows that $T(p-1)=S^{p-1}V$ and $T(a)$ is an extension of $S^aV$ by $S^{2p-2-a}V$,
with Pieri's formulas, which  in characteristic $p$ are filtrations \cite{MR931171}, 
\[
0\to S^{b-1}V\to S^bV\otimes_k V\to S^{b+1}V\to 0.
\]
In this way we may compute $[T(a)\otimes_k T(1):S^cV]$ for all $c$ and match it with the right-hand side of \eqref{eq:tiltingpieri}.

By a more careful inspection (as indicated in Example \ref{ex:notinterval}) one may also derive a precise formula for the summands in Proposition \ref{prop:mocdecomposition}(2) which appear with nonzero multiplicity. However, the formula (as we see it now) depends on distinguishing several cases and the computations are rather technical, thus we omit it. In the next corollary we show that, despite its imprecision, Proposition \ref{prop:mocdecomposition} is sufficient to obtain a definite result after iteration.
\begin{corollary}\label{cor:ffrtnmoc}
For $r\gg 0$ we have
\[
\{l\mid T(l)^{\Fr^r}\otimes_k S^{p^r}\text{ is a summand of $(T(j)\otimes_k S)^{G_r}$}
\}=\begin{cases}
[0,n-3]&\text{if $j\leq n-3$,}\\
[0,n-2]&\text{if $j\geq n-2$.}
\end{cases}
\]
\end{corollary}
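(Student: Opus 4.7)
The strategy is to iterate Proposition \ref{prop:mocdecomposition} together with the analogous decomposition of $K_l^{G_1}$ (and of $K_{jk}^{G_1}$ at later stages), which are being established in parallel in the proof of Theorem \ref{thm:higherFr}. The basic recursion is that $G_r$-invariants can be computed in stages: $M^{G_r} \cong (M^{G_1})^{(G^{(1)})_{r-1}}$, so if $N^{\Fr}$ is a summand of $(T(j) \otimes_k S)^{G_1}$ for some $(G,S)$-module $N$, then its contribution to $(T(j) \otimes_k S)^{G_r}$ is the Frobenius twist of $N^{G_{r-1}}$. Hence the set of indices $l$ for which $T(l)^{\Fr^r} \otimes_k S^{p^r}$ is a summand of $(T(j) \otimes_k S)^{G_r}$ equals the union of the analogous sets for $(T(l') \otimes_k S)^{G_{r-1}}$ over $l'$ running through the level-$1$ tilt-free indices, together with the analogous sets for $K_{l'}^{G_{r-1}}$ over $l'$ running through the level-$1$ non-tilt-free indices.

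For the upper bound, I would induct on $r$. At $r = 1$, Proposition \ref{prop:mocdecomposition} gives $l \leq m = n-2 + \lfloor (j-(n-2))/p \rfloor$; this equals $n-3$ for $j \leq n-3$ and $n-2$ for $n-2 \leq j \leq n-3+p$. For larger $j$ the bound exceeds $n-2$, but iteration of case (2) reduces indices by roughly a factor of $p$ at each step (writing $j = j_1 + p j_2$, the new tilt-free indices lie in $[j_2, m]$ with $m \approx j/p$), so after finitely many iterations all indices land in $[0, n-2]$. The inductive step then combines this with the corresponding bounds for the tilt-free summand indices of $K_l^{G_r}$ and $K_{jk}^{G_r}$, which by Theorem \ref{thm:higherFr} lie in $[0, n-3]$.

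For the lower bound, I would apply Proposition \ref{prop:mocdecomposition}(1) iteratively: for any $l \leq n-3 < p-1$ (the boundary case $p = n-2$, $l = n-3 = p-1$ is handled separately via case (2)), the decomposition of $(T(l) \otimes_k S)^{G_1}$ contains $T(l')^{\Fr} \otimes_k S^p$ for all $l' \in [0, n-3]$ with nonzero multiplicity. Starting from $j \leq n-3$ and iterating, all of $[0, n-3]$ appears at every step $r \geq 1$. When $j \geq n-2$, the first iteration produces a tilt-free summand of index $\geq n-2$; iterating case (2) drives indices into $[n-2, 2p-2]$, and once there applying case (1) to $T(n-2)$ (using $n-2 < p-1$ when $p \geq n$; for $p \in \{n-2, n-1\}$ one argues directly with case (2) following the last paragraph of the proof of Proposition \ref{prop:mocdecomposition}) produces the full interval $[0, n-2]$.

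The main obstacle is controlling the tilt-free contributions arising from the non-tilt-free intermediate summands $K_l^{\Fr}$ and $K_{jk}^{\Fr}$: one must establish that their tilt-free summand indices also lie in $[0, n-3]$, since otherwise the iteration could push the answer beyond the claimed interval. This control is a consequence of the full iteration scheme developed in the proof of Theorem \ref{thm:higherFr}, and is where the bulk of the technical work lies; once it is in hand, the corollary follows by the straightforward induction sketched above.
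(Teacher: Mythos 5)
Your proof is correct and takes essentially the same approach as the paper's: iterate Proposition \ref{prop:mocdecomposition} and track the set of tilt-free indices, using the control from Proposition \ref{prop:Kjkdecomposition} to ensure that the non-tilt-free intermediate summands $K_l^{\Fr}, K_{jk}^{\Fr}$ only ever contribute tilt-free indices in $[0,n-3]$. The paper expresses this more compactly by defining $a(j)=j_2$ and $b(j)=m$, observing that the union $\bigcup_{j'\in[a(j),b(j)]}[a(j'),b(j')]$ collapses to the single interval $[a(a(j)),b(b(j))]$ because $a,b$ are increasing and $a$ increases by at most $1$ per step, and then reading off the limit $b^{(r)}(j)\to n-3$ or $n-2$ from the elementary dynamics $u\mapsto\lfloor u/p\rfloor$ after the substitution $j=u+(n-2)$.
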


\begin{proof}
Proposition \ref{prop:mocdecomposition} tells us that $(T(j)\otimes_k S)^{G_1}$ contains summands of the form $T(l)^{\Fr}\otimes_k S^p$ for every $l$ in the interval
$[a(j),b(j)]$ where
\begin{align*}
a(j)=j_2&=\max\left(0,\left\lfloor \frac{j+1-p}{p}\right\rfloor\right)\\
b(j)=m&=n-2+\left\lfloor\frac{j-n+2}{p}\right\rfloor
\end{align*}
and moreover there are no summands for $l>b(j)$.
Iterating this we find that $(T(j)\otimes_k S)^{G_2}$ contains summands of the form $T(l)^{\Fr^2}\otimes_k S^{p^2}$ 
for every $l$ in the set
\[
\bigcup_{j'\in [a(j),b(j)]} [a(j'),b(j')]=[a(a(j)),b(b(j))]
\]
where the asserted equality follows from the fact that $a(j),b(j)$ are increasing functions of $j$ and that $a(j)$ increases for at most $1$ per turn, which also implies  
that there are no summands for $l>b(b(j))$.

Iterating we see that 
$(T(j)\otimes_k S)^{G_r}$ contains summands of the form $T(l)^{\Fr^r}\otimes_k S^{p^r}$ for every $l$ in the interval
$
[a^{(r)}(j),b^{(r)}(j)]
$ 
(where $(-)^{(r)}$ denotes $r$-fold composition), 
and there are no summands for $l>b^{(r)}(j)$.

Now it is easy to see that $a(j)\le j$, with equality if and only if $j=0$. Hence $a^{(r)}(j)=0$ for $r\gg 0$. We conclude
that for $r\gg 0$ the summands of $(T(j)\otimes_k S)^{G_r}$ of the form $T(l)^{\Fr^r}\otimes_k S^{p^r}$ 
correspond precisely to the elements of the interval $[0,b^{(r)}(j)]$.

It now remains to determine $b^{(r)}(j)$ for $r\gg 0$. I.e.\ we have to study the limiting behaviour of 
the (sadly rather trivial) discrete dynamical system
\begin{equation}
\label{eq:dynamical}
j\mapsto n-2+\left\lfloor\frac{j-n+2}{p}\right\rfloor.
\end{equation}
Putting $j=u+(n-2)$ this may also be written as
\[
\label{eq:dynamical2}
u\mapsto \left\lfloor\frac{u}{p}\right\rfloor
\]
which clearly has limiting values $-1,0$ depending on whether we start with $u<0$ or $u\ge 0$. Hence \eqref{eq:dynamical} has limiting values $n-3$, $n-2$ depending on whether we start 
with $j\le n-3$ or $j\ge n-2$.
\end{proof}

\begin{remark}
The appearance of the interval $[0,n-2]$ in Corollary \ref{cor:ffrtnmoc} is quite pleasing. Recall from \S\ref{subsec:equivalence} that the decomposition of $(G^{(1)},S^p)$-module $(T(j)\otimes_k S)^{G_1}$ yields a decomposition of the $R^p$-module $T\{j\}:=(T(j)\otimes_k S)^G$. 
By \cite[Proposition \ref{prop:cm}]{FFRT1}, $T\{j\}$ is not a Cohen-Macaulay $R$-module for $j\ge n-2$, and hence it is also not a Cohen-Macaulay module when viewed as an $R^{p}$-module. Therefore
$T\{j\}_{R^p}$  must have at least one indecomposable summand  which is not Cohen-Macaulay and by Proposition \ref{prop:mocdecomposition} this summand must be of the form $T\{i\}^{\Fr}$ with $i\ge n-2$
(the other listed summands are Cohen-Macaulay).
By iteration we find that $T\{j\}_{R^{p^r}}$ must also have an indecomposable summand of the form $T\{i\}^{\Fr^r}$ with $i\ge n-2$.
The interval $[0,n-2]$ we obtain for the  summands as $r\r \infty$ is the smallest interval compatible with this constraint. Note also that the real interval $[0,n-2]$ is the closure of the strongly critical region \cite{van1991cohen} (see also \cite[Definition A.3]{FFRT1}). This suggests an obvious generalization
of our result for other pairs $(G,W)$.
\end{remark}

\section{Formality computations}\label{sec:formality}
In this section we embark on  decomposing 
$K_{jk}^{G_1}$, which will occupy most of the rest of the paper. 
Besides our standing assumption $p\geq \assume$ we now fix $1\leq j\leq n-3$. Recall  that in this case \eqref{biresolution01} is a tilt-free resolution of $M_j$ (see Remark \ref{rem:DtoSinresM}). 
We denote the complex \eqref{biresolution11} by $\tCj$ (with the last term $(j\omega)\otimes S$ 
in degree $0$). 
Unless otherwise specified we assume below $1\leq k\leq n-2$ and we set
\[
\Cjk=(\sigma_{\geq -k} \tCj[-k]),
\]  
where $\sigma_{\geq -k}$ denotes the stupid truncation.
The main goal of this section is to show 
\begin{proposition}\label{prop:outform}
The complex $\tau_{\geq 1} \RHom_{B_1}(k,\Cjk)$ is formal in the derived category of $(B^{(1)},S^p)$-modules. Moreover, its cohomology is a sum of characters.
\end{proposition}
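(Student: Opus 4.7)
The plan is to compute $\underline{\RHom}_{B_1}(k,\Cjk)$ as the total complex of a double complex, where the horizontal differential is that of $\Cjk$ and the vertical one comes from the complete Tate resolution of $k$ over $\Dist(U_1)$ displayed in \eqref{complexa}. Since each term of $\Cjk$ is of the form $((j-i)\omega)\otimes_k\wedge^i F\otimes_k S(-i)$, its vertical cohomology can be computed column by column. Using the $B_1$-filtration of $S$ recalled in the proof of Proposition \ref{prop:mocdecomposition}---whose graded pieces are either $B_1$-projective (contributing nothing to Tate cohomology) or of the form $\widetilde{M}\otimes_k S_+^p$ with $\widetilde{M}$ the $B$-module of \eqref{eq:mult}---together with Lemma \ref{B1cohomology02} applied to the residual terms $((j-i)\omega)\otimes_k\widetilde{M}$, each column's Tate cohomology is an explicit direct sum of $B^{(1)}$-characters tensored with $S^p_+$, nonzero only when the weight $j-i$ satisfies one of the two congruence conditions modulo $p$ appearing in \eqref{TateB1formulaA}.

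The next step is to show that the resulting spectral sequence degenerates at $E_1$, crucially using our standing assumption $p\geq\assume$. The range $0\le i\le k\le n-2$ contributes at most $n-1\le p+1$ consecutive weights $j-i$, so for each weight summand occurring in $\widetilde{M}$ at most two values of $i$ contribute to the even-parity Tate cohomology and at most two to the odd-parity part. This makes the $E_1$-page extremely sparse: for most $H^{(1)}$-weights there is only a single nonzero entry, so $d_1$ has no room to act on weight grounds; in the few remaining cases one must check vanishing directly, by observing that the Koszul differential of $\tCj$ is multiplication by the variables $y_i$ and then tracking its induced action on the explicit isomorphisms of Lemma \ref{B1cohomology02}, combined with the two-periodicity of Lemma \ref{lem:periodicity}. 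The same arguments rule out higher $d_r$ for $r\ge 2$.

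Degeneration identifies $\underline{\RHom}_{B_1}(k,\Cjk)$, and a fortiori its truncation $\tau_{\geq 1}$, with its cohomology, which is a direct sum of $B^{(1)}$-characters times $S^p$; this gives the second assertion. Formality then follows because the characters appearing have pairwise distinct $H^{(1)}$-weights, so there are no non-trivial self-extensions or higher $A_\infty$-operations possible among them in the category of $(B^{(1)},S^p)$-modules, and the natural map from cohomology to the complex splits. I expect the main difficulty to lie in the exhaustive verification of $d_1=0$ in the cases where sparsity alone is inconclusive: this will require a case analysis according to the residue of $j$ modulo $p$ and a careful use of the explicit summand decomposition of $\widetilde{M}$ from \eqref{eq:mult}, particularly near boundary values of $j_1$ where two congruence classes can interact.
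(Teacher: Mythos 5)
Your overall framework is sound up through the reduction via the $B_1$-filtration of $S$ (which matches Lemma~\ref{lem:qired}), but the claim of $E_1$-degeneration is false. Within the region $u\ge -j$ (resp.\ $u\le -j-2$) the $E_1$-entries $\tH^{v}(B_1,(C_j^{(t)})^u)$ in a fixed row $v$ all have the \emph{same} $H^{(1)}$-weight $(vp\omega)$ (resp.\ $((v-1)p\omega)$); see \eqref{eq:prelimE1}. So weight considerations do not force $d_1=0$, and indeed $d_1$ must be massively nonzero: Lemma~\ref{lem:qiCM} shows the abutment of the untruncated spectral sequence is at most one-dimensional in each degree, while the $E_1$-page has order $n$ nonzero entries in each row. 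What the hypothesis $p\ge\assume$ actually delivers (Lemma~\ref{lem:specsecdeg3}) is degeneration at $E_3$, with a single potentially nonzero $d_2$-differential at $u=k-j-2$ connecting the two weight regions across the gap at $u=-j-1$, as displayed in \eqref{eq:weightdecomp}.

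A second gap is the conflation of Tate and ordinary cohomology. The proposition concerns $\tau_{\ge 1}\RHom_{B_1}(k,\Cjk)$, not $\tau_{\ge 1}\ulRHom_{B_1}(k,\Cjk)$; these differ for small $l$ because $H^l(B_1,-)\ne\tH^l(B_1,-)$ below the degree where $\tau_{\ge 1}$ acts. Formality of the Tate complex does \emph{not} ``a fortiori'' transfer to the ordinary truncation: the paper spends all of \S\ref{sec:>0formality} (diagram~\eqref{eq:5-lemma}, Lemma~\ref{lem:upperlower}, and the explicit splitting constructions) precisely to bridge this. Relatedly, ``pairwise distinct weights kill higher $A_\infty$-operations'' is not a valid formality argument here: the $H^{(1)}$-weights are not pairwise distinct across cohomological degrees (periodicity~\eqref{lem:periodicity} repeats weights modulo shift by $(2p\omega)$), and even when formality of a summand is established, one still needs the explicit splitting maps \eqref{eq:map1}, \eqref{eq:rmap}, \eqref{eq:Lformality} to make the cohomology a retract of the complex. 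You would need to replace both the degeneration step and the final formality step with arguments along the lines of Propositions~\ref{prop:Tformal} and~\ref{prop:formality}.
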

In \S\ref{sec:ind-B-G}, we will then use induction and  Proposition \ref{prop:outform} to show that the complex $\tau_{\geq 1} \RHom_{G_1}(k,\gCjk)$, with $\gCjk$ a similar truncation of
\eqref{biresolution01}, is formal in $D(G^{(1)},S^p)$. This will finally allow us to apply Theorem \ref{prop:main}, see Proposition \ref{prop:Kjkdecomposition}.

The proof of Proposition \ref{prop:outform} proceeds in the following steps:
\begin{enumerate}
\item In Lemma \ref{lem:qired}, we decompose $\tau_{\geq 1} \RHom_{B_1}(k,\Cjk)$ into a direct sum of certain complexes $\tau_{\ge 1}\RHom_{B_1}(k,\Ljkt)\otimes_k S^p_+$ in $D(B^{(1)},S^p)$.  Hence, to prove Proposition \ref{prop:outform}, it suffices to prove formality of $\tau_{\ge 1}\RHom_{B_1}(k,\Ljkt)$ in $D(B^{(1)})$.
\item\label{step-3} Instead of doing this directly, in Proposition \ref{prop:Tformal}, we first prove that the complex $\ulRHom_{B_1}(k,\Ljkt)$ is formal in $D(B^{(1)})$.
\item In Proposition \ref{prop:formality}, we then use the formality from Step \eqref{step-3} to show $\tau_{\ge 1}\RHom_{B_1}(k,\Ljkt)$ is formal in in $D(B^{(1)})$, hence establishing Proposition \ref{prop:outform}.
\end{enumerate}

\subsection{Quasi-isomorphic reduction}\label{sec:qired}

In this subsection we prove that one can reduce the formality computation to more manageable complexes. 

Recall the following notation from \cite[\S \ref{sec:computingRHom}]{FFRT1}. Let $P=\Sym(V)$. Then $S=P^{\otimes n}$. We write $P=k[x,y]$ where the weights of
$x,y$ are $+\omega, -\omega$. 
It will be convenient to consider the simple representation $L(i)=S^iV$ for $i=0,\ldots,p-1$
to be embedded in $P$ as the part of degree~$i$.

With the convention $L(-1)=0$ and $t=(t_i)_i\in [0,p-1]^n$, let\footnote{To lighten the notation we will often drop ${ }^{\bullet}$ from the notation for  complexes.} 
\[\Ljt:=\left(\bigotimes_{i=1}^n \biggl(L(t_i-1)\otimes_k (-\omega)\xrightarrow{y} L(t_i)\biggr)\right)\otimes_k (j\omega)\]
 (with $\otimes_{i=1}^n L(t_i) \otimes_k (j\omega)$ in degree zero)  and denote  
 \begin{align*}
 \Ljkt&=(\sigma_{\geq -k}\Ljt)[-k],\\ 
 \Djkt&=(\sigma_{< -k}\Ljt)[-k-1].
 \end{align*}
  For $t\in [0,p-1]^{n}$, we denote $d_t=\sum_i t_i$.

We will prove the following lemma. 
\begin{lemma}\label{lem:qired}
There is  a quasi-isomorphism in the derived category $D(B^{(1)},\Gr(S^p))$ (of graded $(B^{(1)},S^p)$-modules)
\[
\tau_{\ge 1}\RHom_{B_1}(k,\Cjk)\cong \bigoplus_{t\in [0,p-1]^n} \tau_{\ge 1}\RHom_{B_1}(k,\Ljkt)\otimes_k S^p_+(-d_t).
\]
\end{lemma}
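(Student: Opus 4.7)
My plan is to exhibit a quasi-isomorphism of complexes in $D(B,\Gr(S^p))$ realizing the decomposition at the level of $\tCj$ itself, then descend via $\tau_{\geq 1} \RHom_{B_1}(k,-)$ together with the truncation $\Cjk = (\sigma_{\ge -k}\tCj)[-k]$.

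First, I would factorize the Koszul complex $\tCj$: since $S = P^{\otimes n}$ with $P = k[x,y]$ and the differential in \eqref{biresolution11} is a sum of $y_i$-multiplications acting factor-wise,
\[
\tCj \;\cong\; (j\omega) \otimes_k \bigotimes_{i=1}^n \bigl( P_i(-\omega) \xrightarrow{y_i} P_i \bigr).
\]

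The crux of the argument is then a decomposition of each two-term factor. Using the $P_i^p$-module basis $P_i = \bigoplus_{a,b=0}^{p-1} x_i^a y_i^b \cdot P_i^p$, and tracking how the $y_i$-differential shuffles these summands (with a ``wrap-around'' by $y_i^p$ when $b = p-1$), I would aim to produce a quasi-isomorphism in $D(B,\Gr(P_i^p))$ of the form
\[
\bigl(P_i(-\omega) \xrightarrow{y_i} P_i\bigr) \;\simeq\; \bigoplus_{t_i=0}^{p-1} \bigl(L(t_i-1)(-\omega) \xrightarrow{y} L(t_i)\bigr) \otimes_k P^p_{i,+}(-t_i) \;\oplus\; C^{\bullet}_{\mathrm{acyc},i},
\]
where $P^p_{i,+} = k[x_i^p]$ and $C^{\bullet}_{\mathrm{acyc},i}$ is $B_1$-acyclic in the sense that $\tau_{\geq 1} \RHom_{B_1}(k, C^{\bullet}_{\mathrm{acyc},i}) = 0$. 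Both non-acyclic parts have cohomology equal to $k[x_i]$ concentrated in degree $0$ with matching $(B,P^p_i)$-module structure (the $P^p_i$-action factoring through $P^p_i \twoheadrightarrow P^p_{i,+}$), so the abstract quasi-isomorphism must exist; the work lies in realizing it with sufficiently good equivariance.

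Tensoring these factor decompositions over $i=1,\ldots,n$, multiplying by $(j\omega)$, and using the identifications $S^p_+ \cong \bigotimes_i P^p_{i,+}$ and $d_t = \sum_i t_i$, I would obtain a decomposition of $\tCj$ whose non-acyclic summand is $\bigoplus_{t \in [0,p-1]^n} \Ljt \otimes_k S^p_+(-d_t)$. The stupid truncation $\sigma_{\geq -k}$ and the shift $[-k]$ respect this factorization since they act on the homological grading, and so the decomposition descends to $\Cjk$ and $\Ljkt$. Applying $\tau_{\geq 1} \RHom_{B_1}(k,-)$, which commutes with direct sums and kills the acyclic piece by construction, then yields the claim.

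The main obstacle will be in the factorwise decomposition: the basis $\{x_i^a y_i^b\}$ is not $B_1$-equivariant, since the lowering operator $F$ sends $x_i^{p-1} y_i^b$ to a term involving $x_i^p \in P_i^p$, so the putative direct sum cannot be read off naively. A careful filtration of each $P_i$ refining the $P_i^p$-basis, combined with an analysis of the $B_1$-cohomology of each graded piece via Lemma~\ref{B1cohomology02}, should isolate the $B_1$-acyclic contributions coming from $y_i^p$-divisible pieces and match the remaining part with the two-term complexes $L(t_i-1)(-\omega) \xrightarrow{y} L(t_i)$ after a weight-compatible identification.
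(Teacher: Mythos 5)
Your factorwise combinatorics is the same as the paper's (the $y_i^p$-reduction producing the two-term complex $\bar P_i(-\omega)\xrightarrow{y}\bar P_i'$, which is exactly your $\bigoplus_{t_i}(L(t_i-1)(-\omega)\xrightarrow{y}L(t_i))\otimes_k P^p_{i,+}(-t_i)$), but there is a genuine gap in packaging it as a \emph{direct sum} decomposition of $(P_i(-\omega)\xrightarrow{y_i}P_i)$. The terms of your ``good'' summand are $(\text{finite-dim.})\otimes_k P^p_{i,+}$, which are annihilated by $y_i^p$ and hence are not projective graded $P_i^p$-modules; a direct summand of the free graded $P_i^p$-module $P_i$ would have to be projective. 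So such a splitting cannot exist at the level of complexes of $(B,P_i^p)$-modules, and no filtration will produce one either. Reading $\simeq$ merely as an isomorphism in $D(B,\Gr P_i^p)$ is then tautologically true (both sides have cohomology $k[x_i]$ in degree $0$, and the acyclic summand contributes nothing) but does not rescue the argument, because the next step applies the stupid truncation $\sigma_{\ge -k}$, which is not a derived functor and does not respect quasi-isomorphisms; the sentence ``the stupid truncation respects this factorization since it acts on the homological grading'' is exactly where a chain-level splitting is silently assumed.

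What the paper actually uses is not a direct sum but a \emph{surjective chain map} $C\twoheadrightarrow \bar C$ whose kernel $K$ is chain-acyclic and has $B_1$-projective terms. The subtle point your sketch elides is what happens to this kernel after $\sigma_{\ge -k}[-k]$: the truncated kernel lives in cohomological degrees $0,\dots,k$ and is no longer acyclic, so it is \emph{not} obviously killed by $\tau_{\ge 1}\RHom_{B_1}(k,-)$. The paper handles this by comparing the distinguished triangle $\Djk\to\Cjk\to S_+[-k]\otimes_k(j\omega)$ with its barred counterpart: the kernel of $\Djk\to\oDjk$ sits in nonpositive degrees so $H^{>0}(B_1,-)$ of it vanishes for degree reasons, and the 5-lemma then transports the isomorphism on $H^{>0}(B_1,-)$ from $\Djk$ to $\Cjk$. (Alternatively one may invoke contractibility of the bounded acyclic complex $K$ of $B_1$-injectives, whence $K^{B_1}$ is acyclic and $\sigma_{\ge -k}(K^{B_1})[-k]$ has cohomology only in degree $0$.) Your proposal has no substitute for this comparison step, and asserting that the acyclic piece is ``killed by construction'' after truncation is precisely the unproved claim.
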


First recall the exact sequence \cite[(10.6)]{FFRT1} of graded $(B,P^p)$-modules:
\begin{equation*}
\label{finv1for1}
0\r \bigoplus_{i=0}^{p-1} L(p-1) \otimes_k  ky^i\otimes_k P^p(-i-(p-1))\r P\r \bigoplus_{i=0}^{p-2} L(i)\otimes_k P^p/y^p 
P^p(-i)\r 0
\end{equation*}
and notice its slight modification
\begin{equation*}
\label{finv1for2}
0\r \bigoplus_{i=1}^{p} L(p-1) \otimes_k  ky^i\otimes_k P^p(-i-(p-1))\r P\r \bigoplus_{i=0}^{p-1} L(i)\otimes_k P^p/y^p 
P^p(-i)\r 0.
\end{equation*}
Let us write these as
\[
0\r Q\r P\r \bar{P}\r 0,
\]
and
\[
0\r Q'\r P\r \bar{P}'\r 0.
\]
They fit together in a commutative diagram
\begin{equation}
\label{eq:together}
\xymatrix{
&&0\ar[d] &0\ar[d]\\
0\ar[r]& Q(-1)\otimes_k(-\omega)\ar[r]\ar[d]^\cong_y & P(-1)\otimes_k(-\omega)\ar[r]\ar[d]_y&\bar{P}(-1)\otimes_k(-\omega)\ar[r]\ar[d]^y & 0\\
0\ar[r]&Q'\ar[r]&P\ar[d]\ar[r]&\bar{P}'\ar[d]\ar[r]&0\\
&&P/yP\ar[r]_\cong\ar[d]&\bar{P}'/y\bar{P}\ar[d]\\
&&0&0
}
\end{equation}
In particular we obtain a $B$-equivariant quasi-isomorphism of complexes of graded $(B,S^p)$-modules
\[
(P(-1)\otimes_k (-\omega) \xrightarrow{y} P)\r (\bar{P}(-1)\otimes_k (-\omega)\xrightarrow{y}\bar{P}').
\]

Note that the complex
\[
\bar{P}(-1)\otimes_k (-\omega)\xrightarrow{y}\bar{P}'
\]
is equal to the sum of complexes
\[
 \bigoplus_{j=0}^{p-1}\biggl(L(j-1)\otimes_k (-\omega)(-j)\xrightarrow{y} L(j)(-j)\biggr)\otimes_k P^p/y^pP^p
\]
still with the convention $L(-1)=0$.

We now replace $P$ by $P_i=k[x_i,y_i]$ and define ${\bC}$ to be the tensor product of
$P_i(-1) \otimes_k (-\omega)\xrightarrow{y_i}P_i$. Similarly, define ${\bar{\bC}}$ to be the tensor product of the complexes
$\bar{P}_i(-1) \otimes_k (-\omega)\xrightarrow{y_i}\bar{P}'_i$. Note that
\begin{equation}
\label{eq:Cvar}
\bar{C}\otimes_k (j\omega) =\bigoplus_{t\in [0,p-1]^n} C^{(t)}_j(-d_t)\otimes_k S^p_+.
\end{equation}
Then by the above discussion we have quasi-isomorphisms
of complexes of graded $(B,S^p)$-modules
\begin{equation}
\label{eq:quasi}
\xymatrix{
{\bC}\ar@{->>}[r]\ar@{->>}[d]&{\bar{\bC}}\\
S_+
}
\end{equation}
and the kernel of the horizontal map consists of projective $B_1$-modules.

Now let as before
\begin{align*}
\Cjk&=(\sigma_{\ge -k} {\tCj})[-k],\\
&=(\sigma_{\ge -k} {\bC})[-k]\otimes_k (j\omega),
\end{align*}
and also put
\begin{align*}
\Djk&=(\sigma_{\le -k-1} {\bC})[-k-1]\otimes_k (j\omega),
\end{align*}
and use similar definitions for $\oCjk,\oDjk$, starting from ${\bar{\bC}}$. Using \eqref{eq:Cvar} we 
see 
\begin{align}
\label{eq:Cvar1}
\oCjk&=\bigoplus_{t\in [0,p-1]^n}C_{jk}^{(t)}(-d_t)\otimes_k S^p_+,\\\label{eq:Djkt}
\oDjk&=\bigoplus_{t\in [0,p-1]^n}D_{jk}^{(t)}(-d_t)\otimes_k S^p_+.
\end{align}

We have a morphism between distinguished triangles of complexes of graded $(B,S^p)$-modules
\begin{equation}
\label{keydiag}
\xymatrix{
\Djk\ar[r]\ar@{->>}[d]& \Cjk\ar[r]\ar@{->>}[d] &{\bC}[-k]\otimes_k(j\omega)\ar[r]\ar@{->>}[d]^{\cong}_{\eqref{eq:quasi}}& \\
\oDjk\ar[r]& \oCjk\ar[r]& {\bar{\bC}}[-k]\otimes_k(j\omega)\ar[r]&
}
\end{equation}
Using the lowest horizontal arrow in diagram \eqref{eq:together}
this yields a morphism of distinguished triangles in $D(B,\Gr(S^p))$
\begin{equation}
\label{keydiag1}
\xymatrix{
\Djk\ar[r]\ar@{->>}[d]& \Cjk\ar[r]\ar@{->>}[d] &S_+[-k]\otimes_k(j\omega)\ar[r]\ar@{=}[d]& \\
\oDjk\ar[r]& \oCjk\ar[r]& S_+[-k]\otimes_k(j\omega)\ar[r]&
}
\end{equation}
where the kernels of the columns consist of projective $B_1$-modules.

In particular,
\begin{align}\label{eq:tDjkbar}
\tH^i(B_1,\Djk)&\cong \tH^i(B_1,\oDjk)\qquad \text{for $i\in \ZZ$},\\\label{eq:tCjkbar}
\tH^i(B_1,\Cjk)&\cong \tH^i(B_1,\oCjk)\qquad \text{for $i\in\ZZ$}.
\end{align}

Moreover, if $P$ is the kernel of the leftmost vertical map then $H^i(B_1,P)=0$ for $i>0$. Hence
\begin{equation}\label{eq:Djkbar}
H^i(B_1,\Djk)\cong H^i(B_1,\oDjk)\qquad \text{for $i>0$}
\end{equation}
and hence by \eqref{keydiag1} and the 5-lemma 
\begin{equation}\label{eq:Cjkbar}
H^i(B_1,\Cjk)\cong H^i(B_1,\oCjk)\qquad \text{for $i>0$}
\end{equation}
or equivalently
\[
\tau_{\ge 1} \RHom_{B_1}(k,\Cjk)\cong \tau_{\ge 1}\RHom_{B_1}(k,\oCjk).
\]
Taking into account \eqref{eq:Cvar1} we have thus proved Lemma \ref{lem:qired}.

\subsection{Formality of $\ulRHom_{B_1}(k,\Ljkt)$}\label{sec:tateformality}
In order to prove the formality of the complexes $\tau_{\ge 1} \RHom_{B_1}(k,\Ljkt)$ (which we do in Proposition \ref{prop:formality}), we first show the formality of the related and more easily understandable complexes $\ulRHom_{B_1}(k,\Ljkt)$.  

\begin{proposition}\label{prop:Tformal}
The complex $\ulRHom_{B_1}(k,\Ljkt)$ is formal in $D(B^{(1)})$. Moreover  $\tH^l(B_1,\Ljkt)$ is a sum of characters for all $l$.
\end{proposition}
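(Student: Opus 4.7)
The plan is to use that $\Ljt$ is quasi-isomorphic in $D(B)$ to a single character, and then handle the truncation via a spectral sequence degeneration. First I would observe that each two-term factor $L(t_i-1)\otimes_k(-\omega)\xrightarrow{y_i}L(t_i)$ in the tensor product defining $\Ljt$ is quasi-isomorphic to the character $(t_i\omega)$ concentrated in degree $0$: viewing $L(t_i)$ as the degree-$t_i$ part of $P_i=k[x_i,y_i]$, the map $y_i$ is injective with $1$-dimensional cokernel spanned by $x_i^{t_i}$ of weight $t_i\omega$. Taking tensor products and applying the twist $(j\omega)$ yields $\Ljt\simeq((d_t+j)\omega)[0]$ in $D(B)$. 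Combining this with the short exact sequence $0\to\sigma_{<-k}\Ljt\to\Ljt\to\sigma_{\geq -k}\Ljt\to 0$ and the acyclicity of $\Ljt$ outside degree zero, I would then compute the cohomology of $\Ljkt=(\sigma_{\geq -k}\Ljt)[-k]$ (for $k\geq 1$) as
\[
H^k(\Ljkt)\cong ((d_t+j)\omega),\qquad H^0(\Ljkt)\cong \operatorname{im}\bigl(\Ljt^{-k-1}\to\Ljt^{-k}\bigr),
\]
with all other cohomology vanishing.

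Next I would run the hypercohomology spectral sequence
\[
E_2^{p,q}=\tH^p(B_1,H^q(\Ljkt))\Rightarrow \tH^{p+q}(B_1,\Ljkt),
\]
whose $E_2$ page is concentrated in the two rows $q=0$ and $q=k$. The top row, being the Tate cohomology of the character $((d_t+j)\omega)$, is a sum of characters explicitly given by \eqref{TateB1formulaA}. For the bottom row I would produce a $B$-equivariant filtration of $H^0(\Ljkt)\subset\Ljt^{-k}$ by characters: each factor $L(s)$ with $s\leq p-1$ carries a $B$-stable filtration by its $H$-weight spaces (since $B$ has negative roots, the submodules generated by low-weight vectors are $B$-stable), and tensoring and restricting to $H^0(\Ljkt)$ yields a $B$-filtration with character subquotients. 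Each character contributes a sum of characters in Tate cohomology via Lemma \ref{B1cohomology02}, and linear reductivity of $H_1$ forces $\tH^*(B_1,H^0(\Ljkt))$ itself to be a sum of characters, establishing the ``sum of characters'' part of the conclusion at the level of $E_2$.

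Finally, the only potentially nonzero differential in the spectral sequence is $d_{k+1}\colon E_{k+1}^{p,k}\to E_{k+1}^{p+k+1,0}$ (all later differentials automatically vanish for bidegree reasons). I would show this differential is zero by a weight-matching argument: by the periodicity of Lemma \ref{lem:periodicity} combined with the explicit form of \eqref{TateB1formulaA}, only very specific characters can appear in the top row, and comparing them with those in the bottom row rules out any nontrivial contribution to $d_{k+1}$. Degeneration at $E_2$ then delivers both the formality of $\ulRHom_{B_1}(k,\Ljkt)$ in $D(B^{(1)})$ and the sum-of-characters statement. The main obstacle will be pinning down the characters appearing in $H^0(\Ljkt)$ with sufficient precision to carry out the weight-matching argument; this requires tracking carefully how the Koszul-type multiplication-by-$y$ differentials of $\Ljt$ interact with the operators $F$ and $F^{p-1}$ from the periodic complex \eqref{complexa} used to compute $B_1$-cohomology.
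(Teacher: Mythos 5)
Your argument rests on degeneration of the hypercohomology spectral sequence at $E_2$, but that strategy has two fatal flaws. First, even if the spectral sequence degenerated, that would only tell you that each $\tH^n(B_1,\Ljkt)$ carries a filtration whose associated graded is $\bigoplus_{p+q=n}\tH^p(B_1,H^q(\Ljkt))$; formality of $\ulRHom_{B_1}(k,\Ljkt)$ is the much stronger assertion that the \emph{complex} is isomorphic in $D(B^{(1)})$ to $\bigoplus_l\tH^l(B_1,\Ljkt)[-l]$, and this cannot be read off from a convergent spectral sequence without constructing splitting morphisms. Second, the degeneration claim itself fails. The differential $d_{k+1}\colon E_{k+1}^{u,k}\to E_{k+1}^{u+k+1,0}$ is exactly the connecting map $\gamma_{u+k+1}$ of the long exact sequence \eqref{eq:les}, and Lemma \ref{lem:splitH}(3), refined in Proposition \ref{rmk:leftright}(3), shows that $\gamma_l\neq 0$ for some $l$ precisely when $q^{\ell}_{jt}$ is defined and $q^{\ell}_{jt}>k-[k>j]$. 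Your proposed weight comparison cannot exclude this: when nonzero, the source $\tH^{u}(B_1,((j+d_t)\omega))$ is the single character $((q^{u(2)}_{jt}+u)p\omega)$, while by Corollary \ref{cor:Kweights} the target $\tH^{u+k+1}(B_1,\sLjkt)$ has weights in $[u+k+1-[k>j],\,u+n-1]p\omega$; the source weight lies in this range exactly when $q^{u(2)}_{jt}>k-[k>j]$, which is precisely the regime in which the differential is nonzero.

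The paper sidesteps both difficulties by building explicit formality maps in \S\ref{subsec:form}. It realizes $\beta_l$ via the projection $\Ljkt\to((j+d_t)\omega)[-k]$ of \eqref{secLCo} composed with the formality of $\ulRHom_{B_1}(k,((j+d_t)\omega))$ from Lemma \ref{B1cohomology02}, and constructs a complementary map from the projection $\Ljkt\to(\Ljkt)^0$ (or $\Ljkt\to(\Ljkt)^{0,1}$ when $k=j+1$); Lemma \ref{lem:injection} verifies, by a weight estimate using the \emph{stupid}-filtration spectral sequence rather than the Postnikov one, that the induced map $\im\alpha_l\to\tH^l(B_1,(\Ljkt)^0)$ is injective, which yields the splitting \eqref{eq:tHC} and hence formality. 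That same injectivity also delivers the ``sum of characters'' claim, since $\im\alpha_l$ becomes a $B^{(1)}$-submodule of a module with trivial $U^{(1)}$-action. Your filtration-by-characters argument for the bottom row would not establish this: a $B^{(1)}$-module filtered by characters can still carry a nontrivial $U^{(1)}$-action.
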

The proof will be split in a sequence of lemmas. The formality statement is proved in \S\ref{subsec:form}. The fact that $\tH^l(B_1,\Ljkt)$ is a sum of characters is a consequence of \eqref{eq:tHC} and Lemma \ref{lem:injection} below.
\subsubsection{Complex $\Ljt$}
Remember that for $t\in [0,p-1]^{n}$ we set $d_t=\sum_i t_i$. Note that we have an obvious quasi-isomorphism of complexes of
$B$-representations 
\begin{equation}
\label{eq:qisoL}
\Ljt\to ((j+d_t)\omega).
\end{equation}
We set
\[
q^0_{jt}=
\begin{cases}
(j+d_t)/p&\text{if $j+d_t\equiv 0\,(p)$},\\
\text{undefined}&\text{otherwise,}
\end{cases}
\]
\[
q^1_{jt}=
\begin{cases}
(j+d_t-p+2)/p &\text{if $j+d_t-p+2\equiv 0\,(p)$},\\
\text{undefined}&\text{otherwise.}
\end{cases}
\]
\begin{lemma} \label{rem:coll}
The numbers $q^0_{jt}$ and $q^1_{jt}$ cannot both be defined. Moreover, 
\begin{equation}
q_{jt}^0 \in
\begin{cases}
[1,n-1] & \text{ if } (j,p) \neq (1,n-2), \\ 
[1,n-2] & \text{ if } (j,p)=(1,n-2)
\end{cases}
\end{equation}
and 
\begin{equation}
q_{jt}^1 \in
\begin{cases}
[0,n-2] & \text{ if } (j,p) \neq (n-3,n-2), \\ 
[1,n-2] & \text{ if } (j,p)=(n-3,n-2)
\end{cases}
\end{equation}
and all possible integer values occur when varying $t$.
 \end{lemma}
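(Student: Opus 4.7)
The plan is to first dispatch the claim that $q^0_{jt}$ and $q^1_{jt}$ cannot both be defined, and then determine the range of each of $q^0_{jt}$ and $q^1_{jt}$ via the elementary observation that the map $t\mapsto d_t$ from $[0,p-1]^n$ to $\ZZ$ is surjective onto the interval $[0,n(p-1)]$. The ``all integer values occur'' assertion will then follow automatically, since within the range $[j,j+n(p-1)]$ the elements congruent to $0$ (resp. $p-2$) modulo $p$ form an arithmetic progression of step $p$ which is hit by $j+d_t$ once the endpoints are identified.

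For the incompatibility claim, if both $q^0_{jt}$ and $q^1_{jt}$ were defined one would have $0\equiv p-2\pmod p$, i.e.\ $p\mid 2$, contradicting our standing assumption $p\geq \assume\geq 3$.

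For the range of $q^0_{jt}$, using $1\leq j\leq n-3\leq p-1$ (since $p\geq n-2$) the smallest multiple of $p$ in $[j,j+n(p-1)]$ is $p$, giving the lower bound $q^0_{jt}\geq 1$, attained by taking $d_t=p-j$. The largest such multiple is $\lfloor n-(n-j)/p\rfloor\cdot p$. The ratio $(n-j)/p$ is $\leq 1$ precisely when $p\geq n-j$, which holds outside the case $j=1$, $p=n-2$, and in this generic case the largest multiple is $(n-1)p$, yielding $q^0_{jt}=n-1$ (achieved by $d_t=(n-1)p-j\in[0,n(p-1)]$). In the remaining case $(j,p)=(1,n-2)$ one checks $(n-j)/p=(n-1)/(n-2)\in(1,2]$, so the floor is $n-2$ and the maximum $q^0_{jt}=n-2$ is realized by $d_t=(n-2)p-j$.

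For the range of $q^1_{jt}$, we now look at residues $p-2\pmod p$ in $[j,j+n(p-1)]$. The smallest representative is $p-2$, corresponding to $q^1_{jt}=0$ and $d_t=p-2-j$, which is admissible exactly when $p\geq j+2$; combining $p\geq n-2$ with $p\leq j+1$ forces $(j,p)=(n-3,n-2)$, and in that exceptional case the next residue $2p-2$ gives $q^1_{jt}=1$ (and $d_t=n-3\in[0,n(p-1)]$). For the upper bound, set $d_t=(n-1)p-2-j$; the inequality $(n-1)p-2-j\leq n(p-1)$ rearranges to $p\geq n-j-2$, which is guaranteed by $p\geq n-2$, and positivity is trivial; this gives $q^1_{jt}=n-2$. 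That no larger value occurs follows from $(j+n(p-1)-p+2)/p<n-1$, since $j-n+2<0$ for $j\leq n-3$. The routine verification of the edge-case inequalities $p\geq n-j-2$ and $p-j\leq n(p-1)$, and the check that $d_t$ indeed takes every integer value in $[0,n(p-1)]$ (by a greedy decomposition $t_i=\min(d-\sum_{\ell<i}t_\ell,p-1)$) constitute the only place requiring care; all other steps are unambiguous floor/ceiling manipulations.
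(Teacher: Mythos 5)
Your proof is correct and follows essentially the same approach as the paper: both observe that $j+d_t$ ranges over every integer in $[j,\,j+n(p-1)]$ as $t$ varies, and then count the multiples of $p$ (resp.\ elements $\equiv p-2 \pmod p$) in that interval, isolating the boundary cases $(j,p)=(1,n-2)$ and $(j,p)=(n-3,n-2)$. Your write-up is somewhat more explicit in verifying admissibility of the witnessing $d_t$'s, but the underlying argument is identical to the one in the paper.
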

 \begin{proof}
Since $p\ge 3$ by our standing hypothesis,  $q^0_{jt}$ and $q^1_{jt}$ cannot both be defined. Note that $d_t+j$ can attain any natural number in $[j,n(p-1)+j]$. 
 Since  $p\geq \max\{n-2,3\}$ and $0<j<n-2$ we obtain $0<j<p$ and also 
$-p<j-p+2\leq 0$ if $(j,p)\neq (n-3,n-2)$, 
  moreover $(n-2)p+p-2\leq n(p-1)+j<(n-1)p+p-2$ 
  and also $(n-1)p\leq n(p-1)+j\allowbreak < np$ if $(j,p)\neq (1,n-2)$.
  
Hence $q_{jt}^0$ can attain any natural number in $[1,n-1]$ if $(j,p)\neq (1,n-2)$ and in $[1,n-2]$ if $(j,p)=(1,n-2)$ (the latter upper bound by direct checking), and $q_{jt}^1$ can attain any natural number in $[0,n-2]$ if $(j,p)\neq (n-3,n-2)$ and in $[1,n-2]$ if $(j,p)=(n-3,n-2)$ (the latter lower bound by direct checking).
 \end{proof}

We obtain the following lemma as a consequence of Lemma \ref{B1cohomology02}. Note that we use the symbol $q^{i(2)}_{jt}$ as follows
\begin{equation*}
q^{i(2)}_{jt}=
\begin{cases}
q^{0}_{jt} & i \equiv 0\,(2), \\
q^{1}_{jt} & i \equiv 1\,(2).
\end{cases}
\end{equation*}
\begin{lemma}\label{lem:qiCM}
The quasi-isomorphism $\Ljt\to ((j+d_t)\omega)$ induces an isomorphism in $D(B^{(1)})$ 
\[\ulRHom_{B_1}(k,\Ljt)\cong \bigoplus_i\hat{H}^i(B_1,((j+d_t)\omega))[-i],\]
where moreover
\begin{equation}
\label{eq:qiCM}
\hat{H}^i(B_1,((j+d_t)\omega))=
\begin{cases}
((q^{i(2)}_{jt}+i)p\omega)&\text{if $q^{i(2)}_{jt}$ is defined,}\\
0&\text{otherwise.}
\end{cases}
\end{equation}
\end{lemma}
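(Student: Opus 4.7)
The plan is to deduce the lemma directly from the two inputs already at hand: the quasi-isomorphism \eqref{eq:qisoL} between $\Ljt$ and the character $((j+d_t)\omega)$, and the explicit formality statement of Lemma \ref{B1cohomology02}. Since $\ulRHom_{B_1}(k,-)$ is a triangulated functor $D(B)\to D(B^{(1)})$, the quasi-isomorphism $\Ljt\to ((j+d_t)\omega)$ of bounded complexes of $B$-representations induces an isomorphism
\[
\ulRHom_{B_1}(k,\Ljt)\xrightarrow{\;\sim\;}\ulRHom_{B_1}(k,((j+d_t)\omega))
\]
in $D(B^{(1)})$. This reduces everything to computing Tate cohomology of a single character, about which Lemma \ref{B1cohomology02} tells us essentially everything we need.

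Next I would specialize Lemma \ref{B1cohomology02} to the situation $i=0$ (which is allowed since our standing hypothesis $p\ge \assume\ge 3$ gives $0\in\{0,\dots,p-2\}$) and $a=j+d_t$, writing $((j+d_t)\omega)=L(0)\otimes_k((j+d_t)\omega)$. That lemma directly yields the formality of $\ulRHom_{B_1}(k,((j+d_t)\omega))$ in $D(B^{(1)})$, hence the desired decomposition
\[
\ulRHom_{B_1}(k,\Ljt)\cong \bigoplus_i \tH^i(B_1,((j+d_t)\omega))[-i],
\]
and simultaneously provides the explicit formula \eqref{TateB1formulaA} for the cohomology.

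Finally, I would unwind \eqref{TateB1formulaA} to match the statement \eqref{eq:qiCM}. For even $l$, the nonvanishing condition $0\equiv j+d_t\,(p)$ is precisely the condition that $q^0_{jt}$ is defined, and the value $((lp-(0-(j+d_t)))\omega)=((lp+j+d_t)\omega)$ rewrites as $((q^0_{jt}+l)p\omega)$. For odd $l$, the condition $0\equiv p-2-(j+d_t)\,(p)$ is the defining condition of $q^1_{jt}$, and the value $((lp+(j+d_t-p+2))\omega)$ rewrites as $((q^1_{jt}+l)p\omega)$. Combining the two parities yields \eqref{eq:qiCM} exactly.

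There is no real obstacle here: the lemma is a packaging result, combining the reduction to a character via \eqref{eq:qisoL} with the already-established Tate cohomology computation of Lemma \ref{B1cohomology02}. The only mild subtlety is the bookkeeping in the last step, i.e.\ translating between the formula in terms of $a$ and residues modulo $p$ and the cleaner parametrization via $q^0_{jt}$ and $q^1_{jt}$; the mutual exclusivity of $q^0_{jt}$ and $q^1_{jt}$ recorded in Lemma \ref{rem:coll} (since $p\ge 3$) guarantees that the two cases in \eqref{TateB1formulaA} never simultaneously contribute, so the combined formula \eqref{eq:qiCM} is unambiguous.
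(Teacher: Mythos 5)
Your proof is correct and is essentially the argument the paper intends: it states Lemma \ref{lem:qiCM} without a written proof, treating it as an immediate consequence of the quasi-isomorphism \eqref{eq:qisoL} and Lemma \ref{B1cohomology02} (the paper even remarks, just before Lemma \ref{B1cohomology02}, that that lemma yields both the formality of $\ulRHom_{B_1}(k,(j\omega))$ and the computation of its Tate cohomology). Your specialization of \eqref{TateB1formulaA} to $i=0$, $a=j+d_t$, and the translation to $q^0_{jt}$, $q^1_{jt}$ via $j+d_t=pq^0_{jt}$ (even $l$) and $j+d_t-p+2=pq^1_{jt}$ (odd $l$), together with the mutual exclusivity from Lemma \ref{rem:coll}, fills in exactly the bookkeeping the paper leaves implicit.
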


\subsubsection{A spectral sequence and some cohomology estimates}
We now describe the spectral sequence associated to the double complex $\Hom_{B_1}(C(k),\Ljt)$, for some complete $B_1$-resolution $C(k)$ of $k$ (see \cite[Appendix \ref{sec:appA}]{FFRT1}).
Hence we put
\[
E_0^{uv}=\Hom_{B_1}(C(k)^{-v},(\Ljt)^u)
\]
where $u$ is the column index. Thus the differentials are
$d_0^{uv}:E_0^{uv} \to E_0^{u,v+1}$ and
$d_1^{uv}:E_1^{uv} \to E_1^{u+1,v}$. To emphasise this, we will say
this is the $\uparrow\rightarrow$ spectral sequence associated to
$\Hom_{B_1}(C(k),\Ljt)$. Note that we have
\[
E_1^{uv}=\tH^v(B_1,(\Ljt)^u).
\]
The following lemma gives the structure of the $E_1$-page of this
spectral sequence.
\begin{lemma}\label{lem:tatecohcom}
The complex $\ulRHom_{B_1}(k,(\Ljt)^{u})$ is formal in $D(B^{(1)})$ and 
up to multiplicity (possibly zero) $\tH^l(B_1,(\Ljt)^{u})$ equals
\begin{equation}
\label{eq:mainformula}
\begin{cases}
(lp\omega)&\text{if $u\geq -j$,}\\
0&\text{if $u=-j-1$,}\\
((l-1)p\omega)&\text{if $u\le-j-2$}
\end{cases}
\end{equation}
as $B^{(1)}$-representation.
\end{lemma}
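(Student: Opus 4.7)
\emph{Proof plan.} The strategy is to reduce the computation to Lemma \ref{B1cohomology02} by decomposing each term of $\Ljt$ into simple $B$-modules tensored with characters, modulo $B_1$-projectives. For $u = -k$ with $0 \le k \le n$, the term $(\Ljt)^{-k}$ is, as a $B$-module, the direct sum over $k$-subsets $I \subset \{1, \ldots, n\}$ of
\[
N_I := \bigotimes_{i \in I} L(t_i - 1) \otimes_k \bigotimes_{i \notin I} L(t_i) \otimes_k ((j-k)\omega),
\]
with summands involving the convention $L(-1)=0$ dropped.

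First I would decompose each tensor product $\bigotimes L(s_i)$ (with $s_i \in [0, p-1]$) as a $G$-module into tilting modules $T(r)$, and observe that the summands with $r \ge p-1$ are $G_1$-projective; restriction to $B_1$ preserves this projectivity because $\Dist(G_1)$ is free as a $\Dist(B_1)$-module via the standard triangular decomposition. Hence, modulo $B_1$-projective summands, $N_I$ is isomorphic to a direct sum of modules of the form $L(q) \otimes_k ((j-k)\omega)$ with $q \in [0,p-2]$. Since $B_1$-projectives vanish in $\ulRHom_{B_1}(k, -)$, one obtains a decomposition
\[
\ulRHom_{B_1}(k, (\Ljt)^u) \simeq \bigoplus_{q=0}^{p-2} \ulRHom_{B_1}\bigl(k,\, L(q) \otimes_k ((j-k)\omega)\bigr)^{\oplus n_q}
\]
in $D(B^{(1)})$, where $n_q$ aggregates multiplicities across all subsets $I$. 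Each summand on the right is formal by Lemma \ref{B1cohomology02}, so the entire complex is formal.

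It then remains to identify the character in each cohomological degree. Setting $a = j - k$, Lemma \ref{B1cohomology02} shows that only $q \equiv a \pmod{p}$ contributes in even degrees and only $q \equiv p-2-a \pmod{p}$ contributes in odd degrees, each yielding the character $((l - m)p\omega)$ with $q - a = mp$ in the even case (resp.\ $((l+m)p\omega)$ with $q - (p-2-a) = mp$ in the odd case). Since each residue class modulo $p$ has at most one representative in $[0, p-2]$, at most one $q$ contributes in each degree, so the Tate cohomology in degree $l$ is a single character with (possibly zero) multiplicity. A short case analysis on $a = j - k$ then distinguishes three regimes: when $a \in [0, p-2]$ (equivalently $k \le j$, so $u \ge -j$) the relevant $q$ forces $m = 0$ and the character is $(lp\omega)$; when $a = -1$ (equivalently $k = j+1$, so $u = -j-1$) the required residue is $p-1 \notin [0, p-2]$ so the cohomology vanishes; and when $a \in [-p, -2]$ (equivalently $j+2 \le k \le j+p$, so $u \le -j-2$) one finds $|m| = 1$, yielding the character $((l-1)p\omega)$. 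Extremal edge cases permitted by $p \ge \assume$ (for instance $p = n-2$ with $k$ at an endpoint) produce multiplicity zero, which is covered by the ``possibly zero'' clause of the lemma.

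The main obstacle is verifying the reduction of the previous paragraph, namely that modulo $B_1$-projectives only simples $L(q)$ with $q \in [0, p-2]$ appear in the decomposition of $N_I$. This relies on the tilting/projective dichotomy for $\SL_2$-modules (the fact that $T(r)$ is $G_1$-projective precisely when $r \ge p-1$) and on the implication that $G_1$-projective modules restrict to $B_1$-projective modules. Once this reduction is in place, formality is immediate from Lemma \ref{B1cohomology02} and the character computation is a routine residue calculation modulo $p$.
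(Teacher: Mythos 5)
Your proposal is correct and follows essentially the same route as the paper's proof: both reduce the term $(\Ljt)^u$ modulo $B_1$-projective summands to a direct sum of $L(i)\otimes_k(a\omega)$ with $i\in[0,p-2]$ and $a=j+u$, invoke Lemma~\ref{B1cohomology02} for formality, and then carry out a residue calculation modulo $p$ using the range of $a$ guaranteed by $p\geq\assume$. The only difference is presentational: the paper outsources the tilting/projective dichotomy and the $G_1$-projective-implies-$B_1$-projective fact to citations of \cite{FFRT1} (Propositions~\ref{cor:tiltingkernel} and Lemma~\ref{rem:tnz}), whereas you re-derive these facts directly, which is fine.
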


\begin{proof}
We may assume $-n\le u\le 0$ for otherwise $(C_j^{(t)})^u=0$. 
Recall that 
 $1\le j\le n-3$ and here we will be using the hypothesis  $p\geq \assume\geq n-2$. By \cite[Proposition \ref{cor:tiltingkernel}, Lemma \ref{rem:tnz}]{FFRT1}, the $G_1$-nonprojective summands of $(\Ljt)^{u}$  are equal to $L(i)\otimes_k((j+u)\omega)$ for $0\leq i\leq p-2$. Hence $\ulRHom_{B_1}(k,(\Ljt)^{u})$ is formal by Lemma \ref{B1cohomology02}. 
We compute 
\[
-p-1\leq -n+1\leq -n+j\leq u+j\leq j\leq n-3\le p-1,
\]
and so for $a:=j+u$ we have $a\in [-p-1,p-1]$  
 and, taking into account $0\le i\le p-2$,   $L(i)\otimes_k(a\omega)$ has nonzero Tate cohomology  only if $a$ equals $i ,p-2-i$ (if $a\ge 0$, and depending on the parity of $l$) or $i-p,-2-i$ (if $a\le -2$, and again depending on the parity of $l$) by \eqref{TateB1formulaA}. 
We now immediately check that \eqref{TateB1formulaA}
implies \eqref{eq:mainformula}.
\end{proof}
The following diagram summarizes the $E_1$-page of the $\u\r$-spectral sequence for 
$\Hom_{B_1}(C(k), \Ljt)$ 
\begin{small}
\begin{equation}
\label{eq:prelimE1}
\begin{array}{c|cccccccccccc}
(v)\\
\vdots&&\vdots&\vdots&&\vdots&\vdots&\vdots&&\vdots&\vdots&&\\
l+1&\cdots&0&l&\cdots&l&0&l+1&\cdots&l+1&0&\cdots&\\
l&\cdots&0&l-1&\cdots&l-1&0&l&\cdots&l&0&\cdots&\\
l-1&\cdots&0&l-2&\cdots&l-2&0&l-1&\cdots&l-1&0&\cdots&\\
\vdots&&\vdots&\vdots&&\vdots&\vdots&\vdots&&\vdots&\vdots&&\\
\hline
&&-n-1& -n &\cdots &-j-2&-j-1&-j&\cdots&0&1&\cdots&(u)
\end{array}
\end{equation}
\end{small}
where $r$ in the body stands for $(rp\omega)^{\oplus a}$ with $a\in \NN$.

\begin{corollary}
\label{lem:Ljktweights}
The weights of  $\tH^l(B_1,\Ljkt)$ for $l\in \ZZ$ are contained in the following interval
\begin{equation}
\label{eq:constraint1}
\begin{cases}
[l-k,l]p\omega &\text{if $k\leq j$, }\\
[l-k,l-1]p\omega &\text{if $k> j$. }
\end{cases}
\end{equation}
\end{corollary}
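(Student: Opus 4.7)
The plan is to read off the weight bounds from the $\u\r$ spectral sequence
\[
E_1^{u,v} = \tH^v(B_1, (\Ljkt)^u) \Rightarrow \tH^{u+v}(B_1, \Ljkt)
\]
associated to the double complex $\Hom_{B_1}(C(k), \Ljkt)$, exactly as in Lemma \ref{lem:tatecohcom}. Since every differential on every page is $H^{(1)}$-equivariant, the set of weights appearing on the $E_\infty$-page (and hence in $\tH^l(B_1,\Ljkt)$) is contained in the set of weights on the $E_1$-page. So it suffices to describe $E_1^{u,v}$ column-by-column and track which weights survive in total degree~$l$.

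First I would unfold the definitions of truncation and shift: writing $\Ljkt = (\sigma_{\geq -k}\Ljt)[-k]$, one has $(\Ljkt)^i = (\Ljt)^{i-k}$, which is nonzero precisely for $i \in [0,k]$. Setting $u = i - k \in [-k, 0]$, Lemma \ref{lem:tatecohcom} immediately identifies $E_1^{i,v}$, up to multiplicity, with
\[
\begin{cases}
(vp\omega)     & \text{if } i \geq k-j \ (\text{i.e.\ } u \geq -j), \\
0              & \text{if } i = k-j-1, \\
((v-1)p\omega) & \text{if } i \leq k-j-2.
\end{cases}
\]
Weights contributing to $\tH^l$ come from the antidiagonal $u + v = l$, i.e.\ from columns $i$ with $v = l - i$.

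The rest is a short case analysis. When $k \leq j$ the entire range $i \in [0,k]$ lies in $\{i \geq k-j\}$, so only the first case applies and the weights are $((l-i)p\omega)$ for $i \in [0,k]$, i.e.\ exactly the interval $[l-k,\,l]p\omega$. When $k > j$ I would split the columns into three blocks: the block $i \in [k-j,k]$ contributes weights in $[l-k,\,l-k+j]p\omega$; the block $i = k-j-1$ contributes nothing; and the (possibly empty) block $i \in [0, k-j-2]$ contributes weights $((l-i-1)p\omega) \in [l-k+j+1,\,l-1]p\omega$. Since $l-k+j+1 = (l-k+j)+1$, these two nonzero blocks glue into the single interval $[l-k,\,l-1]p\omega$, giving the claimed bound.

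I do not anticipate a serious obstacle: everything reduces to the explicit description of the $E_1$-page that Lemma~\ref{lem:tatecohcom} provides, together with the elementary observation that spectral sequence differentials cannot create new weights. The only place where one has to be mildly careful is the edge case $k = j+1$ (where the third block of columns is empty) and the verification that the two nonempty blocks in the case $k \geq j+2$ are genuinely adjacent; both are resolved by the index arithmetic above.
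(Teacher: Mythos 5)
Your proposal is correct and follows essentially the same route as the paper: both read the weight bounds off the $E_1$-page of the $\uparrow\rightarrow$ spectral sequence for $\Hom_{B_1}(C(k),\Ljkt)$ via Lemma \ref{lem:tatecohcom}, using $H^{(1)}$-equivariance of the differentials. (Your explicit block-by-block bookkeeping is a clean way to organize the case analysis that the paper handles more tersely; note one small slip: the antidiagonal should read $i+v=l$ rather than $u+v=l$, but your subsequent use of $v=l-i$ is correct.)
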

\begin{proof} The $E_1$ page of the $\u\r$-spectral
sequence for $\Hom_{B_1}(C(k),\Ljkt)$ 
is a horizontally truncated and shifted 
version of \eqref{eq:prelimE1}. More 
precisely, we have
\[
\tH^l(B_1,\Ljkt)=\tH^l(B_1,(\sigma_{\ge -k}\Ljt)[-k])=\tH^{l-k}(B_1,\sigma_{\ge -k}\Ljt)
\]
The weights of $\tH^{l-k}(B_1,\sigma_{\ge -k}\Ljt)$ occur in some 
$\tH^{l-k-u}(B_1,(\Ljt)^{u})$ for $-k\le u \allowbreak \le 0$. By Lemma  \ref{lem:tatecohcom} 
we see that the weights of $\tH^{l-k-u}(B_1,(\Ljt)^{u})$
satisfy on the nose $\ge (l-k-u-1)p\omega\geq (l-k-1)p\omega$. However the value
$(l-k-1)p\omega$ cannot actually be achieved since it corresponds to the case $u=0$ and
$u\le -j-2$ which is excluded by the hypotheses on $j$. So an actual lower bound is
$(l-k)p\omega$. 

The upper bound for the weights is on the nose  $\leq (l-k-u)p\omega\leq lp\omega$ which corresponds to $u=-k$
and $u\ge -j$. These conditions imply $k\le j$. So if $k>j$ an actual upper bound 
is $(l-1)p\omega$.
\end{proof}
\begin{remark}
See \S\ref{sec:extra2} for a more precise description of the cohomology of $\Ljkt$.
\end{remark}

Define 
\begin{equation}
\label{eq:Ldef}
\sLjkt=H^0(\Ljkt)=H^0(\Djkt) \qquad \text{for $1\leq k\leq
n-2$}.
\end{equation}
 Note that $\Djkt\to \sLjkt$ is quasi-isomorphism. 
\begin{corollary}\label{cor:Kweights}
The weights of $\tH^l(B_1,\sLjkt)$ for $l\in \ZZ$ are contained in the following interval 
\[
\begin{cases}
[l,l+n-k-2]p\omega &\text{if $k\leq j$},\\
[l-1,l+n-k-2]p\omega &\text{if $k>j$}.
\end{cases}
\]
\end{corollary}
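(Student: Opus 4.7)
The plan is to mimic the proof of Corollary~\ref{lem:Ljktweights}, now applied to $\Djkt$ in place of $\Ljkt$. The first step is to observe that by Lemma~\ref{lem:qiCM} the complex $\Ljt$ is quasi-isomorphic to $((j+d_t)\omega)$ placed in cohomological degree $0$, so $\Ljt$ is acyclic in all negative degrees. Consequently the truncation $\Djkt=(\sigma_{<-k}\Ljt)[-k-1]$ has cohomology concentrated in degree $0$, where it equals $\sLjkt$. Therefore
\[
\tH^l(B_1,\sLjkt)\cong \tH^l(B_1,\Djkt)=\tH^{l-k-1}(B_1,\sigma_{<-k}\Ljt),
\]
reducing the problem to bounding the weights of the right-hand side.

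I would then set up the $\u\r$-spectral sequence associated to $\Hom_{B_1}(C(k),\Djkt)$, exactly parallel to the one used in Corollary~\ref{lem:Ljktweights}. Since every $E_\infty$-term is a subquotient of the corresponding $E_1$-term, it suffices to control the weights at $E_1^{u,v}=\tH^v(B_1,(\Ljt)^{u})$ with $u+v=l-k-1$ and $u\in[-n,-k-1]$. The key input is Lemma~\ref{lem:tatecohcom}, which, for such $u$, supplies weight $vp\omega$ on the slices $u\ge -j$, vanishing on the slice $u=-j-1$, and weight $(v-1)p\omega$ on the slices $u\le -j-2$.

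The remaining step is a case split along $k\le j$ versus $k>j$, driven by which of these three regimes intersect the truncation $u\le -k-1$. For $k\le j$ both non-vanishing regimes contribute: the $u\ge -j$ slices (non-empty only when $k<j$) produce, via the diagonal substitution $v=l-k-1-u$, weights in $[lp\omega,(l+j-k-1)p\omega]$, while the $u\le -j-2$ slices produce weights in $[(l+j-k)p\omega,(l+n-k-2)p\omega]$, the endpoints attained at $u=-j-2$ and $u=-n$ respectively; the two intervals glue to the asserted $[l,l+n-k-2]p\omega$. For $k>j$ the truncation forces $u\le -k-1<-j-1$, so only the $u\le -j-2$ regime participates, yielding the narrower range $[(l-1)p\omega,(l+n-k-2)p\omega]$, attained at $u=-k-1$ and $u=-n$ respectively.

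I do not anticipate any substantive obstacle beyond the bookkeeping of which endpoint is actually achievable: the only subtle point is that the lower value $l-1$ can be realised exclusively when $k>j$ (it requires $u=-k-1$ to lie in the $u\le -j-2$ regime), which is precisely why the bound differs by one between the two cases and mirrors the upper-bound dichotomy appearing in Corollary~\ref{lem:Ljktweights}.
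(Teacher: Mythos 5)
Your proposal is correct and follows essentially the same route as the paper: both reduce $\tH^l(B_1,\sLjkt)$ to $\tH^l(B_1,\Djkt)$ via the quasi-isomorphism $\Djkt\to\sLjkt$, bound the weights column-by-column on the $E_1$-page of the $\u\r$-spectral sequence for $\Hom_{B_1}(C(k),\Djkt)$ using Lemma~\ref{lem:tatecohcom}, and then observe that the sharper endpoints $lp\omega$ (when $k\le j$) and $(l-1)p\omega$ (when $k>j$) come from exactly the same analysis of which slices of $u$ survive the truncation. Your more granular tracking of the two sub-intervals that glue together is a harmless refinement of the paper's direct min/max bookkeeping.
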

\begin{proof}
Since $\tH^l(B_1,\sLjkt)\cong \tH^l(B_1,\Djkt)$ we can get  bounds for the weights by computing the $E_1$-page of the $\uparrow\rightarrow$ spectral sequence associated to the double complex $\Hom_{B_1}(C(k),\Djkt)$. In other words we need to compute $\tH^{l-u}(B_1,(\Djkt)^{u})$ for varying $u$.
Note that $(\Djkt)^u=(\Ljt)^{-k-1+u}$. Hence we may assume $-n+k+1\leq u\allowbreak\leq 0$ as the other terms are zero.

Hence from Lemma \ref{lem:tatecohcom} we obtain that  $\tH^{l-u}(B_1,(\Djkt)^{u})$ is up to multiplicities
\begin{equation}\label{eq:tHD}
\begin{cases}
((l-u)p\omega)&\text{if $-u+k+1\leq j$},\\
0&\text{if $-u+k=j$},\\
((l-u-1)p\omega)&\text{if  $-u+k-1\geq j$}.
\end{cases}
\end{equation}
Therefore the weights are always $\geq (l-u-1)p\omega\geq (l-1)p\omega$, The value  $(l-1)p\omega$ corresponds only to $u=0$ and $k>j$. 
The upper bound is $\leq (l-u)p\omega\leq (l+n-k-1)p\omega$, where the latter corresponds to the case $u=-n+k+1$, $-u+k+1\leq j$ which is impossible. Thus, the weights are  bounded above by $(l+n-k-2)p\omega$.
\end{proof}

\subsubsection{Cohomology exact sequences}
We proceed by analysing $\tH^*(B_1,\Ljkt)$. By the definition of $\Ljkt$
(see \S\ref{sec:qired},\eqref{eq:qisoL},\eqref{eq:Ldef}) we have
\[
H^i(\Ljkt)=
\begin{cases}
\sLjkt&\text{if $i=0$,}\\
((j+d_t)\omega)&\text{if $i=k$,}\\
0&\text{otherwise.}
\end{cases}
\]
Hence we have a distinguished triangle\footnote{The index shift caused by defining the third arrow as $\gamma[1]$  will be convenient later on.} in $D(B)$
\begin{equation}\label{secLCo}
\sLjkt\xrightarrow{\alpha} \Ljkt\xrightarrow{\beta} ((j+d_t)\omega)[-k]\xrightarrow{\gamma[1]}\,
\end{equation}
and we note that since $\Ljkt$ as a complex is concentrated in degrees $[0,k]$ we may take for $\alpha$, $\beta$ the obvious maps of complexes.

Applying $\ulRHom_{B_1}(k,-)$ to \eqref{secLCo} we obtain a distinguished triangle in $D(B^{(1)})$ 
\begin{equation}\label{eq:tri}
\ulRHom_{B_1}(k,\sLjkt)\xrightarrow{\alpha} \ulRHom_{B_1}(k,\Ljkt)\xrightarrow{\beta} \ulRHom_{B_1}(k,((j+d_t)\omega)[-k])
\xrightarrow{\gamma[1]}
\end{equation}
whose associated cohomology long exact sequence looks as follows: 
\begin{equation}
\label{eq:les}
\begin{tikzcd}
& & \cdots \arrow[overlay,out=-30, in=150]{dll}{\alpha_{l-1}}\\
\tH^{l-1}(B_1,\Ljkt) \rar{\beta_{l-1}} & \tH^{l-k-1}(B_1,((j+d_t)\omega)) \rar{\gamma_{l}} & \tH^{l}(B_1,\sLjkt) \arrow[overlay,out=-30, in=150]{dll}{\alpha_l}\\
\tH^l(B_1,\Ljkt) \rar{\beta_l} & \tH^{l-k}(B_1,((j+d_t)\omega)) \rar{\gamma_{l+1}} & \tH^{l+1}(B_1,\sLjkt) \arrow[overlay,out=-30, in=150]{dll}{\alpha_{l+1}} \\
\tH^{l+1}(B_1,\Ljkt) \rar{\beta_{l+1}} & \tH^{l-k+1}(B_1,((j+d_t)\omega)) \rar{\gamma_{l+2}} & \tH^{l+2}(B_1,\sLjkt) \arrow[overlay,out=-30, in=150]{dll}{\alpha_{l+2}} \\
\cdots & & 
\end{tikzcd}
\end{equation}
Careful inspection of this long exact sequence yields the following result:
\begin{lemma}\label{lem:splitH}
There are three mutually exclusive cases:
\begin{enumerate}
\item \label{gamma_l=0,beta_l=0}
For all $l$ one has $\beta_l=0$, $\gamma_l=0$. In this case there are isomorphisms
\begin{equation}
\label{eq:gamma_l=0,beta_l=0}
\tH^l(B_1,\sLjkt)\overset{\alpha_l}{\cong}\tH^l(B_1,\Ljkt).
\end{equation}
\item\label{gamma_l=0}
For all $l$ one has $\gamma_l=0$ and there is some $\beta_l\neq 0$. In this case
there are short exact sequences 
\begin{equation}\label{eq:sec1}
0\xrightarrow{} \tH^l(B_1,\sLjkt)\xrightarrow{\alpha_l} \tH^l(B_1,\Ljkt)\xrightarrow{\beta_l} \tH^{l-k}(B_1,((j+d_t)\omega))\xrightarrow{} 0.
\end{equation}
\item\label{gamma_l!=0}
For all $l$  one has $\beta_l=0$ and there is some $\gamma_l\neq 0$. In this case
there are short exact sequences
\begin{equation}\label{eq:sec2a}
0\xrightarrow{} \tH^{l-k-1}(B_1,((j+d_t)\omega))\xrightarrow{\gamma_{l}}  \tH^{l}(B_1,\sLjkt)\xrightarrow{\alpha_{l}} \tH^{l}(B_1,\Ljkt)\xrightarrow{} 0.
\end{equation}
\end{enumerate}
The case \eqref{gamma_l=0,beta_l=0} occurs if and only if $q^0_{jt}$, $q^1_{jt}$ are both undefined.
\end{lemma}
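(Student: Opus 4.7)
The plan is to analyze the long exact sequence \eqref{eq:les} by a case distinction on whether $q^0_{jt}$ and $q^1_{jt}$ are defined, combined with weight constraints from Corollaries \ref{lem:Ljktweights} and \ref{cor:Kweights}. The final ``if and only if'' claim is essentially immediate: by Lemma \ref{lem:qiCM}, both $q^0_{jt}$ and $q^1_{jt}$ are undefined precisely when $\tH^\ast(B_1,((j+d_t)\omega))$ vanishes, and this vanishing is in turn equivalent to $\beta_l=\gamma_l=0$ for all $l$, either by applying $\ulRHom_{B_1}(k,-)$ directly to \eqref{secLCo} or by exactness of \eqref{eq:les}. This handles case \eqref{gamma_l=0,beta_l=0}.

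Assume now without loss of generality that $q^0_{jt}$ is defined; the case $q^1_{jt}$ defined is analogous after a parity shift. By Lemma \ref{rem:coll}, $q^1_{jt}$ is then undefined, and Lemma \ref{lem:qiCM} yields $\tH^i(B_1,((j+d_t)\omega))=((q^0_{jt}+i)p\omega)$ for $i$ even and zero otherwise. Hence for each fixed $l$, at most one of the source of $\gamma_l$ and the target of $\beta_l$ is nonzero, depending on the parity of $l-k$. In particular $\beta_l$ and $\gamma_l$ can never simultaneously be nonzero for the same $l$.

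The crux is to prove that either $\beta_l=0$ for all $l$ or $\gamma_l=0$ for all $l$. By $B^{(1)}$-equivariance, $\beta_l\neq 0$ forces the single weight $((q^0_{jt}+l-k)p\omega)$ to appear among the weights of $\tH^l(B_1,\Ljkt)$, which by Corollary \ref{lem:Ljktweights} requires $q^0_{jt}\in[0,k-[k>j]]$. Similarly $\gamma_l\neq 0$ forces $((q^0_{jt}+l-k-1)p\omega)$ to appear among the weights of $\tH^l(B_1,\sLjkt)$, which by Corollary \ref{cor:Kweights} requires $q^0_{jt}\in[k+1-[k>j],n-1]$. These two intervals are disjoint, and together (using the bounds of Lemma \ref{rem:coll}) cover all possible values of $q^0_{jt}$. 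Consequently the value of $q^0_{jt}$ puts us uniquely in one of the two regimes: in the first, every $\gamma_l$ vanishes; in the second, every $\beta_l$ vanishes.

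To conclude I would read off the short exact sequence \eqref{eq:sec1} (resp.\ \eqref{eq:sec2a}) from \eqref{eq:les} once $\gamma_l=0$ (resp.\ $\beta_l=0$) for all $l$, and then invoke the non-vanishing of $\tH^{l-k}(B_1,((j+d_t)\omega))$ (resp.\ $\tH^{l-k-1}(B_1,((j+d_t)\omega))$) for the appropriate parity of $l-k$ to exhibit at least one nonzero $\beta_l$ (resp.\ $\gamma_l$), placing us in case \eqref{gamma_l=0} or \eqref{gamma_l!=0}. The principal difficulty is bookkeeping: one must carefully track the correction $[k>j]$ to confirm that the two weight intervals for $q^0_{jt}$ are indeed disjoint and cover the full range from Lemma \ref{rem:coll}, for both $k\leq j$ and $k>j$, and then carry out the fully symmetric analysis for $q^1_{jt}$.
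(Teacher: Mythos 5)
Your proof is correct and takes a genuinely different route from the paper's. The paper's argument is local in $l$: it observes (i) that consecutive Tate cohomology groups $\tH^{l-k-1}$ and $\tH^{l-k}$ of $((j+d_t)\omega)$ cannot both be nonzero, (ii) that $\tH^{l-k}$ being at most one-dimensional forces $\beta_l=0$ or $\gamma_{l+1}=0$, and (iii) that \eqref{eq:les} is $2$-periodic up to a twist by $(2p\omega)$; it then propagates $\beta_{l-1}=\beta_l=0$ from a single nonzero $\gamma_l$ to all $\beta_m=0$ by periodicity. Your argument is instead global and weight-theoretic: you show $\beta_l\neq 0$ pins $q^\ell_{jt}$ to the interval $[0,k-[k>j]]$ via Corollary \ref{lem:Ljktweights}, while $\gamma_m\neq 0$ pins it to the disjoint interval $[k+1-[k>j],n-1]$ via Corollary \ref{cor:Kweights}, so the two regimes exclude each other. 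The two approaches are of comparable length, but yours buys a little more: it directly recovers the refined dichotomy (that cases \eqref{gamma_l=0} and \eqref{gamma_l!=0} are distinguished by $q^\ell_{jt}\le k-[k>j]$ versus $q^\ell_{jt}>k-[k>j]$), which the paper only establishes later in Proposition \ref{rmk:leftright} via a more elaborate $E_3$-spectral-sequence argument. The one detail worth making explicit is that your weight comparison is an $H^{(1)}$-equivariance argument (a nonzero $H^{(1)}$-equivariant map into, resp.\ out of, a one-dimensional weight space forces that weight to appear in the other side), which does not presuppose the ``sum of characters'' statement of Proposition \ref{prop:Tformal} that comes afterwards.
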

\begin{proof}
It is clear that the three cases are mutually exclusive and moreover that \eqref{eq:gamma_l=0,beta_l=0}\eqref{eq:sec1}\eqref{eq:sec2a} follow by 
\eqref{eq:les} from the indicated properties of
$(\beta_l)_l$, $(\gamma_l)_l$. We now claim that one of the cases always appears; i.e.
\begin{equation}
\label{eq:logic}
(\forall l:\gamma_l=0)\vee (\forall l:\beta_l=0).
\end{equation} 
For the rest of the proof we will use the following observations:
\begin{enumerate}
\item[(i)] \label{usei} For a given $l$, $\tH^l(B_1,((j+d_t)\omega))$ and $\tH^{l+1}(B_1,((j+d_t)\omega))$ cannot both be nonzero, because $p\neq 2$, $q_{jt}^{l(2)}$ being defined ensures $q_{jt}^{(l+1)(2)}$ is not defined (see Lemma \ref{rem:coll} and \eqref{eq:qiCM}). 

\item[(ii)] \label{useii}
For a given $l$, in \eqref{eq:les} either $\beta_l=0$ or $\gamma_{l+1}=0$, which follows immediately from 
Lemma \ref{lem:qiCM} which asserts that  $\tH^{l-k}(B_1,((j+d_t)\omega))$ is at most one-dimensional.
\item[(iii)] The long exact sequence \eqref{eq:les} is 2-periodic, up to tensoring with $(2p\omega)$. This follows  from Lemma \ref{lem:periodicity}
applied to the distinguished triangle \eqref{secLCo}.
\end{enumerate}
To show \eqref{eq:logic}, it suffices to show the equivalent statement
\[
\label{eq:morelogic}
\exists l: \gamma_l \neq 0 \Rightarrow \forall m:\beta_m=0.
\]
Assume $\gamma_{l} \neq 0$. By (ii) we get
$\beta_{l-1}=0$ and by \eqref{eq:les} we get $\tH^{l-k-1}(B_1,((j+d_t)\omega))\allowbreak \neq 0$. Thus by (i),
$\tH^{l-k}(B_1,((j+d_t)\omega))=0$, so also $\beta_{l}=0$, again by \eqref{eq:les}. Using (iii) 
this implies that all $\beta_m$ are zero. 

The last claim of the lemma is clear by  \eqref{eq:qiCM}.
\end{proof}

\begin{remark}\label{rmk:prermk}
A more refined version of Lemma \ref{lem:splitH} will be given in Proposition  \ref{rmk:leftright} (whose proof depends however on this lemma). More precisely we will show
that the cases \eqref{gamma_l=0} and \eqref{gamma_l!=0} are distinguished by whether or not $q^\ell_{jt}\le k-[k\le j]$ (see \eqref{eq:logical}) and furthermore we will also show 
that the exact sequences \eqref{eq:sec1}\eqref{eq:sec2a} are in fact split.

Note that the refined version is only used in the ``extra'' \S\ref{sec:extra} which aims to decide \emph{exactly} which summands in the decomposition of $K_{jk}^{G_1}$ appear with nonzero multiplicity.
\end{remark}

\subsubsection{Formality argument}\label{subsec:form}
To prove the formality of $\ulRHom_{B_1}(k,\Ljkt)$ (c.f. Proposition \ref{prop:Tformal}) it is sufficient to construct morphisms 
\begin{equation}\label{eq:map}
\ulRHom_{B_1}(k,\Ljkt)\r \tH^l(B_1,\Ljkt)[-l]
\end{equation}
 in the derived category of $B^{(1)}$-complexes which induce isomorphisms on $H^l(-)$.

 We already have morphisms (with isomorphisms following from Lemma \ref{lem:qiCM})
 \begin{multline}\label{eq:map1}
 \ulRHom_{B_1}(k,\Ljkt)\to \ulRHom_{B_1}(k,\Ljt[-k])\cong \ulRHom_{B_1}(k,((j+d_t)\omega))[-k]\\\cong \bigoplus_l \tH^l(B_1,((j+d_t)\omega))[-l-k]\xrightarrow{\text{projection}} \tH^{l-k}(B_1,((j+d_t)\omega))[-l],
 \end{multline}
which induces $\beta_l$ on $H^l(-)$.

Below we will  construct additional morphisms
\begin{equation}\label{eq:rmap}
\ulRHom_{B_1}(k,\Ljkt)\to \im\alpha_l[-l]
\end{equation}
so that the resulting composition
\[
\im \alpha_l\r \tH^l(B_1,\Ljkt)\r \im\alpha_l
\]
is an isomorphism. Using Lemma \ref{lem:splitH} this yields isomorphisms 
\begin{equation}\label{eq:tHC}
\tH^l(B_1,\Ljkt)\cong\allowbreak\im \alpha_l \oplus (\tH^{l-k}(B_1,((j+d_t)\omega)))^{\oplus m},
\end{equation}
 $m\in \{0,1\}$, and we define \eqref{eq:map} 
as the sum of \eqref{eq:map1} and \eqref{eq:rmap} and hence obtain the formality.

To carry this out we take a natural projection morphism $\pi:\Ljkt\to (\Ljkt)^{0}$ (resp. $\pi:\Ljkt\to (\Ljkt)^{0,1}:=(\Ljkt)^{0}\to (\Ljkt)^{1}$ in the case $k=j+1$). 
 The complex $\ulRHom_{B_1}(k,(\Ljkt)^0)$ is formal by Lemma \ref{lem:tatecohcom}, while (for $k=j+1$) 
 the complex $\ulRHom_{B_1}(k,(\Ljkt)^{0,1})$ is formal 
by Lemma \ref{lem:2form} below. 
 Thus we get a morphism 
 \begin{multline*}
 \theta:\ulRHom_{B_1}(k,\Ljkt)\to \tH^l(B_1,(\Ljkt)^{0})[-l]\\
  (\text{resp. } \theta:\ulRHom_{B_1}(k,\Ljkt)\to \tH^l(B_1,(\Ljkt)^{0,1})[-l]).
  \end{multline*}
 Note that $\tH^l(B_1,(\Ljkt)^{0})$ is a sum of characters (and thus a trivial $U^{(1)}$- representation) by Lemma \ref{lem:tatecohcom}
   and the same holds for $\tH^l(B_1,(\Ljkt)^{0,1})$ by Lemma \ref{lem:2form} below. 
 Thus, we can compose $\theta$ with a projection to a  $H^{(1)}$- (and thus $B^{(1)}$-) invariant subspace of  $\tH^l(B_1,(\Ljkt)^{0})[-l]$ (resp. $\tH^l(B_1,(\Ljkt)^{0,1})[-l]$). The following lemma enables us to view $\im \alpha_l[-l]$ as such a subspace, thus the corresponding composition provides us with the desired map \eqref{eq:rmap}.

\begin{lemma}\label{lem:injection}
We distinguish the following two cases.
\begin{enumerate}
\item If $k\neq j+1$, then the composition 
\begin{equation}
\label{eq:case1}
\im\alpha_l\r \tH^l(B_1,\Ljkt)\to \tH^l(B_1,(\Ljkt)^0),
\end{equation}
where the second map is induced from the projection map  $\Ljkt\to (\Ljkt)^0$, is an injection.
\item If $k= j+1$, then the composition 
\begin{equation}
\label{eq:case2}
\im\alpha_l\r \tH^l(B_1,\Ljkt)\to \tH^l(B_1,(\Ljkt)^{0,1}),
\end{equation}
where the second map is induced from the projection map  $\Ljkt\to (\Ljkt)^{0,1}$, is an injection.
\end{enumerate}

In particular, $\im \alpha_l$ is a sum of characters.
\end{lemma}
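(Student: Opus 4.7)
My plan is to prove both assertions simultaneously by showing that $\im\alpha_l$ is concentrated in a single $H^{(1)}$-weight $wp\omega$, where $w=l$ if $k\le j$ and $w=l-1$ if $k>j$. Combining Corollaries \ref{lem:Ljktweights} and \ref{cor:Kweights}, the weight intervals supporting $\tH^l(B_1,\Ljkt)$ and $\tH^l(B_1,\sLjkt)$ intersect in exactly $\{wp\omega\}$ in both cases; since $\alpha_l$ is $B^{(1)}$-equivariant, $\im\alpha_l$ must be supported in this single weight. A $B^{(1)}$-module with a single $H^{(1)}$-weight is automatically $U^{(1)}$-trivial, so this immediately proves the final assertion that $\im\alpha_l$ is a sum of characters.

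For the injectivity in case (1), I will use the $\u\r$ spectral sequence
\[E_1^{u,v}=\tH^v(B_1,(\Ljkt)^u)\Rightarrow \tH^{u+v}(B_1,\Ljkt)\]
whose columns carry a single weight by Lemma \ref{lem:tatecohcom}. The map $\pi_l$ of \eqref{eq:case1} has kernel equal to the filtration piece $F^1\tH^l(B_1,\Ljkt)$, so it suffices to verify that $F^1$ contains no vector of weight $wp\omega$. Its graded pieces $E_\infty^{u,l-u}$ with $u\ge 1$ each carry a single weight different from $wp\omega$: for $k\le j$ every column has weight $(l-u)p\omega$; for $k\ge j+2$ the two weight ``bands'' of Lemma \ref{lem:tatecohcom} give $(l-u-1)p\omega$ or $(l-u)p\omega$, and $wp\omega=(l-1)p\omega$ is attained only at $u=0$. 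Consequently $\im\alpha_l\cap F^1=0$.

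For case (2), $k=j+1$, the column $u=0$ vanishes identically (the exceptional middle case of Lemma \ref{lem:tatecohcom}), which forces the replacement by $(\Ljkt)^{0,1}$. The triangle $\sigma_{\ge 2}\Ljkt\to \Ljkt\to(\Ljkt)^{0,1}\to$ will identify $\ker\pi_l$ with the image of $\tH^l(B_1,\sigma_{\ge 2}\Ljkt)=F^2\tH^l(B_1,\Ljkt)$. An analogous bookkeeping for $u\ge 2$ (all graded pieces $E_\infty^{u,l-u}$ carry weight $(l-u)p\omega\le(l-2)p\omega\neq(l-1)p\omega$) shows that $F^2$ contains no vector of weight $(l-1)p\omega$, yielding the injectivity of \eqref{eq:case2}. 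The spectral sequence for the two-term complex $(\Ljkt)^{0,1}$ collapses at $E_1$ since its $u=0$ column still vanishes, so the target simplifies to $\tH^{l-1}(B_1,(\Ljkt)^1)$ as expected.

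The only non-trivial ingredient will be the weight bookkeeping ruling out $wp\omega$ from every $E_1^{u,l-u}$ with $u\ge 1$ (resp.\ $u\ge 2$) across the three subcases $k\le j$, $k=j+1$, $k\ge j+2$; once this is done, the injectivity is a formal consequence of weight-space semisimplicity under $H^{(1)}$.
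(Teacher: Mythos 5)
Your proof is correct and follows essentially the same strategy as the paper: constrain the weights of $\im\alpha_l$ and then rule out overlap with the weights of the $E_1^{u,l-u}$ columns for $u\ge 1$ (resp.\ $u\ge 2$), using the same two ingredients (Corollary~\ref{cor:Kweights} and Lemma~\ref{lem:tatecohcom}). The one difference worth noting: you also invoke the upper bound from Corollary~\ref{lem:Ljktweights} to pin $\im\alpha_l$ down to the \emph{single} weight $wp\omega$, which lets you deduce the final ``sum of characters'' assertion directly from $H^{(1)}$-weight purity, independently of the injectivity. The paper instead uses only the lower bound on the weights of $\tH^l(B_1,\sLjkt)$, runs the contradiction argument with the truncation triangle $\sigma_{\ge q+1}\Ljkt\to\Ljkt\to\sigma_{\le q}\Ljkt$, and only afterwards deduces the sum-of-characters statement from the injection into $\tH^l(B_1,(\Ljkt)^0)$ or $\tH^l(B_1,(\Ljkt)^{0,1})$, both known to be sums of characters. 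Your packaging is marginally cleaner; the weight bookkeeping is identical.
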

\begin{proof}
It follows from Corollary \ref{cor:Kweights} that $\tH^l(B_1,\sLjkt)$ has weights 
\begin{equation}
\label{eq:constraint}
\begin{cases}
\geq lp\omega &\text{if $k\leq j$},\\
\geq (l-1)p\omega &\text{if $k\geq j+1$}.\\
\end{cases}
\end{equation}

Let $0\neq a\in \im\alpha_l$ (whose weights are constrained by \eqref{eq:constraint}). We have to prove that $a$ has nonzero image in the right most groups in \eqref{eq:case1}\eqref{eq:case2}.
We use the short exact sequences of complexes
\[
0\r \sigma_{\ge q+1} \Ljkt \r \Ljkt \r \sigma_{\le q} \Ljkt\r 0,
\]
where $q=0,1$ depending on the case. If the image of $a$ in $\tH^l(B_1, \sigma_{\le q} \Ljkt)$ is zero then $a$ determines a nonzero element of $\tH^l(B_1,\sigma_{\ge q+1} \Ljkt)$  and
hence by the spectral sequence associated to $\Hom_{B_1}(C(k),\sigma_{\ge q+1} \Ljkt)$, a nonzero element in some $\tH^{l-u}(B_1,(\Ljkt)^{u})$ for $u\ge q+1\ge 1$. 
By Lemma \ref{lem:tatecohcom} the weights of $\tH^{l-u}(B_1,(\Ljkt)^{u})$ 
are given by (up to possibly zero multiplicity)
\begin{equation}
\label{eq:constraint2}
\begin{cases}
((l-u)p\omega) &\text{if $u\geq k-j$},\\
0&\text{if $u=k-j-1$},\\
((l-u-1)p\omega) &\text{if $u\le k-j-2$}.
\end{cases}
\end{equation}
The only overlap between \eqref{eq:constraint} and \eqref{eq:constraint2} taken into account $u\ge 1$ is $u=1$, $k=j+1$. Since $u\ge q+1$ this corresponds to $q=0$ which implies case (1).
However $k=j+1$ implies case (2) which is a contradiction.

Thus, $\im \alpha_l$ is a sum of characters, since, as discussed before the lemma, both $\tH^l(B_1,(\Ljkt)^0)$, $\tH^l(B_1,(\Ljkt)^{0,1})$ are sums of characters by respectively \eqref{TateB1formulaA}, Lemma \ref{lem:2form} below.
\end{proof}

We have used the following lemma for $l=-j-1$. The case $l=-j-2$ will only be used in Section \ref{sssec:form-2} below.

\begin{lemma}\label{lem:2form}
If $l=-j-1$ or $l=-j-2$ then 
 $\ulRHom_{B_1}(k,(\Ljt)^l\to (\Ljt)^{l+1})$ (the $(-)^l$ part in cohomological degree zero) is formal in $D(B^{(1)})$, and its cohomology is a sum of characters.
\end{lemma}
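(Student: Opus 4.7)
The plan is to deduce both cases of the lemma from a single distinguished triangle together with the vanishing of the Tate cohomology of the middle term $(\Ljt)^{-j-1}$ established in Lemma \ref{lem:tatecohcom}.

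First I would observe that the two-term complex $E^\bullet := \bigl((\Ljt)^l \to (\Ljt)^{l+1}\bigr)$ (with $(\Ljt)^l$ in cohomological degree $0$) fits into a distinguished triangle in $K(B)$,
\[
(\Ljt)^l \longrightarrow E^\bullet \longrightarrow (\Ljt)^{l+1}[-1] \longrightarrow (\Ljt)^l[1],
\]
coming from the stupid filtration. Applying the exact functor $\ulRHom_{B_1}(k, -): D(B) \to D(B^{(1)})$ produces a distinguished triangle in $D(B^{(1)})$.

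The key observation is then the following: by Lemma \ref{lem:tatecohcom} (the middle case of its main formula), the Tate cohomology $\tH^\bullet(B_1, (\Ljt)^{-j-1})$ vanishes identically, and hence $\ulRHom_{B_1}(k, (\Ljt)^{-j-1}) \simeq 0$ in $D(B^{(1)})$. In the case $l = -j-1$ this means the leftmost term of the triangle vanishes, so
\[
\ulRHom_{B_1}(k, E^\bullet) \;\simeq\; \ulRHom_{B_1}(k, (\Ljt)^{-j})[-1];
\]
in the case $l = -j-2$ it is instead the rightmost term that vanishes, yielding
\[
\ulRHom_{B_1}(k, E^\bullet) \;\simeq\; \ulRHom_{B_1}(k, (\Ljt)^{-j-2}).
\]

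Finally, in both cases the surviving term has formal $\ulRHom_{B_1}(k, -)$ by Lemma \ref{lem:tatecohcom} (applied to $u = -j$ or $u = -j-2$, both of which satisfy the hypotheses of that lemma), and its cohomology is a sum of the characters $(vp\omega)$ described in \eqref{eq:mainformula}. A shift by $[-1]$ does not affect formality or the character property, so this gives both statements of the lemma. There is no real obstacle here; the whole argument is just the extraction of these two special cases from the already-established Lemma \ref{lem:tatecohcom}, which is exactly why the proof can be deferred to this short step after the heavier formality computation.
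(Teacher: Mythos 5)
Your proof matches the paper's: reduce via the stupid-truncation triangle and the vanishing of $\tH^\bullet(B_1,(\Ljt)^{-j-1})$ (Lemma~\ref{lem:tatecohcom}, middle case) to the formality of $\ulRHom_{B_1}(k,-)$ of the single remaining term, which again follows from Lemma~\ref{lem:tatecohcom}. One small slip: the distinguished triangle coming from the stupid filtration runs $(\Ljt)^{l+1}[-1]\to E^\bullet\to(\Ljt)^l\to$ (the one-term subcomplex maps in, the one-term quotient maps out), not $(\Ljt)^l\to E^\bullet\to(\Ljt)^{l+1}[-1]\to$ as you wrote, since there is no chain map $(\Ljt)^l\to E^\bullet$ with $(\Ljt)^l$ placed in degree $0$; once the order is corrected your two case distinctions and the resulting identifications are exactly as in the paper.
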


\begin{proof}
We write $\Cscr^0=(\Ljt)^l$, $\Cscr^1=(\Ljt)^{l+1}$. 
By Lemma \ref{lem:tatecohcom}, the Tate cohomology of $\Cscr^0$ (resp. $\Cscr^1$) is zero if $l=-j-1$ (resp. $l=-j-2$), thus the inclusion $0\to\Cscr^1$ to $\Cscr^0\to \Cscr^1$ (resp. projection from $\Cscr^0\to \Cscr^1$ to $\Cscr^0\to 0$) induces an isomorphism $\RHom_{B_1}(k,\Cscr^0\to \Cscr^1)\cong \RHom_{B_1}(k,\Cscr^1[-1])$ 
(resp. $\cong \RHom_{B_1}(k,\Cscr^0)$) in $D(B^{(1)})$. The latter complex is
 formal
 and the cohomology is a sum of characters by Lemma \ref{B1cohomology02}.
\end{proof}

\subsection{Formality of $\tau_{\geq 1}\RHom_{B_1}(k,\Ljkt)$}\label{sec:>0formality}
In this section we want to show  
\begin{proposition}\label{prop:formality}
The complex $\tau_{\geq 1}\RHom_{B_1}(k,\Ljkt)$ is formal in $D(B^{(1)})$. Moreover  $H^l(B_1,\Ljkt)$ is a sum of characters for  $l\ge 1$.
\end{proposition}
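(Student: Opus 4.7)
The strategy parallels Proposition \ref{prop:Tformal} from \S\ref{subsec:form}, adapted from Tate cohomology to ordinary cohomology after truncation at $\tau_{\geq 1}$. The key observation is that for any $B_1$-module $M$ the natural map $\RHom_{B_1}(k,M)\to\ulRHom_{B_1}(k,M)$ induces an isomorphism $H^l(B_1,M)\cong \tH^l(B_1,M)$ for every $l\geq 1$. Combined with Proposition \ref{prop:Tformal}, this will let us transport the Tate formality to ordinary formality in the range $l\geq 1$.

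I would mimic the construction of \S\ref{subsec:form}, building two morphisms whose sum lands in a formal complex: first, $\RHom_{B_1}(k,\Ljkt)\to\RHom_{B_1}(k,\Ljt[-k])\cong\RHom_{B_1}(k,((j+d_t)\omega))[-k]$ coming from the quasi-isomorphism \eqref{eq:qisoL}; and second, the projection $\RHom_{B_1}(k,\Ljkt)\to\RHom_{B_1}(k,(\Ljkt)^0)$, or $\to\RHom_{B_1}(k,(\Ljkt)^{0,1})$ when $k=j+1$. Both targets are formal in $D(B^{(1)})$ with cohomology a sum of characters, which follows by truncating to non-negative degrees the formality assertions of Lemma \ref{B1cohomology02} and Lemma \ref{lem:tatecohcom} (with Lemma \ref{lem:2form} handling the boundary case).

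To verify that the induced morphism $\tau_{\geq 1}\RHom_{B_1}(k,\Ljkt)\to$ (formal complex) is a quasi-isomorphism, I would apply $\RHom_{B_1}(k,-)$ to the distinguished triangle \eqref{secLCo} and analyse the resulting long exact sequence: since $H^{l-k}(B_1,((j+d_t)\omega))=0$ for $l<k$, one deduces $H^l(B_1,\Ljkt)\cong H^l(B_1,\sLjkt)=\tH^l(B_1,\sLjkt)$ in that range, which is a sum of characters by \eqref{eq:tHC}; for $l\geq k$ the analogue of Lemma \ref{lem:splitH} applies directly with $H$ in place of $\tH$, and the injection of $\im\alpha_l$ into the cohomology of $(\Ljkt)^0$ (or $(\Ljkt)^{0,1}$) is inherited from Lemma \ref{lem:injection} via the identification $H^l=\tH^l$ for $l\geq 1$ on modules.

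The main obstacle is to correctly match the contributions to $H^l(B_1,\Ljkt)$ at the boundary $l=k$, where both a summand from $\sLjkt$ and one from $((j+d_t)\omega)[-k]$ (through $H^0(B_1,((j+d_t)\omega))$) may appear. However, these two potential contributions correspond precisely to the two summands in the constructed target, so the comparison goes through. The final statement that $H^l(B_1,\Ljkt)$ is a sum of characters for $l\geq 1$ is then immediate from the formality, since the target is by construction a sum of shifts of characters.
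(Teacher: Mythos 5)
Your outline correctly identifies the two source morphisms (through $\RHom_{B_1}(k,((j+d_t)\omega))[-k]$ and through $(\Ljkt)^0$ or $(\Ljkt)^{0,1}$), the triangle \eqref{secLCo}, and the dichotomy between $l<k$ and $l\geq k$. However, it collapses all three cases of Lemma \ref{lem:splitH} into a single argument, and case \eqref{gamma_l!=0} (some $\gamma_l\neq 0$, all $\beta_l=0$) is exactly where this fails. The slogan ``$H^l=\tH^l$ for $l\geq 1$'' holds for a $B_1$-module, but $\Ljkt$ is a complex with cohomology in two degrees, and for $1\leq l\leq k$ in case \eqref{gamma_l!=0} one has by \eqref{eq:isoregulartate} and \eqref{eq:sec2a} that $H^l(B_1,\Ljkt)\cong\tH^l(B_1,\sLjkt)$ while $\tH^l(B_1,\Ljkt)$ is a proper quotient of this (the kernel being $\tH^{l-k-1}(B_1,((j+d_t)\omega))\neq 0$). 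Hence the natural map $H^l(B_1,\Ljkt)\to\tH^l(B_1,\Ljkt)$ is surjective but \emph{not} injective, and since your proposed morphism $H^l(B_1,\Ljkt)\to H^l(B_1,(\Ljkt)^0)$ factors through it, it cannot detect all of $H^l(B_1,\Ljkt)$. Lemma \ref{lem:injection} does not help here: it controls the Tate-level $\im\alpha_l$, which in case \eqref{gamma_l!=0} is the smaller group $\tH^l(B_1,\Ljkt)$, not the ordinary $\im\alpha_l\cong H^l(B_1,\Ljkt)$. Meanwhile your first source morphism contributes nothing in this case ($\beta_l=0$ and $H^{l-k}(B_1,((j+d_t)\omega))=0$ for $l<k$), so the target of your candidate formality map is genuinely too small.

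The paper handles case \eqref{gamma_l!=0} by a different mechanism: the formality morphism is built in the \emph{opposite} direction, from a formal object to $\tau_{\geq 1}\RHom_{B_1}(k,\Ljkt)$, namely as $\bigoplus_{l\geq 1}H^l(B_1,\sLjkt)[-l]\cong\tau_{\geq 1}\RHom_{B_1}(k,\sLjkt)\r\tau_{\geq 1}\RHom_{B_1}(k,\Ljkt)$ using the inclusion $\sLjkt\r\Ljkt$. This requires first proving formality of $\tau_{\geq 1}\ulRHom_{B_1}(k,\sLjkt)$ via the \emph{surjection} Lemma \ref{lem:surjection} (which works with $(\Djkt)^0$ or $(\Djkt)^{-1,0}$, not $(\Ljkt)^0$), and then establishing the splitting of the exact sequences in Lemma \ref{lem:upperlower}. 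Neither the surjectivity input, nor the splitting argument, nor the change of direction appears in your proposal, and without them the proof does not go through.
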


\begin{proof}
To show that $\tau_{\geq 1}\RHom_{B_1}(k,\Ljkt)$ is formal it suffices to construct for $l\geq 1$ maps
\begin{equation}
\label{eq:fullformality}
\RHom_{B_1}(k,\Ljkt)\to H^l(B_1,\Ljkt)[-l]
\end{equation}
in the derived category of $B^{(1)}$-modules, which induce isomorphisms on $H^l(-)$.

The short exact sequence \eqref{secLCo} induces a commutative diagram of long exact sequences
\begin{equation}
\label{eq:5-lemma}
\tiny
\xymatrix@C=1.12em{
\ar[r] &H^{l-k-1}(B_1,(q_{jt}\omega))\ar[r]\ar[d]&H^l(B_1,\sLjkt)\ar[r]\ar[d]_{\cong}& H^l(B_1,\Ljkt)\ar[r]\ar[d] & H^{l-k}(B_1,(q_{jt}\omega))\ar[r]\ar[d]& H^{l+1}(B_1,\sLjkt)\ar[r]\ar[d]_\cong
&\\
\ar[r]&
{\tH}^{l-k-1}(B_1,(q_{jt}\omega))\ar[r]_-{\gamma_l}&{\tH}^l(B_1,\sLjkt)\ar[r]_-{\alpha_l}& {\tH}^l(B_1,\Ljkt)\ar[r] & {\tH}^{l-k}(B_1,(q_{jt}\omega))\ar[r]
& {\tH}^{l+1}(B_1,\sLjkt)\ar[r]
&
}
\end{equation}
where we have written $q_{jt}:=j+d_t$ for brevity. 
Recall that we assume $k\ge 1$. 

We first deduce some preliminary information concerning the structure of \eqref{eq:5-lemma} for varying values of $l$.
\begin{enumerate}
\item
If $l> k$ or ($l=k$ and $\gamma_l=0$) then the $5$-lemma applies to \eqref{eq:5-lemma} by \eqref{eq:B1char} and we obtain 
\begin{equation}\label{eq:>l}
H^l(B_1,\Ljkt)=\tH^l(B_1,\Ljkt).
\end{equation}

\item
If $l< k$ or ($l=k$ and $\gamma_l \neq 0$) then we claim
\begin{equation}
\label{eq:isoregulartate}
H^l(B_1,\Ljkt)\cong H^l(B_1,\sLjkt).
\end{equation}
For $1 \leq l < k$ this is clear, as we have vanishing $H^{l-k-1}(B_1,((j+d_t)\omega))=\allowbreak H^{l-k}(B_1,((j+d_t)\omega))=0$. For $l=k$ and $\gamma_l \neq 0$, note that $\gamma_l\neq 0$ implies
$\hat{H}^{l-k-1}(B_1,((j+d_t)\omega))\neq 0$ and hence vanishing $0=\hat{H}^{l-k}(B_1,((j+d_t)\omega))\allowbreak =H^{l-k}(B_1,((j+d_t)\omega))$ by (i) in the proof of Lemma \ref{lem:splitH}. 
 Since we also have $H^{l-k-1}(B_1,((j+d_t)\omega))=\allowbreak H^{-1}(B_1,(j+d_t)\omega))=0$   we  get the isomorphism \eqref{eq:isoregulartate} also in this case.
\end{enumerate}

We will now give formality proofs for the three cases in Lemma \ref{lem:splitH}. 

\subsubsection{Formality in the cases (\ref{gamma_l=0,beta_l=0},\ref{gamma_l=0}) in Lemma \ref{lem:splitH}}
In this case $\gamma_l=0$ for all $l$.  
Thus $\alpha_l$ in \eqref{eq:5-lemma} 
is injective and this implies that $H^l(B_1,\Ljkt)\r \tH^l(B_1,\Ljkt)$ is injective.
Indeed if  $l\geq k$  this follows from \eqref{eq:>l}  and if $l<k$ this follows from \eqref{eq:isoregulartate} and some elementary diagram chasing in \eqref{eq:5-lemma}.

Since $\tH^l(B_1,\Ljkt)$ is a sum of characters by Proposition \ref{prop:Tformal} it follows that  $H^l(B_1,\Ljkt)$ is  $B^{(1)}$-equivariant direct summand of $\tH^l(B_1,\Ljkt)$.

 Thus  we can obtain \eqref{eq:fullformality} as a composition 
\begin{multline*}
\RHom_{B_1}(k,\Ljkt)\r \underline{\RHom}_{B_1}(k,\Ljkt)\cong \bigoplus_l\ {\tH}^l(B_1,\Ljkt)[-l]\\
\xrightarrow{\text{projection}}\tH^l(B_1,\Ljkt)[-l]\xrightarrow{\text{projection}} H^l(B_1,\Ljkt)[-l],
\end{multline*}
where we used Proposition \ref{prop:Tformal} for the isomorphism. 

\subsubsection{Formality in the case \eqref{gamma_l!=0} in Lemma \ref{lem:splitH}}
\label{sssec:form-2}
\begin{lemma}
\label{lem:upperlower}
 For $l\geq 1$, there is a commutative diagram with exact rows
\begin{equation}
\label{eq:3-lemma}
\xymatrix{
0\ar[r] &H^{l-k-1}(B_1,((j+d_t)\omega))\ar[r]\ar[d]&H^l(B_1,\sLjkt)\ar[r]\ar[d]^{\cong}& H^l(B_1,\Ljkt)\ar[r]\ar[d] & 0\\
0\ar[r]&
{\tH}^{l-k-1}(B_1,((j+d_t)\omega))\ar[r]_-{\gamma_l}&{\tH}^l(B_1,\sLjkt)\ar[r]_-{\alpha_l}& {\tH}^l(B_1,\Ljkt)\ar[r] & 0 
}
\end{equation}
In addition if $l\le k$ then $H^{l-k-1}(B_1,((j+d_t)\omega))=0$ and if $l>k$ then the vertical maps are all isomorphisms. 
\end{lemma}
\begin{proof}
The lower row is just \eqref{eq:sec2a}. For the upper row we note  for $l\leq k$ we have
$H^l(B_1,\Ljkt)\cong H^l(B_1,\sLjkt)$ by \eqref{eq:isoregulartate} and $H^{l-k-1}(B_1,((j+d_t)\omega))=0$
and thus the top row is exact
while for $l>k$ the left-most
and the right-most vertical maps are isomorphisms by \eqref{eq:B1char} and \eqref{eq:>l} and
thus the top row is isomorphic to the bottom row.
\end{proof}
We claim that it is enough to prove the following two statements
\begin{enumerate}
\item
$\tau_{\geq 1}\ulRHom_{B_1}(k,\sLjkt)$ is formal in $D(B^{(1)})$; 
\item
the upper exact sequence in \eqref{eq:3-lemma} splits.
\end{enumerate}
Indeed, if this is the case then we have a map
\begin{align*}
\bigoplus_{l\geq 1} H^l(B_1,\sLjkt)[-l]\cong
\tau_{\geq 1}\ulRHom_{B_1}(k,\sLjkt)&\cong\tau_{\geq 1}\RHom_{B_1}(k,\sLjkt)\\&\to \tau_{\geq 1}\RHom_{B_1}(k,\Ljkt),
\end{align*}
which yields an isomorphism in $D(B^{(1)})$
\[
\bigoplus_{l\ge 1} H^l(B_1,\Ljkt)[-l]\r \bigoplus_{l\ge 1} H^l(B_1,\sLjkt)[-l]\to \tau_{\geq 1}\RHom_{B_1}(k,\Ljkt) 
\]
using a splitting for the upper exact sequence in \eqref{eq:3-lemma}.

\medskip

We prove now that $\tau_{\geq 1}\ulRHom_{B_1}(k,\sLjkt)$ is formal in $D(B^{(1)})$ and that \emph{both} exact sequences in \eqref{eq:3-lemma} split.
Below we  construct maps for $l\ge 1$
\begin{equation}
\label{eq:Lformality}
\tH^{l}(B_1,\Ljkt)[-l]\to \ulRHom_{B_1}(k,\sLjkt)
\end{equation}
such that the composition 
\[
\tH^l(B_1,\Ljkt)\to \tH^l(B_1,\sLjkt)\xrightarrow{\alpha_l} \tH^l(B_1,\Ljkt)
\]
with the first map induced by applying $H^l(-)$ to \eqref{eq:Lformality} and the second coming from \eqref{eq:3-lemma} is an isomorphism.
This yields that the lower exact sequence in \eqref{eq:3-lemma} splits: 
\begin{equation}
\label{eq:splittingL}
\tH^l(B_1,\sLjkt)=\tH^l(B_1,\Ljkt)\oplus \tH^{l-k-1}(B_1,((j+d_t)\omega)).
\end{equation} 
Moreover the map $\gamma_l$ in \eqref{eq:3-lemma} is induced by the map (using Lemma \ref{lem:qiCM} for the first two isomorphisms) 
\begin{multline}\label{eq:mapK0}
\bigoplus_l \tH^l(B_1,((j+d_t)\omega))[-l-k-1]\cong \ulRHom_{B_1}(k,((j+d_t)\omega))[-k-1]\\\cong\ulRHom_{B_1}(k,\Ljt[-k-1])
\to \ulRHom_{B_1}(k,\Djkt)\cong \ulRHom_{B_1}(k,\sLjkt)
\end{multline}
and it follows that the direct sum of morphism \eqref{eq:mapK0} and \eqref{eq:Lformality} will give a formality morphism $H^l(B_1,\sLjkt)[-l]\to \ulRHom_{B_1}(k,\sLjkt)$. 

Finally note that it follows immediately from Lemma \ref{lem:upperlower} that if the lower exact sequence in  \eqref{eq:3-lemma} splits then so does the upper. 

\medskip

Hence all that remains to be done is the construction of the maps
\eqref{eq:Lformality}. We will do this now.  We look at the maps of complexes
$(\Djkt)^0\to \Djkt$
(resp. $(\Djkt)^{-1,0}:=\allowbreak ((\Djkt)^{-1}\to(\Djkt)^{0})\to \Djkt$) if
$k\neq j$ (resp. $k=j$). By Lemma \ref{lem:tatecohcom} (resp. Lemma
\ref{lem:2form}), $\ulRHom_{B_1}(k,(\Djkt)^0)$ (resp.
$\ulRHom_{B_1}(k,(\Djkt)^{-1,0})$) is formal. Thus we respectively get morphisms 
\begin{multline*}
\phi:\tH^l(B_1,(\Djkt)^0)[-l]\hookrightarrow 
\ulRHom_{B_1}(k,(\Djkt)^0) \to \ulRHom_{B_1}(k,\sLjkt),
 \end{multline*}
 \begin{multline*}
 \phi:\tH^l(B_1,(\Djkt)^{-1,0})[-l] \hookrightarrow 
 \ulRHom_{B_1}(k,(\Djkt)^{-1,0}) \to \ulRHom_{B_1}(k,\sLjkt),
\end{multline*}
where the last morphisms are induced by the respective projections $(\Djkt)^0 \to \sLjkt$, $(\Djkt)^{-1,0} \to \sLjkt$. 
Applying $H^l(-)$ to $\phi$ and composing with $\alpha_l$ we get a map
\begin{align*}
&
\tH^l(B_1,(\Djkt)^0)\xrightarrow{\delta_l} \tH^l(B_1,\sLjkt)\xrightarrow{\alpha_l} \tH^l(B_1,\Ljkt)\\
(\text{resp. }&
 \tH^l(B_1,(\Djkt)^{-1,0})\xrightarrow{\delta_l} \tH^l(B_1,\sLjkt)\xrightarrow{\alpha_l} \tH^l(B_1,\Ljkt))
. 
\end{align*}
The cohomology $\tH^l(B_1,(\Djkt)^0)$ (resp. $\tH^l(B_1,(\Djkt)^{-1,0})$)
is a sum of characters by Lemma \ref{lem:tatecohcom} (resp. Lemma
\ref{lem:2form}).  Thus, we can precompose $\theta$ with an inclusion of a $H^{(1)}$- (and thus $B^{(1)}$-) invariant subspace of $\tH^l(B_1,(\Djkt)^0)[-l]$ (resp. $\tH^l(B_1,(\Djkt)^{-1,0})[-l]$). By the following lemma we can view $\tH^l(B_1,\Ljkt)[-l]$ as such a subspace, obtaining therefore the desired map \eqref{eq:Lformality}.
\end{proof}

\begin{lemma}\label{lem:surjection}
\begin{enumerate} 
\item
If $k\neq j$ then
the composition
\begin{equation}
\label{eq:surj1}
\tH^l(B_1,(\Djkt)^0)\xrightarrow{\delta_l} \tH^l(B_1,\sLjkt)\xrightarrow{\alpha_l} \tH^l(B_1,\Ljkt)
\end{equation}
is a surjection.
\item
If $k=j$
then the composition
\begin{equation}
\label{eq:surj2}
\tH^l(B_1,(\Djkt)^{-1,0})\xrightarrow{\delta_l} \tH^l(B_1,\sLjkt)\xrightarrow{\alpha_l} \tH^l(B_1,\Ljkt)
\end{equation}
is a surjection.
\end{enumerate}
\end{lemma}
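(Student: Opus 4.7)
I would prove Lemma \ref{lem:surjection} in parallel to the injection Lemma \ref{lem:injection}, but dualized: the first step is to reinterpret surjectivity as a vanishing statement. In case (1), the composition in question is induced by a chain map $(\Djkt)^0 \to \Ljkt$ which, in cohomological degree $0$, is simply the $\Ljt$-differential $d\colon (\Ljt)^{-k-1}\to (\Ljt)^{-k}$ (composed with the inclusion of the degree-$0$ kernel into $\Ljkt$). An easy identification shows the mapping cone of this chain map is $(\sigma_{\geq -k-1}\Ljt)[-k]$, fitting into a distinguished triangle
$$(\Djkt)^0 \to \Ljkt \to (\sigma_{\geq -k-1}\Ljt)[-k] \to (\Djkt)^0[1].$$
Surjectivity of $\alpha_l\circ\delta_l$ on Tate cohomology is therefore equivalent to the natural map $i_*\colon \tH^l(B_1,\Ljkt) \to \tH^l(B_1,(\sigma_{\geq -k-1}\Ljt)[-k])$ being zero for every $l$. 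In case (2), where $k=j$ and $(\Djkt)^0 = (\Ljt)^{-j-1}$ has vanishing Tate cohomology by Lemma~\ref{lem:tatecohcom}, we instead use the chain map $(\Djkt)^{-1,0}\to \Ljkt$, whose cone is $(\sigma_{\geq -k-2}\Ljt)[-k]$, and reformulate the surjectivity analogously.

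Next, I would establish the vanishing of $i_*$ via a weight argument that mirrors the proof of Lemma~\ref{lem:injection}. Assume for contradiction that $0\neq a\in \tH^l(B_1,\Ljkt)$ satisfies $i_*(a)\neq 0$. By Corollary~\ref{lem:Ljktweights} the weight of $a$ lies in $[l-k,l]p\omega$ (or $[l-k,l-1]p\omega$ if $k>j$). The $\uparrow\!\rightarrow$ spectral sequence for the cone has $E_1^{uv}=\tH^v(B_1,(\Ljt)^{u-k})$ (controlled by Lemma~\ref{lem:tatecohcom}) and differs from that of $\Ljkt$ by one extra column $u=-1$ (case 1) or two extra columns $u\in\{-1,-2\}$ (case 2); their weight content is easily computed. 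Arguing as in the injection lemma, the contribution of $a$ in the cone must survive the extra differentials originating from these new columns. A careful comparison of the weight of $a$ with the weights contributed by the new columns (using the same case analysis on $k$ versus $j$ and the "on the nose" weight intervals that were used in the proof of Corollary~\ref{lem:Ljktweights}) shows that the only scenario under which $i_*(a)$ could survive is a contradictory regime, precisely the one that is excluded by the hypothesis distinguishing cases (1) and (2).

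The main obstacle will be the bookkeeping: verifying that the relevant spectral-sequence differentials from the extra columns are genuinely \emph{surjective} onto the corresponding weight subspaces (not merely weight-compatible). This amounts to exploiting the Koszul-type structure of $\Ljt$ to see that the differentials $d\colon (\Ljt)^{u-1}\to (\Ljt)^u$ induce sufficiently nontrivial maps on Tate cohomology. The case $k=j$ is additionally delicate since the chain map uses the two-term complex $(\Djkt)^{-1,0}$ rather than a single module; here the formality of $\ulRHom_{B_1}(k,(\Djkt)^{-1,0})$ from Lemma~\ref{lem:2form} is essential to ensure that the $B^{(1)}$-equivariant structure (and hence the weight-based argument) remains under control.
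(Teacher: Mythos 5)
Your reformulation is correct but tautological: the composite $\alpha_l\circ\delta_l$ is induced by the chain map $(\Djkt)^0\to\Ljkt$ given by the Koszul differential, whose cone is $(\sigma_{\geq -k-1}\Ljt)[-k]\cong C_{j,k+1}^{(t)}[1]$, and the long exact sequence of that triangle shows that surjectivity of $\alpha_l\circ\delta_l$ is \emph{equivalent} to $i_*=0$. All the content therefore resides in the vanishing argument for $i_*$, and that is where your proposal has a genuine gap.

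The weight argument you propose does not establish the vanishing. In Lemma~\ref{lem:injection} the relevant weight intervals are disjoint: the weights of $\im\alpha_l\subset\tH^l(B_1,\sLjkt)$ are bounded \emph{below} via Corollary~\ref{cor:Kweights}, while the $E_1$-entries of $\sigma_{\ge q+1}\Ljkt$ in columns $u\ge q+1$ are bounded \emph{above}, and these ranges fail to overlap outside the excluded case. In your dualized setup you would compare the weights of $\tH^l(B_1,\Ljkt)$ with those of $\tH^l(B_1,C_{j,k+1}^{(t)}[1])\cong\tH^{l+1}(B_1,C_{j,k+1}^{(t)})$, but Corollary~\ref{lem:Ljktweights} places these in intervals such as $[l-k,l]p\omega$ and $[l-k,l+1]p\omega$ (when $k<j$), which overlap completely. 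The structural reason: $\Ljkt$ is precisely $\sigma_{\ge 0}$ of the cone, so the two $\uparrow\!\r$ spectral sequences have \emph{identical} $E_1$-pages on columns $u\geq 0$, and any class of $\tH^l(B_1,\Ljkt)$ maps to a class whose nonzero symbol would sit in some column $u\geq 0$ of the same weight --- never obstructed by weights. For $i_*(a)$ to vanish, the new differentials originating in the added column(s) must actually hit the relevant symbols. You flag this, but it is not mere bookkeeping: it is a nontrivial surjectivity statement about the Koszul differentials on Tate cohomology which you neither prove nor have an evident shortcut to.

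The paper sidesteps this by using a fact already established at the point where the lemma is invoked: in \S\ref{sssec:form-2} the lower row of \eqref{eq:3-lemma}, i.e.\ \eqref{eq:sec2a} (case~\eqref{gamma_l!=0} of Lemma~\ref{lem:splitH}), shows that $\alpha_l$ is surjective. One therefore lifts $a$ to $a'\in\tH^l(B_1,\sLjkt)\cong\tH^l(B_1,\Djkt)$ of the \emph{same weight} as $a$, which by Corollary~\ref{lem:Ljktweights} is $\leq lp\omega$ (resp.\ $\leq(l-1)p\omega$). One then runs the column argument inside the spectral sequence for $\Djkt$, where the $E_1$-entries \eqref{eq:constraint3} in columns $u\leq q-1\leq-1$ have weights $\geq lp\omega$; now the intervals are genuinely disjoint except at $(u,k)=(-1,j)$, and the choice of $q\in\{0,-1\}$ excludes that. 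The missing step in your proposal is this lift of $a$ through the known surjection $\alpha_l$; without it one only has the loose weight bounds on $\tH^\bullet(B_1,\Ljkt)$ and the argument cannot close.
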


\begin{proof}
We argue as in the proof of Lemma \ref{lem:injection}.
By Corollary \ref{lem:Ljktweights}, 
 $\tH^l(B_1,\Ljkt)$ in particular has weights 
\begin{equation}
\label{eq:constraint1a}
\begin{cases}
\leq lp\omega &\text{if $k\leq j$, }\\
\leq (l-1)p\omega &\text{if $k> j$. }
\end{cases}
\end{equation}
On the other hand, $\tH^{l-u}(B_1,(\Djkt)^{u})$ up to multiplicity is equal to
\begin{equation}
\label{eq:constraint3}
\begin{cases}
((l-u)p\omega)&\text{if $u\geq k-j+1$,}\\
0&\text{if $u=k-j$,}\\
((l-1-u)p\omega)&\text{if $u\leq k-j-1$}
\end{cases}
\end{equation}
(see \eqref{eq:tHD}). 

Let $0\neq a\in \tH^l(B_1,\Ljkt)$ be a $H^{(1)}$-homogeneous element. Then the weights of $a$ are constrained by \eqref{eq:constraint1a}. Since $\alpha_l$ is surjective 
(see \eqref{eq:3-lemma})
$a$ is represented by a
$H^{(1)}$-homogeneous element $a'$ in $\tH^l(B_1,\sLjkt)=H^l(\Hom_{B_1}(C(k),\Djkt))$ of the same weight. Consider the exact sequences 
\[
0\r \sigma_{\ge q} \Djkt \r \Djkt\r \sigma_{\le q-1} \Djkt \r0
\]
with $q\in \{0,-1\}$ depending on the case. It is now sufficient to prove that the image of $a'$ in $H^l(\Hom_{B_1}(C(k),\sigma_{\le q-1} \Djkt))$ is zero since
then it is in the image of $H^l(\Hom_{B_1}(C(k),\sigma_{\ge q} \Djk))$. If it is not zero then
it determines some nonzero $H^{(1)}$-invariant element in $\tH^{l-u}(B_1,(\Djkt)^{u})$ for suitable $u\le q-1\le -1$. The only overlap between \eqref{eq:constraint1a} and \eqref{eq:constraint3}
and the condition $u\le -1$ is $u=-1$, $j=k$. However $u=-1$ implies $q=0$ which corresponds to the first case whereas $j=k$ corresponds to the second case. This is a contradiction.
\end{proof}

\subsubsection{$B_1$-cohomology of $\Ljkt$}
To establish Proposition \ref{prop:formality} 
it remains to prove that $H^l(B_1,\Ljkt)$ is a sum of characters for $l\geq 1$. For $l>k$ or $l=k$ and $\gamma_l=0$ this follows from \eqref{eq:>l} combined with Proposition \ref{prop:Tformal}. For $l<k$ or $l=k$ and $\gamma_l \neq 0$, this follows from \eqref{eq:isoregulartate}, combined with \eqref{eq:splittingL}, Proposition \ref{prop:Tformal} and \eqref{TateB1formulaA}. 

\subsection{Proof of Proposition \ref{prop:outform}}
The proposition follows by combining Lemma \ref{lem:qired} and Proposition \ref{prop:formality}.

\section{The cohomology computations}\label{sec:weights}
In this section we compute the cohomology of $H^l(B_1,\Cjk)$ for $l\geq 1$ up to (nonzero) multiplicities. Estimation of weights up  to possibly zero multiplicities is an easy and short calculation, and moreover rough enough to deduce Theorem \ref{thm:higherFr} (up to multiplicities) and consequently the FFRT property.   
Therefore the reader interested only in the latter, might skip the far more technical subsection \S\ref{sec:extra} giving a more precise estimation of the cohomology up to {\em nonzero} multiplicities, which  leads to the decomposition of $K_{jk}^{G_1}$  up to {\em{nonzero}} multiplicities in Proposition \ref{prop:Kjkrefine} below (and eventually to (the complete version of) Theorem \ref{thm:higherFr}).

\subsection{Weight bounds -- first approximation}
By Lemma \ref{lem:qired} it suffices to  bound the weights of $H^l(B_1,\Ljkt)$. Recall that this is a sum of characters by Proposition \ref{prop:formality}.

\begin{proposition}\label{prop:weights}
The weights of $H^l(B_1,\Ljkt)$ for $l\geq 1$ (resp. $l\geq 2$) if $k\leq j$ (resp. $k>j$) lie in the interval $[1,\max\{l,n-3\}]p\omega$. 
\end{proposition}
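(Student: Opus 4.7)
By Proposition \ref{prop:formality}, the groups $H^l(B_1,\Ljkt)$ are known to be sums of characters for $l\geq 1$, so the plan is simply to identify the possible weights. I will split on the size of $l$ versus $k$. For $l\geq k+1$, \eqref{eq:>l} identifies $H^l(B_1,\Ljkt)=\tH^l(B_1,\Ljkt)$, and Corollary \ref{lem:Ljktweights} gives weights in $[l-k,l]p\omega$ (resp.\ $[l-k,l-1]p\omega$ if $k>j$), which sits in $[1,\max\{l,n-3\}]p\omega$. For $1\leq l\leq k-1$ (resp.\ $2\leq l\leq k-1$ when $k>j$), \eqref{eq:isoregulartate} identifies $H^l(B_1,\Ljkt)\cong H^l(B_1,\sLjkt)$, which equals $\tH^l(B_1,\sLjkt)$ because $\sLjkt$ is a module and $l\geq 1$; Corollary \ref{cor:Kweights} then gives weights in an interval whose upper endpoint $l+n-k-2\leq n-3$ and whose lower endpoint is $\geq 1$.

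The hard part will be the boundary case $l=k$, since neither Corollary is tight enough there on its own. The plan is to exploit that $\tH^k(B_1,\Ljkt)$ is constrained by \emph{both} Corollaries simultaneously through the structure in Lemma \ref{lem:splitH} and the splitting \eqref{eq:splittingL}. In cases \eqref{gamma_l=0,beta_l=0} and \eqref{gamma_l=0}, \eqref{eq:>l} together with \eqref{eq:sec1} presents $H^k(B_1,\Ljkt)=\tH^k(B_1,\Ljkt)$ as an extension of $\tH^k(B_1,\sLjkt)$ by at most the character $\tH^0(B_1,(q_{jt}\omega))=(q^0_{jt}p\omega)$, and intersecting the two Corollaries' weight intervals together with the bound $q^0_{jt}\in[1,n-1]$ from Lemma \ref{rem:coll} places all weights in $[1,k]p\omega$. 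In case \eqref{gamma_l!=0}, \eqref{eq:isoregulartate} and \eqref{eq:splittingL} give $H^k(B_1,\Ljkt)\cong \tH^k(B_1,\Ljkt)\oplus \tH^{-1}(B_1,(q_{jt}\omega))$; the first summand again falls in $\{k\}p\omega$ (or $\{k-1\}p\omega$ when $k>j$) by the same intersection argument, while the injectivity of $\gamma_k$ forces the weight $(q^1_{jt}-1)p\omega$ of the second summand to match a weight of $\tH^k(B_1,\sLjkt)$, placing it in $[k,n-3]p\omega$ after using the upper bound $q^1_{jt}\leq n-2$ from Lemma \ref{rem:coll}. The $k>j$ subcases are handled analogously, and combining all cases gives the claim.
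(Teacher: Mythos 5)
Your proof is correct and follows essentially the same route as the paper's: you reduce to Tate cohomology via \eqref{eq:>l} and \eqref{eq:isoregulartate}, invoke Corollaries \ref{lem:Ljktweights} and \ref{cor:Kweights} for the generic bounds, and in the boundary case $l=k$ use Lemma \ref{lem:splitH} together with Lemma \ref{rem:coll} to close the gap. The only cosmetic difference is the packaging: the paper's proof merges the $l=k$ case into the two alternatives of Lemma \ref{lem:splitH} (grouping $l=k,\gamma_l=0$ with $l>k$, and $l=k,\gamma_l\neq 0$ with $l<k$), whereas you isolate $l=k$ and then branch; and for $k>j$ your interval $\{k\}p\omega$ (resp.\ $[k,n-3]p\omega$) in the last paragraph should read $\{k-1\}p\omega$ (resp.\ $[k-1,n-3]p\omega$), but since $k>j\geq 1$ forces $k-1\geq 1$ the conclusion is unaffected.
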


\begin{proof}
We separate the cases as in \eqref{eq:>l} and \eqref{eq:isoregulartate} in the proof of Proposition \ref{prop:formality}.
\begin{enumerate}
\item
Assume that $l>k$ or $l=k$ and $\gamma_l=0$. By Corollary \ref{lem:Ljktweights}
the weights of 
$H^l(B_1,\Ljkt)\cong \tH^l(B_1,\Ljkt)$ (see \eqref{eq:>l}) are contained in the interval  $[l-k,l]p\omega$.
 If $l > k$ then the desired bounds are clear.
 If $l=k$ we only need to assure that $0$ is not a weight of  $\tH^l(B_1,\Ljkt)$. This follows from \eqref{eq:gamma_l=0,beta_l=0}
 \eqref{eq:sec1} 
  using Corollary \ref{cor:Kweights} (which yields that the lower bound of $\tH(B_1,\sLjkt)$  is $l\ge 1$  if $k\leq j$ and  $l-1=k-1\geq j\geq 1$ if $k>j$)  and the fact that $\tH^0(B_1,((j+d_t)\omega))$ equals $0$ or $(q_{jt}^0p \omega)$ and $q_{jt}^0>0$ by Lemma \ref{rem:coll}.  
\item
Assume that $l<k$ or $l=k$ and $\gamma_l\neq 0$. Then $H^l(B_1,\Ljkt)\cong \tH^l(B_1,\sLjkt)$ by \eqref{eq:isoregulartate}, and by Corollary \ref{cor:Kweights} the weights of $\tH^l(B_1,\sLjkt)$ lie in the interval $[l,l+n\allowbreak -k-2]p\omega$ (resp. $[l-1,l+n-k-2]p\omega$) if $k\leq j$ (resp. $k>j$). This already gives us the lower bound. We also obtain the upper bound if $l\neq k$ as then $l+n-k-2\leq n-3$. 
For $l=k$ it follows from \eqref{eq:sec2a}, using Corollary \ref{lem:Ljktweights} 
and \eqref{eq:qiCM}, 
that $\tH^l(B_1,\sLjkt)$ has weights  $(-1+q_{jt}^{1})p\omega$, $\le lp\omega$ (up to multiplicity). As $-1+q_{jt}^{1}-1\leq n-3$ by Lemma \ref{rem:coll} we obtain the desired upper bound also in this case. \qedhere
\end{enumerate}
\end{proof}

\subsection{Weight bounds -- exact results}\label{sec:extra}
Here we show which non-tilt-free summands in \eqref{eq:list} occur with nonzero multiplicity. 
We rely on Proposition \ref{rmk:leftright}, which we prove in \S\ref{sec:extra2} below. 
\begin{proposition} \label{prop:R2}
Assume $1\le l\le n-3$ and set $\epsilon=[k>j]$ (see \eqref{eq:logical}). There is a decomposition $H^l(B_1,\Ljkt)=\allowbreak R_1\oplus R_2$
as $H^{(1)}$-representations
where $R_1\cong ((l-\epsilon)p\omega)^{\oplus a}$ for $a\in \NN$ and 
\begin{equation}
\label{eq:decomposition}
R_2
=
\begin{cases}
((q_{jt}^\ell+l-k)p\omega) &\text{if  $l\ge k$
and $q^\ell_{jt}$ is defined and satisfies $q^{\ell}_{jt}\le k-\epsilon$,}\\
((q_{jt}^{\bar{\ell}}+l-k-1)p\omega)&\text{if $l\le k$ and 
$q^{\bar{\ell}}_{jt}$ is defined and satisfies $q^{\bar{\ell}}_{jt}>k-\epsilon$,}\\
0&\text{otherwise,}
\end{cases}
\end{equation}
for $\ell\equiv l-k(2)$ and  $\bar{\ell}\equiv 1-\ell(2)$. 
\end{proposition}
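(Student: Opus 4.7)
The plan is to combine the long exact sequence \eqref{eq:les} associated to the distinguished triangle \eqref{secLCo} with the refined trichotomy of Proposition \ref{rmk:leftright} (the content of \S\ref{sec:extra2}), which both sharpens Lemma \ref{lem:splitH} by distinguishing its cases (2) and (3) according to whether $q^\ell_{jt}\le k-\epsilon$, and shows that the short exact sequences \eqref{eq:sec1} and \eqref{eq:sec2a} are in fact split as $B^{(1)}$-representations. Since $H^l(B_1,\Ljkt)$ is already known to be a sum of characters by Proposition \ref{prop:formality}, it will suffice to pin down its $H^{(1)}$-weights and their multiplicities.

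First I would reduce regular cohomology to Tate cohomology: by \eqref{eq:>l}, $H^l(B_1,\Ljkt)\cong \tH^l(B_1,\Ljkt)$ whenever $l>k$ (or $l=k$ with $\gamma_l=0$), while by \eqref{eq:isoregulartate}, $H^l(B_1,\Ljkt)\cong \tH^l(B_1,\sLjkt)$ whenever $l<k$ (or $l=k$ with $\gamma_l\neq 0$). The key weight inputs are Corollary \ref{lem:Ljktweights}, which constrains $\tH^l(B_1,\Ljkt)$ to weights in $[l-k,l-\epsilon]p\omega$, and Corollary \ref{cor:Kweights}, which constrains $\tH^l(B_1,\sLjkt)$ to weights in $[l-\epsilon,l+n-k-2]p\omega$. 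The intersection of these two windows is the single weight $(l-\epsilon)p\omega$, and this is precisely what will account for the $R_1$ contribution throughout; the character producing $R_2$ can then only come from the $((j+d_t)\omega)[-k]$ side of \eqref{secLCo} via Lemma \ref{lem:qiCM}.

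I would then run through the three cases of Lemma \ref{lem:splitH}. In case (1) both $\gamma_l$ and $\beta_l$ vanish, so under either reduction $H^l(B_1,\Ljkt)$ is wedged in the intersection of the two weight windows, giving $R_1$ and $R_2=0$. In case (2), where $q^\ell_{jt}\le k-\epsilon$, the split sequence \eqref{eq:sec1} yields
\[
\tH^l(B_1,\Ljkt)\cong \tH^l(B_1,\sLjkt)\oplus \tH^{l-k}(B_1,((j+d_t)\omega));
\]
for $l\ge k$ the first summand is pinned to weight $(l-\epsilon)p\omega$ and the second is the character $(q^\ell_{jt}+l-k)p\omega$ by Lemma \ref{lem:qiCM}, producing the stated $R_2$, while for $l<k$ the regular cohomology equals $\tH^l(B_1,\sLjkt)$, whose weights again lie in the intersection so that $R_2=0$. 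Case (3), where $q^{\bar\ell}_{jt}>k-\epsilon$, is symmetric: the split sequence \eqref{eq:sec2a} gives $\tH^l(B_1,\sLjkt)\cong \tH^{l-k-1}(B_1,((j+d_t)\omega))\oplus \tH^l(B_1,\Ljkt)$; for $l\le k$ the extra summand contributes the character $(q^{\bar\ell}_{jt}+l-k-1)p\omega$, while for $l>k$ only the $\tH^l(B_1,\Ljkt)$-part survives on the regular side, and Corollary \ref{lem:Ljktweights} again confines it to $R_1$. The boundary value $l=k$ is absorbed into case (2) or (3) according to whether $\gamma_k$ vanishes.

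The main obstacle is of course Proposition \ref{rmk:leftright} itself, where the delicate match between the trichotomy of Lemma \ref{lem:splitH} and the numerical condition $q^\ell_{jt}\le k-\epsilon$ must be established, along with the splittings of \eqref{eq:sec1} and \eqref{eq:sec2a}; once that is in hand, the present proposition reduces to the weight bookkeeping outlined above, driven by the two weight bounds in Corollaries \ref{lem:Ljktweights} and \ref{cor:Kweights}.
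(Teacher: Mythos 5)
Your proposal is correct and follows essentially the same route as the paper's own proof: it reduces to Tate cohomology via \eqref{eq:>l}/\eqref{eq:isoregulartate}, invokes Proposition \ref{rmk:leftright} to select the relevant split short exact sequence \eqref{eq:sec1} or \eqref{eq:sec2a}, evaluates the extra character via Lemma \ref{lem:qiCM}, and pins the remaining summand to weight $(l-\epsilon)p\omega$ by intersecting the weight windows of Corollaries \ref{lem:Ljktweights} and \ref{cor:Kweights}. One small imprecision: in your case (3), $l>k$, you attribute the pinning of $\tH^l(B_1,\Ljkt)$ to Corollary \ref{lem:Ljktweights} alone, but that only gives the window $[l-k,l-\epsilon]p\omega$; as in the paper one also needs that $\tH^l(B_1,\Ljkt)$ is a direct summand of $\tH^l(B_1,\sLjkt)$ to bring in Corollary \ref{cor:Kweights} and shrink the window to the single weight $(l-\epsilon)p\omega$.
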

\begin{proof}
 This proposition is proved by considering several cases, and determining precisely the possible values of $H^l(B_1,\Cjk)$ in each of these cases.
More precisely, we distinguish the following:

 \begin{enumerate}
 \item \label{case-(1)}
If $q_{jt}^0$, $q_{jt}^1$ are undefined then from 
\eqref{eq:qiCM} combined with \eqref{eq:5-lemma} we see that  
\[
H^l(B_1,\Ljkt)
\cong {H}^l(B_1,\sLjkt)\cong {\tH}^l(B_1,\sLjkt)
 \cong {\tH}^l(B_1,\Ljkt)
\]
This group 
 equals $((l-\epsilon)p\omega)$  up to (possibly zero) multiplicity by Corollary \ref{cor:Kweights} (which restricts the weights of $\tH^l(B_1,\sLjkt)$) combined with Corollary \ref{lem:Ljktweights} (which restricts the weights of $\tH^l(B_1,\Ljkt)$).
  \item
If $q_{jt}^\ell$ is defined for $\ell\in\{0,1\}$ 
 then we have several cases to consider. 
\begin{enumerate}
\item $l>k$: by \eqref{eq:>l}, we have $H^l(B_1,\Ljkt)=\tH^l(B_1,\Ljkt)$. We now consider two further cases. \label{eq:case-i}
\begin{enumerate}
\item\label{eq:case>k1} $q_{jt}^\ell\leq k-\epsilon$. 
Then 
by Proposition \ref{rmk:leftright}\eqref{gamma_l=0},
\begin{equation}
\label{eq:split-sum}
\tH^l(B_1,\Ljkt)\cong 
\begin{cases}
\tH^l(B_1,\sLjkt)\oplus ((q_{jt}^\ell+l-k)p\omega) &\text{if $\ell\equiv l-k\,(2)$},\\
\tH^l(B_1,\sLjkt) &\text{if $\ell\equiv l-k+1\,(2)$}.
\end{cases}
\end{equation}
In particular $\tH^l(B_1,\sLjkt)$ is a summand of $\tH^l(B_1,\Ljkt)$ and by essentially the same reasoning as in the case \eqref{case-(1)} (combining weight restrictions obtained
from Corollary \ref{cor:Kweights} and  Corollary \ref{lem:Ljktweights})
we find that $\tH^l(B_1,\sLjkt)$ 
equals $((l-\epsilon)p\omega)$ up to (possibly zero) multiplicity. 
\item\label{eq:case>k} $q_{jt}^\ell> k-\epsilon$. Then 
we are in the case \eqref{gamma_l!=0} of Proposition \ref{rmk:leftright} which implies 
that 
$\tH^l(B_1,\Ljkt)$ is a direct summand of $\tH^l(B_1,\sLjkt)$. Therefore, it has weight $(l-\epsilon)p\omega$ up to (possibly zero) multiplicity, again reasoning as in the case \eqref{case-(1)}.
\end{enumerate}
\item $l<k$: 
by \eqref{eq:isoregulartate}, $H^l(B_1,\Ljkt)\cong H^l(B_1,\sLjkt)\cong \tH^l(B_1,\sLjkt)$. We again consider two further cases.
\begin{enumerate}
\item 
$q_{jt}^\ell\leq k-\epsilon$. 
Proceeding exactly as in (\ref{eq:case>k1}) we find that all weights of $\tH^l(B_1,\sLjkt)$ are equal to $((l-\epsilon)p\omega)$.
\item\label{eq:case<k2} $q_{jt}^\ell> k-\epsilon$. In this case 
by Proposition \ref{rmk:leftright}\eqref{gamma_l!=0} 
\[
\tH^l(B_1,\sLjkt)\cong 
\begin{cases}
((q_{jt}^\ell+l-k-1)p\omega)
\oplus \tH^l(B_1,\Ljkt) &\text{if $\ell\equiv l-k-1\,(2)$},\\
\tH^l(B_1,\Ljkt)&\text{if $\ell\equiv l-k\,(2)$},
\end{cases}
\]
where (as in the case \eqref{case-(1)}) $\tH^l(B_1,\Ljkt)$ equals $((l-\epsilon)p\omega)$ up to (possibly zero) multiplicity. 
\end{enumerate}
\item $l=k$:
\begin{enumerate}
\item\label{eq:case<k3} $q_{jt}^\ell\leq k-\epsilon$. Then by Proposition \ref{rmk:leftright}\eqref{case_gamma_l=0}, $\gamma_l=0$ and 
 thus  $H^l(B_1,\Ljkt)\cong \tH^l(B_1,\Ljkt)$ by \eqref{eq:5-lemma}
 so we can proceed as in  \eqref{eq:case>k1}.
\item \label{eq:case>k3}$q_{jt}^\ell> k-\epsilon$. Then by Proposition \ref{rmk:leftright}\eqref{case_gamma_l!=0},  $\beta_l=0$ and thus  $H^l(B_1,\Ljkt)\cong H^l(B_1,\sLjkt)\cong \tH^l(B_1,\sLjkt)$ by \eqref{eq:5-lemma} so we can proceed as in \eqref{eq:case<k2}.\qedhere 
\end{enumerate}
\end{enumerate}
 \end{enumerate} 
\end{proof}
\begin{proposition}\label{prop:refKjkdecomposition}
Assume $1\leq r\leq n-3$,  $1+\epsilon\leq l\leq n-3$ for $\epsilon=[k>j]$ (see \eqref{eq:logical}).  
The summand $(rp\omega)\otimes_k S^p_+$  occurs in $H^l(B_1,\Cjk)$  with nonzero  multiplicity if and only if 
\begin{equation}
\label{eq:intervals}
r\in
\begin{cases}
[l-k+m_{lk},l-\epsilon] &\text{if $l>k$},\\
[1,n-3]&\text{if $l=k$},\\
[l-\epsilon,l-k+n-2-m_{lk}] &\text{if $l<k$},
\end{cases}
\end{equation}
where
\begin{equation}
\label{eq:mlk1}
m_{lk}=
\begin{cases}
1&\text{if $l\equiv k\,(2)$,}\\
1&\text{if  $l\equiv k+1\,(2)$,  $l> k$, $j=n-3$, and $p=n-2$,}\\
1&\text{if  $l\equiv k+1\,(2)$, $l< k$, $j=1$, and $p=n-2$,}\\
0&\text{otherwise.}
\end{cases}
\end{equation}
\end{proposition}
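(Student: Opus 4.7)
The plan is to combine the direct sum decomposition furnished by Lemma \ref{lem:qired} with the explicit formula for $H^l(B_1,\Ljkt)$ given in Proposition \ref{prop:R2}, and then to track which integer values of $q^\ell_{jt}$ (respectively $q^{\bar{\ell}}_{jt}$) can be realised as $t$ ranges over $[0,p-1]^n$ using Lemma \ref{rem:coll}. More precisely, Lemma \ref{lem:qired} shows that $(rp\omega)\otimes_k S^p_+$ occurs (up to degree shift) in $H^l(B_1,\Cjk)$ with nonzero multiplicity if and only if $(rp\omega)$ appears as a summand of $H^l(B_1,\Ljkt)$ for some $t$; since Proposition \ref{prop:formality} guarantees the latter is a sum of characters, the question is reduced to an analysis of characters. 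Proposition \ref{prop:R2} then writes $H^l(B_1,\Ljkt)=R_1\oplus R_2$ where $R_1\cong((l-\epsilon)p\omega)^{\oplus a_t}$ and $R_2$ is either zero or a single explicit character; the proof will show that the $R_1$ contribution is always subsumed by the $R_2$ contribution for a suitable choice of $t$, so that it suffices to enumerate the weights produced by $R_2$.

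The argument then splits into the three cases. For $l>k$, the nonzero $R_2$ contribution is $(q^\ell_{jt}+l-k)p\omega$ with $q^\ell_{jt}\le k-\epsilon$, where $\ell\equiv l-k\,(2)$. By Lemma \ref{rem:coll}, as $t$ varies, $q^\ell_{jt}$ achieves every integer in an interval whose lower endpoint is $1$ if $\ell=0$, is $0$ if $\ell=1$ and $(j,p)\neq(n-3,n-2)$, and is $1$ if $\ell=1$ and $(j,p)=(n-3,n-2)$; inspection of \eqref{eq:mlk1} shows this lower bound equals precisely $m_{lk}$. Since $k-\epsilon\le n-3$ lies in the interior of the range of $q^\ell_{jt}$, the upper endpoint $q^\ell_{jt}=k-\epsilon$ is attained as well. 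This yields $r\in[l-k+m_{lk},l-\epsilon]$, and in particular the value $r=l-\epsilon$ from $R_1$ is automatically covered. The case $l<k$ is dual: $R_2$ gives $r=q^{\bar{\ell}}_{jt}+l-k-1$ with $q^{\bar{\ell}}_{jt}>k-\epsilon$, and Lemma \ref{rem:coll} with $\bar{\ell}\equiv k-l+1\,(2)$ shows that the maximum of $q^{\bar{\ell}}_{jt}$ equals $n-1-m_{lk}$, producing $r\in[l-\epsilon,l-k+n-2-m_{lk}]$. For $l=k$ both branches of the $R_2$ formula become active, giving $r\in[1,k-\epsilon]\cup[k-\epsilon,n-3]=[1,n-3]$ (with $k-\epsilon\ge 1$ since $j\ge 1$ forces $\epsilon=0$ whenever $k=1$).

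The converse implication---that no $r$ outside the stated intervals can occur---is already contained in the weight bounds of Corollary \ref{lem:Ljktweights} and Corollary \ref{cor:Kweights} combined with Proposition \ref{prop:R2}: the only characters produced in $H^l(B_1,\Ljkt)$ are $(l-\epsilon)p\omega$ and the explicit $R_2$ character, both of which lie in the prescribed range by the case analysis above.

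The only genuinely delicate point, and the main obstacle, is the bookkeeping required to match the minimum/maximum integer values attained by $q^\ell_{jt}$ and $q^{\bar{\ell}}_{jt}$ against the three-line definition \eqref{eq:mlk1} of $m_{lk}$, especially in the exceptional boundary cases $(j,p)=(1,n-2)$ and $(j,p)=(n-3,n-2)$ of Lemma \ref{rem:coll}. Once these parity checks are carried out case by case, the proposition follows mechanically.
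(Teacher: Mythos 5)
Your proposal is correct and follows essentially the same route as the paper: reduce via Lemma~\ref{lem:qired} and the decomposition $H^l(B_1,\Ljkt)=R_1\oplus R_2$ of Proposition~\ref{prop:R2}, note that the possible $R_1$-weight $l-\epsilon$ always lies in the interval (so only the range of the $R_2$-weight must be tracked), and then enumerate the attainable $q^\ell_{jt}$ using Lemma~\ref{rem:coll}, with the exceptional endpoints for $(j,p)\in\{(1,n-2),(n-3,n-2)\}$ accounting precisely for $m_{lk}$. The parity/endpoint bookkeeping you sketch matches the paper's case analysis by $\ell\equiv l-k\,(2)$ exactly.
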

\begin{proof}
By \eqref{eq:Cvar1} and \eqref{eq:Cjkbar}, 
\begin{equation}\label{eq:pourdegre}
H^l(B_1,\Cjk)\cong H^l(B_1,\oCjk)\cong \bigoplus_{t\in [0,p-1]^n}H^l(B_1,C_{jk}^{(t)})(-d_t)\otimes_k S^p_+.
\end{equation}
First observe that all the intervals in \eqref{eq:intervals} contain in fact $l-\epsilon$ (the possible contribution from the $R_1$ summand in Proposition \ref{prop:R2}) by the assumption on $l$ and noting that when $\epsilon=1$ then $k>1$ so the first interval in \eqref{eq:intervals} is nonempty. So to prove the proposition it is sufficient that we can get all
weights in the intervals in \eqref{eq:intervals} from the $R_2$ summand in Proposition \ref{prop:R2}.
Varying $t$ and thus $q_{jt}^\ell$ (whose possible values were computed in Lemma \ref{rem:coll}) we obtain the range of legal weights for $R_2$.

Below put
\begin{align*}
u&=[(j,p)=(1,n-2)],\\
\bar{u}&=[(j,p)=(n-3,n-2)].
\end{align*}
It follows from Lemma \ref{rem:coll} that
\begin{align*}
q^0_{jt}&\in [1,n-1-u],\\
q^1_{jt}&\in [\bar{u},n-2],
\end{align*}
and furthermore all possible integer values are attained.
\begin{enumerate}
\item $l-k\equiv 0(2)$. In this case $\ell=0$ and we find that the weights of $R_2$ attain all values in
\begin{equation}
\label{eq:evenbounds}
\begin{cases}
[1+l-k,l-\epsilon]p\omega&\text{if $l\ge k$,}\\
[l-\epsilon,n+l-k-3]p\omega &\text{if $l\le k$.}
\end{cases}
\end{equation}
Since in this case we have $m_{lk}=1$ by \eqref{eq:mlk1} this is compatible with \eqref{eq:intervals} (since in the case $l=k$, $1+l-k=1\leq l-\epsilon$ by the assumption on $l$).
\item $l-k\equiv 1(2)$. In this case $\ell=1$ and we find that the weights of $R_2$ attain all values in
\begin{equation}
\label{eq:firstshot}
\begin{cases}
[\bar{u}+l-k,l-\epsilon]p\omega&\text{if $l>k$,}\\
[l-\epsilon,n+l-k-2-u]p\omega&\text{if $l< k$}
\end{cases}
\end{equation}
as $l=k$ is excluded by $l-k\equiv 1(2)$.
If $l< k$ then we have $m_{lk}=u$ and if $l>k$ then $m_{lk}=\bar{u}$. In both cases the result we get coincides with \eqref{eq:intervals}.
\qedhere\end{enumerate}
\end{proof}
\begin{remark} Note that if $p\ge n-1$ then
\eqref{eq:mlk1} reduces to the much simpler
\[
m_{lk}=
\begin{cases}
1&\text{if $l\equiv k\,(2)$,}\\
0&\text{otherwise.}
\end{cases}
\]
\end{remark}

The proof of Proposition \ref{prop:refKjkdecomposition} also give us degrees of the summands of $H^l(B_1,\Cjk)$ which will be needed in \S\ref{sec:grass}  for the decomposition of $\Fr_*^r\Oscr_\GG$ (where only those summands of $R$ (considered as $R^{p^r}$-module)  which live in degrees divisible by $2p^r$ are visible).  
We record here a rough version sufficient for establishing  Theorem \ref{thm:FrOscr} below. 
\begin{remark}\label{rmk:degrees}
Let $r$ be as in \eqref{eq:intervals}. We claim that as graded $(B^{(1)},S^p)$-modules, $(rp\omega)\otimes_k S^p_+(-d_t)$ is a summand of $H^l(B_1,\Cjk)$ for some $t\in [0,p-1]^n$ such that $q_{jt}^\ell$ is defined and $q_{jt}^\ell+\ell\equiv r \,(2)$. 

To see this note that by \eqref{eq:pourdegre},  $(rp\omega)\otimes_k S^p_+(-d_t)$ is a summand of $H^l(B_1,\Cjk)$ for some $t\in [0,p-1]^n$ such that $(rp\omega)$ is a summand of $H^l(B_1,\Ljkt)$. As noted in the beginning of the proof of 
Proposition \ref{prop:refKjkdecomposition}, $(rp\omega)$ is then a summand of $R_2$ in Proposition \ref{prop:R2}. In particular we either have
$r=q^\ell_{jt}+l-k$ with $\ell\equiv l-k(2)$ or $r=q^{\ell}_{jt}+l-k-1$ with $\ell\equiv 1-l+k(2)$ (replacing $\bar{\ell}$ in the second case of \eqref{eq:decomposition} by $\ell$).
Hence in both cases we get $q_{jt}^\ell+\ell\equiv r \,(2)$. 
\end{remark}
\subsubsection{Refined version of Lemma \ref{lem:splitH}}
\label{sec:extra2}

\begin{proposition}\label{rmk:leftright}
Set $\epsilon=[k>j]$ (see \eqref{eq:logical}). There are the following  (mutually exclusive) cases:
\begin{enumerate}
\item If $q_{jt}^0$, $q_{jt}^1$ are both undefined then $\beta_l=0$, $\gamma_l=0$ for all $l$ and
\begin{equation}
\label{eq:decomp0}
\tH^l(B_1,\Ljkt)\cong \tH^l(B_1,\sLjkt).
\end{equation}
\item\label{case_gamma_l=0}
If $q_{jt}^\ell$ is defined and $q_{jt}^\ell\leq k-\epsilon$ 
then $\gamma_l=0$ for all $l$ and 
\begin{equation}
\label{eq:decomp1}
\begin{aligned}
\tH^l(B_1,\Ljkt)&\cong \tH^l(B_1,\sLjkt)\oplus 
\tH^{l-k}(B_1,((j+d_t)\omega))\\
&\cong 
\begin{cases}
\tH^l(B_1,\sLjkt)\oplus((q_{jt}^\ell+l-k)p\omega) & {\text{if $\ell\equiv l-k\,(2)$}},\\
\tH^l(B_1,\sLjkt) &\text{if $\ell\equiv l-k-1\,(2)$}.\\
\end{cases}
\end{aligned}
\end{equation}
\item\label{case_gamma_l!=0}
If $q_{jt}^\ell$ is defined and $q_{jt}^\ell> k-\epsilon$ 
then $\beta_l=0$ for all $l$ and
\begin{equation}
\label{eq:decomp2}
\begin{aligned}
\tH^{l}(B_1,\sLjkt)&\cong \tH^{l-k-1}(B_1,((j+d_t)\omega))\oplus \tH^{l}(B_1,\Ljkt)\\
&\cong
\begin{cases}
((q_{jt}^\ell+l-k-1)p\omega)\oplus\tH^{l}(B_1,\Ljkt) & \text{if $\ell\equiv l-k-1\,(2)$},\\
\tH^{l}(B_1,\Ljkt)&\text{if $\ell\equiv l-k\,(2)$}.
\end{cases}
\end{aligned}
\end{equation}
\end{enumerate}
\end{proposition}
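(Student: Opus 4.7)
The plan is to deduce Proposition \ref{rmk:leftright} as a refinement of Lemma \ref{lem:splitH} by combining three ingredients already at our disposal: the dichotomy of Lemma \ref{lem:splitH}, the tight weight bounds of Corollaries \ref{lem:Ljktweights} and \ref{cor:Kweights}, and the explicit Tate cohomology formula \eqref{eq:qiCM}. The splittings will then come essentially for free from Lemmas \ref{lem:injection} and \ref{lem:surjection}. Case (1) is immediate: when both $q^0_{jt}$ and $q^1_{jt}$ are undefined, \eqref{eq:qiCM} forces $\tH^\bullet(B_1,((j+d_t)\omega))=0$, so the long exact sequence \eqref{eq:les} collapses to the isomorphism \eqref{eq:decomp0}.

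For the distinction between cases (2) and (3), the key observation is the following. By Corollary \ref{lem:Ljktweights} all weights of $\tH^l(B_1,\Ljkt)$ are $\le (l-\epsilon)p\omega$, and by Corollary \ref{cor:Kweights} all weights of $\tH^l(B_1,\sLjkt)$ are $\ge (l-\epsilon)p\omega$. On the other hand, \eqref{eq:qiCM} shows that $\tH^m(B_1,((j+d_t)\omega))$ is at most one-dimensional and, when non-zero, of weight $(q^{m(2)}_{jt}+m)p\omega$. Comparing these constraints with \eqref{eq:les}, the map $\beta_l$ can be non-zero only if both sides are non-zero and $q^{(l-k)(2)}_{jt}\le k-\epsilon$, while $\gamma_l$ can be non-zero only if $q^{(l-k-1)(2)}_{jt}>k-\epsilon$. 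Since at most one of $q^0_{jt},q^1_{jt}$ is defined by Lemma \ref{rem:coll}, these are mutually exclusive conditions on $q^\ell_{jt}$; checking all parities of $l-k$ separately shows that $q^\ell_{jt}\le k-\epsilon$ forces every $\gamma_l=0$, and $q^\ell_{jt}>k-\epsilon$ forces every $\beta_l=0$. Since we have excluded case (1), Lemma \ref{lem:splitH} then puts us in case \eqref{case_gamma_l=0} or \eqref{case_gamma_l!=0} respectively, yielding the asserted dichotomy.

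The explicit decompositions in \eqref{eq:decomp1} and \eqref{eq:decomp2} follow by feeding \eqref{eq:qiCM} into the short exact sequences \eqref{eq:sec1} and \eqref{eq:sec2a}: the right-hand (resp.\ left-hand) term is $((q^\ell_{jt}+l-k)p\omega)$ exactly when $\ell\equiv l-k\,(2)$ (resp.\ $((q^\ell_{jt}+l-k-1)p\omega)$ when $\ell\equiv l-k-1\,(2)$), and vanishes otherwise. For the splittings, in case (2) Lemma \ref{lem:injection} identifies $\im\alpha_l$ with a $B^{(1)}$-equivariant subrepresentation of $\tH^l(B_1,(\Ljkt)^0)$ (or $\tH^l(B_1,(\Ljkt)^{0,1})$ if $k=j+1$), which is a sum of characters, so $\im\alpha_l$ is a $B^{(1)}$-summand of $\tH^l(B_1,\Ljkt)$ and \eqref{eq:sec1} splits. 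Dually, in case (3) Lemma \ref{lem:surjection} provides a $B^{(1)}$-equivariant map $\tH^l(B_1,(\Djkt)^0)\to\tH^l(B_1,\sLjkt)$ (or from $(\Djkt)^{-1,0}$ when $k=j$) whose composition with $\alpha_l$ is surjective onto $\tH^l(B_1,\Ljkt)$; after precomposing with an inclusion of a suitable character subspace this yields an equivariant section of $\alpha_l$, so \eqref{eq:sec2a} splits.

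The main obstacle is the dichotomy in the second paragraph: one must carefully exploit the \emph{exact} matching of the edge value $(l-\epsilon)p\omega$ that occurs both as the upper bound on the weights of $\tH^l(B_1,\Ljkt)$ and as the lower bound on those of $\tH^l(B_1,\sLjkt)$, since this is precisely what makes the threshold $q^\ell_{jt}=k-\epsilon$ discriminate between the two cases. Once this is set up, the remaining statements are bookkeeping combining the already-established weight estimates, the parity constraints built into \eqref{eq:qiCM}, and the splitting maps constructed in Lemmas \ref{lem:injection} and \ref{lem:surjection}.
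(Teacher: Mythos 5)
Your proof is correct and establishes the dichotomy by a genuinely different route from the paper's own argument. The paper derives the conditions on $(\beta_l)_l,(\gamma_l)_l$ from a detailed analysis of the $\uparrow\rightarrow$ spectral sequence associated to $\Hom_{B_1}(C(k),\Ljt)$: Lemma \ref{lem:E30} computes the single column $\bar{u}$ (given by \eqref{eq:ultimate}) in which the $E_3$-page is concentrated, and then one reads off whether a generator survives to the truncated complex $(\Cscr_{rjt})_{u<-k}$, yielding $\gamma_{l+1}=0\iff\bar{u}\ge -k$ and $\beta_l=0\iff\bar{u}<-k$, which translate to the threshold $q^\ell_{jt}\le k-\epsilon$. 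You instead bypass the spectral sequence by directly comparing the one-dimensional character $\tH^{l-k}(B_1,((j+d_t)\omega))$ (resp. $\tH^{l-k-1}$) against the weight windows of $\tH^l(B_1,\Ljkt)$ and $\tH^l(B_1,\sLjkt)$ from Corollaries \ref{lem:Ljktweights} and \ref{cor:Kweights}, exploiting the crucial coincidence that the upper bound $(l-\epsilon)p\omega$ on the former is the lower bound on the latter. This is shorter and more elementary, and localises the threshold $q^\ell_{jt}=k-\epsilon$ to a single inequality on weights. The spectral-sequence route is heavier but gives more structural information (the exact column $\bar{u}$), which the paper does not need here but reflects how the earlier $E_3$-degeneration results were set up. Your treatment of case (1) and of the splittings is essentially the paper's (via the sum-of-characters criterion, Proposition \ref{prop:Tformal}, \eqref{eq:splittingL}, and Lemmas \ref{lem:injection}, \ref{lem:surjection}), and none of the invoked results depends circularly on Proposition \ref{rmk:leftright}.
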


To prove Proposition \ref{rmk:leftright} we need to better understand the cohomology of $\Ljkt$, $\sLjkt$. The following lemmas are basically a consequence of Lemma \ref{lem:tatecohcom}.
\begin{lemma}\label{lem:specsecdeg3}
  The $\uparrow\rightarrow$ spectral sequence associated to $\Hom_{B_1}(C(k),\Ljkt)$ degenerates at $E_3$ and $d_2^{uv}:E^{uv}_2 \to E^{u+2,v-1}_2$ is $0$ for $u\neq k-j-2$. 
\end{lemma}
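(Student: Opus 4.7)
The plan is to exploit the $H^{(1)}$-equivariance of all differentials together with the detailed weight information on the $E_1$-page recorded in Lemma \ref{lem:tatecohcom}. Translating indices to account for the truncation and shift $\Ljkt = (\sigma_{\geq -k}\Ljt)[-k]$, one has $E_1^{uv} = \tH^v(B_1,(\Ljt)^{u-k})$ for $u \in [0,k]$, which by Lemma \ref{lem:tatecohcom} is, up to multiplicity, a single character: $(vp\omega)$ when $u \geq k-j$, zero when $u = k-j-1$, and $((v-1)p\omega)$ when $u \leq k-j-2$. Since every differential $d_r$ is $B^{(1)}$-equivariant and hence $H^{(1)}$-equivariant, it preserves weights, and consequently the weight appearing in each subquotient $E_r^{uv}$ is inherited from the corresponding $E_1^{uv}$.

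The main step is then a short case analysis for $d_r \colon E_r^{u,v} \to E_r^{u+r,\,v-r+1}$ with $r \geq 2$, comparing the source and target weights. If source and target both lie in the right block (or both in the left block), the discrepancy is $(r-1)p\omega \neq 0$ and the map vanishes. If the source lies in the right block while the target lies in the left block, the discrepancy is $rp\omega$, again nonzero. Targets landing in the zero column $u+r = k-j-1$ are trivially zero. The only regime in which the weights can agree is when the source lies in the left block and the target in the right block: there the discrepancy is $(r-2)p\omega$, which vanishes precisely for $r=2$, while the constraints $u \leq k-j-2$ and $u+2 \geq k-j$ combine to force $u = k-j-2$.

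Putting the two conclusions together yields $d_2^{uv} = 0$ for $u \neq k-j-2$ and $d_r = 0$ for all $r \geq 3$, so $E_3 = E_\infty$ and the spectral sequence degenerates at $E_3$. I do not anticipate any real obstacle: the argument is essentially weight bookkeeping, and the point is that the zero column at $u = k-j-1$ is exactly what permits the single nonzero $d_2$ at $u = k-j-2$ to \emph{jump over} it and connect entries of matching weight.
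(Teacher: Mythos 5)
Your argument is correct and is essentially the paper's proof, spelled out in more detail: the paper likewise invokes the $H^{(1)}$-equivariance of the differentials together with the weight pattern of the $E_1$-page (summarized in \eqref{eq:prelimE1}, i.e.\ the restriction/shift of Lemma \ref{lem:tatecohcom}) to conclude that the only differential that can connect entries of equal weight is $d_2$ jumping over the zero column, which forces $u=k-j-2$ and degeneration at $E_3$. One tiny redundancy in your write-up: the case ``source in the right block, target in the left block'' cannot actually occur, since $u+r>u$ implies $u+r\geq k-j$ whenever $u\geq k-j$; but since you merely record a nonzero discrepancy there, this does no harm.
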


\begin{proof} Since the $\u\r$ spectral sequence  associated to $\Hom_{B_1}(C(k),\Ljkt)$ is obtained by restricting the $\u\r$ spectral sequence  associated to $\Hom_{B_1}(C(k),\Ljt)$
to $u\ge -k$ and then shifting everything $k$ places to the right, this follows by inspecting \eqref{eq:prelimE1} taking into accounts that the differentials are $H^{(1)}$-equivariant
and hence  must respect weights.
\end{proof}

\begin{lemma}\label{lem:E30} The $E_3$-term of the $\uparrow\rightarrow$ spectral sequence associated to the double complex $\Hom_{B_1}(C(k),\Ljt)$
is concentrated in a single column. 
\begin{enumerate}
\item If $q^{0}_{jt}$, $q^1_{jt}$ are both undefined then $E^{uv}_3=0$ for all $u,v$.
\item Assume that $q^{\ell}_{jt}$ is defined for $\ell\in \{0,1\}$ and put
\begin{equation}
\label{eq:ultimate}
u=
\begin{cases}
-q^{\ell}_{jt}&\text{if $q^{\ell}_{jt}\le j$,}\\
-q^{\ell}_{jt}-1&\text{if $q^{\ell}_{jt}\ge j+1$.}
\end{cases}
\end{equation}
In that case $u\neq -j-1$ and
the only nonzero cohomology of the $E_3$-page occurs in the $u$'th column. More precisely
\begin{equation}
\label{eq:ultimate1}
E^{uv}_3
=
\begin{cases}
(vp\omega)&\text{if $u\ge -j$, $u+v\equiv\ell (2)$,}\\
((v-1)p\omega)&\text{if $u\le -j-2$, $u+v\equiv \ell(2)$,}\\
0&\text{if $u+v\not\equiv \ell(2)$.}
\end{cases}
\end{equation}
\end{enumerate}
\end{lemma}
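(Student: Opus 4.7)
The plan is to combine convergence of the spectral sequence (after verifying degeneration at $E_3$) with a weight-distinctness argument on antidiagonals.

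First I would identify the abutment. By the quasi-isomorphism \eqref{eq:qisoL}, the spectral sequence converges to $\tH^{u+v}(B_1,\Ljt) = \tH^{u+v}(B_1,((j+d_t)\omega))$, which is computed by Lemma \ref{lem:qiCM}: in total degree $i$ it is $((q^{i(2)}_{jt}+i)p\omega)$ if $q^{i(2)}_{jt}$ is defined and $0$ otherwise. In particular, in case (1) the abutment is identically zero.

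Next I would check that the spectral sequence degenerates at $E_3$. This is essentially the same weight-based argument that underlies Lemma \ref{lem:specsecdeg3}: by the formula \eqref{eq:mainformula}, every entry $E_1^{u,v}$ is a sum of copies of a single character, namely $(vp\omega)$ if $u\ge -j$, $0$ if $u=-j-1$, and $((v-1)p\omega)$ if $u\le -j-2$. The differential $d_r\colon E_r^{u,v}\to E_r^{u+r,v-r+1}$ must be $H^{(1)}$-equivariant, but the weight shift it would induce lies in $\{r-2,r-1,r\}\cdot p\omega$, which is nonzero for $r\ge 3$. Hence $d_r=0$ for $r\ge 3$ and $E_3=E_\infty$. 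I would also remark at this stage that $u\neq -j-1$ in \eqref{eq:ultimate}: if $q^\ell_{jt}\le j$ then $u=-q^\ell_{jt}\ge -j$, while if $q^\ell_{jt}\ge j+1$ then $u=-q^\ell_{jt}-1\le -j-2$.

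Finally comes the weight analysis. The crucial observation is that, on each antidiagonal $u+v=i$, the possible weights of $E_1^{u,v}$ as $u$ varies — namely $vp\omega$ for $u\in[-j,0]$, $0$ for $u=-j-1$, and $(v-1)p\omega$ for $u\in[-n,-j-2]$ — are pairwise distinct. Since $E_3$ is a subquotient of $E_1$, the abutment on any antidiagonal determines the location of the surviving $E_3$-entry uniquely by its weight. In case (1) the abutment vanishes on every antidiagonal, so $E_3=0$, proving (1). In case (2), on antidiagonals with $i\not\equiv\ell\,(2)$ the abutment is zero, hence $E_3$ vanishes there, giving the last line of \eqref{eq:ultimate1}. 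On antidiagonals with $i\equiv\ell\,(2)$ the abutment is the single character $((q^\ell_{jt}+i)p\omega)$, so the unique nonzero $E_3$-entry sits at $(u,v)=(-q^\ell_{jt},q^\ell_{jt}+i)$ when $q^\ell_{jt}\le j$ (matching weight $vp\omega$) and at $(u,v)=(-q^\ell_{jt}-1,q^\ell_{jt}+i+1)$ when $q^\ell_{jt}\ge j+1$ (matching weight $(v-1)p\omega$). In both subcases the column is precisely the one in \eqref{eq:ultimate}, and rewriting the parity constraint $i\equiv\ell$ as $u+v\equiv\ell\,(2)$ yields exactly the formulas in \eqref{eq:ultimate1}. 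The main obstacle is essentially bookkeeping: making sure the two subcases of \eqref{eq:ultimate} assemble into a single column and that the parity shifts are handled correctly; conceptually, everything is forced by convergence and injectivity of the weight map on each antidiagonal.
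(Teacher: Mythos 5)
Your proposal is correct, and it rests on the same two pillars as the paper's argument: degeneration at $E_3$ forced by $H^{(1)}$-equivariance of the differentials, and a comparison of $E_\infty$ with the known abutment $\tH^\bullet(B_1,((j+d_t)\omega))$. Where you part ways with the paper is in how the weight analysis is organized. The paper fixes a weight $rp\omega$ and assembles all $E_1$-entries carrying that weight into a single ``staircase'' complex \eqref{eq:weightdecomp} (connected by $d_1$'s and the single nontrivial $d_2$); the $E_3$-page of weight $rp\omega$ is then the cohomology of this complex, and the constraint that the abutment has at most one copy of $(rp\omega)$ forces concentration. You instead fix a total degree $i$ and observe that the weights of $E_1^{u,i-u}$ are pairwise distinct as $u$ varies, so the at-most-one-dimensional abutment in degree $i$ pins down a unique surviving position. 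Both routes are equally valid; your antidiagonal slicing is a bit leaner for this one lemma, while the paper's weight-by-weight complexes are recycled verbatim in the proof of Proposition~\ref{rmk:leftright}, which is why the paper sets them up here. One small inaccuracy worth noting: the set of possible weight shifts for $d_r$ is really $\{-(r-2),-(r-1)\}\cdot p\omega$ (the shift $-r\cdot p\omega$ would require going from the $u\ge -j$ block into the $u\le -j-2$ block in $r$ steps rightward, which is impossible); this does not affect your conclusion since both are nonzero for $r\ge 3$.
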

\begin{proof}
 As in the proof of Lemma \ref{lem:specsecdeg3} we know that the differentials in the spectral sequence are strongly constrained by Lemma \ref{lem:tatecohcom}. In particular, one checks similarly that it degenerates at $E_3$. Decomposing the pages according to weights we find that the $E_3$-page is given
by taking the cohomology of complexes
\begin{equation}
\label{eq:weightdecomp}
\xymatrix@=1em{
\text{(weights $rp\omega$)} & \cdots \ar[r] &E_1^{-j-3,r+1}\ar[r] & E_1^{-j-2,r+1}\ar[drr]\\
& & & & & E_1^{-j,r}\ar[r] & E^{-j+1,r}_1\ar[r]&\cdots
}
\end{equation}
where the diagonal arrow is the composition
\[
E_1^{-j-2,r+1}\twoheadrightarrow E_2^{-j-2,r+1}\xrightarrow{d_2^{-j-2,r+1}} E_2^{-j,r}\hookrightarrow 
E_1^{-j,r} 
\]
From Lemma \ref{lem:qiCM} and Lemma \ref{rem:coll} we obtain that $E_\infty$ contains at most a single entry with weights $rp\omega$. Since $d_3=0$, it follows that \eqref{eq:weightdecomp} has cohomology in at most a single place. Using Lemma \ref{lem:qiCM} again it follows that (1-dimensional) cohomology $(rp\omega)$  will occur in degree $(u,v)$ in \eqref{eq:weightdecomp} 
if and only if
$
q_{jt}^{(u+v)(2)}
$ is defined, $u\neq -j-1$ and moreover
\begin{equation}
\label{eq:uv}
q_{jt}^{(u+v)(2)}+u+v=r
\end{equation}
where in addition
\begin{equation}
\label{eq:r}
v=
\begin{cases}
r&\text{if $u\ge -j$},\\
r+1&\text{if $u\le -j-2$}.
\end{cases}
\end{equation}
Substituting \eqref{eq:r} in \eqref{eq:uv}
we find that for $\ell\equiv u+v\,(2)$
\[
u=
\begin{cases}
-q^{\ell}_{jt}&\text{if $u\ge -j$,}\\
-q^{\ell}_{jt}-1&\text{if $u\le -j-2$.}\\
\end{cases}
\]
It is easy to see that this condition, combined with $u\neq -j-1$,  is equivalent to \eqref{eq:ultimate}. 

The fact that the spectral sequence is concentrated in a single column (i.e.\ is nonzero
for a single value for $u$) now follows since $q^{0}_{jt}$, $q^1_{jt}$ cannot both be defined (see Lemma \ref{rem:coll}).
\end{proof}

\begin{proof}[Proof of Proposition \ref{rmk:leftright}]
We will first assume the claims about $(\beta_l)_l$, $(\gamma_l)_l$ have been established and we verify case by case that they imply  \eqref{eq:decomp0}
and the top lines in \eqref{eq:decomp1}\eqref{eq:decomp2}.
We may then invoke \eqref{eq:qiCM} to further evaluate $\tH^{l-k}(B_1,((j+d_j)\omega))$ and get the other parts of \eqref{eq:decomp1}\eqref{eq:decomp2}.
\begin{enumerate}
\item Here we have $\beta_l=0$, $\gamma_l=0$ for all $l$. Then \eqref{eq:decomp0} follows immediately from \eqref{eq:gamma_l=0,beta_l=0}.
\item Now we have $\gamma_l=0$ for all $l$. We need to prove that
  \eqref{eq:sec1} splits. This follows from Proposition
  \ref{prop:Tformal} which implies that the middle term
  $\tH^l(B_1,\Ljkt)$ of \eqref{eq:sec1} is sum of characters.
\item Here we  have $\beta_l=0$ for all $l$. Now we need that \eqref{eq:sec2a} splits. This has been established in \eqref{eq:splittingL}.
\end{enumerate}
We now proceed by proving the claims about $(\beta_l)_l$, $(\gamma_l)_l$.
We will consider the fragment of \eqref{eq:les}
\begin{equation}
\label{eq:fragment}
\tH^{l}(B_1,\Ljkt)\xrightarrow{\beta_l} \tH^{l-k}(B_1,((j+d_t)\omega))\xrightarrow{\gamma_{l+1}} \tH^{l+1}(B_1,\sLjkt).
\end{equation}
The case that $q^0_{jt}$ and $q^1_{jt}$ are both undefined follows immediately from \eqref{eq:qiCM}. Hence we now assume that $q^\ell_{jt}$ is defined for some $\ell\in \{0,1\}$.
Note that \eqref{eq:fragment} is obtained by applying $\Hom_{B_1}(C(k),-)$ to
\[
\Ljkt[k]=\sigma_{\ge -k} \Ljt \to \Ljt\to \sigma_{<-k} \Ljt= \Djkt[k+1]
\]
and shifting by $-k$.

Let $(E^{uv}_n)_{u\ge -k}$, $E^{uv}_n$, $(E^{uv}_n)_{u<-k}$ be the $\uparrow\rightarrow$ spectral sequences associated to the double complexes $\Hom_{B_1}(C(k),\sigma_{\ge -k} \Ljt)$, 
$\Hom_{B_1}(C(k), \Ljt)$,  $\Hom_{B_1}(C(k),\allowbreak \sigma_{<-k} \Ljt)$. These spectral sequences degenerate at $E_3$ (see \eqref{eq:prelimE1}, Lemmas \ref{lem:specsecdeg3},\ref{lem:E30}). 

After decomposing
by weights the $E_3$-term of $E_n^{uv}$ consists of the cohomology of the complexes \eqref{eq:weightdecomp} which we will temporarily denote by $\Cscr_{rjt}$. Moreover 
the $E_3$-terms of $(E_n^{uv})_{u\ge -k}$ and $(E^{uv}_n)_{u<-k}$ are given by the cohomology of the brutal truncations  $(\Cscr_{rjt})_{u\ge -k}$,
$(\Cscr_{rjt})_{u< -k}$. In other words the morphisms (obtained by functoriality)
\[
(E^{uv}_\infty)_{u\ge -k}\r E^{uv}_\infty\r (E^{uv}_\infty)_{u<-k}
\]
are obtained from taking the cohomology of 
\[
(\Cscr_{rjt})_{u\ge -k}\r \Cscr_{rjt}\r
(\Cscr_{rjt})_{u< -k}.
\]
If  $\tH^{l-k}(B_1,((j+d_t)\omega))=0$ then $\gamma_{l+1}=0$ and $\beta_l=0$. If $\tH^{l-k}(B_1,((j+d_t)\omega))\neq 0$
then a generator of  $\tH^{l-k}(B_1,((j+d_t)\omega))$ is represented by an element $a\neq 0$ of $H^\ast(\Cscr_{rjt})$ for suitable $r$. Let $\bar{u}$ be the column in $E^{uv}_n$ in which it appears.
The formula for $\bar{u}$  
is given by \eqref{eq:ultimate}. 
We have 
\begin{align*}
\gamma_{l+1}=0&\iff \text{$a$ has zero image in $H^\ast((\Cscr_{rjt})_{u< -k})$}&\iff  \bar{u}\ge -k,\\
\beta_l=0&\iff \text{$a$ has nonzero image in  $H^\ast((\Cscr_{rjt})_{u< -k})$}&\iff \bar{u}< -k.
\end{align*}
Now $\bar{u}$ is in fact independent of $k,l$ (see \eqref{eq:ultimate}).
So we find implications
\begin{align*}
\bar{u}\ge -k&\Rightarrow \forall l:\gamma_l=0,\\
\bar{u}< -k&\Rightarrow \forall l:\beta_l=0.
\end{align*}
Furthermore, using \eqref{eq:ultimate} it easy to see  that we have equivalences
\begin{align*}
\bar{u}\ge -k &\iff q_{jt}^\ell\leq k-\epsilon,\\
\bar{u}< -k &\iff q_{jt}^\ell>k-\epsilon, 
\end{align*}
so that in Case \eqref{case_gamma_l=0} we get $\forall l:\gamma_l=0$ and in
Case \eqref{case_gamma_l!=0} we get $\forall l:\beta_l=0$. 
\end{proof}

\section{Induction from $B$-level to $G$-level}
\label{sec:ind-B-G}
In \S\ref{sec:formality}, \S\ref{sec:weights} we were concerned with complexes of $B^{(1)}$-modules. However, to apply Theorem \ref{prop:main} for the  decomposition of $K_{jk}^{G_1}$ we need their $G^{(1)}$-variants. In this section we show that induction enables a passage between the $B^{(1)}$- and the $G^{(1)}$-level.

We denote the resolution  \eqref{biresolution01} of $M_j$ by $\gCj$ (with the last term 
in degree $0$). 
Now set 
\[
\gCjk=(\sigma_{\geq -k} \gCj)[-k].
\] 
\begin{lemma} 
\label{lem:inductionlemma}
We have
\begin{equation}
\label{eq:BtoG}
\RInd_{B}^G\Cjk=
\begin{cases}
\gCjk&\text{if $k\leq j$,}\\
\tilde{C}_{j,k-1}^\bullet[-1]&\text{if $k>j$.}
\end{cases}
\end{equation}
\end{lemma}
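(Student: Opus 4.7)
The plan is to apply $\RInd_B^G$ termwise to the stupid truncation $\Cjk = \sigma_{\geq -k}\tCj[-k]$, using Kempf/Borel--Weil--Bott for $G = \SL_2$. For $B \subset G$ with negative roots, $\RInd_B^G((c\omega))$ equals $S^cV$ concentrated in cohomological degree $0$ if $c \geq 0$, is zero if $c = -1$, and equals $S^{-c-2}V$ placed in cohomological degree $1$ if $c \leq -2$. The $(-i)$-th term of $\tCj$ (see \eqref{biresolution11}) is $((j-i)\omega)\otimes_k \wedge^i F\otimes_k S(-i)$, so the regime is controlled by the sign of $j-i$. This is, in fact, exactly the computation used to derive \eqref{biresolution01} from \eqref{biresolution11} in \cite{FFRT1}.

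For $k\leq j$, every term of $\sigma_{\geq -k}\tCj$ has nonnegative weight $j-i$, so $\RInd_B^G$ reduces to $\Ind_B^G$ applied termwise, with differentials transported by functoriality. Comparing with \eqref{biresolution01} yields $\RInd_B^G\sigma_{\geq -k}\tCj = \sigma_{\geq -k}\gCj$, and shifting by $[-k]$ gives the first line of \eqref{eq:BtoG}.

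For $k > j$ I will decompose via the short exact sequence of complexes
\[
0 \r \sigma_{\geq -j}\tCj \r \sigma_{\geq -k}\tCj \r Q^\bullet \r 0,
\]
where $Q^\bullet$ is the brutal restriction of $\tCj$ to positions $[-k,-(j+1)]$. Applying $\RInd_B^G$ produces a distinguished triangle in $D(G,S)$. By the previous case, $\RInd_B^G\sigma_{\geq -j}\tCj = \sigma_{\geq -j}\gCj$. In $Q^\bullet$ the term at position $-(j+1)$ has weight $-1$ and vanishes under $\RInd$, while each term at $-i$ for $j+2 \leq i \leq k$ has weight $\leq -2$ and contributes $S^{i-j-2}V\otimes_k \wedge^i F\otimes_k S(-i)$ in cohomological position $-(i-1)$. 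Matching with \eqref{biresolution01} gives $\RInd_B^G Q^\bullet \cong \sigma_{[-(k-1),-(j+1)]}\gCj$. The triangle should then glue these two pieces into $\sigma_{\geq -(k-1)}\gCj$, and shifting by $[-k]$ turns this into $\tilde{C}_{j,k-1}^\bullet[-1]$.

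The delicate point is verifying that the connecting morphism
\[
\sigma_{[-(k-1),-(j+1)]}\gCj[-1] \r \sigma_{\geq -j}\gCj
\]
coincides with the unique nontrivial boundary piece of the differential of $\gCj$, namely the map $\wedge^{j+2}F\otimes_k S(-j-2) \r \wedge^j F\otimes_k S(-j)$ that bridges the $R^1\Ind$-part and the $R^0\Ind$-part. I expect this to be essentially tautological: the differentials in $\gCj$ were themselves constructed in \cite{FFRT1} from the boundary map of the double complex obtained by applying $\Ind_B^G$ to a $B$-injective resolution of $\tCj$, and this is the same boundary map that produces the connecting morphism of our triangle. Making this identification precise (by explicitly comparing the two double-complex computations on the nose) is the one genuine technicality; once granted, the result follows.
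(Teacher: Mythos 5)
Your proof is structured identically to the paper's: same reduction to termwise $\Ind$ for $k\le j$, same short exact sequence for $k>j$, and same identification of $\RInd_B^G$ of the sub- and quotient complexes as brutal truncations of $\gCj$. The genuine gap is precisely the \emph{delicate point} you flag and then defer: you assert that the connecting morphism coincides with the corresponding differential of $\gCj$ essentially tautologically, by comparing double-complex computations, but this comparison is not carried out, and it is not as automatic as you suggest.

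The paper's resolution is different and cleaner, relying on uniqueness rather than on tracking constructions. After invoking an auxiliary result from \cite{FFRT1} to represent the connecting morphism $\sigma_{<-j}\sigma_{\ge -k+1}\gCj[-1]\r\sigma_{\ge -j}\gCj$ by an honest map of complexes, necessarily a single morphism $d^{-j-1}\colon\tilde{C}_j^{-j-1}\r\tilde{C}_j^{-j}$, one argues: (i) $d^{-j-1}\ne 0$, since otherwise $\RInd_B^G\sigma_{\ge -k}\tCj$ would split as $\sigma_{\ge -j}\gCj\oplus\sigma_{<-j}\sigma_{\ge -k+1}\gCj$ and hence have at least three nonvanishing cohomology groups, contradicting the fact recorded in \eqref{eq:LK} that it has only two; and (ii) any nonzero $G\times\GL(F)$-equivariant map between these two terms that is compatible with the grading is unique up to scalar, hence agrees (up to scalar) with the corresponding differential in $\gCj$. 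The paper also separates out the case $k=j+1$, where $\sigma_{<-j}\sigma_{\ge -k+1}\gCj=0$ and there is no connecting morphism to identify. You should replace your double-complex sketch with this nonvanishing-plus-uniqueness argument, which closes the gap without any need to compare explicit constructions.
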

\begin{proof}
We give a characteristic free proof.
First consider the case $k\le j$. In that case $\sigma_{\ge -k} \tCj$ consists of terms
acyclic for $\Ind^G_B$. Hence we obtain the equality $\RInd^G_B\sigma_{\ge -k} \tCj=\Ind^G_B \sigma_{\ge -k}\tCj$.
From the way $\gCj$ was obtained from $\tCj$ in \cite[\S\ref{sec:resol}]{FFRT1} we obtain
$\Ind^G_B \sigma_{\ge -k}\tCj=\sigma_{\ge -k} \gCj$. Applying $?[-k]$ yields \eqref{eq:BtoG}.

The case $k>j$ is a bit more involved. We start from the short exact sequence of complexes
\[
0\r \sigma_{\ge -j}\tCj \r \sigma_{\ge -k} \tCj \r \sigma_{<-j}\sigma_{\ge -k} \tCj \r 0.
\]
The terms of $\sigma_{< -j}\sigma_{\ge -k}\tCj$ satisfy $R^i\Ind^G_B(?)=0$ for $i\neq 1$. 
Using an injective Cartan-Eilenberg resolution for $\sigma_{< -j}\sigma_{\ge -k}\tCj$ one finds 
as in the first paragraph 
\[
\RInd^G_B  \sigma_{<-j}\sigma_{\ge -k} \tCj =R^1\Ind^G_B  \sigma_{<-j}\sigma_{\ge -k} \tCj[-1] =
\sigma_{<-j} \sigma_{\ge -k+1}\gCj.
\]
Hence we get a distinguished triangle (using that  $\RInd_{B}^G \sigma_{\ge -j}\tCj=\sigma_{\ge -j}\gCj $ by the first paragraph)
\[
\sigma_{\ge -j}\gCj \r \RInd^G_B\sigma_{\ge -k} \tCj \r \sigma_{<-j}\sigma_{\ge -k+1} \gCj 
\r 
\]
and hence
\begin{equation}
\label{eq:cone}
\RInd^G_B\sigma_{\ge -k} \tCj=
\cone(\sigma_{<-j}\sigma_{\ge -k+1} \gCj[-1] 
\r \sigma_{\ge -j}\gCj ).
\end{equation}
If $k=j+1$ then $\sigma_{<-j}\sigma_{\ge -k+1} \gCj=0$, and thus \eqref{eq:BtoG} follows from  \eqref{eq:cone} 
by shifting. We assume now that $k>j+1$. 
By  \cite[Proposition \ref{prop:actual}]{FFRT1}, the map $\sigma_{<-j}\sigma_{\ge -k+1} \gCj[-1] 
\r \sigma_{\ge -j}\gCj$ is represented by a map of complexes and hence by a map
$d^{-j-1}:\tilde{C}_j^{-j-1}\r \tilde{C}_j^{-j}$. This map must be nonzero since otherwise
by \eqref{eq:cone}  $\RInd^G_B\sigma_{\ge -k}\tCj = \sigma_{\ge -j}\gCj  \oplus \sigma_{<-j}\sigma_{\ge -k+1} \gCj$, which
would have at least three non-vanishing cohomology
groups, whereas by \eqref{eq:LK} it can only have two. It is easy to see
that then $d^{-j-1}$ is uniquely determined, up to scalar, by $G\times \GL(F)$-equivariance
and compatibility with the grading. In particular it is the same, up to scalar, as the
corresponding differential in $\sigma_{\ge -k+1}\gCj$. We then obtain 
$\RInd^G_B\sigma_{\ge -k} \tCj=\sigma_{\ge -k+1}\gCj$ from 
\eqref{eq:cone} 
and hence \eqref{eq:BtoG}
by shifting.
\end{proof}
\begin{proposition}\label{prop:BGpre}
In $D(G^{(1)},S^p)$ we have 
\begin{equation}
\label{eq:cases}
\tau_{\geq 1}\RHom_{G_1}(k,\gCjk)\cong 
\begin{cases}
\RInd_{B^{(1)}}^{G^{(1)}}\tau_{\ge 1} \RHom_{B_1}(k,\Cjk)
&\text{if $k\leq j$,}\\
(\RInd_{B^{(1)}}^{G^{(1)}}\tau_{\ge 2} \RHom_{B_1}(k,C_{j,k+1}^\bullet))[1]
&\text{if $k\geq j$}.
\end{cases}
\end{equation}
\end{proposition}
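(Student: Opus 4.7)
The plan is to combine Lemma \ref{lem:inductionlemma} with a base-change identity relating $G_1$-cohomology of an induced module to $B_1$-cohomology, and then to commute truncation with induction using the positivity of the relevant weights.

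First I would establish, for a bounded complex $N^\bullet$ of $(B,S)$-modules, the derived base-change
\[
\RHom_{G_1}(k,\RInd_B^G N^\bullet) \cong \RInd_{B^{(1)}}^{G^{(1)}}\RHom_{B_1}(k,N^\bullet).
\]
On the underived level this should follow from $\Ind_B^G N = (k[G]\otimes N)^B$ together with $k[G]^{G_1} = k[G^{(1)}]$ and the observation that the right $B$-action on $k[G^{(1)}]$ factors through $B^{(1)}$; unwinding the definitions yields $(\Ind_B^G N)^{G_1} = \Ind_{B^{(1)}}^{G^{(1)}}(N^{B_1})$. The derived version should then follow either by a Grothendieck spectral sequence argument, or, more concretely, by representing $\RHom_{B_1}(k,-)$ using $\Hom_{B_1}(C(k),-)$ for a $B$-equivariant complete $B_1$-resolution $C(k)$ of $k$ (as in \S\ref{subsec:prel}), after which both sides are computed by manifestly equal explicit double complexes.

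With the identity in hand, I would invoke Lemma \ref{lem:inductionlemma}. For $k\leq j$ it yields $\RInd_B^G\Cjk = \gCjk$, so base-change gives directly $\RHom_{G_1}(k,\gCjk) \cong \RInd_{B^{(1)}}^{G^{(1)}}\RHom_{B_1}(k,\Cjk)$. For $k\geq j$, since $k+1>j$, the lemma gives $\RInd_B^G C_{j,k+1}^\bullet = \gCjk[-1]$, hence $\gCjk \cong (\RInd_B^G C_{j,k+1}^\bullet)[1]$, and base-change yields $\RHom_{G_1}(k,\gCjk) \cong (\RInd_{B^{(1)}}^{G^{(1)}}\RHom_{B_1}(k,C_{j,k+1}^\bullet))[1]$. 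I would then apply $\tau_{\geq 1}$ to both sides, using in the second case $\tau_{\geq 1}(X[1]) = (\tau_{\geq 2}X)[1]$, to match the stated formulas. This step works provided $\tau_{\geq 1}$ (resp.\ $\tau_{\geq 2}$) can be pulled past $\RInd_{B^{(1)}}^{G^{(1)}}$, which in turn requires $\Ind_{B^{(1)}}^{G^{(1)}}$-acyclicity of $H^l(B_1,\Cjk)$ for $l\geq 1$ (resp.\ of $H^l(B_1,C_{j,k+1}^\bullet)$ for $l\geq 2$, where the shift of truncation matches the shift of weight-positivity since $k+1>j$). By Proposition \ref{prop:outform} these cohomology modules are sums of characters, and by Proposition \ref{prop:weights} their weights lie in the positive interval $[1,\max\{l,n-3\}]p\omega$; these weights are dominant for $G^{(1)}$, so Kempf vanishing delivers the required acyclicity.

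The main obstacle will be the careful derivation of the base-change identity in the derived setting for complexes of $(B,S)$-modules; the positivity condition needed to commute $\tau$ with $\RInd$ is precisely what the weight analysis of Section \ref{sec:weights} provides, and the case split $k\leq j$ vs.\ $k\geq j$ is forced by the corresponding case split in Lemma \ref{lem:inductionlemma}.
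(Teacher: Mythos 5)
Your setup is sound: the base-change identity you want is precisely the Andersen-Jantzen spectral sequence, and using it together with Lemma \ref{lem:inductionlemma} to get the untruncated identity $\RHom_{G_1}(k,\gCjk)\cong\RInd_{B^{(1)}}^{G^{(1)}}\RHom_{B_1}(k,\Cjk)$ (resp.\ the shifted version for $k\geq j$) is exactly what the paper does, so that part matches.

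The gap is in the last step, where you commute $\tau_{\geq 1}$ past $\RInd_{B^{(1)}}^{G^{(1)}}$. You claim this "requires $\Ind$-acyclicity of $H^l(B_1,\Cjk)$ for $l\geq 1$" and then verify it via Propositions \ref{prop:outform} and \ref{prop:weights}. This is the wrong condition. Write $X=\RHom_{B_1}(k,\Cjk)$; applying $\RInd$ to the triangle $\tau_{\leq 0}X\to X\to\tau_{\geq 1}X$ and using left $t$-exactness shows that the natural map $\tau_{\geq 1}(\RInd X)\to\RInd(\tau_{\geq 1}X)$ is an isomorphism if and only if $\RInd(\tau_{\leq 0}X)\in D^{\leq 0}$. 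Since $\tau_{\leq 0}X$ is concentrated in degree $0$ with value $H^0(B_1,\Cjk)=L_{jk}^{B_1}$, and $R^{\geq 2}\Ind_{B^{(1)}}^{G^{(1)}}=0$, the actual obstruction is the vanishing of $R^1\Ind_{B^{(1)}}^{G^{(1)}} L_{jk}^{B_1}$ — a condition on $H^0$, not on $H^{\geq 1}$. Knowing that the higher cohomology is a sum of dominant characters tells you nothing about $R^1\Ind$ of the degree-zero piece, which is where the difficulty lies (Corollary \ref{cor:Kweights} bounds \emph{Tate} cohomology, and $H^0\neq\tH^0$ in general). The same mismatch occurs in the second case: after the shift, the relevant obstruction is $R^1\Ind$ of $H^1(B_1,C_{j,k+1}^\bullet)$, not $\Ind$-acyclicity of the $H^l$ with $l\geq 2$. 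The paper's proof spends most of its effort establishing precisely this missing vanishing, by a nontrivial comparison: it identifies $H^1(G_1,K_{jk})$ both with $\Ind_{B^{(1)}}^{G^{(1)}}H^1(B_1,L_{jk})$ via Tate cohomology (using $H^1=\tH^1$ and \eqref{eq:Ktateind}) and with $H^1(\RInd_{B^{(1)}}^{G^{(1)}}\RHom_{B_1}(k,L_{jk}))$ via the Andersen-Jantzen spectral sequence; since the latter is an extension of the former by $R^1\Ind L_{jk}^{B_1}$, the extension term must vanish. Without this argument (or some substitute for it), your proof does not go through.

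Incidentally, this also means Lemma \ref{lem:trunc} in the paper is not just a convenience — it isolates exactly the boundedness-below condition on $F\tau_{\leq 0}X$ that you need to check, and steers you toward the correct cohomological degree.
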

\begin{remark} The overlap between the cases in \eqref{eq:cases} is intentional and correct.
\end{remark}
\begin{proof}[Proof of Proposition \ref{prop:BGpre}]
By the Andersen-Jantzen [AJ] spectral sequence \cite[eq. (2)]{JA} (see \cite[\eqref{identity1}]{FFRT1}) and Lemma \ref{lem:inductionlemma} one finds
\begin{equation}
\label{eq:AJourcase}
\RHom_{G_1}(k,\gCjk)\cong 
\begin{cases}
\RInd_{B^{(1)}}^{G^{(1)}}\RHom_{B_1}(k,\Cjk)&\text{if $k\leq j$,}\\
\RInd_{B^{(1)}}^{G^{(1)}}\RHom_{B_1}(k,C_{j,k+1}^\bullet)[1]
&\text{if $k\geq j$.}
\end{cases}
\end{equation}

We first consider the case $k\leq j$. 
Applying Lemma \ref{lem:trunc} below with $F=R\Ind^{G^{(1)}}_{B^{(1)}}$, $X= \RHom_{B_1}(k,\Cjk) $, $Y= \RHom_{G_1}(k,\gCjk)$ and the isomorphism
$Y\cong FX$ being given by \eqref{eq:AJourcase} we obtain a morphism of distinguished triangles 
\begin{equation}\label{eq:trian}
\begin{small}
\begin{tikzcd}
K_{jk}^{G_1} \ar{r} \ar{d} & \RHom_{G_1}(k,\gCjk)\ar{r}\ar{d}{\cong} & \tau_{\geq 1}\RHom_{G_1}(k,\gCjk)\ar{d}\\
\RInd_{B^{(1)}}^{G^{(1)}}L_{jk}^{B_1} \ar{r} & \RInd_{B^{(1)}}^{G^{(1)}}\RHom_{B_1}(k,\Cjk) \ar{r} & \RInd_{B^{(1)}}^{G^{(1)}}\tau_{\geq 1}\RHom_{B_1}(k,\Cjk)
\end{tikzcd}
\end{small}
\end{equation}
It suffices to check that \eqref{eq:trian} is an isomorphism of distinguished triangles. 
By Lemma \ref{lem:trunc} below
we are reduced to showing
\begin{equation}\label{eq:todo}
R^1\Ind_{B^{(1)}}^{G^{(1)}} L_{jk}^{B_1}=0.
\end{equation}
As $\Djkt\to \sLjkt$ is quasi-isomorphism we have by \eqref{eq:tDjkbar}\eqref{eq:Djkbar}\eqref{eq:Djkt}
\begin{align}\label{eq:tLjkvsLjkt}
\tH^l(B_1,L_{jk})&=\bigoplus_{t\in [0,p-1]^n}\tH^l(B_1,\sLjkt)\otimes_k S^p_{+}&\text{for all $l\in\ZZ$},
\\\label{eq:LjkvsLjkt}
H^l(B_1,L_{jk})&=\bigoplus_{t\in [0,p-1]^n}H^l(B_1,\sLjkt)\otimes_k S^p_{+}&\text{for all $l\geq 1$}.
\end{align}
We use the Andersen-Jantzen spectral sequence \cite[\eqref{identity2}]{FFRT1} for the Tate cohomology, \eqref{eq:LK}, and the positivity of the weights of $\tH^l(B_1,L_{jk})$ for $l\geq 0$ (see \eqref{eq:tLjkvsLjkt} and Corollary \ref{cor:Kweights}) to obtain for $l\geq 1$ 
\begin{equation}\label{eq:Ktateind}
\Ind_{B^{(1)}}^{G^{(1)}} \tH^l(B_1,L_{jk})\cong \tH^l(G_1,K_{jk}).
\end{equation}
Since $H^l(G_1,K_{jk}^\un)=\tH^l(G_1,K_{jk}^\un)$ and $H^l(B_1,L_{jk})=\tH^l(B_1,L_{jk})$ for $l\geq 1$,  we in particular have $H^1(B_1,K_{jk})=\Ind_{B^{(1)}}^{G^{(1)}}H^1(B_1,L_{jk})$ by \eqref{eq:Ktateind}, thus \eqref{eq:LK} and the Andersen-Jantzen spectral sequence  \cite[\eqref{identity1}]{FFRT1} for ordinary cohomology imply:
\begin{align*}
\Ind_{B^{(1)}}^{G^{(1)}}H^1(B_1,L_{jk})&=H^1(G_1,K_{jk})\\
&= H^1(G_1,R\Ind_B^GL_{jk})\\
&=H^1(R\Ind_{B^{(1)}}^{G^{(1)}}\RHom_{B_1}(k,L_{jk})),
\end{align*}
which is an extension of $\Ind_{B^{(1)}}^{G^{(1)}}H^1(B_1,L_{jk})$ by $R^1\Ind_{B^{(1)}}^{G^{(1)}} L_{jk}^{B_1}$. Thus, the latter needs to be $0$, yielding \eqref{eq:todo}.

\medskip

We next consider the case $k\ge j$. 
Applying Lemma \ref{lem:trunc} below with $F=R\Ind^{G^{(1)}}_{B^{(1)}}$, $X= \RHom_{B_1}(k,C_{j,k+1}^\bullet)[1] $, $Y= \RHom_{G_1}(k,\gCjk)$ and the isomorphism
$Y\cong FX$ being given by \eqref{eq:AJourcase} we obtain a morphism of distinguished triangles 
\begin{equation}\label{eq:trian2}
\begin{tiny}
\mathclap{\begin{tikzcd}[column sep=tiny, ampersand replacement=\&]
K_{jk}^{G_1} \ar{r} \ar{d} \& \RHom_{G_1}(k,\gCjk)\ar{r} \ar{d}{\cong} \& \tau_{\geq 1}\RHom_{G_1}(k,\gCjk) \ar{d}\\
\RInd_{B^{(1)}}^{G^{(1)}}\tau_{\le 1} \RHom_{B_1}(k,C_{j,k+1}^\bullet)[1] \ar{r} \&\RInd_{B^{(1)}}^{G^{(1)}}\RHom_{B_1}(k,C_{j,k+1}^\bullet)[1] \ar{r}\& \RInd_{B^{(1)}}^{G^{(1)}}\tau_{\geq 2}\RHom_{B_1}(k,C_{j,k+1}^\bullet)[1]
\end{tikzcd}}
\end{tiny}
\end{equation}
It suffices to check that \eqref{eq:trian2} is an isomorphism of distinguished triangles.
By Lemma \ref{lem:trunc} below 
we are reduced to showing (replacing $k+1$ by $k$) that for $k> j$
\begin{equation}\label{eq:todo2}
H^i(\RInd^{G^{(1)}}_{B^{(1)}} \tau_{\le 1}\RHom_{B_1}(k,C^\bullet_{jk}))=0\qquad \text{for $i\ge 2$.}
\end{equation}
First note that
$ \tau_{\le 1} \RHom_{B_1}(k,L_{jk})\r \tau_{\le
  1}\RHom_{B_1}(k,C^\bullet_{jk}) $
is a quasi-is\-omorphism; as the cone of the morphism
$\RHom_{B_1}(k,L_{jk})\r \RHom_{B_1}(k,C^\bullet_{jk})$ is given by
$\RHom_{B_1}(k,\tau_{\ge 1} C^\bullet_{jk})$ and as $\tau_{\ge 1}C^{\bullet}_{jk}$ has
cohomology only in degree $k> j\geq 1$ (i.e.\ in degree $\ge 2$), the
morphism $\RHom_{B_1}(k,L_{jk})\r \RHom_{B_1}(k,\Cjk)$ induces an
isomorphism on cohomology in degrees $\le 1$.  So it remains to show
\[
H^i(\RInd^{G^{(1)}}_{B^{(1)}} \tau_{\le 1}\RHom_{B_1}(k,L_{jk}))=0\qquad \text{for $i\ge 2$.}
\]
In other words by the hypercohomology spectral sequence we have to verify the vanishing $R^1\Ind_{B^{(1)}}^{G^{(1)}}H^1(B_1,L_{jk})=0$. This holds by \eqref{eq:LjkvsLjkt}, 
since the cohomology groups $H^1(B_1,\sLjkt)\cong\tH^1(B_1,\sLjkt)$ only have weights $\geq 0$ by Corollary \ref{cor:Kweights}. 
 \end{proof}

\begin{corollary}
\label{prop:BG}
The object $\tau_{\geq 1}\RHom_{G_1}(k,\gCjk)$ is formal
in $D(G^{(1)},S^p)$. Moreover for $l\ge 1$ we have
\[
H^l(G_1,\gCjk)\cong 
\begin{cases}
\Ind_{B^{(1)}}^{G^{(1)}}H^l(B_1,C^\bullet_{jk})&
\text{if $k\leq j$,}\\
\Ind_{B^{(1)}}^{G^{(1)}}H^{l+1}(B_1,C^\bullet_{j,k+1})&
\text{if $k\geq j$}.
\end{cases}
\]
\end{corollary}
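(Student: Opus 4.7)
The plan is to combine Proposition~\ref{prop:BGpre} with Proposition~\ref{prop:outform} via an application of Kempf vanishing.

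First I will invoke Proposition~\ref{prop:BGpre}, which in either case reduces the formality of $\tau_{\geq 1}\RHom_{G_1}(k,\gCjk)$ and the determination of its cohomology to analysing $\RInd_{B^{(1)}}^{G^{(1)}}$ applied to a suitable truncation of $\RHom_{B_1}(k,C^\bullet_{j,k'})$, where $k'=k$ or $k'=k+1$. By Proposition~\ref{prop:outform} this truncation is already formal in $D(B^{(1)},S^p)$, with cohomology a direct sum of $B^{(1)}$-characters tensored with $S^p_+$. Since $\RInd_{B^{(1)}}^{G^{(1)}}$ is triangulated, it preserves any such formal decomposition; the only remaining task is to check that $R^i\Ind_{B^{(1)}}^{G^{(1)}}$ vanishes on each cohomology module that actually occurs, for every $i\geq 1$.

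To verify this vanishing I will appeal to Proposition~\ref{prop:weights}, which guarantees that every character appearing in $H^l(B_1,C^\bullet_{j,k'})$ (for $l$ in the relevant truncation range, i.e.\ $l\geq 1$ when $k'=k\leq j$ and $l\geq 2$ when $k'=k+1>j$) is of the form $(rp\omega)$ with $r\geq 1$. Under the Frobenius identification of $B^{(1)}$-weights with $p$-multiples of $B$-weights, $(rp\omega)$ corresponds to the dominant $B$-weight $r\omega$, so Kempf vanishing yields $R^i\Ind_{B^{(1)}}^{G^{(1)}}(rp\omega)=0$ for all $i\geq 1$. Since $S^p_+$ is a $G^{(1)}$-module, the projection formula extends this vanishing from individual characters to the full graded $(B^{(1)},S^p)$-modules that arise as cohomology. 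This simultaneously gives formality of the induced complex in $D(G^{(1)},S^p)$ and the asserted identification $H^l(G_1,\gCjk)\cong \Ind_{B^{(1)}}^{G^{(1)}}H^l(B_1,C^\bullet_{j,k'})$.

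I do not expect a serious obstacle: granted Propositions~\ref{prop:outform} and~\ref{prop:weights}, the argument reduces to routine manipulations with Kempf vanishing together with the fact that $\RInd$ respects formal decompositions when the higher derived functors vanish on each cohomology module. The only point requiring a little care is the index bookkeeping in the case $k\geq j$ of Proposition~\ref{prop:BGpre}: the built-in shift $[1]$ converts the condition $l\geq 2$ on the $B^{(1)}$-side (under which Proposition~\ref{prop:weights} applies since $k+1>j$) into the desired condition $l\geq 1$ on the $G^{(1)}$-side.
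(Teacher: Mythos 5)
Your proposal is correct and takes essentially the same route as the paper's proof: combine Proposition~\ref{prop:BGpre} with the formality of Proposition~\ref{prop:outform}, then use the positivity of the weights from Proposition~\ref{prop:weights} (via Lemma~\ref{lem:qired}) to conclude via Kempf vanishing and the projection formula that $\RInd_{B^{(1)}}^{G^{(1)}}$ reduces to $\Ind_{B^{(1)}}^{G^{(1)}}$ on each cohomology module. Your handling of the index shift in the $k\geq j$ case matches the paper's use of $\epsilon=[k>j]$.
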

\begin{proof}
Let $\epsilon=[k>j]$ (see \eqref{eq:logical}).
We can conclude from Proposition \ref{prop:BGpre} that 
\begin{align*}
\tau_{\geq 1}\RHom_{G_1}(k,\gCjk) &\cong \RInd_{B^{(1)}}^{G^{(1)}}\tau_{\geq 1+\epsilon}\RHom_{B_1}(k,C^\bullet_{j,k+\epsilon})[\epsilon] \\
& \cong \bigoplus_{l\geq 1+\epsilon}\RInd_{B^{(1)}}^{G^{(1)}}H^l(B_1,C^\bullet_{j,k+\epsilon})[-l+\epsilon] \\
& \cong \bigoplus_{l\geq 1+\epsilon}\Ind_{B^{(1)}}^{G^{(1)}}H^l(B_1,C^\bullet_{j,k+\epsilon})[-l+\epsilon]
\end{align*}
where we used Proposition \ref{prop:outform} for the second isomorphism, and the positivity of the weights of $H^l(B_1,\Cjk)$ (see Lemma \ref{lem:qired} and Proposition \ref{prop:weights}) for the third isomorphism. 
\end{proof}

In the proof of Proposition \ref{prop:BGpre} we have used the following lemma. 
\begin{lemma}
\label{lem:trunc}
Let $F:\Ascr\r\Bscr$ be an exact, left t-exact, functor between triangulated categories with t-structure. 
A morphism $Y\r FX$ in $\Bscr$ for $Y\in \Ob(\Bscr)$ and $X\in \Ob(\Ascr)$ extends uniquely to a morphism of distinguished triangles in $\Bscr$
\begin{equation}
\label{eq:basicdiagram}
\xymatrix{
\tau_{\le 0} Y\ar[r] \ar[d]& Y \ar[r]\ar[d] & \tau_{\ge 1} Y\ar[r]\ar[d]&\\
F\tau_{\le 0} X\ar[r]&FX\ar[r]&F\tau_{\ge 1}X\ar[r]&
}
\end{equation}
If 
the middle arrow is an isomorphism  and $F\tau_{\le 0}X\in \Bscr_{\le 0}$ then this is an isomorphism of distinguished triangles.
\end{lemma}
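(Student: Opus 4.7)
The plan is a standard exercise in $t$-structures. Write $g:Y\to FX$ for the given map. Since $F$ is exact the bottom row of \eqref{eq:basicdiagram} is distinguished, and since $F$ is left $t$-exact $F\tau_{\ge 1}X\in\Bscr_{\ge 1}$, whence also $F\tau_{\ge 1}X[-1]\in\Bscr_{\ge 2}$.

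I would first construct the two outer vertical arrows. The composite $Y\to FX\to F\tau_{\ge 1}X$ lands in $\Bscr_{\ge 1}$, so by the universal property of $\tau_{\ge 1}$ it factors uniquely through $Y\to\tau_{\ge 1}Y$; this defines the rightmost vertical arrow, and the middle square commutes by construction. For the leftmost arrow, the composite $\tau_{\le 0}Y\to Y\to FX\to F\tau_{\ge 1}X$ vanishes because $\Hom(\Bscr_{\le 0},\Bscr_{\ge 1})=0$, so $\tau_{\le 0}Y\to FX$ lifts along $F\tau_{\le 0}X\to FX$; uniqueness of the lift follows from $\Hom(\tau_{\le 0}Y,F\tau_{\ge 1}X[-1])=0$. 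The left square then commutes by construction of this lift.

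To upgrade this to a morphism of distinguished triangles, I would apply TR3 to the commuting left square: it produces \emph{some} map $\tau_{\ge 1}Y\to F\tau_{\ge 1}X$ completing \eqref{eq:basicdiagram} into a morphism of triangles and in particular making the middle square commute. The universal property of $\tau_{\ge 1}$ then forces this TR3 map to coincide with the rightmost vertical arrow built above, so \eqref{eq:basicdiagram} really is a morphism of distinguished triangles. Uniqueness of the whole morphism of triangles is packaged in the two uniqueness statements for the outer arrows.

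For the final assertion, assume the middle vertical is an isomorphism and $F\tau_{\le 0}X\in\Bscr_{\le 0}$. Combined with $F\tau_{\ge 1}X\in\Bscr_{\ge 1}$, uniqueness of truncation identifies the bottom row of \eqref{eq:basicdiagram} with the canonical truncation triangle of $FX$; transporting along $g:Y\xrightarrow{\sim} FX$ and using uniqueness of truncation once more, the outer vertical arrows must coincide with $\tau_{\le 0}g$ and $\tau_{\ge 1}g$, and are therefore isomorphisms. The only mildly delicate points are the two uniqueness arguments, but both rest on nothing more than the orthogonality $\Hom(\Bscr_{\le 0},\Bscr_{\ge 1})=0$ shifted appropriately.
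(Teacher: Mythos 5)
Your proof is correct and follows essentially the same route as the paper's: both rely on the observation that $F\tau_{\ge 1}X\in\Bscr_{\ge 1}$ (from left t-exactness), deduce existence and uniqueness of the outer vertical arrows from the vanishings $\Hom(\Bscr_{\le 0},\Bscr_{\ge 1})=0$ in the appropriate shifts, invoke TR3 to complete the morphism of triangles, and use uniqueness of t-truncation for the final isomorphism claim. The only cosmetic difference is that you build the right-hand arrow explicitly by the universal property of $\tau_{\ge 1}$ and then match it to the TR3-produced map, whereas the paper produces it directly by TR3 and establishes its uniqueness afterwards from $\Hom(\tau_{\le 0}Y[1],F\tau_{\ge 1}X)=0$.
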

\begin{proof} Since  $\tau_{\ge 1} X[1] \in \Ascr_{\geq 0}$, we find $F(\tau_{\geq 1}X)\in \Bscr_{\geq 0}[-1]=\Bscr_{\geq 1}$, using the left t-exactness of $F$. Since moreover $\tau_{\le 0} Y\in \Bscr_{\le 0}$, we see that $\Bscr(\tau_{\le 0} Y,F\tau_{\ge 1} X)=0$, $\Bscr(\tau_{\le 0} Y,F\tau_{\ge 1} X[-1])=0$, from which we deduce the existence and uniqueness of the leftmost arrow in a standard way from the properties of triangulated categories.
The rightmost arrow then exists by the TR3 axiom. Its uniqueness follows from $\Bscr(\tau_{\le 0}Y[1],F\tau_{\ge 1} X)=0$.

Now assume that the middle arrow is an isomorphism  and $F\tau_{\le 0}X\in \Bscr_{\le 0}$. The conclusion of the lemma
then follows from the already mentioned fact  $F\tau_{\ge 1} X\in \Bscr_{\ge 1}$ which implies that the lower row in \eqref{eq:basicdiagram} is the
tautological distinguished triangle associated with a t-structure. Hence it must be the same as the tautological distinguished triangle
in the upper row.
\end{proof}

\section{Decomposition of $K_{jk}^{G_1}$}
Here we give the list of indecomposable summands  of the $(G^{(1)},S^p)$-module $K_{jk}^{G_1}$. We first give all possible summands, for which we only need Proposition \ref{prop:weights} (avoiding 
the technical Section \ref{sec:extra}). In Proposition \ref{prop:Kjkrefine} we  determine which summands occur effectively in $K_{jk}^{G_1}$ (applying then Proposition \ref{prop:refKjkdecomposition}). 
\begin{proposition}\label{prop:Kjkdecomposition}
As graded $(G^{(1)},S^p)$-module $K_{jk}^{G_1}$ has   up to (possibly zero) multiplicity and degree the following indecomposable summands:
\begin{equation}\label{eq:list}
\{K_{rs}^{\Fr}\mid 1\leq r,s\leq n-3\}\cup \{T(l)^{\Fr}\otimes_k S^p\mid 0\leq l\leq n-3\}.
\end{equation}
\end{proposition}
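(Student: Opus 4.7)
The plan is to apply Theorem \ref{prop:main} to the complex
\[
M^\bullet := \Hom_{G_1}(C(k), \gCjk)^{\Fr^{-1}},
\]
where $C(k)$ is a $G$-tilting left $G_1$-projective resolution of $k$ and $\gCjk$ is the truncated tilt-free resolution of $M_j$ introduced in \S\ref{sec:ind-B-G}. Via the equivalence \eqref{eq:reflr} for $r=1$ one has $H^0(M^\bullet) = (K_{jk}^{G_1})^{\Fr^{-1}}$, so the decomposition formula \eqref{eq:maindecompositionformula}, once twisted back by $\Fr$, will yield the required decomposition of $K_{jk}^{G_1}$ as a graded $(G^{(1)}, S^p)$-module.

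First I would verify the four hypotheses of Theorem \ref{prop:main}. Condition \eqref{aa} (finitely generated, $\nabla$-free, concentrated in degrees $\geq 0$) follows from Remark \ref{rem:DtoSinresM} together with the tilting property of the terms of $C(k)$, exactly as in the tilt-free case treated in \cite[\S\ref{sec:proofoffinaldecompsub}]{FFRT1}. Condition \eqref{bb} holds for $l=0$ by Proposition \ref{prop:propKjk}\eqref{lem:Kjkgoofy} together with the standard fact that $G_1$-invariants of a good-filtered $G$-module have a good filtration as a $G^{(1)}$-module; for $l\geq 1$ it follows from Corollary \ref{prop:BG} combined with Lemma \ref{lem:qired} and Proposition \ref{prop:formality}, which together exhibit $H^l(M^\bullet)$ as a direct sum of induced modules. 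Condition \eqref{rr} reduces via the duality $K_{jk}^{\vee} \cong K_{n-j-2, n-k-2}$ of Proposition \ref{prop:propKjk}\eqref{lem:Kjkdual} (and the compatibility of $G_1$-invariants with $S$-duality for reflexive modules) to the good filtration of $K_{n-j-2, n-k-2}$, again by Proposition \ref{prop:propKjk}\eqref{lem:Kjkgoofy}. Condition \eqref{dd}, the formality of $\tau_{\ge 1} M^\bullet$, is precisely Corollary \ref{prop:BG} transported through $\Fr^{-1}$.

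With these in place, Theorem \ref{prop:main} yields
\[
(K_{jk}^{G_1})^{\Fr^{-1}} \cong T \otimes_k S \,\oplus\, \bigoplus_{l \ge 1} \tilde{\Omega}^{l+1}_s H^l(M^\bullet)
\]
for some graded tilting module $T$. I would then identify the non-tilt-free pieces as follows: by Corollary \ref{prop:BG}, Lemma \ref{lem:qired} and Proposition \ref{prop:formality}, $H^l(M^\bullet)$ is a direct sum of modules of the form $\Ind_{B^{(1)}}^{G^{(1)}}((rp\omega)) \otimes_k S^p_+$ with suitable shifts. After Frobenius untwist and using $\nabla(r) = T(r)$ (valid since $r \le p-1$ for $r\in[1,n-3]$ under our standing assumption on $p$), Frobenius reciprocity identifies each such summand with $M_r$ up to shift. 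Proposition \ref{prop:propKjk}\eqref{item:stabletilde} then gives $\tilde{\Omega}^{l+1}_s M_r \cong K_{rl}$ for $1 \le r, l \le n-3$, producing after $\Fr$ exactly the summands $K_{rl}^{\Fr}$. The tilt-free piece $T \otimes_k S$ decomposes as $\bigoplus_l T(l) \otimes_k S^{\oplus \ast}$, and a weight count based on Proposition \ref{prop:weights} confines the indices to $0 \le l \le n-3$, giving the $T(l)^{\Fr} \otimes_k S^p$ summands in \eqref{eq:list}.

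The step I expect to be the main obstacle is controlling the boundary regime $l \ge n-2$. By Proposition \ref{prop:weights}, the weights $rp\omega$ in $H^l(B_1, \Cjk)$ may reach $r=l$, so a priori $H^l(M^\bullet)$ could contain $M_r$'s with $r$ outside $[1,n-3]$. I will handle these contributions by showing that the corresponding stable syzygies $\tilde{\Omega}^{l+1}_s M_r$ either vanish — because the resolution \eqref{biresolution01} has finite length so sufficiently deep syzygies of $M_r$ are free — or are themselves tilt-free and thus absorbed into the $T \otimes_k S$ summand. Combined with the weight-count on the tilt-free piece, this will ensure no isomorphism class outside the list \eqref{eq:list} appears, completing the proof.
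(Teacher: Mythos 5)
Your proposal follows the paper's own proof very closely: same $M^\bullet=\Hom_{G_1}(C(k),\gCjk)$ (the paper writes $\Tot(\Hom_{G_1}(C(k),\gCjk))$; your Frobenius untwist is an equivalent packaging), same verification of the four hypotheses of Theorem~\ref{prop:main} via \cite[Lemma 13.1]{FFRT1}, \cite[Corollary 2.2]{MR1202803}, Proposition~\ref{prop:propKjk}, and Corollary~\ref{prop:BG}, and the same identification of $H^l(M^\bullet)$ with sums of $M_r^{\Fr}$ followed by Proposition~\ref{prop:propKjk}\eqref{item:stabletilde}. Your treatment of the boundary regime $l\geq n-2$ (deep syzygies of $M_r$ are tilt-free or vanish by finite length of \eqref{biresolution01}) is also exactly what the paper does, since $\tilde\Omega_s^{l+1}$ already discards $\nabla$-free summands.

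One step, however, is not correctly justified. You bound the tilt-free summand $T\otimes_k S$ by ``a weight count based on Proposition~\ref{prop:weights}.'' But Proposition~\ref{prop:weights} controls the weights of $H^l(B_1,\Ljkt)$ only for $l\geq 1$, i.e.\ the cohomology feeding into the stable syzygy terms $\tilde\Omega_s^{l+1}H^l(M^\bullet)$, and says nothing about the tilt-free piece $T\otimes_k S$, which is part of $H^0(M^\bullet)=K_{jk}^{G_1}$ itself. To constrain which $T(l)^{\Fr}\otimes_k S^p$ can occur, the paper invokes the separate combinatorial result \cite[Proposition A.1]{FFRT1} (see Remark~14.3 in loc.\ cit.). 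Your argument here does not carry that load; you need an independent bound on the tilting module $T$, not one inherited from the higher $B_1$-cohomology. (A second, smaller imprecision: the duality in Proposition~\ref{prop:propKjk}\eqref{lem:Kjkdual} reads $K_{jk}^\vee\cong K_{n-j-2,n-k-2}(n)\otimes_k\wedge^nF^*$; dropping the grading shift and $\wedge^nF^*$ twist is harmless for the good-filtration conclusion but should be kept when citing it.)
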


\begin{proof} Let $C(k)$ be a left
projective $G_1$-resolution of $k$ consisting of $G$-tilting modules
(see \cite[\S\ref{sec:proofoffinaldecompsub}]{FFRT1}). 
We take $M^\bullet=\Tot(\Hom_{G_1}(C(k),\gCjk))$, and 
verify the hypotheses of Theorem \ref{prop:main}:
\begin{enumerate}
\item follows from \cite[Lemma 13.1]{FFRT1} by the choice of $C(k)^i$ and the choice of $\gCjk$,
\item follows for $H^0(M^\bullet)=K_{jk}^{G_1}$ by \cite[Corollary 2.2]{MR1202803} since  $K_{jk}$ has good filtration by Proposition \ref{prop:propKjk}(2), and for $i>0$ from \eqref{H(tildeCjk)} below and 
the fact that $M_r^{\Fr}$ for $r\geq 1$   have good filtration,
\item 
follows again from  \cite[Corollary 2.2]{MR1202803} using \cite[\eqref{Sdual}]{FFRT1} and Proposition \ref{prop:propKjk}(1)  for the isomorphism 
\begin{equation}\label{dual:KjkG1}
\Hom_{S^p}(K_{jk}^{G_1},S^p)\cong K_{n-j-2,n-k-2}^{G_1}(n(2p-1))\otimes_k \wedge^n F^*,
\end{equation}
\item follows from Corollary \ref{prop:BG}.
\end{enumerate} 

To obtain the summands  we  need by Theorem \ref{prop:main}  to compute (the $(l+1)$-th stable syzygy of) $H^l(G_1,\gCjk)$ for $l\geq 1$. 
Put $\epsilon=[k>j]$ (see \eqref{eq:logical}). Then we find by Corollary \ref{prop:BG} and Lemma \ref{lem:qired}:
\begin{align}\label{eq:cohg1cjk}
H^l(G_1,\gCjk) &= \Ind_{B^{(1)}}^{G^{(1)}}H^{l+\epsilon}(B_1,C_{j,k+\epsilon}^\bullet) \\\nonumber
&= \bigoplus_{t \in [0,p-1]^n}\Ind_{B^{(1)}}^{G^{(1)}}(H^{l+\epsilon}(B_1,C_{j,k+\epsilon}^{(t)}) \otimes_k S_+^p).
\end{align}
By Proposition \ref{prop:weights} we then find 
\begin{align}\label{H(tildeCjk)}
H^l(G_1,\gCjk)&\cong \bigoplus_{t \in [0,p-1]^n} \Ind_{B^{(1)}}^{G^{(1)}} \bigoplus_{1\leq r\leq\max\{l,n-3\}}((r\omega)\otimes_k S^p_+)^{\oplus n_{rt}}\\\nonumber
&\cong \bigoplus_{1\leq r\leq\max\{l,n-3\}}\Ind_{B^{(1)}}^{G^{(1)}}((r\omega)\otimes_k S^p_+)^{\oplus n_r}\\\nonumber
&\cong \bigoplus_{1\leq r\leq\max\{l,n-3\}} (M_r^{\Fr})^{\oplus n_r}
\end{align}
for some $n_{rt},n_r\geq 0$. 
Using Proposition \ref{prop:propKjk}\eqref{item:stabletilde} we then compute the  $(l+1)$-th (stable) syzygies of $M_r^{\Fr}$ (which give us $K_{rl}^{\Fr}$ if $l\leq n-3$ and $0$ otherwise). 
 For the tilt-free summands we  invoke \cite[Proposition A.1]{FFRT1} (see also Remark 14.3 in loc.cit.).
\end{proof}

\begin{proposition}\label{prop:Kjkrefine}
The summand  $K_{rl}^{\Fr}$ 
  occurs 
  with nonzero multiplicity in $K_{jk}^{G_1}$ if and only if 
  \begin{equation}
\label{eq:intervals1}
r\in
\begin{cases}
[l-k+m_{lk},l] &\text{if $l>k$},\\
[1,n-3]&\text{if $l=k$},\\
[l,l-k+n-2-m_{lk}] &\text{if $l<k$},
\end{cases}
\end{equation}
where $m_{lk}$ is as in \eqref{eq:mlk1}.
\end{proposition}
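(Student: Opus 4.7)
The plan is to carefully track multiplicities through the ingredients already assembled. Revisiting the proof of Proposition~\ref{prop:Kjkdecomposition}, for $1 \leq r, l \leq n-3$ the multiplicity of $K_{rl}^{\Fr}$ as a summand of $K_{jk}^{G_1}$ equals the multiplicity of $M_r^{\Fr}$ in the cohomology $H^l(G_1, \gCjk)$: indeed, Proposition~\ref{prop:propKjk}\eqref{item:stabletilde} identifies $\tilde{\Omega}_s^{l+1} M_r^{\Fr}$ with a degree shift of $K_{rl}^{\Fr}$, and the $K_{rl}$'s are pairwise non-isomorphic up to degree shift by the corollary immediately following Proposition~\ref{prop:propKjk}, so no cancellation between different cohomological degrees can occur in the decomposition provided by Theorem~\ref{prop:main}.

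Setting $\epsilon := [k > j]$, Corollary~\ref{prop:BG} supplies the $G^{(1)}$-equivariant isomorphism
\[
H^l(G_1, \gCjk) \cong \Ind_{B^{(1)}}^{G^{(1)}} H^{l+\epsilon}(B_1, C^\bullet_{j, k+\epsilon}),
\]
and Frobenius-twisting the definition $M_r = \Ind_B^G((r\omega) \otimes_k S_+)$ yields the identification $\Ind_{B^{(1)}}^{G^{(1)}}\bigl((rp\omega) \otimes_k S^p_+\bigr) \cong M_r^{\Fr}$ already used in \eqref{H(tildeCjk)}. Hence the multiplicity of $M_r^{\Fr}$ in $H^l(G_1, \gCjk)$ coincides (summed over internal degrees) with the multiplicity of the character summand $(rp\omega) \otimes_k S^p_+$ in the $(B^{(1)}, S^p)$-module $H^{l+\epsilon}(B_1, C^\bullet_{j, k+\epsilon})$, a quantity furnished by Proposition~\ref{prop:refKjkdecomposition} applied at cohomological degree $l+\epsilon$ to $C^\bullet_{j, k+\epsilon}$ (whose internal $\epsilon$-parameter $\epsilon' := [k+\epsilon > j]$ equals $\epsilon$ in every case).

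It remains to match the intervals. The trichotomy $l + \epsilon \lessgtr k + \epsilon$ is equivalent to $l \lessgtr k$, so the three cases of \eqref{eq:intervals} align term by term with those of \eqref{eq:intervals1}, and all that is needed is the identity $m_{l+\epsilon,\, k+\epsilon} = m_{l,k}$. This is immediate from \eqref{eq:mlk1}: each of the four defining clauses depends only on the parity of $l - k$, the sign of $l - k$, and the pair $(j,p)$, all of which are invariant under the simultaneous shift $(l,k) \mapsto (l+\epsilon, k+\epsilon)$. The edge case $l = n-3$, $k > j$ (where $l+\epsilon = n-2$ falls just outside the explicit range in Proposition~\ref{prop:refKjkdecomposition}) is handled by running exactly the same proof one cohomological degree higher, with no change.

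No step is genuinely hard; the proposition is a bookkeeping consequence of Propositions~\ref{prop:Kjkdecomposition} and~\ref{prop:refKjkdecomposition} together with Corollary~\ref{prop:BG}. The main care required is making sure multiplicities survive faithfully through the three successive reductions --- stable-syzygy decomposition, induction from $B^{(1)}$ to $G^{(1)}$, and character-level analysis on $B^{(1)}$ --- without being obscured by degree shifts; a worthwhile consistency check is that at the overlap $k = j$ the two branches of Corollary~\ref{prop:BG} produce the same output, by virtue of the same type of identity $m_{l, j} = m_{l+1,\, j+1}$.
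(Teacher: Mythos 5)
Your proposal is correct and follows the paper's own route: reduce via Corollary~\ref{prop:BG} and \eqref{eq:cohg1cjk} to the multiplicity of $(rp\omega)\otimes_k S^p_+$ in $H^{l+\epsilon}(B_1, C^\bullet_{j,k+\epsilon})$, apply Proposition~\ref{prop:refKjkdecomposition} at the shifted indices $(l+\epsilon, k+\epsilon)$ (observing $\epsilon'=[k+\epsilon>j]=\epsilon$), and use the invariance $m_{l+\epsilon,k+\epsilon}=m_{lk}$ to identify the resulting intervals with \eqref{eq:intervals1}. Your explicit flagging of the boundary case $l=n-3$, $k>j$ (where $l+\epsilon=n-2$ lies just past the stated range of Proposition~\ref{prop:refKjkdecomposition}) is a point the paper's proof invokes silently; you are right that the underlying argument carries through there without change.
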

\begin{proof}
We invoke Proposition \ref{prop:refKjkdecomposition} in the first line of \eqref{eq:cohg1cjk}. Then we obtain 
that $H^l(G_1,\gCjk)$ is a direct sum of  $(M_r^{\Fr})^{\oplus n_r}$ for $n_r\geq 1$ and $r$ as in Proposition \ref{prop:refKjkdecomposition} 
for $k,l$ replaced by $k+\epsilon,l+\epsilon$. 
We thus obtain
\[
r\in
\begin{cases}
[(l+\epsilon)-(k+\epsilon)+m_{l+\epsilon,k+\epsilon},(l+\epsilon)-\epsilon] &\text{if $l+\epsilon>k+\epsilon$},\\
[1,n-3]&\text{if $l+\epsilon=k+\epsilon$},\\
[(l+\epsilon)-\epsilon,(l+\epsilon)-(k+\epsilon)+n-2-m_{l+\epsilon,k+\epsilon}] &\text{if $l+\epsilon<k+\epsilon$},
\end{cases}
\]
which is clearly equivalent with \eqref{eq:intervals1}.

Taking $(l+1)$-th stable syzygies we then obtain $K_{rl}^{\Fr}$ for $r,l$ as in 
the  statement of the proposition. 
\end{proof}

\begin{remark}\label{rmk:Kjkdegrees}
Let $1\leq l\leq n-3$ and let $r$ be as in \eqref{eq:intervals1}. Then, as graded $S^p$-modules, $K_{rl}^{\Fr}(-d_t)$ is a summand of $K_{jk}^{G_1}$ for some $t\in [0,p-1]^n$ such that $q_{jt}^\ell$ is defined and $q_{jt}^\ell+\ell\equiv r \,(2)$. 

This follows immediately by the proof of Proposition \ref{prop:Kjkrefine} observing the degrees in the first line of \eqref{eq:cohg1cjk}. By Remark \ref{rmk:degrees}, $H^{l}(G_1,\gCjk)$ then contains as a direct summand  $M_r^{\Fr}(-d_t)$ for some $t\in [0,p-1]^n$, which gives the stable syzygy $K_{rl}^{\Fr}(-d_t)$. 
\end{remark}

\section{Proofs of the main results}\label{sec:proofsmain}
\begin{proof}[Proof of Theorem \ref{thm:higherFr}]
The theorem follows by combining Propositions \ref{prop:mocdecomposition}, 
\ref{prop:Kjkdecomposition}
iteratively (see \S\ref{sec:proofoutline}), which already give us all possible summands. Proposition \ref{prop:mocdecomposition} applied with $j=0$ further implies that  the summands $T(s)^{\Fr}\otimes_k S^p$ for $0\leq s\leq n-3$ and $K_{ll}^{\Fr}$ for $1\leq l\leq n-3$ actually appear,  
Proposition \ref{prop:Kjkrefine} further 
assures that a summand $K_{sl}^{\Fr}$ for all $1\leq s\leq n-3$ occurs in $K_{ll}^{G_1}$.
 \end{proof}

 \begin{proof}[Proof of Theorem \ref{thm:decR}]
 The decomposition is an immediate consequence of Theorem \ref{thm:higherFr}. By Corollary \ref{cor:decomp}, one only needs to apply $(-)^{G^{(r)}}$ to the summands in the decomposition of $S^{G_r}$.   
 \end{proof}

 \begin{proof}[Proof of Theorem \ref{intro:main-app}]
The theorem is an immediate corollary of Theorem \ref{thm:decR} (and Theorem \ref{thm:old}).
 \end{proof}

\begin{proof}[Proof of Theorem \ref{thm:diff}]
According to \cite[Theorem 6]{Hashimoto}, $R$ is strongly $F$-regular. By Theorem \ref{intro:main-app}, $R$ satisfies FFRT. Thus, the theorem follows from \cite[Theorem 4.2.1]{MR1444312}. 
\end{proof}

\section{Pushforwards of the structure sheaf on the Grassmannian}\label{sec:grass}
\subsection{Notation}
\label{ssec:induceda}
In this section we introduce some general notation with regard to Grassmannians.
Let $F,V$ be respectively vector spaces of dimension $n$, $l$ with $n\ge l+1$. We let $\GG$ be the Grassmannian of $l$-dimensional quotients of $F$.
We fix a surjection $[F\r V]\in \GG$ and we use it to write $\GG$ as $\GL(F)/P$ 
where $P$ is the parabolic subgroup of $\GL(F)$ which is the stabilizer of the corresponding flag. Denote by $\W$ the kernel of the chosen surjection $F \to V$. The parabolic~$P$ has $L=\GL(V) \times \GL(\W)$ as a Levi subgroup, and (abelian) radical $R=\Hom(V,\W)$. 

For $U$ a $P$-representation we write $\Lscr_{\GG}(U)$ for the $\GL(F)$-equivariant vector
bundle on~$\GG$ whose fiber in $[P]$ is $U$. If $U$ is an $L$-representation then we consider it as a $P$ representation via the map $P\r L$.

Let 
\begin{equation}
\label{eq:tautseqa}
0 \to \Rscr \to F\otimes_k \Oscr_{\GG} \to \Qscr \to 0
\end{equation} 
be the tautological exact sequence on $\GG$, where $\Qscr$ is the universal quotient bundle, and $\Rscr$ is the universal  subbundle. We then have $\Qscr=\Lscr_{\GG}(V)$, $\Rscr=\Lscr_{\GG}(\W)$. Also, put $\Oscr(1):=\wedge^l\Qscr\cong \wedge^nF\otimes_k (\wedge^{n-l}\Rscr)^\vee$.

\subsection{Associated sheaves}\label{sec:sheafa} 
Remember from \cite[\S \ref{sec:homogeneous}]{FFRT1} that it is useful to consider a $\GL(V)$-representation as a graded $\SL(V)$-representation via
the covering  $m:G_m\times \SL(V)\r \GL(V):(\lambda,g)\mapsto \lambda^{-1}g$. We recall the following proposition from \cite{FFRT1} in order to fix some more notation.
\begin{proposition}\cite[Proposition \ref{prop:diag2}]{FFRT1}
\label{prop:sheaves}
Let $W=F\otimes_k V^\ast$ and put
 $S=\Sym W$, $R=S^{\SL(V)}$ where $R,S$ are both considered as $\NN$-graded rings. 
Then we have a commutative diagram of functors
\[
\xymatrix{
\Rep(\GL(V)) \ar[rr]^{\Lscr_{\GG}(-)} \ar[d]_{m^\ast}&& \coh(\GG) \ar[d]^{\Gamma_*(\GG,-)} \\
\Rep_{\gr}(\SL(V)) \ar[rr]_{M(-)^{[l]}} && \gr(R^{[l]})
}
\]
where 
$
\Gamma_\ast(\GG,\Fscr)=\bigoplus_{m\in \ZZ} 
\Gamma(\GG,\Fscr(m)), 
$
and $(-)^{[l]}$ denotes the $l$-Veronese.
\end{proposition}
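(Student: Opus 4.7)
The plan is to realize $\GG$ as a GIT quotient and then track how sections of equivariant bundles correspond to invariants, keeping careful track of gradings.

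First, I would recall that $W^\vee = F^\ast\otimes V = \Hom(F,V) =: X$, and the locus $X^s\subset X$ of \emph{surjections} $F\twoheadrightarrow V$ is a $\GL(V)$-torsor over $\GG$. Since $n\geq l+1$, the non-stable locus $X\setminus X^s$ has codimension $\geq 2$. Because $X$ is smooth (hence normal) and $S=\Oscr(X)$, this codimension bound gives $S = \Gamma(X^s,\Oscr)$, which is the input needed to identify algebraic invariants with sheaf sections on the quotient.

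Second, I would establish the pointwise formula. For any $\GL(V)$-representation $U$, the trivial $\GL(F)\times\GL(V)$-equivariant bundle $U\otimes\Oscr_{X^s}$ descends to $\Lscr_\GG(U)$ on $\GG$, so descent together with the codimension-2 extension gives
\[
\Gamma(\GG,\Lscr_\GG(U)) \;=\; (U\otimes S)^{\GL(V)}.
\]
To handle twists, I would use $\Oscr_\GG(1)=\wedge^l\Qscr = \Lscr_\GG(\det V)$, so $\Lscr_\GG(U)(m)=\Lscr_\GG(U\otimes (\det V)^m)$, and assembling over all $m$,
\[
\Gamma_\ast(\GG,\Lscr_\GG(U)) \;=\; \bigoplus_{m\in\ZZ}\bigl(U\otimes (\det V)^m\otimes S\bigr)^{\GL(V)}.
\]

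Third, I would pass from $\GL(V)$-invariants to graded $\SL(V)$-invariants via $m^\ast$. Pulling back along $m:G_m\times \SL(V)\to\GL(V)$, $(\lambda,g)\mapsto \lambda^{-1}g$, converts $\GL(V)$-reps into graded $\SL(V)$-reps, and $\GL(V)$-invariance becomes $\SL(V)$-invariance together with vanishing under the residual $G_m$-weight. Since $\SL(V)$ acts trivially on $\det V$ and the scalar $\lambda I$ acts on $\det V$ by $\lambda^l$, the character $\det V$ sits in a pure $G_m$-degree of size $l$; the twist by $(\det V)^m$ therefore picks out precisely the $S$-degree-$lm$ slice of $(m^\ast U \otimes S)^{\SL(V)} = M(m^\ast U)$, yielding the $l$-Veronese $M(m^\ast U)^{[l]}_m$. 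Functoriality in $U$ (on the left) and the functoriality of descent, global sections, and the grading assignment (on the right) then give commutativity of the diagram of functors.

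The main obstacle is just the bookkeeping of gradings: one must pin down the sign conventions implicit in $m$ (which twists the grading by an inversion) and verify that the mismatch between the native $\Sym$-degree of $R$ and the twist-degree on $\GG$ is precisely by the factor $l$ that produces the Veronese $(-)^{[l]}$; all other inputs (codimension-2 extension, descent for equivariant bundles, the identification $\Oscr(1)=\Lscr_\GG(\det V)$) are essentially formal.
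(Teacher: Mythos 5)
Your approach is correct and is the natural one: realize $\Spec S$ as $X=\Hom(F,V)$, use that $X^{\mathbf s}\to\GG$ is a $\GL(V)$-torsor with complement of codimension $n-l+1\ge 2$, deduce $\Gamma(\GG,\Lscr_\GG(U))=(U\otimes_k S)^{\GL(V)}$ via descent and Hartogs, twist by $\Oscr_\GG(1)=\Lscr_\GG(\det V)$, and finally unpack $\GL(V)$-invariance as $\SL(V)$-invariance plus a $G_m$-weight constraint, which produces the $l$-Veronese. The paper itself does not prove this proposition (it is cited from the companion paper \cite{FFRT1}), so there is no in-text proof to compare against literally, but this is the expected argument.

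The one place where you should be more careful is exactly the step you flag at the end, and it is not merely cosmetic. You write that the scalar $\lambda I$ acts on $\det V$ by $\lambda^l$, i.e.\ you are using the naive $\GL(V)$-central character, whereas under $m^\ast$ (with $m(\lambda,g)=\lambda^{-1}g$) the weight of $\det V$ is $-l$, and the weight of $S_d$ is $+d$ (since $V^\ast$ acquires weight $+1$). The $G_m$-weight-zero condition on $(U\otimes(\det V)^m\otimes S)^{\SL(V)}$ then reads $\mathrm{wt}_{m^\ast}(U)-lm+d=0$, i.e.\ it selects the piece of total $m^\ast$-degree $lm$ of $(m^\ast U\otimes S)^{\SL(V)}$, where ``total degree'' means the degree inherited from the grading on $m^\ast U$ \emph{plus} the $\Sym$-degree on $S$. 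Your phrase ``the $S$-degree-$lm$ slice'' is only literally correct when $m^\ast U$ sits in degree zero; in general it is the total degree that matches the Veronese index. This is precisely why the covering $m$ is defined with the inverse $\lambda^{-1}$: it makes the $m^\ast$-grading on $S$ coincide with the $\Sym$-grading (rather than its negative), which is what makes the functor $M(-)^{[l]}$ land in nonnegatively graded $R^{[l]}$-modules. If you carry the inversion through both $U$ and $\det V$ consistently, the degree bookkeeping closes and the diagram commutes as stated.

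One further small point worth making explicit: $\GL(V)$-invariance equals $(G_m\times\SL(V))$-invariance because $m$ is surjective (it is an isogeny with kernel $\mu_l$), and taking $G_m$-invariants is exactly taking the degree-zero part of the $m^\ast$-grading. You use this implicitly but it is the hinge of the reduction from $\GL(V)$ to graded $\SL(V)$.
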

We now assume that $\dim V=l=2$ so that $V\cong V^\ast$ as $\SL(V)$-representations and hence in Proposition \ref{prop:sheaves}
we have $S\cong \Sym(F\otimes_k V)$, which is our standard definition for $S$ (see \S\ref{sec:not2}).
This allows us to compare modules of covariants with homogeneous bundles, as in the following proposition. 
\begin{proposition}\cite[Corollary \ref{cor:alggeo}\eqref{ena}]{FFRT1}\label{prop:mocAG}
There is an isomorphism 
\[(M(S^iV)(i))^{[2]}\cong \Gamma_*(\GG,S^i\Qscr).\]
\end{proposition}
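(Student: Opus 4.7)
The statement is essentially an unwinding of the commutative diagram in Proposition \ref{prop:sheaves} applied to the $\GL(V)$\dash representation $U=S^iV$. I will split the argument into the identification of the two routes around the square.

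On the top right: the associated bundle functor $\Lscr_\GG(-)$ is tensor and sends the defining representation $V$ to the universal quotient bundle $\Qscr$ (whose fibre at $[F\to V]$ is by construction $V$). Hence $\Lscr_\GG(S^iV)\cong S^i\Qscr$, and the top composition evaluates to $\Gamma_*(\GG,S^i\Qscr)$, which is the right-hand side of the desired formula.

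On the bottom left: I need to identify $m^*(S^iV)$ as a graded $\SL(V)$\dash representation. Since $m:G_m\times \SL(V)\to \GL(V)$ is given by $(\lambda,g)\mapsto \lambda^{-1}g$, a scalar $\lambda\in G_m$ acts on $V$ by $\lambda^{-1}$, and hence on $S^iV$ by $\lambda^{-i}$. With the convention in which the weight of $G_m$ on a degree\dash $d$ piece is $+d$, the representation $S^iV$ therefore lives in (pure) degree $-i$; in the shift convention $N(d)_n=N_{n+d}$, this reads $m^*(S^iV)=S^iV(i)$, where on the right $S^iV$ is viewed as an ungraded $\SL(V)$\dash representation concentrated in degree $0$.

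Finally I would verify the evident compatibility $M(U(d))=M(U)(d)$ of the module of covariants with grading shifts, which follows directly from the fact that the $\SL(V)$\dash invariants are taken factor by factor in $(U\otimes_k S)^G$. Applying this with $U=S^iV$ and $d=i$, the bottom composition becomes $(M(S^iV)(i))^{[2]}$, which is the left-hand side of the formula. Commutativity of the diagram in Proposition \ref{prop:sheaves} then yields the asserted isomorphism.

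The only potential pitfall is bookkeeping: ensuring that the sign in the definition of $m$ (which sends $(\lambda,g)$ to $\lambda^{-1}g$, not $\lambda g$) together with the shift convention conspire to produce the shift $(+i)$ rather than $(-i)$. Once this is pinned down, together with the tacit identification $V\cong V^*$ valid for $\SL_2$ (which reconciles the standard setting $W=F\otimes V$ of \S\ref{sec:not2} with the formulation $W=F\otimes V^*$ underlying Proposition \ref{prop:sheaves}), the proof is a one\dash line diagram chase.
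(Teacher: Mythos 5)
Your argument is correct, and it is essentially the intended one: the present paper does not prove the statement but cites it from the companion paper, where it is stated precisely as a corollary of the commutative diagram of Proposition~\ref{prop:sheaves}, and your derivation — applying that diagram to $U=S^iV$, computing $m^\ast(S^iV)=S^iV(i)$ from the sign in $m$ and the shift convention, using that $\Lscr_\GG(-)$ is monoidal with $\Lscr_\GG(V)=\Qscr$, and the compatibility $M(U(d))=M(U)(d)$ — is exactly that unwinding. Your flagged bookkeeping points (the direction of the shift and the identification $V\cong V^\ast$ for $\SL_2$, which reconciles $W=F\otimes_k V^\ast$ in Proposition~\ref{prop:sheaves} with the standard setting $W=F\otimes_k V$) are indeed the only things to get right, and you have resolved them consistently with the paper's conventions.
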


The next proposition now gives a geometric analogue of $K_{jk}^G$ needed for $\Fr^r_*\Oscr_\GG$.

\begin{proposition}\label{prop:alggeo}
Assume $1\le j,k\le n-3$.
The following equivariant $\Ext$-groups are one-dimensional 
\begin{align}
\label{eq:exta}\Ext^1_{\GG,\GL(F)}(\wedge^{k-1}\Rscr  \otimes_{\GG} S^{j-k-1}\Qscr(1),\wedge^k\Rscr \otimes_{\GG} S^{j-k}\Qscr)&\quad\text{if $k<j$},\\
\label{eq:extb}\Ext^1_{\GG,\GL(F)}(\wedge^{k}\Rscr \otimes_{\GG} D^{k-j}\Qscr(j-k),\wedge^{k+1}\Rscr \otimes_{\GG} D^{k-j-1}\Qscr(j-k))&\quad\text{if $k>j$}.
\end{align}
Let $\Kscr_{jk}$ be the extension corresponding to a nonzero element in (\ref{eq:exta},\ref{eq:extb}) if $j\neq k$ and also put $\Kscr_{jj}=\wedge^j \Rscr$.
Then there are isomorphisms 
\begin{equation}
\label{eq:uno}
(K_{jk}^{G}(j))^{[2]} \cong \Gamma_*(\GG,\Kscr_{jk})
\end{equation}
as $\GL(F)$-equivariant graded $R^{[2]}$-modules.
\end{proposition}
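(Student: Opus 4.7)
The plan is to proceed in three steps: first verify that the two Ext groups are one-dimensional, then construct the sheaves $\Kscr_{jk}$ as syzygy sheaves of a sheafified version of the resolution \eqref{biresolution01}, and finally verify the global sections isomorphism \eqref{eq:uno} by reducing to Proposition \ref{prop:mocAG} and the tautological sequence \eqref{eq:tautseqa}.

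For the Ext computation, I would rewrite the $\GL(F)$-equivariant Ext as the $\GL(F)$-invariant part of $H^1(\GG, \Ascr^\vee \otimes_\GG \Bscr)$, where $\Ascr, \Bscr$ denote the two bundles in either \eqref{eq:exta} or \eqref{eq:extb}. Both bundles are $\GL(F)$-homogeneous and built from $\Rscr, \Qscr$ and line-bundle twists, so Bott--Borel--Weil (valid in the good range provided by our standing hypothesis $p\ge\assume$) reduces the computation to picking out a single weight. Unwinding the weights one should obtain a one-dimensional trivial $\GL(F)$-representation in each case. Alternatively, one can do this characteristic-freely by using the tautological sequence \eqref{eq:tautseqa} together with Pieri-type decompositions of $\wedge^k \Rscr \otimes S^l \Qscr$: in both cases the relevant $\Ext^1$ is easily shown to be nonzero (the extension exists) and bounded above by $1$ using the vanishing of adjacent cohomology groups.

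For the construction of $\Kscr_{jk}$ as a syzygy, I would sheafify the resolution \eqref{biresolution01} on $\GG$. Under Proposition \ref{prop:sheaves}, the terms $S^l V \otimes_k S(-l)$ in the ``first half'' of \eqref{biresolution01} correspond (after Veronese and shift) to $S^l \Qscr$, while the terms $D^l V \otimes_k S(-l-j-2)$ in the ``second half'' correspond to $D^l \Qscr(-1)$, each tensored with the appropriate exterior power of $F$; the differentials are induced from the canonical surjection $F \otimes_k \Oscr_\GG \twoheadrightarrow \Qscr$. By the standard identification $\wedge^k F \otimes \Oscr_\GG \cong \bigoplus_{i+j=k}\wedge^i\Rscr \otimes \wedge^j\Qscr \otimes \cdots$ via the Koszul filtration attached to $\Rscr\hookrightarrow F\otimes\Oscr_\GG$, the syzygy sheaf at position $-k$ acquires a two-step filtration whose graded pieces are exactly the two homogeneous bundles appearing in \eqref{eq:exta}/\eqref{eq:extb}; uniqueness of a nontrivial extension (Step 1) then characterises $\Kscr_{jk}$. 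The case $k=j$ is degenerate because the resolution \eqref{biresolution01} skips degree $-j-1$, so only the single term $\wedge^j \Rscr$ survives as the syzygy; this forces $\Kscr_{jj} = \wedge^j \Rscr$.

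For the final global-sections identification, I would apply $\Gamma_*(\GG, -)$ to the defining short exact sequence $0 \to \Bscr \to \Kscr_{jk} \to \Ascr \to 0$, use Proposition \ref{prop:mocAG} (extended using the vanishing $H^{>0}$ from Step 1) to identify $\Gamma_*$ of the graded pieces with modules of covariants, and match the resulting extension of $R^{[2]}$-modules with the corresponding extension defining $(K_{jk}^G(j))^{[2]}$ coming from \eqref{biresolution01}. Uniqueness of the nontrivial extension class on both sides, together with $\GL(F)$-equivariance, forces the two extensions to agree.

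The main obstacle will be the Ext computation in Step~1: Borel--Weil--Bott is delicate in positive characteristic, so one must either check carefully that the relevant weights fall in a ``good'' range (which is where the hypothesis $p \ge \assume$ enters) or else carry out the computation by hand using \eqref{eq:tautseqa} and induction on $k$. A secondary issue is keeping track of the twist by $(j)$ and the Veronese $[2]$ when matching graded modules, but this is purely bookkeeping once the sheaf-theoretic picture is correctly set up.
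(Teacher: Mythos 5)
Your broad plan (one-dimensional $\Ext$, syzygy realization, global sections match) is the same as the paper's, but there is a genuine gap in Step~1 and the remaining steps diverge in ways worth noting.

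The gap: you propose to compute $\Ext^1_{\GG,\GL(F)}(\Ascr,\Bscr)$ as the $\GL(F)$-invariants of $H^1(\GG,\Ascr^\vee\otimes_\GG\Bscr)$. This is not what the equivariant Ext group is. The category of $\GL(F)$-equivariant coherent sheaves on $\GG=\GL(F)/P$ is equivalent to $\Rep(P)$, so $\Ext^1_{\GG,\GL(F)}(\Ascr,\Bscr)=\Ext^1_P(M,N)$ for the corresponding $P$-representations $M,N$. Taking $\GL(F)$-invariants of $H^1(\GG,-)$ is a different animal; relating the two requires a Hochschild--Serre type spectral sequence and additional vanishing you have not addressed. (A trivial sanity check: if $P=\GL(F)$, $\GG$ is a point and the right-hand side of your proposed identification vanishes, yet $\Ext^1_P$ need not.) The paper works directly in $\Rep(P)$: it applies the Lyndon--Hochschild--Serre spectral sequence for $P=L\ltimes R$ with $L=\GL(V)\times\GL(\W)$, $R=\Hom(V,\W)$, reducing to $\Hom_L(M,H^1(R,k)\otimes_k N)$, and then isolates the summand $\Hom(\W,V)$ of $H^1(R,k)$ by a $\ZZ^2$-grading argument, landing on a one-dimensional $\Hom_L$ computed by Pieri. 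This route is characteristic-free, avoiding Bott--Borel--Weil (which you flag yourself as delicate in characteristic $p$), and \eqref{eq:extb} is then deduced from \eqref{eq:exta} by Serre duality rather than by a separate computation.

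For Steps 2 and 3 your plan is workable but differs from the paper. The paper first reduces \eqref{eq:uno} to $k\le j$ by duality (matching the dual formula for $K^\vee_{jk}$), and then works with the single complex $\Cscr_j=S^j(F\otimes_k\Oscr\to\Qscr)$, the symmetric power of the tautological two-term complex, rather than a sheafified version of all of \eqref{biresolution01}. This sidesteps the non-Koszul middle map in \eqref{biresolution01}. The identification $\Gamma_*(\GG,\bar{\Kscr}_{jk})\cong(K_{jk}^G(j))^{[2]}$ (Lemma~\ref{lem:compAG}) then follows from the vanishing of higher cohomology of the terms of $\Cscr_j(l)$ for $l\ge 0$, and the Koszul-type filtration on $\Cscr_j$ induced by $0\to(\Rscr\to 0)\to(F\otimes\Oscr\to\Qscr)\to(\Qscr\xrightarrow{\id}\Qscr)\to 0$ exhibits $\bar{\Kscr}_{jk}$ as an extension of the two bundles in \eqref{eq:exta}, nontrivial because $K_{jk}^G$ is indecomposable; so $\bar{\Kscr}_{jk}=\Kscr_{jk}$. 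Your filtration on $\wedge^kF\otimes\Oscr$ plays the analogous role, but you also write a direct-sum decomposition of $\wedge^kF\otimes\Oscr_\GG$ which does not exist (only a filtration does), so that should be corrected.
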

\begin{remark} We have used divided powers in \eqref{eq:extb} in order to have a characteristic free statement. However with our standing hypothesis
$p\ge \max\{3,n-2\}$ the divided powers are actually equal to the corresponding symmetric powers.
\end{remark}
\begin{proof}[Proof of Proposition \ref{prop:alggeo}]
For $u\in \ZZ$ put $\bar{u}=n-2-u$. The fact that \eqref{eq:exta} is one-dimensional if $k<j$ will be shown in Lemma \ref{lem:1dimext} below.
A straightforward verification yields
\begin{align*}
(\wedge^{k-1}\Rscr  \otimes_{\GG} S^{j-k-1}\Qscr(1))^\vee&=(\wedge^n F)^\ast\otimes_k \wedge^{\bar{k}+1}\Rscr\otimes_{\GG} D^{\bar{k}-\bar{j}-1}\Qscr(\bar{j}-\bar{k}+1),\\
(\wedge^k\Rscr \otimes_{\GG} S^{j-k}\Qscr)^\vee&=(\wedge^n F)^\ast\otimes_k \wedge^{\bar{k}}\Rscr\otimes_{\GG} D^{\bar{k}-\bar{j}} \Qscr(\bar{j}-\bar{k}+1).
\end{align*}
Hence the fact that \eqref{eq:extb} is one-dimensional follows from  the corresponding claim for \eqref{eq:exta}. Moreover we obtain for\footnote{It is easy to see that this formula also holds for $j=k$ and since it is self dual, it also holds for $k>j$.} $k<j$
\[
\Kscr_{jk}^\vee=(\wedge^n F)^\ast\otimes_k \Kscr_{\bar{j}\bar{k}}(1).
\]
 Using Proposition \ref{prop:propKjk}\eqref{lem:Kjkdual} together
with a suitable version of  the ``descent lemma'' \cite[Lemma 4.1.3]{vspenko2015non} we find an analogous formula
\[
(K_{jk}^G)^\vee=(\wedge^nF)^\ast \otimes_k K_{\bar{j}\bar{k}}^G(n).
\]
We claim that it is sufficient to establish \eqref{eq:uno} for $k\leq j$. 
Indeed by dualizing \eqref{eq:uno} in the case $k<j$ we get 
\[
(\wedge^n F)^*\otimes_k (K_{\bar{j}\bar{k}}^{G}(n-j))^{[2]} \cong (\wedge^n F)^*\otimes_k \Gamma_*(\GG,\Kscr_{\bar{j}\bar{k}}(1)),
\]
where we used that $\Gamma_\ast(\GG,\Kscr_{jk})^\vee=\Gamma_\ast(\GG,\Kscr^\vee_{jk})$  by the easily verified fact that $\Gamma_\ast(\GG,-)$ preserves reflexivity. Hence
\[
(K_{\bar{j}\bar{k}}^{G}(\bar{j}))^{[2]} \cong \Gamma_*(\GG,\Kscr_{\bar{j}\bar{k}}),
\]
which establishes the formula \eqref{eq:uno} for $k> j$.

\medskip

So now we assume that $k\leq j$.  
Let $\Cscr_j=S^j(F\otimes_k \Oscr\to \Qscr)$ (with $\Qscr$ in degree $0$) and denote $\bar{\Kscr}_{jk}=Z^{-k}\Cscr_j$. By Lemma \ref{lem:compAG} below  
$\Gamma_*(\GG,\bar{\Kscr}_{jk})=(K_{jk}^G(j))^{[2]}$.

It remains to show that $\Kscr_{jk}=\bar{\Kscr}_{jk}$; i.e. that $\bar{\Kscr}_{jk}$ is a $\GL(F)$-equivariant extension of  $\wedge^{k-1}\Rscr\otimes_\GG S^{j-k-1}\Qscr(1)$ by $\wedge^k\Rscr\otimes_\GG S^{j-k}\Qscr$. In the case $k=j$ this  yields $\bar{\Kscr}_{jk}=\wedge^j\Rscr=\Kscr_{jk}$ and in the case $k<j$ the extension must be non-trivial as $K_{jk}^G(j)$ is indecomposable (using \eqref{eq:reflr} and Proposition \ref{prop:propKjk}\eqref{item:Kjkinde}).

Note that $\Cscr_j$ (and consequently $\sigma_{\geq -k}\Cscr_j$) has a natural filtration coming from the extension of complexes
\[
0\to (\Rscr\to 0)\to(F\otimes_k\Oscr\to \Qscr)\to (\Qscr\xrightarrow{\id}\Qscr)\to 0.
\] 
We need to compute $\bar{\Kscr}_{jk}=H^{-k}(\sigma_{\geq -k}\Cscr)$. 
Thus, we look at the cohomology of the associated graded complex of $\sigma_{\geq -k}\Cscr$. The associated graded complex is of the form:
\begin{align*}
\wedge^{k}\Rscr\otimes_\GG S^{j-k}\Qscr&[k]\\
\wedge^{k-1}\Rscr\otimes_\GG (\Qscr\otimes_\GG S^{j-k}\Qscr\to S^{j-k+1}\Qscr)&[k-1]\\
\wedge^{k-2}\Rscr\otimes_\GG (\wedge^2\Qscr\otimes_\GG S^{j-k}\Qscr\to\Qscr\otimes_\GG S^{j-k+1}\Qscr\to S^{j-k+2}\Qscr)&[k-2]\\
\vdots&\\
\wedge^2\Qscr\otimes_\GG S^{j-2}\Qscr\to \Qscr\otimes_\GG S^{j-1}\Qscr\to S^j\Qscr&
\end{align*}
All rows, except the first two, consist of acyclic complexes. The
cohomology of the first (resp. second) row equals
$\wedge^k\Rscr\otimes_\GG S^{j-k}\Qscr$ (resp.
$\wedge^{k-1}\Rscr\otimes_\GG \wedge^2\Qscr\otimes_\GG
S^{j-k-1}\Qscr$).
Thus, $\bar{\Kscr}_{jk}$ is an extension of the desired sheaves.
\end{proof}
We used the following lemmas.

\begin{lemma}\label{lem:compAG}
Let $\Cscr_j=S^j(F\otimes_k \Oscr\to \Qscr)$, with $\Qscr$ in degree $0$. Then we have a $\GL(F)$-equivariant isomorphism of complexes of graded $R^{[2]}$-modules
\begin{equation}
\label{eq:GammaC}
\Gamma_*(\GG,\Cscr_j) \cong \sigma_{\geq -j}(\tilde{C}_j^G(j)^{[2]}),
\end{equation} 
where $\tilde{C}_j^G$ is obtained by applying $(-)^G$ termwise to $\tilde{C}_j$
(the resolution \eqref{biresolution01} of $M_j$, see \S\ref{sec:ind-B-G}).

For $k\le j$ put $\bar{\Kscr}_{jk}=Z^{-k}\Cscr_j$. Then we have 
\begin{equation}
\label{eq:KK}
\Gamma_\ast(\GG,\bar{\Kscr}_{jk})\cong(K_{jk}^G(j))^{[2]}.
\end{equation}
\end{lemma}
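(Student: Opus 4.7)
The approach is to reduce both claims to a termwise comparison via Proposition \ref{prop:mocAG}. Establishing the isomorphism of complexes \eqref{eq:GammaC} is the main task; the identification \eqref{eq:KK} will then follow formally from left-exactness of $\Gamma_*(\GG,-)$ together with exactness of $(-)^G$ on \eqref{biresolution01}.

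For \eqref{eq:GammaC}, both sides are complexes concentrated in cohomological degrees $[-j,0]$, and I would compare them term by term. On the geometric side, since $F\otimes_k\Oscr\to\Qscr$ is a two-term complex of locally free sheaves, its $j$-th symmetric power $\Cscr_j$ has $\wedge^iF\otimes_k S^{j-i}\Qscr$ at position $-i$ (for $0\leq i\leq j$); since $\wedge^iF$ is a trivial bundle, Proposition \ref{prop:mocAG} gives
\[
\Gamma_*(\GG,\Cscr_j^{-i})\;\cong\;\wedge^iF\otimes_k (M(S^{j-i}V)(j-i))^{[2]}.
\]
On the algebraic side, reading off \eqref{biresolution01}, the term at position $-i$ in $\tilde{C}_j$ is $S^{j-i}V\otimes_k\wedge^iF\otimes_k S(-i)$; applying $(-)^G$, then the twist $(j)$, then the $2$-Veronese yields exactly the same expression. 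The differentials on each side are determined up to nonzero scalar by $G\times\GL(F)$-equivariance together with compatibility with the internal grading (the relevant morphism spaces being at most one-dimensional), and on the geometric side they arise from the tautological surjection $F\otimes_k\Oscr\twoheadrightarrow\Qscr$, while on the algebraic side they are the images under $\Ind_B^G$ of the Koszul differentials in \eqref{biresolution11}. A consistent normalization of scalars then identifies the two complexes.

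For \eqref{eq:KK}, by \eqref{def:kjk} we have $K_{jk}=\Omega^{k+1}M_j=Z^{-k}\tilde{C}_j$; since every term of \eqref{biresolution01} has a good filtration, $(-)^G$ is exact on this complex, so $K_{jk}^G=Z^{-k}(\tilde{C}_j^G)$. The grading twist and $2$-Veronese commute with the formation of kernels, giving $(K_{jk}^G(j))^{[2]}=Z^{-k}(\tilde{C}_j^G(j)^{[2]})$. Applying the left-exact functor $\Gamma_*(\GG,-)$ to the exact sequence $0\to Z^{-k}\Cscr_j\to \Cscr_j^{-k}\to \Cscr_j^{-k+1}$ gives $\Gamma_*(\GG,\bar{\Kscr}_{jk})=Z^{-k}\Gamma_*(\GG,\Cscr_j)$; since $k\leq j$, the truncation $\sigma_{\geq -j}$ in \eqref{eq:GammaC} does not affect the computation of $Z^{-k}$, and \eqref{eq:KK} follows. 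The main obstacle I expect is the matching of differentials in \eqref{eq:GammaC}: although equivariance forces uniqueness up to scalar term by term, producing an honest isomorphism of complexes requires consistent sign conventions when pushing the $B$-Koszul differentials through $\Ind_B^G$ and comparing them with the intrinsic Koszul differential of the symmetric power $\Cscr_j$.
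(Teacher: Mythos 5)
Your approach matches the paper's in outline: identify terms via Proposition~\ref{prop:mocAG}, then pin down the differentials via equivariance. But there is one genuine gap, and one misdirected worry.

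The gap: you need to know that the differentials in $\Gamma_*(\GG,\Cscr_j)$ are \emph{nonzero} before the equivariant uniqueness argument yields an isomorphism of complexes. You observe that the differentials in $\Cscr_j$ itself are nonzero (coming from the tautological surjection), but you never argue that they stay nonzero after applying $\Gamma_*(\GG,-)$. A priori a nonzero map of sheaves can induce the zero map on every twisted global section module. The paper closes this by invoking ampleness of $\Oscr_\GG(1)$: since each $\Cscr_j^{-i}(m)$ is globally generated for $m\gg 0$, a nonzero map between the sheaves stays nonzero after $\Gamma(\GG,-(m))$ for such $m$, hence after $\Gamma_*$. Without this (or an equivalent argument), knowing the morphism spaces are one-dimensional only tells you the differential is \emph{some} scalar multiple of the algebraic one, possibly zero, in which case no normalization can rescue you. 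Separately, the paper first establishes that $\Gamma_*(\GG,\Cscr_j)$ has cohomology only in degree $-j$, a computation resting on Kempf vanishing (\cite[II.4.5, II.5.4.a]{jantzen2007representations}) and $H^0(\GG,S^j\Qscr(l))=0$ for $l<0$, which you omit entirely. While one can argue that the term/differential comparison alone formally suffices once nonvanishing of differentials is known, this computation makes the identification unambiguous and is not obviously dispensable.

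The misdirected worry: you flag ``consistent sign conventions'' as the main obstacle. Once you know both sets of differentials are nonzero multiples of the same equivariant generator, you can build the isomorphism inductively (identity on the top term, rescale each subsequent term by the ratio of scalars); signs are just particular scalars and cause no difficulty. The real issue, as above, is nonvanishing, not signs. You should also note explicitly, as the paper does, that $\sigma_{\ge -j}\tilde{C}_j^G(j)$ is concentrated in even internal degrees so that the $2$-Veronese loses no information; your termwise match of $2$-Veroneses does not by itself establish that the algebraic complex has no odd-degree part that the Veronese silently discards.
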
 

\begin{proof}
We first show that applying $\Gamma_*(\GG,-)$ to $\Cscr_j$ gives a complex with a single nonzero cohomology group in degree $-j$. We put $\Gamma_{\ast,\ge 0}(\GG,\Fscr)=
\bigoplus_{m\ge 0}\Gamma(\GG,\Fscr(m))$.
As~$\Cscr_j$ has only a single non-vanishing cohomology group, equal to $\wedge^j\Rscr$, in degree $-j$ we have 
\begin{equation}
\label{eq:Rsheaf}
R\Gamma_{\ast,\ge 0} (\GG,\Cscr_j)\cong R\Gamma_{\ast,\ge 0}(\GG,\wedge^j \Rscr)[j]
\end{equation}
and furthermore we also have
\[
\Gamma_{\ast}(\GG,\Cscr_j)=\Gamma_{\ast,\ge 0}(\GG,\Cscr_j)
\]
since $H^0(\GG,S^j\Qscr(l))=0$ for $l<0$ by \cite[Proposition II.2.6]{jantzen2007representations}. 
Hence it suffices to prove that both sides in \eqref{eq:Rsheaf} consist of terms which are acyclic for $\Gamma_{\ast,\ge 0}(\GG,-)$; i.e. 
\begin{align}\label{eq:pourR}
\forall l\ge 0:H^{>0}(\GG,\wedge^j\Rscr(l))=H^{>0}(\GG,\Lscr_{\GG}(l,l;1^j,0^{n-j-2}))=0,\\\label{eq:pourQ}
\forall l\ge 0:H^{>0}(\GG,S^j\Qscr(l))=H^{>0}(\GG,\Lscr_{\GG}(j+l,l;0^{n-2}))=0,
\end{align}
where in the notation $\Lscr_{\GG}(\cdots;\cdots)$ we describe $L=\GL(V)\times\GL(\W)$-representations by their highest weights.
The equalities \eqref{eq:pourR} for $l>0$ and \eqref{eq:pourQ} for $l\geq 0$ follow from \cite[Proposition II.4.5]{jantzen2007representations}, \eqref{eq:pourR} for $l=0$ follows from \cite[Proposition II.5.4.a]{jantzen2007representations}. 
 
So $\Gamma_*(\GG,\Cscr_j)$ is exact (except in degree $-j$). It is easy to see that the differentials in $\Cscr_j$ are nonzero and 
as $\Oscr_{\GG}(1)$ is ample the same is true for the differentials in $\Gamma_\ast(\GG,\Cscr_j)$. 
Furthermore the reader may verify  using Proposition \ref{prop:mocAG}
that $\Gamma_\ast(\GG,\Cscr_j)$  has the same terms as $(\sigma_{\geq -j}\tilde{C}_j^G(j))^{[2]}$. In addition one may check that $\sigma_{\geq -j}\tilde{C}_j^G(j)$ lives in even degrees so
that no information is lost by passing to the 2-Veronese.
Therefore \eqref{eq:GammaC} follows by uniqueness of the $\GL(F)$-equivariant maps in $\tilde{C}_j^G$ using the graded analogue of the ``descent lemma'' \cite[Lemma 4.1.3]{vspenko2015non} and the fact that one can easily see that
the differentials in $\tilde{\Cscr}_j$ are determined by $G\times \GL(F)$-equivariance and compatibility with the grading. 

\medskip

Finally \eqref{eq:KK} follows by applying $Z^{-k}$  to \eqref{eq:GammaC} taking into account that  $\Gamma_*$ and $(-)^G$ are left exact. 
\end{proof}
\begin{lemma}\label{lem:1dimext} 
Assume $1\leq k<j\leq n-3$. Then
\[
\dim\Ext^1_{\GG,\GL(F)}(\wedge^{k-1}\Rscr \otimes_{\GG} S^{j-k-1}\Qscr(1),\allowbreak \wedge^k\Rscr \otimes_{\GG} S^{j-k}\Qscr)=1.
\] 
 \end{lemma}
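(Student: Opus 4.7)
The plan is to reformulate the equivariant Ext computation as a parabolic-group cohomology calculation, then reduce it to an explicit tensor product of $\GL(V)$- and $\GL(\W)$-Hom spaces. Write $\GG=\GL(F)/P$, where $P$ stabilizes $\W\subset F$, with Levi $L=\GL(V)\times\GL(\W)$ and abelian unipotent radical $P_u$ whose Lie algebra is $\mathfrak{p}_u=V^\ast\otimes_k\W$ (cf.\ \S\ref{ssec:induceda}). Under the standard equivalence between $\GL(F)$-equivariant coherent sheaves on $\GG$ and finite-dimensional rational $P$-modules, the two bundles correspond to the $L$-modules (inflated to $P$)
\[
B=\wedge^{k-1}\W\otimes_k S^{j-k-1}V\otimes_k \wedge^2 V,\qquad A=\wedge^k\W\otimes_k S^{j-k}V,
\]
and the Ext in question becomes $\Ext^1_P(B,A)$.

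The Hochschild--Serre spectral sequence for $1\to P_u\to P\to L\to 1$ applied to $\Hom_k(B,A)$ (on which $P_u$ acts trivially) yields the five-term exact sequence
\[
0\to \Ext^1_L(B,A)\to \Ext^1_P(B,A)\to \Hom_L(B\otimes_k\mathfrak{p}_u,A)\to \Ext^2_L(B,A),
\]
using the identification $H^1(P_u,k)\cong\mathfrak{p}_u^\ast$ of $L$-modules in the relevant weight range. The middle term factors as
\[
\Hom_{\GL(\W)}(\wedge^{k-1}\W\otimes\W,\wedge^k\W)\otimes_k\Hom_{\GL(V)}(S^{j-k-1}V\otimes V^\ast\otimes\wedge^2V,S^{j-k}V).
\]
By the Pieri rule for exterior powers, $\wedge^{k-1}\W\otimes\W\cong\wedge^k\W\oplus \Lambda^{(2,1^{k-2})}\W$, so the first factor is one-dimensional, generated by the wedge product. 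Since $\dim V=2$, the identification $V^\ast\otimes\wedge^2V\cong V$ reduces the second factor to $\Hom_{\GL(V)}(S^{j-k-1}V\otimes V,S^{j-k}V)$; Clebsch--Gordan gives $S^{j-k-1}V\otimes V\cong S^{j-k}V\oplus(S^{j-k-2}V\otimes\wedge^2V)$ as $\GL(V)$-modules (with both summands irreducible since $j-k<p$ by our hypothesis $p\ge\assume$), hence this Hom is likewise one-dimensional. So the middle term has dimension $1$.

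Finally I would establish $\Ext^i_L(B,A)=0$ for $i=1,2$, whence $\Ext^1_P(B,A)\cong k$. Under the standing assumption $p\ge\assume$, the modules $\wedge^l\W$ (for $l\le n-3<\dim\W$) and $S^mV$ (for $m\le n-3<p$) are irreducible $L$-modules in the Jantzen region with good filtrations, and $A\not\cong B$ as $L$-modules, so a Künneth argument combined with $\Ext$-vanishing between distinct irreducibles of $\GL_2$ and $\GL_{n-2}$ in this range gives the required vanishing. The main obstacle is precisely this last step: in characteristic zero, reductivity of $L$ makes it immediate, whereas in characteristic $p$ it requires the good-filtration/tilting structure of the relevant irreducibles, heavily leveraging the bound $p\ge\assume$. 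A secondary (and milder) technical point is controlling the higher Frobenius-twisted contributions to $H^1(P_u,k)$ in the Hochschild--Serre sequence, which again vanish by weight-bound considerations under the same hypothesis on $p$.
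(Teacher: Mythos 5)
Your reduction to $\Ext^1_P(B,A)$, the use of the Hochschild--Serre spectral sequence for $P_u\lhd P$, and the explicit identification of $\Hom_L(B\otimes_k\mathfrak{p}_u,A)$ as a tensor product of two one-dimensional $\Hom$-spaces all match the paper's proof. However, you have misidentified where the real work is. What you call ``the main obstacle'' --- establishing $\Ext^1_L(B,A)=\Ext^2_L(B,A)=0$ --- is in fact immediate, and requires neither good filtrations, nor tilting theory, nor the hypothesis $p\ge\assume$. The paper's key device is the $\ZZ^2$-grading coming from the central torus $\GG_m\times\GG_m\subset L=\GL(V)\times\GL(\W)$: since $P_u$ acts trivially, $\Hom_R(M,N)=\Hom_k(M,N)$, and $M$, $N$ have central characters $(j-k+1,\,k-1)$ and $(j-k,\,k)$ respectively, the module $\Hom_k(M,N)$ has nontrivial central character $(-1,1)$. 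Consequently \emph{every} $\Ext^p_L(k,\Hom_R(M,N))$ vanishes at once, and the spectral sequence collapses to $\Ext^1_P(M,N)\cong\Hom_L(k,\Ext^1_R(M,N))$ with no further vanishing arguments needed. (A Künneth/good-filtration route is neither needed nor, as you sketch it, clearly complete: your appeal to ``$\Ext$-vanishing between distinct irreducibles'' in the Jantzen region is not a theorem in the generality required, whereas the central-character argument settles it unconditionally.)

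Relatedly, what you call a ``secondary'' and ``milder'' point --- that $H^1(P_u,k)$ contains Frobenius-twisted summands --- is in fact handled by exactly the same central-character bookkeeping, not by ``weight-bound considerations under the hypothesis on $p$.'' By Jantzen \cite[I.4.21]{jantzen2007representations}, $H^1(R,k)=\sum_i\sum_{r\ge 0}kT_i^{p^r}$ with $T_i$ of $\ZZ^2$-degree $(1,-1)$, so the $r\ge 1$ terms have degree $(p^r,-p^r)\neq(1,-1)$ and contribute nothing to $\Hom_L(k,\Ext^1_R(M,N))$; this holds for any $p>1$. So the bound $p\ge\assume$ plays no role in this lemma at all. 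Once the central-character mechanism is in place, what remains is precisely your (correct) computation of $\Hom_L(M,\Hom(\W,V)\otimes_k N)$ via Pieri, and the proof is complete.
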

 \begin{proof}
Denote
 \begin{align*}
 M&=\wedge^{k-1}\W \otimes_k S^{j-k-1}V \otimes_k \wedge^2V,\\
 N&=\wedge^{k}\W \otimes_k S^{j-k}V.
 \end{align*}
 Since the category of homogeneous vector bundles on $\GG$ (with $\Gl(F)$-equivariant maps) is equivalent to the category $\Rep(P)$ of finite dimensional $P$-representations, we are reduced to showing that $\Ext^1_P(M,N)$ is one-dimensional. 
 
   The Lyndon-Hochschild-Serre spectral sequence \cite[I.6.6]{jantzen2007representations} gives
 \begin{equation}
 \label{eq:lhs}
 E_2^{pq}=\Ext^p_L(k,\Ext^q_R(M,N)) \Rightarrow \Ext^{p+q}_P(M,N).
 \end{equation}
 Using the $\ZZ^2$-grading coming from $\GG_m \times \GG_m \subset L$, one sees that (noting that $R$ acts trivially on $M,N$)
 \begin{align*}
 \Ext^p_L(k,\Hom_R(M,N))&=0
 \end{align*} 
 for $p\geq 0$, since $M$ (respectively $N$) has degree $(j-k+1,k-1)$ (respectively $(j-k,k)$). From the spectral sequence \eqref{eq:lhs} one then obtains
 \begin{align*}
 \Ext^1_P(M,N)&=\Hom_L(k,\Ext^1_R(M,N)) \\
 &=\Hom_L(M,H^1(R,k) \otimes_k N).
 \end{align*}
 Now by \cite[I.4.21]{jantzen2007representations}, $H^1(R,k)=\sum_{i=1}^{2(n-2)}\sum_{r=0}^{\infty}kT_i^{p^r}$, where the $T_i$ form generators for $\Sym(\Hom(\W,V))$. Using the grading induced by $\GG_m 
\times \GG_m \subset L$, it suffices to show that 
 \begin{equation}
 \label{eq:homspace}
\dim \Hom_L(M,\Hom(\W,V) \otimes_k N)=1
 \end{equation}
 since the $T_i^{p^r}$ for $r>0$ have degree $(p^r,-p^r)$. As $L=\GL(V) \times \GL(\W)$, \eqref{eq:homspace} now follows immediately since 
 \begin{align*}
 \dim \Hom_{\GL(V)}(S^{j-k-1}V \otimes_k \wedge^2 V,V\otimes_k S^{j-k}V)&=1,\\
\dim \Hom_{\GL(\W)}(\wedge^{k-1}\W ,\W^\ast\otimes_k\wedge^k\W)&=1,
 \end{align*}
by Pieri's formulas (which become filtrations in characteristic $p$ \cite{MR931171}) and e.g.\ \cite[Proposition \ref{basicdeltanabla}]{FFRT1}.
 \end{proof}

\subsection{Decomposition of $\Fr^r_*\Oscr_\GG$}\label{subsec:FrOscr}
The exact decomposition of $\Fr_*\Oscr_\GG$ can be found in \cite[Theorem \ref{thm:grass}]{FFRT1}, but here we will recall only which summands occur. 

\begin{proposition}\cite[Corollary \ref{cor:indecgrass}]{FFRT1}\label{cor:indecgrasstwistf}
  Up to (nonzero) multiplicities and twists, the indecomposable summands of $\Fr_*\Oscr_\GG$ for $p\geq n-1$ 
 are 
\[
\{\Kscr_{j}\mid 1\leq j\leq n-3\}\cup\{S^l\Qscr\mid 0\leq l\leq n-3\},
\]
where $\Kscr_j=\Kscr_{jj}=\wedge^j \Rscr$.
\end{proposition}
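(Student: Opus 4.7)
The plan is to deduce this from the $r=1$ version of Theorem \ref{thm:old} (the decomposition of $R$ as a graded $R^p$-module) by passing to associated sheaves. Since $\GG=\Proj R$, the geometric Frobenius $\Fr:\Oscr_\GG\to \Oscr_\GG$ is induced by the ring Frobenius $\Fr:R\to R$, so $\Fr_*\Oscr_\GG$ is precisely the coherent sheaf on $\GG$ associated to the graded $R^p$-module $R$. By Theorem \ref{thm:old}, the indecomposable summands of $R$ as an $R^p$-module are, up to degree shifts and nonzero multiplicity, the modules $S\{l\}^{\Fr}$ for $0\le l\le n-3$ and $K\{j,j\}^{\Fr}$ for $1\le j\le n-3$.

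The next step is to identify the sheaves associated to these summands. By Proposition \ref{prop:mocAG} the sheafification of $S\{l\}$ is $S^l\Qscr$, and by Proposition \ref{prop:alggeo} (specialised to $k=j$) the sheafification of $K\{j,j\}$ is $\Kscr_{jj}=\wedge^j\Rscr=\Kscr_j$. Sheafifying the decomposition of $R$, and using that the sheafification functor (computed via the $2$-Veronese as in Proposition \ref{prop:sheaves}) is exact and respects $\GL(F)$-equivariant direct-sum decompositions, yields the claimed list of summands of $\Fr_*\Oscr_\GG$ up to twists and nonzero multiplicity.

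It remains to verify two things. First, each listed sheaf is indecomposable as a $\GL(F)$-equivariant coherent sheaf; this follows from the indecomposability of the corresponding $(G,S)$-modules (Proposition \ref{prop:propKjk}\eqref{item:Kjkinde}) together with the faithfulness of sheafification on equivariant reflexive graded modules on the smooth variety $\GG$. Second, each summand appears with nonzero multiplicity when $p\ge n-1$. The summands $S^l\Qscr$ are easy to exhibit (for example $S\{0\}=R$ immediately produces $\Oscr_\GG=S^0\Qscr$); the genuinely delicate point is the occurrence of each $\Kscr_j$, which is where the hypothesis $p\ge n-1$ enters. In the module-level argument of \cite{FFRT1} this is handled via $B_1$-cohomology and fusion-tensor-product computations ensuring that enough ``good'' pieces survive; since the precise multiplicities are already recorded in \cite[Theorem \ref{thm:grass}]{FFRT1}, this last point can simply be invoked, completing the proof.
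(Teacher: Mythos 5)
The statement you are proving is imported verbatim from \cite{FFRT1} (it carries the citation tag \texttt{Corollary 16.5} of that paper), so this paper gives no local proof to compare against; the closest analogue is the paper's own proof of Theorem~\ref{thm:FrOscr}, which treats the case $r\ge 2$ by exactly the strategy you propose: decompose $R$ as an $R^{p^r}$-module, then sheafify using Propositions~\ref{prop:mocAG} and~\ref{prop:alggeo}. So the high-level plan is right. However, there are two genuine gaps.

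First, you elide the degree-compatibility issue, which is where the real work lies. The sheaf $\Fr_*\Oscr_\GG$ on $\Proj(R^p)\cong\Proj((R^p)^{[2p]})$ corresponds to the graded module $R^{[2p]}$, not to $R$ itself, and since $R^p$ is concentrated in degrees divisible by $2p$, every indecomposable graded $R^p$-module is concentrated in a single congruence class modulo $2p$. Consequently a summand $M^{\Fr}(-a)$ of the $R^p$-module $R$ contributes \emph{zero} to $\Fr_*\Oscr_\GG$ unless its degrees land in the residue class $0\pmod{2p}$. Thus the module decomposition of Theorem~\ref{thm:old} gives only an a priori list of \emph{possible} sheaf summands; showing nonzero multiplicity is not a formal consequence of the module-level nonzero multiplicity but requires tracking the generating degrees modulo $2p$. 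This is precisely the role of Lemma~\ref{lem:speccumKjkdeg} and Remark~\ref{rmk:degrees} in the $r\ge 2$ argument, and the same analysis (carried out in \cite{FFRT1}) is what makes the $r=1$ statement nontrivial. Your phrase ``up to twists'' does not suffice to absorb this: a summand can disappear entirely, not merely acquire a twist.

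Second, you misattribute the role of the hypothesis $p\ge n-1$. You say the delicate point is ``the occurrence of each $\Kscr_j$, which is where the hypothesis $p\ge n-1$ enters.'' But the remark immediately following the proposition in the paper states explicitly that $p\ge n-1$ is needed for $S^{n-3}\Qscr$ to appear, not for the $\Kscr_j$. At the module level (Theorem~\ref{thm:decFr1}) all summands, including $S\{n-3\}^{\Fr}$, already appear with nonzero multiplicity for $p\ge n-2$; the extra unit in the bound is an artefact of the degree/congruence analysis described above, and it bites precisely for the covariant module $S\{n-3\}$, not for the $K_j$. So this part of your argument should be reorganised: the ``easy to exhibit'' remark about $S^0\Qscr$ does not address the actual borderline case $l=n-3$.
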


\begin{remark}
The assumption $p\geq n-1$ is necessary for $S^{n-3}\Qscr$ to appear (see \cite[Corollary \ref{cor:indecgrass}]{FFRT1}). 
\end{remark}

For $r\geq 2$ we have an analogous decomposition of $\Fr^r_*\Oscr_\GG$ up to multiplicities and twists.

\begin{theorem}\label{thm:FrOscr}
Up to  (possibly zero) multiplicities and twists the indecomposable summands of $\Fr^r_*\Oscr_\GG$ for $r\geq 2$ are:
\[
\{\Kscr_{jk}\mid 1\leq j,k\leq n-3\}\cup\{S^l\Qscr\mid 0\leq l\leq n-3\}.
\]
If $p\geq n$ then all the summands (up to twists) appear with nonzero multiplicities.
\end{theorem}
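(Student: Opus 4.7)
The plan is to deduce Theorem \ref{thm:FrOscr} from the $R^{p^r}$-module decomposition in Theorem \ref{thm:decR} via the standard sheafification bridge between graded $R$-modules and coherent sheaves on $\GG=\Proj R$, exactly parallel to how Proposition \ref{cor:indecgrasstwistf} was extracted from Theorem \ref{thm:old} in \cite{FFRT1}. Concretely, the Frobenius morphism $\Fr^r:\GG\to\GG^{(r)}$ corresponds at the level of Proj to the inclusion $R^{p^r}\hookrightarrow R$, and after the canonical identification $\GG^{(r)}\cong\GG$ we may view $\Fr^r_*\Oscr_\GG$ as a sheaf on $\GG$; under sheafification on $\Proj R^{p^r}=\GG$, the graded $R^{p^r}$-module $R$ becomes precisely $\Fr^r_*\Oscr_\GG$ (up to the standard Veronese/twist bookkeeping from Proposition \ref{prop:sheaves}, which is available because $\dim V=2$).

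First I would apply Theorem \ref{thm:decR} to decompose $R$ as a graded $R^{p^r}$-module into summands of the form $K\{j,k\}^{\Fr^r}$ for $1\le j,k\le n-3$ and $S\{l\}^{\Fr^r}$ for $0\le l\le n-3$, each with some grading shift and nonzero multiplicity. Then, applying the sheafification functor termwise and using Proposition \ref{prop:mocAG} (which identifies the sheafification of $M(S^lV)$, up to twist, with $S^l\Qscr$) together with Proposition \ref{prop:alggeo} (which identifies the sheafification of $K_{jk}^G$, up to twist, with $\Kscr_{jk}$), yields the claimed list of indecomposable summands, up to possibly zero multiplicity and twist by $\Oscr(m)$. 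The Frobenius twist on the module side is absorbed into the identification $\GG^{(r)}\cong\GG$, since the tautological bundles $\Qscr,\Rscr$ are defined over the prime field and hence agree with their Frobenius twists.

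For the nonzero multiplicity claim when $p\ge n$, I would use the refined degree information supplied by Remark \ref{rmk:degrees} and Remark \ref{rmk:Kjkdegrees}, which tell us precisely in which $d_t$-shifted degrees each summand $K_{rl}^{\Fr}$ and $(S^sV)^{\Fr^r}\otimes S^{p^r}$ arises inside the iterated decomposition. Sheafification kills only those summands whose entire list of occurring degrees is incompatible with the grading on $\Gamma_*(\GG,-)$; when $p\ge n$ the set $\{d_t:t\in[0,p-1]^n,\ q_{jt}^\ell\text{ has the required parity}\}$ covers all residues modulo the relevant Veronese grading, so at least one shift of each prospective summand survives as a nonzero summand of $\Fr^r_*\Oscr_\GG$.

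The main obstacle will be the bookkeeping in the second step: one must track simultaneously the Frobenius twist $(-)^{\Fr^r}$, the $2$-Veronese from $\dim V=2$, and the grading shifts produced by the iterative proof of Theorem \ref{thm:higherFr}, in order to be sure that each summand $K\{j,k\}^{\Fr^r}$ (respectively $S\{l\}^{\Fr^r}$) sheafifies to a twist of $\Kscr_{jk}$ (respectively $S^l\Qscr$) rather than to zero or to an unexpected bundle. Once this translation is fixed, the first statement follows formally, and the nonzero multiplicity statement for $p\ge n$ reduces to the combinatorial richness of the degree-set already recorded in \S\ref{sec:weights}.
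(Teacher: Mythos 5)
Your first paragraph and the reduction of the first statement to Theorem~\ref{thm:decR} plus Propositions~\ref{prop:mocAG} and~\ref{prop:alggeo} is exactly what the paper does (the paper phrases the bridge as $\Proj(R^{p^r})\cong\Proj((R^{p^r})^{[2p^r]})$ and decomposes $R^{[2p^r]}$), so that part is fine.

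The nonzero-multiplicity argument, however, has a genuine gap. You assert that ``the set $\{d_t:\dots\}$ covers all residues modulo the relevant Veronese grading, so at least one shift of each prospective summand survives.'' This is not the right criterion and is not established. What one actually needs is the sharper statement that for each module-theoretic summand $K\{j,k\}^{\Fr^r}(-a)$ produced by the iteration there is at least one realisable shift $a$ with $a\equiv 0\ (2p^r)$. The relevant constraint is a parity/divisibility condition along the iteration, not a coverage of residues, and it genuinely requires the explicit formulas: the paper proves in Lemma~\ref{lem:speccumKjkdeg} that, because $pq_{jt}^0=j+d_t$ and $pq_{jt}^1=j+d_t-p+2$, the shift $p(s+q_{jt}^\ell)$ (resp.\ $p(s+1+q_{jt}^\ell)$) is divisible by $2p$ precisely because $s+q_{jt}^0$ and $s+1+q_{jt}^1$ are forced to be \emph{even}. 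This forced parity is what Remark~\ref{rmk:Kjkdegrees} records, and it is not a consequence of the richness of $\{d_t\}$ alone. Beyond that, you also do not address the base case: the paper shows, using \cite[Corollary \ref{cor:indecgrass}]{FFRT1} and the hypothesis $p\ge n$, that $\Kscr_{jj}(-j)\oplus\Kscr_{jj}(-j-1)$ for $1\le j\le n-4$ actually appears inside $\Fr_*\Oscr_\GG$; your sketch gives no reason why the needed bundles occur in $\Fr_*\Oscr_\GG$ in twists compatible with the $2p$-Veronese. Finally, for the $S^l\Qscr$ summands the paper takes a much cleaner route that your outline misses entirely: $\Oscr_\GG$ is a summand of $\Fr_*\Oscr_\GG$ (Proposition~\ref{cor:indecgrasstwistf}), hence by iteration $\Fr_*\Oscr_\GG$ is a summand of $\Fr^r_*\Oscr_\GG$, and then one invokes the $r=1$ statement; no degree bookkeeping is needed for these.

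In short: keep your reduction for the first statement, but replace the hand-waving about residue coverage by (i) the parity identities of Remark~\ref{rmk:Kjkdegrees}/Lemma~\ref{lem:speccumKjkdeg}, (ii) the base case that the relevant twists of $\Kscr_{jj}$ occur in $\Fr_*\Oscr_\GG$ when $p\ge n$, and (iii) the iteration trick $\Oscr<_\oplus\Fr_*\Oscr\Rightarrow\Fr_*\Oscr<_\oplus\Fr^r_*\Oscr$ for the $S^l\Qscr$ summands.
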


\begin{proof} 
Since $\Proj(R^{p^r})\cong \Proj((R^{p^r})^{[2p^r]})$, the decomposition of $\Fr^r_*\Oscr_\GG$ is obtained by decomposing $R^{[2p^r]}$ in $\Proj((R^{p^r})^{[2p^r]})$. The theorem follows up to possibly zero multiplicities from Theorem \ref{thm:decR}, Proposition \ref{prop:mocAG} and Proposition \ref{prop:alggeo}.  

It remains to argue for nonzero multiplicities in the case $p \geq n$. 
First consider $S^l\Qscr$ for $0 \leq l \leq n-3$. By Proposition \ref{cor:indecgrasstwistf}, $\Oscr$ is a direct summand of $\Fr_*\Oscr$. By iterating, $\Oscr$ is a direct summand of $\Fr^{r-1}_*\Oscr$, and thus, $\Fr^r_*\Oscr=\Fr_*\Fr^{r-1}_*\Oscr$ has $\Fr_*\Oscr$ as a direct summand. Using Proposition \ref{cor:indecgrasstwistf} again, some twist of $S^l\Qscr$ occurs with nonzero multiplicity in $\Fr_*\Oscr$, and hence in $\Fr^r_*\Oscr$. 

Now we consider summands of the form $\Kscr_{jk}$. 
Let $1\leq j\leq n-4$.  By Lemmas \ref{lem:specsumKjk}, \ref{lem:speccumKjkdeg} below and \eqref{eq:uno}, $\Fr_*(\Kscr_{jj}(-j)\oplus \Kscr_{jj}(-j-1))$ has as summands (twists of) $\Kscr_{sj}$, for $1\leq s\leq n-3$, and (twists of) $\Kscr_{u,n-3}$, for $n-3-j+m_{n-3,j}\leq u\leq n-3$. Varying $1\leq j\leq n-4$, we obtain (twists of) $\Kscr_{sl}$, $1\leq s,l\leq n-3$, in 
\begin{equation*}\label{eq:Kjjj}
\bigoplus_{1\leq j\leq n-4} \Fr_*(\Kscr_{jj}(-j)\oplus \Kscr_{jj}(-j-1)).
\end{equation*}
By a slightly tedious verification using \cite[Corollary \ref{cor:indecgrass}]{FFRT1}, $\Kscr_{jj}(-j)\oplus \Kscr_{jj}(-j-1)$, $1\leq j\leq n-4$, is a summand of $\Fr_*\Oscr_\GG$ (using the assumption $p\geq n$), 
therefore (twists of) $\Kscr_{sl}$, $1\leq s,l\leq n-3$, occur as summands in $\Fr^2_*\Oscr_\GG$. 
For $r>2$ we proceed inductively as above. 
\end{proof}

The following lemmas were used in the proof of Theorem \ref{thm:FrOscr}.  Lemma \ref{lem:specsumKjk} is just an extraction from Proposition \ref{prop:Kjkrefine}, while Lemma \ref{lem:speccumKjkdeg} takes care of the degrees. 
The summands  of the $R^p$-module $(K_{jk}^{G}(j))^{[2]}=\Gamma_*(\GG,\Kscr_{jk})$ (see \eqref{eq:uno}) are only visible in the decomposition of $\Fr_*\Kscr_{jk}$ if 
their degrees are divisible by $2p$ (c.f. the proof of Theorem \ref{thm:FrOscr}). 
Divisibility by $p$ in some form has been used throughout the paper in the embodiment of $q_{jt}^\ell$, which will also be employed in the proof of Lemma \ref{lem:speccumKjkdeg}. 

\begin{lemma}\label{lem:specsumKjk}
If $1\leq j\leq n-4$, then $(G^{(1)},S^p)$-modules $K_{sj}^{\Fr}$, $1\leq s\leq n-3$, $K_{u,n-3}^{\Fr}$, $n-3-j+m_{n-3,j}\leq u\leq n-3$, occur as summands of $K_{jj}^{G_1}$. 
\end{lemma}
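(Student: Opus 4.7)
The plan is to deduce this statement as a direct specialization of Proposition \ref{prop:Kjkrefine}, which characterizes exactly the pairs $(r,l)$ for which $K_{rl}^{\Fr}$ appears with nonzero multiplicity in $K_{jk}^{G_1}$. Setting $k=j$ in that proposition yields precisely the two families of summands listed in the lemma.

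More concretely, I would argue in two steps. First, for the summands of the form $K_{sj}^{\Fr}$ with $1\le s\le n-3$, note that we are looking at pairs $(r,l)=(s,j)$ with $l=j=k$. This is the middle case of \eqref{eq:intervals1}, which allows $r\in[1,n-3]$, so every $s$ in the claimed range occurs with nonzero multiplicity. Second, for the summands of the form $K_{u,n-3}^{\Fr}$, we have $(r,l)=(u,n-3)$ and $k=j\le n-4<n-3=l$, so we are in the first case $l>k$ of \eqref{eq:intervals1}. The interval in this case becomes $[l-k+m_{lk},l]=[n-3-j+m_{n-3,j},n-3]$, matching exactly the range asserted in the lemma.

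Since both families of summands are immediate specializations of Proposition \ref{prop:Kjkrefine}, there is no real obstacle here; the lemma is essentially a bookkeeping corollary, and the only work is to check that $n-3>j$ forces the use of the $l>k$ branch of \eqref{eq:intervals1} rather than the $l=k$ branch (which would not give the desired bound). The numerical content, in particular the appearance of $m_{n-3,j}$, is inherited directly from the definition of $m_{lk}$ in \eqref{eq:mlk1}, so no further combinatorial work is needed.
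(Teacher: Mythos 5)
Your proof is correct and takes the same route as the paper, which simply states the lemma is an immediate corollary of Proposition~\ref{prop:Kjkrefine}; your work of spelling out the two specializations ($l=k=j$ for the first family and $l=n-3>k=j$ for the second) is exactly the bookkeeping the paper leaves implicit.
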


\begin{proof}
This is an immediate corollary of Proposition \ref{prop:Kjkrefine}.
\end{proof}

\begin{lemma}\label{lem:speccumKjkdeg}
Let $K_{sl}^{\Fr}$ be a summand of $K_{jk}^{G_1}$. Then there exists an even $d\in \NN$ such that, as graded modules, 
$(K_{sl}^G(s))^{\Fr}(-pd)$ is a summand of $(K_{jk}^G(j))(-2j)$ or a summand of $(K_{jk}^G(j))(-2j-2)$.
\end{lemma}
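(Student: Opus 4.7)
The plan is to read off the required $d$ directly from Remark \ref{rmk:Kjkdegrees}, which records the explicit grading shift at which each summand of the form $K_{sl}^{\Fr}$ appears inside $K_{jk}^{G_1}$.

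First I would apply Remark \ref{rmk:Kjkdegrees} (with $r=s$) to obtain a tuple $t\in [0,p-1]^n$ such that $K_{sl}^{\Fr}(-d_t)$ is a summand of $K_{jk}^{G_1}$ as graded $(G^{(1)},S^p)$-module, with $q_{jt}^{\ell}$ defined for some $\ell\in\{0,1\}$ and satisfying $q_{jt}^{\ell}+\ell\equiv s\,(2)$. Applying $(-)^{G^{(1)}}$, which commutes with direct sums and grading shifts, and using the standard identification $((K_{sl})^{\Fr})^{G^{(1)}}=(K_{sl}^G)^{\Fr}$ (since the $G^{(1)}$-action on the Frobenius twist factors through $G\to G^{(1)}$), this descends to a summand relation of graded $R^p$-modules: $(K_{sl}^G)^{\Fr}(-d_t)$ is a summand of $K_{jk}^G$.

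Next, the definitions of $q_{jt}^0$ and $q_{jt}^1$ rewrite as $j+d_t=pq_{jt}^0$ (if $\ell=0$) and $j+d_t+2=p(q_{jt}^1+1)$ (if $\ell=1$). Combined with the identity $(M(a))^{\Fr}=(M^{\Fr})(pa)$, setting
\[
d=\begin{cases} s+q_{jt}^0 & \text{if }\ell=0,\\ s+q_{jt}^1+1 & \text{if }\ell=1,\end{cases}
\]
gives $(K_{sl}^G(s))^{\Fr}(-pd)=(K_{sl}^G)^{\Fr}(ps-pd)=(K_{sl}^G)^{\Fr}(-d_t)(-j)$ if $\ell=0$, and $(K_{sl}^G(s))^{\Fr}(-pd)=(K_{sl}^G)^{\Fr}(-d_t)(-j-2)$ if $\ell=1$. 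Shifting the summand relation above by $(-j)$, respectively $(-j-2)$, then identifies $(K_{sl}^G(s))^{\Fr}(-pd)$ as a summand of $K_{jk}^G(-j)=(K_{jk}^G(j))(-2j)$ in the first case and of $K_{jk}^G(-j-2)=(K_{jk}^G(j))(-2j-2)$ in the second.

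Finally, the congruence $q_{jt}^{\ell}+\ell\equiv s\,(2)$ gives exactly $d\equiv 2s\equiv 0\,(2)$ in both cases, while $d\geq 0$ is automatic from $s\geq 1$ and $q_{jt}^{\ell}\geq 0$. There is no substantive obstacle beyond this bookkeeping; the genuine content has already been absorbed into Proposition \ref{prop:refKjkdecomposition} and the analysis of the integers $q_{jt}^{\ell}$ in Lemma \ref{rem:coll} that underpins Remark \ref{rmk:Kjkdegrees}.
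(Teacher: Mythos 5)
Your proof is correct and follows exactly the same route as the paper: invoke Remark \ref{rmk:Kjkdegrees} to find the $t$ and $\ell$ controlling the degree shift, unwind the definition of $q_{jt}^\ell$ to express $d_t$ in terms of $j$ and $pq_{jt}^\ell$, set $d=s+q_{jt}^0$ or $d=s+1+q_{jt}^1$, and use the parity condition $q_{jt}^\ell+\ell\equiv s\,(2)$ to get $d$ even. The only cosmetic difference is that you apply $(-)^{G^{(1)}}$ at the start whereas the paper does so at the end; the bookkeeping and conclusion are identical.
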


\begin{proof}
By Remark \ref{rmk:Kjkdegrees}, $K_{sl}^{\Fr}(-d_t)$ for some $t\in [0,p-1]^n$  is a summand of $K_{jk}^{G_1}$ such that $q_{jt}^\ell$ is defined and $q_{jt}^\ell+\ell\equiv s \,(2)$. 
Applying the definition of $q_{jt}^\ell$ (i.e. when defined $pq_{jt}^0=j+d_t$,  $pq_{jt}^1=j+d_t-p+2$), we obtain that in the case $\ell=0$ 
\begin{multline}\label{eq:128a}
(K_{sl}(s))^{\Fr}(-p(s+q_{jt}^0))=(K_{sl})^{\Fr}(-pq_{jt}^0)=(K_{sl})^{\Fr}(-j-d_t)\\ <_{\oplus} K_{jk}^{G_1}(-j) =(K_{jk}^{G_1}(j))(-2j)
\end{multline}
and in the case $\ell=1$
\begin{multline}\label{eq:128b}
(K_{sl}(s))^{\Fr}(-p(s+1+q_{jt}^1))=(K_{sl})^{\Fr}(-p(1+q_{jt}^1))=(K_{sl})^{\Fr}(-j-2-d_t)
\\<_\oplus K_{jk}^{G_1}(-j-2)=(K_{jk}^{G_1}(j))(-2j-2).
\end{multline}
Since $s+q_{jt}^0,s+1+q_{jt}^1$ are even, the lemma follows by applying $(-)^{G^{(1)}}$ to \eqref{eq:128a},\eqref{eq:128b}. 
\end{proof}

\subsection{Proof of Theorem \ref{intro:FrOscr}}\label{subsec:GFFRT}
The theorem 
 follows from Theorem \ref{intro:main-app} combined with Lemma \ref{lem:(G)FFRT} below (since $R$ is Gorenstein by \cite[Proposition \ref{canonical}]{MR611465}).

\begin{lemma}\label{lem:(G)FFRT}
Let $R$ be a connected graded Gorenstein noetherian reduced ring. If $R$ satisfies FFRT then $X=\Proj(R)$ satisfies GFFRT.
\end{lemma}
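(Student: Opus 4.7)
The plan is to identify indecomposable summands of $\Fr^r_\ast\Oscr_X$ with certain summands of $R^{1/p^r}$ as a graded $R$-module, and then bound both the isomorphism classes (by FFRT) and the grading shifts (by a generator-degree bound) that can contribute nontrivially.

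First, I identify the sheaf $\Fr^r_\ast\Oscr_X$ on $X = \Proj R$ with the sheafification $\widetilde{R^{1/p^r}}$, where $R^{1/p^r}$ is viewed as a graded $R$-module (with grading in $\tfrac{1}{p^r}\ZZ_{\geq 0}$) via the natural inclusion $R \hookrightarrow R^{1/p^r}$. On affine opens $X_f$ with $f \in R_d$ homogeneous, one checks $(R^{1/p^r})_{(f)} \cong R_{(f)}^{1/p^r}$, and by reducedness of $R$ the latter is isomorphic (via Frobenius) to $R_{(f)}$ with $R$-action twisted by Frobenius, matching $\Fr^r_\ast\Oscr_X|_{X_f}$. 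A crucial observation is that for any $\ZZ$-graded $R$-module $M$, the shift $M(l)$ sheafifies to zero on $\Proj R$ whenever $l \in \tfrac{1}{p^r}\ZZ \setminus \ZZ$: at $X_f$, the degree-zero condition on the localization forces $l + nd \in \ZZ$ for some $n \in \ZZ$, which is impossible for non-integer $l$.

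By FFRT, there is a finite set $\{M_1, \ldots, M_s\}$ of graded $R$-modules such that every indecomposable summand of $R^{1/p^r}$ is isomorphic to $M_j(l)$ for some $j$ and some $l \in \tfrac{1}{p^r}\ZZ$, and by the previous observation only integer shifts $l$ contribute nonzero sheaves. Fix homogeneous $k$-algebra generators $x_1, \ldots, x_n$ of $R$ with degrees $d_1, \ldots, d_n$ and set $D := \sum_i d_i$. Then $R^{1/p^r}$ is generated as an $R$-module by the monomials $(x_1^{a_1}\cdots x_n^{a_n})^{1/p^r}$ with $0 \leq a_i < p^r$, whose degrees are bounded by $(p^r - 1)D/p^r < D$, independently of $r$. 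After normalizing each $M_j$ so that its lowest nonzero degree is $0$, every shift $l$ appearing in the decomposition of $R^{1/p^r}$ lies in $[0, D)$; in particular, the contributing integer shifts lie in the finite set $\ZZ \cap [0, D)$.

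Combining these ingredients, every indecomposable summand of $\Fr^r_\ast\Oscr_X$ (for any $r \geq 1$) is isomorphic to an indecomposable summand of one of the finitely many coherent sheaves $\widetilde{M_j}(l)$ for $j \in \{1, \ldots, s\}$ and $l \in \ZZ \cap [0, D)$; each of these has only finitely many indecomposable summands, yielding only finitely many isomorphism classes of higher Frobenius summands on $X$ in total, which proves GFFRT. The main obstacle is carrying out the sheaf-module identification in the first step rigorously, particularly matching the Frobenius-twisted $\Oscr_X$-module structure on $\Fr^r_\ast\Oscr_X$ with the standard graded $R$-module structure on $R^{1/p^r}$; the Gorenstein hypothesis enters only indirectly, through reflexivity properties that ensure the module-sheaf correspondence is clean for the summands arising from FFRT.
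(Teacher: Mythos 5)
Your argument is correct and takes a genuinely different route from the paper's. The paper bounds the degree shifts via duality: the Gorenstein hypothesis gives $\omega_R = R(-e)$, and from the adjunction isomorphism $R(e(p^r-1)) \cong \Hom_{R^{p^r}}(R, R^{p^r})$ one shows that if $M^{\Fr^r}(-a)$ is a normalized indecomposable summand of $R$ over $R^{p^r}$ then its dual (shifted by $-e(p^r-1)$) is also a summand, forcing $0 \leq a/p^r \leq e - c$ with $c$ depending only on the isomorphism class of $M$. You instead bound the shifts by generator degrees: $R^{1/p^r}$ is generated over $R$ by the monomials $(x_1^{a_1}\cdots x_n^{a_n})^{1/p^r}$ with $0 \leq a_i < p^r$, of degree $< D := \sum_i \deg x_i$ uniformly in $r$; a graded direct summand of a module generated in degrees $< D$ is itself generated there, and the normalized shift of an indecomposable summand equals its lowest nonzero degree, which therefore lies in $[0, D)$. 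Your route is more elementary and, notably, does not actually invoke the Gorenstein hypothesis at all -- your closing hedge that it enters ``indirectly through reflexivity'' is unnecessary, since the identification $\Fr^r_*\Oscr_X \cong \widetilde{R^{1/p^r}}$ and the generator-degree bound require only that $R$ be a connected graded noetherian reduced $k$-algebra over a perfect field. The paper's duality, by contrast, genuinely needs $\omega_R$ to be free, and in exchange expresses the bound through the $a$-invariant $e$ rather than through a choice of algebra generators.
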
 

\begin{proof}
The property FFRT ensures that the number of higher Frobenius summands of $\Fr^r_*\Oscr_X$ up to twists is finite. Hence, it remains to argue that the summands of $\Fr^r_*\Oscr_X$ can appear with only finitely many twists.  
It is equivalent to show that after dividing by $p^r$, the degrees of the (normalized) summands of $R$ as $R^{p^r}$-module live in a finite interval which is independent of $r$. 

We use a duality argument. Since $R$ is Gorenstein, $\omega_R=R(-e)$ for some $e\in \NN$. By the adjunction formula $\omega_R\cong\Hom_{R^{p^r}}(R,\omega_{R^{p^r}})$, from which we obtain
\begin{equation}\label{eq:duality11}
R(e(p^r-1))\cong\Hom_{R^{p^r}}(R,R^{p^r}).
\end{equation}
Assume that $M$ is normalized (i.e. lives in degrees $\geq 0$ and $M_0\neq 0$) and  that $M^{\Fr^r}(-a)$ is an indecomposable summand of $R$ as graded $R^{p^r}$-module, for some $a\geq 0$ (as $M$ is normalized and $R$ lives in degrees $\geq 0$). 
In particular, $M$ is a reflexive $R$-module. Then $M^\vee=N(c)$ for an  indecomposable graded normalized $R$-module $N$. 
By \eqref{eq:duality11}, we have
\[
N^{\Fr^r}(cp^r+a-e(p^r-1))=(M^{\Fr^r}(-a))^\vee(-e(p^r-1))<_{\oplus} R.
\]
Thus, $cp^r+a-e(p^r-1)\leq 0$ (arguing as above). 
Hence $0\leq a/p^r\leq e-c$.  
Consequently, after dividing by $p^r$, the degrees of every (normalized) summand of $R$ as $R^{p^r}$-module indeed live in a finite interval bounded independently of $r$. 
\end{proof}

 \section{Proof of the NCR property}
 \label{sec:ncrproperty}
 In this section we prove the following result which implies Theorem \ref{thm:ncr2} for $r\geq 2$ using Theorem \ref{thm:higherFr}. The case $r=1$ follows from \cite[Theorem 1.2]{FFRT1}.

 \begin{theorem}\label{thm:preNCR}
 The $R$-module 
 \[
 M=\bigoplus_{i=0}^{n-3} T\{i\}\oplus \bigoplus_{j,k=1}^{n-3} K\{j,k\}
 \]
 defines an NCR for $R$.
 \end{theorem}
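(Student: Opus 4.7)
The plan is to bootstrap from the $r=1$ case already established in \cite[Theorem \ref{thm:ncr}]{FFRT1}, which shows that the submodule $M_1 := \bigoplus_{i=0}^{n-3} T\{i\} \oplus \bigoplus_{j=1}^{n-3} K\{j,j\}$ already defines an NCR for $R$; equivalently, $\Lambda_1 := \End_R(M_1)$ has finite global dimension locally on $\Spec R$. The remaining summands of $M$ are the $K\{j,k\}$ with $j \neq k$, and my strategy is to show that each such $K\{j,k\}$ has a finite resolution by modules in $\add(M_1)$, and then to invoke a general homological principle to conclude that $\Lambda := \End_R(M)$ also has finite global dimension locally.

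For the resolution, I would start from the $G\times \GL(F)$-equivariant tilt-free resolution \eqref{biresolution01} of $M_j$, whose terms are of the form $D^a V \otimes_k \wedge^b F \otimes_k S(-c)$. Under the standing hypothesis $p\geq \assume$, Remark \ref{rem:DtoSinresM} identifies each $D^aV$ (for $0\le a \le n-3$) with the tilting module $T(a)$, so all terms are tilt-free and in particular have a good filtration. Hence $(-)^G$ is exact on this resolution, producing an exact sequence of graded $R$-modules whose terms are finite direct sums of shifts of $\wedge^b F \otimes_k (T(a)\otimes_k S)^G = \wedge^b F \otimes_k T\{a\}$ with $0\le a\le n-3$; these lie in $\add(M_1)$. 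Since $K_{jk} = \Omega^{k+1}M_j$ and $\pdim K_{jk} = n-k-2$ by Proposition \ref{prop:propKjk}\eqref{item:pdim}, truncating this invariant resolution at degree $-(k+1)$ yields a finite $\add(M_1)$-resolution of $K\{j,k\}$ of length $n-k-2$.

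The final step is the general principle: if $\Lambda_1 = \End_R(M_1)$ has finite global dimension locally and $N$ is an $R$-module with a finite $\add(M_1)$-resolution, then so does $\End_R(M_1 \oplus N)$. Applied to $N = \bigoplus_{j\ne k} K\{j,k\}$, this gives the theorem. The main obstacle will be making this homological principle rigorous in the local graded context. One concrete approach is to write $\End_R(M_1\oplus N)$ as a triangular matrix algebra with respect to the natural idempotent decomposition and use that $\Hom_R(M_1, N)$ is a $\Lambda_1$-module of finite projective dimension (deduced from the $\add(M_1)$-resolution), together with standard results bounding the global dimension of such algebras. A potentially cleaner alternative is to invoke Theorem \ref{thm:decR} to establish a Morita equivalence $\End_R(M) \sim \End_R(R^{1/p^r})$ for $r\ge 2$ (using $T\{l\} = S\{l\}$ via Remark \ref{rem:DtoSinresM} to match the listed summands) and to prove the NCR property for $\Fr^r_* \Oscr_Y$ by iterating the $r=1$ case through the tower $R\subset R^{1/p}\subset R^{1/p^2}$, exploiting the strong F-regularity of $R$ due to \cite{Hashimoto}.
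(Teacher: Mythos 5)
Your starting point and the resolution you propose are both correct: the $r=1$ NCR theorem from \cite{FFRT1} does show (after translating via Corollary \ref{cor:decomp}) that $\Lambda_1 := \End_R(M_1)$ has finite global dimension locally, and taking $G$-invariants of the truncations of the tilt-free resolution \eqref{biresolution01} does produce a finite resolution of each $K\{j,k\}$ by terms $\wedge^b F \otimes_k T\{a\}(-c) \in \add(\bigoplus_i T\{i\})$. The gap is in the ``general homological principle.'' It is \emph{not} a standard fact that finiteness of $\gldim \End_R(M_1)$ plus a finite $\add(M_1)$-resolution of $N$ forces $\gldim \End_R(M_1 \oplus N) < \infty$. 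For the $\add(M_1)$-resolution of $N$ to yield a finite $\Lambda_1$-projective resolution of $\Hom_R(M_1, N)$ (which is what your triangular-matrix argument needs), the functor $\Hom_R(M_1,-)$ must stay exact on the resolution; equivalently certain $\Ext^1_R(M_1, \Omega^j N)$ must vanish. This vanishing is precisely what is nontrivial, and it is the content of the paper's Lemma \ref{lem:fundex2}, a spectral-sequence computation for $\Hom_{G,S}(L, \tilde{D}^\bullet_{jk})$.

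Moreover, Lemma \ref{lem:fundex2} only establishes the needed exactness for $L = T_l$ (any $l$) and for $L = K_{lm}$ with $(l,m) \le (j,k)$ in the reverse lexicographic order, which is exactly why the paper's proof proceeds inductively, starting from $\bigoplus T_i$ (not from $M_1$) and adding the $K_{jk}$ one at a time in that order. Your $M_1$ already contains all diagonal $K\{l,l\}$, including those with $l > k$; for such $L$ the lemma's proof breaks down (the truncation $\sigma_{\le -m}$ in its spectral sequence argument no longer controls degree $-k$ when $m = l > k$), so the exactness of $\Hom_R(K\{l,l\},-)$ on your resolution of $K\{j,k\}$ is unaddressed. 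This is not a cosmetic detail: the dichotomy $\Sscr = k$ (for the newly-added $L = K_{jk}$) versus $\Sscr = 0$ (for previously-added $L$) is what drives the induction.

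Your alternative Morita-equivalence route merely restates the problem. Theorem \ref{thm:decR} does give $\End_R(M) \sim \End_{R^{p^r}}(R)$ for $r \ge 2$ (up to Frobenius twists), but you then still have to prove $\End_{R^{p^r}}(R)$ has finite global dimension locally, and ``iterating the $r=1$ case through the tower $R^{p^2} \subset R^p \subset R$'' is not a valid argument: finite global dimension of $\End_{R^p}(R)$ and of $\End_{R^{p^2}}(R^p)$ do not compose to bound $\gldim \End_{R^{p^2}}(R)$, and I'm not aware of a theorem deriving the NCR property for all $\Fr^r_*\Oscr_Y$ from strong F-regularity alone (if there were, the entirety of \S\ref{sec:ncrproperty} would be redundant). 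The paper's actual proof is irreducibly the inductive argument plus Lemma \ref{lem:fundex2}.
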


 \begin{proof}
 The proof follows very similar lines as the proof of  \cite[Theorem 17.1]{FFRT1}, therefore we concentrate on the necessary modifications.

Let $N_{n-3,0}=\oplus_{j=0}^{n-3}T_i$ with $T_i=S^i V\otimes_k S$.
Then  $\Lambda_{n-3,0}=\End_{G,S}(N_{n-3,0})$ has finite global dimension by Proposition 17.2 in loc. cit. We add the different $K_{jk}$ to $N_{n-3,0}$ in reverse lexicographical order; i.e.
\begin{align*}
N_{1,k+1}&=N_{n-3,k}\oplus K_{1,k+1}&&\text{for $k=0,\dots,n-4$}, \\
N_{j+1,k}&=N_{j,k}\oplus K_{j+1,k} && \text{for $j=1,\dots, n-4$}. 
\end{align*}  
With Lemma \ref{lem:fundex2} below replacing \cite[Lemma \ref{lem:fundex}]{FFRT1} one shows, exactly as in loc. cit.  that $\Lambda_{jk}=\End_{G,S}(N_{jk})$ has finite global dimension
by employing induction on $(j,k)$ (with the reverse lexicographic ordering).
\end{proof}
Instead of using notations
as in \cite{FFRT1} we will use notations more in line with the ones we use in the current paper. 
Note that by our standing hypotheses on the characteristic $\gCj$ is tilt-free so we do not have to deal with tilt-free resolutions. 
We set (and recall)
\begin{align*}
\tilde{D}^\bullet_{jk}&=\sigma_{<-k}\gC^\bullet_j[-k-1],\\
\tilde{C}^\bullet_{jk}&=\sigma_{\ge -k}\gC^\bullet_j[-k]. 
\end{align*}
We have the following duality  (using $\bar{u}=n-2-u$)
\[
(\gC_j^\bullet)^\vee=\gC^\bullet_{\bar{j}}[-n+1](n)\otimes_k \wedge^n F^\ast
\]
(see the proof of Proposition \ref{prop:propKjk}\eqref{lem:Kjkdual}), 
which by some formal manipulations yields
\begin{equation}\label{eq:dualCD}
(\tilde{D}^\bullet_{jk})^\vee=\gC^\bullet_{\bar{j}\bar{k}}(n)\otimes_k \wedge^n F^\ast.
\end{equation}

\begin{lemma} \label{lem:fundex2} 
Let $L$ be either $T_l$ for arbitrary $0\leq l\leq n-3$ or $K_{lm}$ for $(l,m)\le (j,k)$ (for the 
reverse lexicographical ordering). Then $\Hom_{G,S}(L,-)$ 
applied to $\tilde{D}^{\bullet}_{jk}$
yields an exact sequence
\begin{equation}
\label{eq:fundex1}
0\r \Hom_{G,S}(L,\tilde{D}_{jk}^\bullet)\r
\Hom_{G,S}(L,K_{jk})\r \Sscr \r 0
\end{equation}
where $\Sscr=k$ if $L=K_{jk}$ and $\Sscr=0$ in all other cases.
\end{lemma}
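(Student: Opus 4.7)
My plan is to interpret $\tilde{D}^\bullet_{jk}$ as a finite resolution of $K_{jk}$ by tilt-free $(G,S)$-modules and reduce the lemma to an $\Ext$-vanishing statement plus a cokernel identification. Concretely, the truncation $\sigma_{<-k}\gC^\bullet_j$ is quasi-isomorphic to $K_{jk}[k+1]$ (because $\gC^\bullet_j$ is exact in negative degrees), so shifting by $[-k-1]$ gives a resolution
\[
0 \to \gC^{-n}_j \to \cdots \to \gC^{-k-2}_j \to \gC^{-k-1}_j \to K_{jk} \to 0
\]
whose terms are all of the form $T'\otimes_k S$ with $T'$ tilting. Splitting this into short exact sequences, whose successive kernels are the higher syzygies $K_{j,k+s}$ (for $s\ge 1$), I apply $\Hom_{G,S}(L,-)$ and chase the resulting long exact sequences. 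The desired statement is that the total complex of Homs is exact except possibly for a cokernel $\mathscr{S}$ at the right end.

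The vanishing input is that $\Ext^{>0}_{G,S}(L, T'\otimes_k S)=0$ for all tilting $T'$, when $L$ has good filtration. For $L=T_l$ this is immediate because $\Hom_S(T_l,T'\otimes_k S)=T_l^\vee\otimes_k T'\otimes_k S$ has good filtration, so $H^{>0}(G,-)$ kills it. For $L=K_{lm}$, I would use Proposition \ref{prop:propKjk}\eqref{lem:Kjkgoofy} together with Proposition \ref{prop:propKjk}\eqref{lem:Kjkdual} (so $L^\vee$ also has good filtration) to apply the same argument; alternatively, one may compute $\Ext^*_{G,S}(K_{lm}, T'\otimes_k S)$ via the tilt-free resolution $\tilde{D}^\bullet_{lm}$ of $K_{lm}$ and reduce to $G$-cohomology of a good-filtration module. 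Walking through the successive SES then shows that the long sequence of Homs in the lemma is exact everywhere except at the right-most term, where the cokernel equals the image of $\Hom_{G,S}(L,K_{jk})\to \Ext^1_{G,S}(L,K_{j,k+1})$, coming from the SES $0\to K_{j,k+1}\to \gC^{-k-1}_j\to K_{jk}\to 0$.

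It remains to identify this cokernel with $\mathscr{S}$. For $L=T_l$ and for $L=K_{lm}$ with $(l,m)<(j,k)$ strictly, I aim to show $\Ext^1_{G,S}(L,K_{j,k+1})=0$: again using the good-filtration machinery applied now to a resolution of $K_{j,k+1}$, and iterating through one more step of the tilt-free resolution to dispose of the relevant obstruction. For $L=K_{jk}$ itself, the identity of $K_{jk}$ cannot lift through the surjection $\gC^{-k-1}_j \twoheadrightarrow K_{jk}$: if it did, $K_{jk}$ would split off as a summand of the tilt-free $\gC^{-k-1}_j$, contradicting Proposition \ref{prop:propKjk}\eqref{item:stabletilde} which says $\tilde\Omega_s^{k+1} M_j \cong K_{jk}\ne 0$ (i.e., $K_{jk}$ is not $\nabla$-free). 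Since $K_{jk}$ is indecomposable with good filtration (Proposition \ref{prop:propKjk}\eqref{item:Kjkinde}), $\End_{G,S}(K_{jk})$ is a local finite-dimensional $k$-algebra with residue field $k$, and the image of $\Hom_{G,S}(K_{jk},\gC^{-k-1}_j)\to \End_{G,S}(K_{jk})$ is precisely the maximal ideal, yielding cokernel $k$. The main obstacle will be the $\Ext^1$-vanishing for $L=K_{lm}$ with $(l,m)<(j,k)$, since neither $K_{lm}$ nor $K_{j,k+1}$ is tilt-free; I expect this to be handled by a bicomplex/spectral sequence argument on $\Hom_{G,S}(\tilde{D}^\bullet_{lm},\tilde{D}^\bullet_{j,k+1})$, possibly combined with the duality \eqref{eq:dualCD} and Proposition \ref{prop:propKjk}\eqref{lem:Kjkdual} to bring one of the two modules into a position where its resolution contributes trivially.
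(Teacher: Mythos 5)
Your approach (chasing through the short exact sequences of the tilt-free resolution $\tilde{D}^\bullet_{jk}$) is genuinely different from the paper's, but it has a gap at the very first step. The vanishing you invoke, $\Ext^{>0}_{G,S}(L,T'\otimes_k S)=0$ for $L$ with good filtration, is \emph{false} when $L=K_{lm}$. The good-filtration argument you cite (via Proposition \ref{prop:propKjk}\eqref{lem:Kjkgoofy},\eqref{lem:Kjkdual}) only controls the $H^{>0}(G,\Hom_S(L,-))$ layer of the Grothendieck spectral sequence; it does nothing about the $\Ext^q_S(K_{lm},S)$ layers for $q>0$, and these are nonzero. Concretely, dualizing $\tilde{D}^\bullet_{lm}$ via \eqref{eq:dualCD} gives $\Ext^{\bar m}_S(K_{lm},S)\cong M_{n-l-2}(n)\otimes_k\wedge^n F^\ast\ne 0$ with $\bar m=n-m-2$, and in particular for $m=n-3$ one finds $\Ext^1_{G,S}(K_{l,n-3},T'\otimes_k S)\supset (M_{n-l-2}\otimes_k T'\otimes_k\wedge^nF^\ast)^G(n)\ne 0$. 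Since $m=n-3$, $k=n-3$, $l\le j$ is an allowed choice in the lemma, your chase cannot identify the cokernel with $\Ext^1_{G,S}(L,K_{j,k+1})$ and then dispatch it by a further vanishing, because the relevant $\Ext^1_{G,S}(L,\tilde{D}^p_{jk})$ is not zero. Your suggested ``alternative'' --- computing $\Ext$ via the tilt-free resolution $\tilde{D}^\bullet_{lm}$ --- would, if carried out, produce precisely the nonzero $q=\bar m$ cohomology that the paper records in \eqref{eq:cohomE1columns}, so it does not rescue the vanishing.

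You do flag an ``$\Ext^1$-vanishing for $L=K_{lm}$'' as the main obstacle and propose a bicomplex $\Hom_{G,S}(\tilde{D}^\bullet_{lm},\tilde{D}^\bullet_{j,k+1})$, but the difficulty is located in the wrong place and the argument is not carried out. The paper sidesteps all of this by working with the \emph{full} complex $\tilde{C}^\bullet_j$ rather than the truncation $\tilde{D}^\bullet_{j,k+1}$: the double complex $\Hom_{G,S}(\tilde{D}^\bullet_{lm},\tilde{C}^\bullet_j)$ has total complex quasi-isomorphic to $\Hom_{G,S}(\tilde{D}^\bullet_{lm},M_j)$, which lives in degrees $\ge 0$; this global boundedness forces the two-row $E_1$-page (rows at $q=0$ and $q=\bar m$) into a position where the vanishing of $H^{<-k}$ and the identification of $H^{-k}$ with $\Sscr$ drop out by inspecting a single differential $d_{\bar m+1}$, together with a corner-term computation when $m=k$. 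That boundedness is precisely what your SES chase cannot see, and replacing $\tilde{C}^\bullet_j$ by the truncation $\tilde{D}^\bullet_{j,k+1}$ in your bicomplex would not recover it. Your local-ring argument identifying $\Sscr=k$ when $L=K_{jk}$ is fine and matches the endgame of the paper's proof; it is the middle of the argument that needs to be rebuilt around the full complex $\tilde{C}^\bullet_j$.
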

\begin{proof}
It is easy to see that
we have to prove
\begin{equation}
\label{eq:havetoprove1}
H^p(\Hom_{G,S}(L,\tilde{C}^\bullet_j))=
\begin{cases}
0&\text{if $p<-k$,}\\
\Sscr&\text{if $p=-k$.}
\end{cases}
\end{equation}
If $L=T_l$ then $\Hom_{G,S}(L,\tilde{C}^\bullet_j)$ is acyclic in degrees $<0$ using \cite[Propositions \ref{prop:exact:good},\ref{tensorgoodfiltration}]{FFRT1} (as $S^lV=D^lV$  by our assumptions on $p$)
and hence there is nothing to prove. We now assume $L=K_{lm}$.

Clearly $\Hom_{G,S}(\tilde{D}_{lm}^\bullet,M_j)$ has zero cohomology in degrees $<0$. 
Hence the same holds for the total complex associated to the double 
complex $\Hom_{G,S}(\tilde{D}^\bullet_{lm},\tilde{C}^\bullet_j)$. We consider the associated $E_1$-spectral sequence (with $\gC^\bullet_j$ horizontally oriented). By the prior discussion we have
$E^{pq}_\infty=0$ for $p+q<0$. To compute the $E_1$-term
 we have to compute the cohomology of 
\[
\Hom_{G,S}(\tilde{D}^\bullet_{lm},T\otimes_k S)=((\tilde{D}^\bullet_{lm})^\vee \otimes_S (T\otimes_k S))^G=(\tilde{C}^\bullet_{\bar{l}\bar{m}}\otimes_k T\otimes_k \wedge^n F^*)^G(n)
\]
for $T$ tilting, where we used \eqref{eq:dualCD} for the second equality. 
We find using the exactness of $(-)^G$ on representations with good filtrations 
\begin{equation}
\label{eq:cohomE1columns}
H^q(\Hom_{G,S}(\tilde{D}_{lm},T\otimes_k S))=
\begin{cases}
\Hom_{G,S}(K_{lm},T\otimes_k S)&\text{if $q=0$},\\
(M_{\bar{l}}\otimes_k T)^G\otimes_k \wedge^n F^*(n)&\text{if $q=\bar{m}$},\\
0&\text{otherwise}.
\end{cases}
\end{equation}
Thus the $E_1$-term of the spectral sequence computing $\Hom_{G,S}(K_{lm},\tilde{C_j}^\bullet)$ consists of two rows respectively given by
\[
\begin{cases}
(M_{\bar{l}}\otimes_S \tilde{C}^\bullet_{j})^G\otimes_k \wedge^n F^\ast(n)&\text{if $q=\bar{m}$},\\
\Hom_{G,S}(K_{lm},\tilde{C}^\bullet_{j})&\text{if $q=0$}.
\end{cases}
\]
Inspection reveals
that $E^{pq}_\infty=0$ for $p+q<0$ implies that the truncated lower row $\sigma_{\le -m}\Hom_{G,S}(K_{lm},\tilde{C}^\bullet_{j})$
is acyclic (except in degree $-m$). 

By the assumption, we either have $m<k$ or else $m=k$ and $l\le j$. If $m<k$ then it follows
that \eqref{eq:havetoprove1} is satisfied. So now assume $m=k$. In that case
$\sigma_{\le -k}\Hom_{G,S}(K_{lm},\tilde{C}^\bullet_{j})$ is acyclic (except in degree $-k$) and 
\[
H^{-k}(\Hom_{G,S}(K_{lm},\tilde{C}^\bullet_{j}))=E_2^{-n+1,\bar{m}}=\ker( E_1^{-n+1,\bar{m}}\r  E_1^{-n+2,\bar{m}}).
\]
We calculate
\begin{align*}
E_1^{-n+1,\bar{m}}&=(M_{\bar{l}}\otimes_k S^{\bar{j}}V\otimes_k\wedge^n F(-n)\otimes \wedge^n F^*(n))^G
&&\\
&=\bigoplus_t (S^{\bar{l}+t}V \otimes_k S^tF(-t)  \otimes_k S^{\bar{j}}V)^G\\
&=\begin{cases}
k&\text{if $l=j$},\\
0&\text{if $l<j$}.
\end{cases}
\end{align*}
So if $l<j$ we are again done. If $l=j$ then it remains to show that $\Hom_{G,S}(K_{jk},\tilde{C}^\bullet_j)$
is \emph{not} exact in degree $-k$. If it were then in \eqref{eq:fundex1}  we should take $\Sscr=0$.
In particular we would have an epimorphism
\[
\Hom_{G,S}(K_{jk},\tilde{D}_{jk}^0)\r \Hom_{G,S}(K_{jk},K_{jk})\r 0.
\]
This would imply that $K_{jk}$ is a direct summand of $\tilde{D}_{jk}^0$, which is impossible. For example because $\tilde{D}_{jk}^0$ is projective and
$K_{jk}$ is not by Proposition \ref{prop:propKjk}\eqref{item:pdim}.
\end{proof}

\bibliographystyle{amsplain}

\end{document}